\numberwithin{equation}{section}
\newtheorem{teo}{Theorem}[section]
\newtheorem{pro}[teo]{Proposition}
\newtheorem{lem}[teo]{Lemma}
\newtheorem{ques}[teo]{Question}
\newtheorem{fact}[teo]{Fact}
\theoremstyle{definition}
\newtheorem{que}[teo]{Question}
\newtheorem{notation}[teo]{Notation}
\theoremstyle{remark}
\newtheorem{rem}[teo]{Remark}
\newcommand{\calA}{\mathcal{A}}
\newcommand{\calM}{\mathcal{M}}
\newcommand{\calR}{\mathcal{R}}
\newcommand{\bZ}{\mathbb{Z}}
\newcommand{\KlausA}[1]{}
\newcommand{\SamA}[1]{}
\newcommand{\YanoA}[1]{}
\newcommand{\RaduA}[1]{}
\newcommand{\Sp}{\operatorname{Sp}}
\newcommand{\GL}{\operatorname{GL}}
\newcommand{\Nm}{\operatorname{Nm}}
\newcommand{\Id}{\operatorname{Id}}
\newcommand{\Pic}{\operatorname{Pic}}
\newcommand{\Ext}{\operatorname{Ext}}
\newcommand{\Hom}{\operatorname{Hom}}
\newcommand{\Sym}{\operatorname{Sym}}
\newcommand{\Aut}{\operatorname{Aut}}
\renewcommand{\Im}{\operatorname{Im}}
\newcommand{\End}{\operatorname{End}}
\newcommand{\Gr}{\operatorname{Gr}}
\newcommand{\PD}{\operatorname{PD}}
\newcommand{\fH}{\mathfrak{H}}
\newcommand{\ZZ}{\mathbb{Z}}
\newcommand{\QQ}{\mathbb{Q}}
\newcommand{\RR}{\mathbb{R}}
\newcommand{\CC}{\mathbb{C}}
\newcommand{\TT}{\mathbb{T}}
\newcommand{\PP}{\mathbb{P}}
\newcommand{\AV}{\overline{A}_g^V}
\newcommand{\AP}{\overline{A}_g^P}
\newcommand{\AC}{\overline{A}_g^C}
\newcommand{\oM}{\overline{M}_g}
\newcommand{\oR}{\overline{R}_{g+1}}
\newcommand{\coR}{\overline{\calR}_{g+1}}
\let\@wraptoccontribs\wraptoccontribs
\begin{document}
\bibliographystyle{amsalpha}

\title[Extending the Prym map]{Extending the Prym map to toroidal compactifications of the moduli space of abelian varieties}

\author[S. Casalaina-Martin]{Sebastian Casalaina-Martin}
\address{University of Colorado, Department of Mathematics, Boulder, CO 80309, USA}
\email{casa@math.colorado.edu}

\author[S. Grushevsky]{Samuel Grushevsky}
\address{Stony Brook University, Department of Mathematics, Stony Brook, NY 11794, USA}
\email{sam@math.sunysb.edu}

\author[K. Hulek]{Klaus Hulek}
\address{Institut f\"ur Algebraische Geometrie, Leibniz Universit\"at Hannover,  30060 Hannover, Germany \newline Current address:
Institute for Advanced Study, 
School of Mathematics, 
1 Einstein Drive, 
Princeton,  NJ 08540,
USA}
\email{hulek@math.uni-hannover.de}
\email{hulek@ias.edu}

\author[R. Laza]{Radu Laza}
\address{Stony Brook University, Department of Mathematics, Stony Brook, NY 11794, USA  \newline Current address:
Institute for Advanced Study, 
School of Mathematics, 
1 Einstein Drive, 
Princeton,  NJ 08540,
USA}
\email{rlaza@math.sunysb.edu}
\email{rlaza@math.ias.edu}

\contrib[with an appendix by]{Mathieu Dutour Sikiri\'c}
\address{Rudjer Boskovi\'c Institute, Bijenicka 54, 10000 Zagreb, Croatia}
\email{mathieu.dutour@gmail.com}

\thanks{Research of the first author was supported in part by NSF grant DMS-11-01333 and Simons Foundation Collaboration Grant for Mathematicians (317572). Research of the second author is supported in part by NSF grant DMS-12-01369. Research of the third author is supported in part by DFG grant Hu-337/6-2. Research of the fourth author is supported in part by NSF grants DMS-12-00875, DMS-12-54812, and by a Sloan Fellowship.}

\begin{abstract}
The main purpose of this paper is to present a conceptual approach to understanding the extension of the Prym map from the space of admissible double covers of stable curves to different toroidal compactifications of the moduli space of principally polarized abelian varieties.  By separating the combinatorial problems from the geometric aspects we can reduce this to the computation of certain monodromy cones. In this way we  not only shed new light on the extension results of Alexeev, Birkenhake, Hulek, and Vologodsky for the second Voronoi toroidal compactification, but we also apply this to other toroidal compactifications, in particular the perfect cone compactification, for which we obtain a combinatorial characterization of the indeterminacy locus, as well as a geometric description up to codimension six, and an explicit toroidal resolution of the Prym map up to codimension four.
\end{abstract}

\date{\today}

\maketitle

%%%%%%%%%%%%%%%%%%%%%%%%%%%%%%%%%%%%%%
\section*{Introduction}
A fundamental tool in the  study of  algebraic curves is the  theory of   Jacobians. Assigning to a curve its principally polarized Jacobian  defines the Torelli period map $M_g\to A_g$ from the coarse moduli space of curves of genus $g$ to the coarse moduli space of principally polarized abelian varieties (ppav) of dimension $g$. It is a well-known  fact, due to Mumford and Namikawa \cite{nam}, that the  Torelli map extends to a morphism  $\oM\to \AV$  from the  Deligne--Mumford compactification  to the second Voronoi toroidal compactification. More recently, Alexeev and Brunyate \cite{ab} have studied extensions of the Torelli map  to other toroidal compactifications and have shown that the period map extends to a morphism to the perfect cone compactification $\AP$, but not to a morphism to the central cone compactification $\overline{A}^{C}_g$ for $g\ge 9$,  disproving a conjecture of Namikawa.

While the Torelli map is injective for all $g$, for $g\ge 4$ it is not dominant. One geometric approach to understanding higher-dimensional ppav is via Prym varieties, which are ppav associated to connected \'etale double covers of curves.  Associating to a cover its principally polarized Prym variety defines the Prym period map $R_{g+1}\to A_g$, where $R_{g+1}$ is the coarse moduli space of connected \'etale double covers of curves of genus $g+1$. The Prym period map is dominant for $g\le 5$, and has been used to  provide a  geometric approach to the Schottky problem for $g=4,5$, to study the rationality of threefolds, and to give a better understanding of the geometry of $A_4$ and $A_5$.

In contrast to the case of Jacobians, it has been known since the work of Friedman and Smith \cite{fs} that the Prym  period map does not extend to a morphism from Beauville's moduli space of admissible double covers $\oR$ to any of the standard toroidal compactifications. Subsequent work of Alexeev, Birkenhake, and Hulek \cite{abh} and Vologodsky \cite{vologodsky}  identifies the indeterminacy locus of the rational map  $\overline{R}_{g+1}\dashrightarrow \AV$; it is the closure of the locus of so-called Friedman--Smith covers with at least $4$ nodes  (see \S \ref{secFSexamples}).  In this paper, we investigate the problem of extending the Prym map to other toroidal compactifications.   Our main results are:

\begin{itemize}
\item  A complete combinatorial characterization of the indeterminacy locus of the Prym map to the perfect and central cone compactifications (Theorem \ref{teoprymext}).   The techniques also give a complete combinatorial characterization of the indeterminacy locus of the Prym map to the second Voronoi compactification, providing another proof of \cite[Thm.~3.2]{abh}.

\item  A  geometric characterization of the indeterminacy locus of the Prym map $\overline R_{g+1}\dashrightarrow \bar A_g^P$ to the perfect cone compactification up to codimension $6$ in $\overline R_{g+1}$  in terms of Friedman--Smith covers  (Theorem \ref{teoindPM}).

\item An explicit resolution of the Prym map $\overline R_{g+1}\dashrightarrow \AP$ up to codimension $4$  (Theorem \ref{teoresPM}).  This also resolves the Prym map to $\AV$ and $\AC$ up to codimension $4$.
\end{itemize}

In  Appendix \ref{secappMDS}, Mathieu Dutour Sikiri\'c also proves an extension result to the central cone compactification (Theorem \ref{teoindC}).

\smallskip

In this paper, we approach the extension problem for the Prym  map in terms of  the Hodge theoretic framework of  a general period map $\calM\to \mathcal D/\Gamma$ from a moduli space to a classical period domain.  This allows us to determine the conditions for extensions of period maps to moduli spaces that are compactified so that the monodromy transformations are of Picard--Lefschetz type (i.e.~given by rank 1 forms).
In this way we  separate the geometric aspects of the problem from the combinatorial issues involved in dealing with various admissible cone decompositions.

In particular, the approach  unifies the arguments for Jacobians and Pryms, and we discuss the Torelli map throughout for motivation.
As a result, we also get a  new proof of   the extension results
of \cite{abh} for $\oR\dashrightarrow\AV$.  In \cite{abh}, the authors have the additional goal of determining compactified Pryms as  stable semiabelic pairs; focusing here on the extension condition allows us to make a more direct, Hodge theoretic  argument.  With the work in \cite{abh},  translating from our results to the language of stable semiabelic pairs is straightforward (\S \ref{secModStAbVar}, \S \ref{secsubFibers}).
In addition, one of our original motivations for this work was investigating the extension of the period map for cubic threefolds  to a morphism from a suitable GIT compactification of the moduli space of threefolds to a suitable compactification of $A_5$, stemming from our work \cite{cml} and \cite{cml2}, and using some of the results of our work \cite{gh}. The  methods we use in this paper apply  in that setting also, and we will return to the study of the period map for cubic threefolds in  subsequent work.

\smallskip

A few words about the structure of the paper. We start in Section \ref{secttoroidal} by reviewing some basic facts about the toroidal compactifications (second Voronoi, perfect, central) that we consider in our paper. We then discuss (Section \ref{sectHT}) the general  framework of degenerations of Hodge structures and the connection to toroidal compactifications. This is mostly standard (see eg.~\cite{cattani} for an exposition), but we find it convenient to include a short discussion of this adapted to our needs. In Section \ref{sectPrym}, we briefly review the standard compactification of the moduli of Prym varieties by admissible covers (\cite{b}) and the associated combinatorial data (graphs with an involution, etc.).
In Section  \ref{sectMon}, we specialize the discussion of Section \ref{sectHT} to curves and Prym varieties and discuss the computation of the monodromy cones in terms of the dual graph.
The monodromy cone for  Jacobians is classical (eg.~\cite{nam76II}) and that of Pryms is essentially contained in \cite{fs} and \cite{abh}.  Nonetheless, we believe that our presentation unifies, simplifies, and clarifies some of the arguments in the literature.  Our goal  will be to apply similar techniques to the study of other moduli spaces via Hodge theory in the future.

With these preliminaries, new results start in Section \ref{sectExt}, where we recast the extension criteria for the Torelli map, and then prove combinatorial criteria, in terms of the dual graph, for the extension of the Prym map to various toroidal compactifications of $A_g$, obtaining Theorem \ref{teoprymext} and  thus giving in addition  a new  proof of \cite[Thm.~3.2]{abh}.
We then proceed to relate these combinatorial conditions to geometric conditions on the admissible covers.  The so-called Friedman--Smith covers are central to this discussion and we describe in Section \ref{secFSexamples} their monodromy in detail: in Subsection  \ref{secFSMonCone} we compute the monodromy cones, and in Theorem \ref{teoFSMCP}
we discuss their properties with respect to the fans defining different toroidal compactifications.
In Section \ref{sectindeterm}, we use these computations to describe
the indeterminacy locus of the Prym map geometrically,
and it is interesting to note that this behavior for the perfect cone
compactification is  quite different from that for 
the second Voronoi compactifictaion.  We are
able to give a complete geometric characterization of the indeterminacy locus of the Prym map to
the perfect cone compactification $\bar A_g^P$  up  to codimension $6$ (Theorem \ref{teoindPM}), utilizing the recent results of Melo and Viviani \cite{MV12}.

The computations also allow us to describe
the resolution of the period map in terms of explicit, toroidal
modifications of the moduli space of admissible covers.  In Section \ref{sectRes} we describe
the resolution of the period map to the perfect cone
compactification completely up to codimension $4$ (Theorem
\ref{teoresPM}).
In Section \ref{secsubFibers} we start a discussion on the fibers of the Prym map. More precisely, we discuss which types of admissible covers are mapped to which
strata. This also provides another link to  \cite{abh} since we discuss the relationship  between the monodromy cones and the degeneration data of $1$-parameter families, which in turn
determine semiabelic varieties which are limits of Pryms.

Many of the arguments in the paper regarding the Prym map in low codimension rely on working through a
number of examples, and explicit computations of monodromy cones. These are somewhat
lengthy and technical, and to maintain the structural unity of the
argument we collect these explicit computations in the appendices.
Appendix \ref{seccombinatorics} treats the combinatorics of the Friedman--Smith cones and relates these to the various cone decompositions.
In Appendix \ref{secexamples} we discuss some examples where the Prym map extends; this comes down to proving that certain monodromy cones
belong to either the  second Voronoi, perfect cone or central cone decomposition.
Appendix \ref{secDegen} contains some lengthy calculations where we discuss further degenerations of Friedman--Smith examples. In particular we
compute their monodromy cones and discuss  to which, if any, cone decompositions these belong.
Finally, in Appendix \ref{secSimp} we discuss a method which allows us to simplify certain monodromy cones and thus to reduce to previous
calculations.

\subsection*{Acknowledgements} We are very grateful to Mathieu Dutour Sikiri\'c who was always willing to answer our questions on
cone decompositions and who helped us check some of our guesses with his powerful computer programs.

\subsection*{Notation}
We will use calligraphic letters to refer to moduli stacks (e.g.~$\mathcal A_g$, $\mathcal R_{g+1}$, etc.), and Roman  letters for the associated coarse moduli spaces (e.g.~$A_g$, $R_{g+1}$, etc.). Since all the spaces occurring here (with the exception of Alexeev's stack of stable semiabelic pairs) are Deligne--Mumford stacks, all the period maps are assumed to be locally liftable, and the extensions are insensitive to finite covers, there is  essentially no difference between using stacks or the associated coarse moduli space. In fact, we will typically stick to  the coarse moduli space, except for the situations where we want to emphasize the modular meaning.

%%%%%%%%%%%%%%%%%%%%%%%%%%%%%%%%%%%%%%
%%% Part IA - Generalities about extension to toroidal compactifications
%%%%%%%%%%%%%%%%%%%%%%%%%%%%%%%%%%%%%%

\section{Brief review of toroidal compactifications}\label{secttoroidal}
In this section, we briefly review the theory of toroidal compactifications of $A_g$ (see \cite{AMRT}, \cite{nam} and \cite{FC90} for more details), focusing on the  three classically known
toroidal compactifications (up to refinement of the fans, i.e.~blow-ups), that is the perfect cone (also known as first Voronoi), second Voronoi, and central cone compactification. Primarily the purpose here is to fix the notation and terminology needed later.

\begin{notation}
As is customary, when necessary, we will use subscripts (eg.~$H_\ZZ$) to indicate the coefficients for modules and algebraic groups. Unless specified, the coefficients are either $\QQ$ or $\RR$.
\end{notation}

\subsection{The Satake--Baily--Borel Compactification} Fix a free abelian group $H$ of rank $2g$, and a non-degenerate, skew-symmetric, bilinear form $Q$ on $H$.  We let $D$ be  the classifying space of polarized weight $1$ Hodge structures on $H$:
$$
  D:=\{F\in \operatorname{Grass}(g,H_\CC): Q(F,F)=0, \ \ iQ(F,\overline F)>0\}\cong G_\RR/K,
$$
where $G_\RR\cong \Sp(2g,\RR)$ and $K=U(r)$ is the maximal compact subgroup. Taking $Q$ to be the standard symplectic form, $D$ can be  (canonically) identified with the Siegel upper half-space $\fH_g$, the space of symmetric $g\times g$ complex matrices with positive definite imaginary part.   The fractional linear transformations give an action of $G_\bZ=\Sp(2g,\ZZ)$ on $D\cong \fH_g$, and we set
$$A_g:=\fH_g/\Sp(2g,\ZZ).$$
The Satake--Baily--Borel (SBB) compactification $A_g^*$
is a normal, projective compactification of $A_g$  that admits a stratification:
$$A_g^*=A_g\sqcup A_{g-1}\sqcup\ldots\sqcup A_{0}.$$

We recall that $A_g^*$ and the above stratification are obtained (set-theoretically) by adding to $D$ the so called rational  boundary components $F_{W_0}$, and then taking the quotient  with respect to the natural $G_\bZ=\Sp(2g,\bZ)$  action.
Namely, the rational  boundary components $F_{W_0}$ of $D$ correspond to the choice of rational maximal parabolic subgroups $P_{W_0}\subset\Sp(Q,H_\QQ)$, which in turn correspond to the choice of a totally isotropic subspace $W_0\subseteq H_\QQ$ (of which $P_{W_0}$ is then the stabilizer).  Note that since  $\Sp(2g,\ZZ)$ acts transitively on the set of isotropic subspaces $W_0$ of $H_\QQ$ of fixed dimension, the set of rational boundary components is essentially indexed by the $\nu(=\dim W_0)\in\{0,\dots, g\}$. Furthermore, the choice of $W_0$ defines a weight filtration on $H_{\QQ}$:
\begin{equation}\label{eqfiltration}
  W_{-1}:=\{0\}\subseteq W_0\subseteq W_1:=(W_0)_Q^\perp\subseteq W_2:=H_{\QQ}.
 \end{equation}
The polarization $Q$ induces a polarization (non-degenerate symplectic form) $\bar Q$ on $\Gr_1^W=W_1/W_0$. It is then standard (eg.~\cite[p.84]{cattani}) that the boundary component $F_{W_0}$ is the classifying space $D_{g'}$ with ($g'=g-\nu$) of $\bar Q$-polarized Hodge structures on $\Gr_1^W$, giving the component $A_{g'}=F_{W_0}/G_{\ZZ}$ of $A_g^*$. (N.B. $F_{\{0\}}=D$, and after the identification $F_{W_0}=D_{g'}=\mathfrak H_{g'}$, the action of $G_{\mathbb Z}$ restricts to the action of $\operatorname{Sp}(2g',\mathbb Z)$.)

\subsection{Toroidal compactifications}
The toroidal compactifications are certain refinements
of the SBB compactification    $A_g^*$, depending  on a choice of a compatible collection of admissible cone  decompositions,  $\Sigma$.  Each such choice gives a compactification $\overline{A}_g^{\Sigma}$ with a  canonical map $ \overline{A}_g^{\Sigma}\to A_g^*$.
Here we review a few points about the construction from the perspective of Hodge theory (essentially following \cite{cattani}).

The construction is relative over $A_g^*$, and one starts by considering a totally isotropic subspace $W_0\subseteq H_{\QQ}$ of dimension $\nu\le g$ and the corresponding boundary component of $A_g^*$. Consider then the real Lie subalgebra of $\mathfrak{sp}(Q,H_{\RR})$ preserving $W_0$:
$$
  \mathfrak n(W_0):=\{N\in  \mathfrak{sp}(Q,H_{\RR})\mid \mathrm{Im}(N)\subseteq W_0\}.
$$
Then for any $N\in \mathfrak n(W_0)$ we have $N^2=0$, and thus $N$ defines a weight filtration compatible with that induced by $W_0$, see \eqref{eqfiltration}. In other words, we have
$$
 \Im(N)=W_0(N)\subseteq W_0\subseteq W_1=W_0^\perp \subseteq W_1(N)=\ker (N)=\Im(N)^\perp,
$$
and, in particular, a natural surjection
\begin{equation}\label{eqnsurjection}
\Gr_2^W(:=W_2/W_1)\twoheadrightarrow \Gr_2(N)(:=W_2(N)/W_1(N)).
\end{equation}
Furthermore, since $N$ is a nilpotent symplectic endomorphism, we get a natural isomorphism
\begin{equation}\label{eqncomp}
\begin{CD}
\Gr_2(N) @>N>> \Gr_0(N) @>Q(N(\cdot),\cdot)>> \Gr_2(N)^\vee\\
v@>>> N(v)@>>>Q(N(\cdot),v),
\end{CD}
\end{equation}
which can be interpreted as giving a non-degenerate bilinear form $Q_N$ on $\Gr_2(N)$. The form $Q_N$ turns out to be symmetric, and by pullback can be viewed as a form on  $\Gr_2^W$; thus there is a natural map (defined over $\QQ$)
\begin{equation}\label{eqnW}
\mathfrak n(W_0)\stackrel{\sim}{\longrightarrow} \Hom(\Sym^2 \Gr_2^W,\RR),
\end{equation}
which (it is not hard to see) is an isomorphism.

As described above, $\mathfrak n(W_0)$ is canonically identified with the Lie algebra of symmetric bilinear forms (or equivalently symmetric $g'\times g'$ matrices, with $g'=g-\nu$) on $\Gr_2^W$. With this identification, we consider the cone of positive definite $g'\times g'$ symmetric matrices
$$
  \mathfrak n(W_0)^+:=\{N\in \mathfrak n(W_0)\mid  Q_N \textrm{ is positive definite}\}.
$$
Let $\Sigma$ be a  compatible collection of admissible cone decompositions (see \S \ref{secAdCD}).
Now for each cone $\sigma_{W_0}\in \Sigma_{W_0}$, there is an associated space $B(\sigma_{W_0})$ together with a map $B(\sigma_{W_0})\to F_{W_0}$, where $F_{W_0}$ is the rational boundary component associated to $W_0$ (see eg.~\cite[p.91]{cattani}).    These maps are compatible in the sense that if $\tau_{W_0}\le \sigma_{W_0}$ is a face, then there is a commutative diagram
$$
\xymatrix@C=.5cm@R=.5cm{
B(\tau_{W_0}) \ar@{->}[rr] \ar@{->}[rd]& & B(\sigma_{W_0}) \ar@{->}[ld]\\
&F_{W_0}&
}
$$
One then sets $D^{\Sigma}=\bigcup_{W_0}\bigcup_{\sigma_{W_0}\in \Sigma_{W_0}}B(\sigma_{W_0})$.  The action of $G_{\ZZ}=\Sp(2g,\ZZ)$ extends to an action on $D^{\Sigma}$, and then (set-theoretically) $\bar A^\Sigma_g=D^{\Sigma}/G_{\ZZ}$, inducing also a natural map $\bar A^\Sigma_g\to A^*_g$.

\subsection{Admissible cone decompositions for quadratic forms}\label{secAdCD}
We now review some basic terminology and results about cone decompositions.
Let $\Lambda$ be a  free $\ZZ$-module of rank $g$.   The space of quadratic forms on $\Lambda$ is $(\Sym^2 \Lambda)^\vee$, which comes equipped with a natural diagonal action of $\GL(\Lambda)=\Aut_{\ZZ}(\Lambda)$. One considers the open cone of positive definite quadratic forms
$$C\subset (\Sym^2 \Lambda)^\vee\otimes_\ZZ \RR,$$
and then lets $\overline{C}^\QQ$ be the rational closure.
Obviously, $C$ and $\overline{C}^\QQ$ are  $\GL(\Lambda)$-invariant. For any subgroup $\Gamma\subseteq \GL(\Lambda)$ (typically we will be interested $\Gamma= \GL(\Lambda)$), a $\Gamma$-admissible rational polyhedral decomposition $\Sigma$ (in short {\it admissible decomposition}) of $C$  is a $\Gamma$-invariant collection of (rational, convex, polyhedral)  subcones covering $\overline{C}^\QQ$ which satisfies certain natural axioms (see \cite{nam} or \cite[Ch.~IV, Def.~2.2, p.96]{FC90} for details), most notably the requirement that there are only finitely many orbits of cones of $\Sigma$ modulo the action of $\Gamma$.

For the construction of the toroidal compactifications $\overline{A}_g^\Sigma$ one requires an admissible decomposition for the space of quadratic forms associated to each isotropic subspace $W_0$ (see \eqref{eqnW}). As discussed, all isotropic subspaces $W_0$ of fixed dimension are conjugate, and thus what one needs is an admissible decomposition for each lattice $\Lambda'$ of rank  $0\le g'\le g$, compatible in the following sense. We say that $\Sigma'$ and  $\Sigma$ are compatible if there exists a surjection $\Lambda \twoheadrightarrow \Lambda'$ so that $\Sigma'$ is obtained from $\Sigma$ via pull-back by the natural inclusion $\overline C^{\QQ}(\Lambda')\subseteq \overline C^{\QQ}(\Lambda)$.  If this is the case for one surjection $\Lambda\twoheadrightarrow \Lambda'$, it will be true for all surjections.    In particular, specifying an admissible decomposition for $\Lambda$ then specifies uniquely compatible admissible decompositions for all lattices $\Lambda'$ of smaller rank. In short,  all we need to define a toroidal compactification $\bar A^\Sigma_g$ is  an admissible cone decomposition for the rank $g$ lattice.

Three admissible decompositions are classically  known for $A_g$, namely the so called second Voronoi, the perfect cone (or first Voronoi), and the central cone
decomposition (these can, of course, be further subdivided). These decompositions
are discussed in \cite[\S8, \S9]{nam}. We shall address in this paper all three decompositions and the associated toroidal compactifications.
Though we will not review their definitions (the interested reader should see \cite{nam}), we will discuss the relevant facts about them in the following subsection.
There is also another admissible decomposition known, namely that into $C$-types \cite{RB}, which is less known to algebraic geometers. This coincides with the second Voronoi
decomposition for $g\leq 4$, but for $g\geq 5$ second Voronoi is a proper refinement of the $C$-type decomposition. To our knowledge no geometric interpretation
of the corresponding toroidal compactification is known.

\smallskip

Finally, we recall some terminology.  A cone $\sigma\subseteq \overline C^{\mathbb Q}$  is called \emph{basic} if the integral generators of its $1$-dimensional faces can be completed to a $\mathbb Z$-basis of $(\operatorname{Sym}^2\Lambda)^\vee$. It is called \emph{simplicial} if these generators can be completed to a $\mathbb Q$-basis; i.e.~if the generators are linearly independent.

\subsection{Admissible cone decompositions and rank $1$ quadrics}
In the geometric context of our paper, we will only be interested in cones spanned by rank $1$ quadrics (i.e.~squares of linear forms), since  our (log of) monodromy operators will be rank one. For such cones it is essentially a combinatorial problem to decide if they belong to the second Voronoi, perfect, or central cone decompositions. These results are well known and we will refer the reader to \cite{ab} and \cite{MV12} for further details.

For  $\ell_1,\ldots,\ell_n\in \Lambda_{\mathbb R}^\vee\setminus\lbrace 0\rbrace$, let $\sigma:=\mathbb R_{\ge 0}\langle \ell_i^2\rangle_{i=1}^n$ be the corresponding cone generated by rank $1$ quadrics in $\operatorname{Sym}^2\left(\Lambda_{\mathbb R}^\vee\right)$.    Given a basis for $\Lambda$, we will often refer to the cone $\sigma$ by writing the matrix whose $i$-th row is the expression for $\ell_i$ in terms of the dual basis to the given basis,
and to any such matrix will associate such a cone.

In this setup, we then have the following combinatorial results that determine whether a set of linear forms in $\Lambda^\vee$ generate a cone contained in a cone of one of the three standard admissible decompositions.
\begin{lem}[Second Voronoi] \label{lemsecvor}
Let $\Lambda$ be free $\ZZ$-module of  rank $g$.  Suppose $\ell_1,\ldots,\ell_n\in \Lambda^\vee$ are primitive, non-zero, linear forms.
The following are equivalent:
\begin{enumerate}
\item   $\{\ell_1^{2},\ldots, \ell_n^{2}\}$ lie in a common cone of the second Voronoi decomposition.
\item   $\RR_{\ge 0}\langle \ell_1^{2},\ldots, \ell_n^{2}\rangle$ is a cone in the second Voronoi decomposition.
\item Any $\RR$-linearly independent subset $\{\ell_j\}_{j\in J}\subseteq \{\ell_1,\ldots,\ell_n\}$, is a $\ZZ$-basis of the $\mathbb Z$-module $ \RR\langle \ell_j\rangle_{j\in J}\cap \Lambda^\vee$.
\item  Any $\RR$-linearly independent subset $\{\ell_j\}_{j\in J}\subseteq \{\ell_1,\ldots,\ell_n\}$ of maximal rank, is a $\ZZ$-basis of the $\mathbb Z$-module  $ \RR\langle \ell_j\rangle_{j\in J}\cap \Lambda^\vee $.
\end{enumerate}
\end{lem}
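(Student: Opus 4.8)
The plan is to treat $(3)\Leftrightarrow(4)$ and $(2)\Rightarrow(1)$ as formalities and to focus on $(3)\Rightarrow(2)$ and $(1)\Rightarrow(3)$. Indeed $(2)\Rightarrow(1)$ and $(3)\Rightarrow(4)$ are trivial, and for $(4)\Rightarrow(3)$ one argues by enlargement: given an $\RR$-linearly independent subset $\{\ell_j\}_{j\in J}$, extend it to a maximal $\RR$-linearly independent subset $\{\ell_j\}_{j\in J'}\subseteq\{\ell_1,\dots,\ell_n\}$ with $J\subseteq J'$; by $(4)$ this is a $\ZZ$-basis of $M':=\RR\langle\ell_j\rangle_{j\in J'}\cap\Lambda^\vee$, and since $\RR\langle\ell_j\rangle_{j\in J}\cap M'=\ZZ\langle\ell_j\rangle_{j\in J}$ by linear independence, $\{\ell_j\}_{j\in J}$ is a $\ZZ$-basis of $\RR\langle\ell_j\rangle_{j\in J}\cap\Lambda^\vee$. (The primitivity hypothesis on the $\ell_i$ is what makes each $\ell_i^2$ a primitive generator of its ray, so that $(1)$--$(2)$ and $(3)$--$(4)$ speak about the same objects.)

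For the remaining two implications I would use the description of the second Voronoi decomposition via Delone subdivisions (see \cite{ab}, \cite[\S2]{MV12} and the classical references therein). Recall that its closed cones are the secondary cones $\sigma(\mathcal D):=\overline{\{q\in C:\operatorname{Del}(q)=\mathcal D\}}$ of the Delone subdivisions $\mathcal D$ of $\Lambda_\RR$ with vertices in $\Lambda$; that a positive semidefinite form $q$ lies in $\sigma(\mathcal D)$ exactly when $\mathcal D$ refines $\operatorname{Del}(q)$; and that the faces of $\sigma(\mathcal D)$ are the secondary cones of the Delone coarsenings of $\mathcal D$. Moreover, for primitive $\ell\in\Lambda^\vee$ the subdivision $\operatorname{Del}(\ell^2)$ is the dicing $\mathcal L_\ell$ of $\Lambda_\RR$ by the parallel hyperplanes $\ell^{-1}(\ZZ)$, and for any $t_1,\dots,t_n>0$ the subdivision $\operatorname{Del}(\sum_i t_i\ell_i^2)$ is always a coarsening of the common dicing $\mathcal D_0:=\bigwedge_i\mathcal L_{\ell_i}$. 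The combinatorial core — the point for which \cite{ab} and \cite{MV12} are really needed — is that $\mathcal D_0$ is a Delone subdivision, and that then $\sigma(\mathcal D_0)=\RR_{\ge0}\langle\ell_1^2,\dots,\ell_n^2\rangle$, if and only if $\{\ell_1,\dots,\ell_n\}$ is a unimodular system, i.e.\ if and only if $(3)$ holds.

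Granting this, $(3)\Rightarrow(2)$ is immediate. For $(1)\Rightarrow(3)$, assume $\ell_1^2,\dots,\ell_n^2$ all lie in a single cone $\tau=\sigma(\mathcal D)$ of the decomposition; then $\mathcal D$ refines each $\mathcal L_{\ell_i}$, hence refines $\mathcal D_0$. Put $q:=\sum_i\ell_i^2\in\tau$. Since $q\in\sigma(\operatorname{Del}(q))$ and $\sigma(\operatorname{Del}(q))$ is a face of $\tau$, the defining property of a face gives $\ell_i^2\in\sigma(\operatorname{Del}(q))$ for every $i$; hence $\operatorname{Del}(q)$ refines each $\mathcal L_{\ell_i}$ and therefore refines $\mathcal D_0$. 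Combined with the general fact that $\operatorname{Del}(q)$ coarsens $\mathcal D_0$, this yields $\operatorname{Del}(q)=\mathcal D_0$, so $\mathcal D_0$ is a Delone subdivision, and the combinatorial core above forces $\{\ell_1,\dots,\ell_n\}$ to be a unimodular system, i.e.\ $(3)$.

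The real obstacle is the combinatorial core itself: that the dicing $\mathcal D_0$ is a Delone subdivision precisely for unimodular systems, and that its secondary cone is then spanned by the $\ell_i^2$ and not strictly larger. I would prove the forward direction by exhibiting, for each maximal cell of $\mathcal D_0$, an empty circumscribed ellipsoid with respect to a suitable $\sum_i t_i\ell_i^2$, using unimodularity to control the vertices and the circumcenter of the cell; and the converse by showing that a failure of unimodularity produces an integral linear form $m$ in the span of some of the $\ell_i$ whose hyperplanes $m^{-1}(\ZZ)$ slice through cells of $\mathcal D_0$, contradicting $\mathcal D_0=\operatorname{Del}(q)$. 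A subsidiary matter is that the $\ell_i$ need not span $\Lambda_\RR^\vee$, so the forms $\sum_i t_i\ell_i^2$ are only positive semidefinite; one either argues with the induced subdivision directly, or reduces to the nondegenerate case by passing to the saturated sublattice $\RR\langle\ell_1,\dots,\ell_n\rangle\cap\Lambda^\vee$ and the corresponding quotient of $\Lambda$, which is legitimate by the compatibility of admissible decompositions recalled in \S\ref{secAdCD}.
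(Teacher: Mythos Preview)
Your proposal is correct and in fact considerably more detailed than the paper's own treatment, which simply records the result as well known and refers the reader to \cite[Lem.~4.5]{ab}. Both ultimately rest on the same combinatorial input---the characterization of second Voronoi cones generated by rank~$1$ quadrics via unimodular systems of linear forms, as in \cite{ab} and the references there---so the approaches are the same; you have just unpacked the secondary-cone/Delone-subdivision mechanism that the paper leaves implicit in its citation.
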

\begin{proof}
This is well known.  We direct the reader to \cite[Lem.~4.5]{ab} and the references therein.
\end{proof}

One may take as a definition that a \emph{matroidal} cone is a second Voronoi cone  generated by rank $1$ quadrics (this is essentially the content of Lemma \ref{lemsecvor}).   It follows from the lemma that a face of a matroidal cone is matroidal, and moreover, that matroidal cones are simplicial.    We denote by  $\Sigma_{\text{mat}}\subseteq\Sigma_V$ the collection of matroidal cones.

To connect the discussion with that of \cite{abh}, we recall the notion of a dicing.  Fix a collection of codimension-$1$ affine spaces  $\{H_i\}_{i\in I}$ in $\Lambda_{\RR}$.  Let $\mathscr H=\bigcup_{i\in I}H_i$ be the associated arrangement of affine spaces.  The arrangement $\mathscr H$ is stratified by the intersections of the $H_i$.  We say that  $\mathscr H$ defines a \emph{dicing} of $\Lambda$ if the union of the $0$-dimensional strata of $\mathscr H$ is exactly the lattice $\Lambda$.

\begin{lem}\label{lemdice}
Let $\Lambda$ be a free $\ZZ$-module of rank $g$.  Suppose that  $\ell_1,\ldots,\ell_n\in \Lambda^\vee$ are $\RR$-linearly independent.  Then $\ell_1,\ldots,\ell_n$ form a $\ZZ$-basis for $\Lambda^\vee$ if and only if they determine a dicing of $\Lambda _{\RR}$.  More precisely, this means that  the collection of hyperplanes
$$
  H_{i,m}:=\{x\in \Lambda_{\RR} : \ell_i(x)=m\}
$$
with $i=1,\ldots,n$ and $m\in \ZZ$ defines  a dicing of  $\Lambda$.
\end{lem}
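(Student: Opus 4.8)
The plan is to prove both directions by relating the lattice $\Lambda^\vee$ to the lattice generated by the $\ell_i$, and dually relating $\Lambda$ to the translates of the hyperplanes. Since $\ell_1,\dots,\ell_n$ are assumed $\RR$-linearly independent, we must have $n \le g$; I will first reduce to the case $n = g$. If $n < g$, extend $\{\ell_i\}$ to an $\RR$-basis; the statement "$\{\ell_i\}$ is a $\ZZ$-basis of $\RR\langle\ell_i\rangle \cap \Lambda^\vee$" and the statement "the $H_{i,m}$ dice $\Lambda$" are both insensitive, up to an obvious fibering argument, to passing to the saturation of $\RR\langle\ell_i\rangle$ in $\Lambda^\vee$ and the corresponding quotient of $\Lambda$; so it suffices to treat the full-rank case. (Strictly: the $0$-strata of the arrangement $\{H_{i,m}\}$ are not points when $n < g$ but affine subspaces of dimension $g - n$; the correct statement in that range is that these translates, intersected with a complementary torus, dice that torus, which again reduces to the $n = g$ case. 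I would phrase the lemma's proof for $n = g$, which is how it is used.)

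So assume $n = g$. First I would establish the forward direction: if $\ell_1,\dots,\ell_g$ is a $\ZZ$-basis of $\Lambda^\vee$, let $e_1,\dots,e_g$ be the dual $\ZZ$-basis of $\Lambda$, so $\ell_i(e_j) = \delta_{ij}$. Then a point $x = \sum a_j e_j \in \Lambda_\RR$ lies on $H_{i,m_i}$ for all $i$ (with $m_i \in \ZZ$) precisely when $a_i = m_i$ for all $i$, i.e.\ when $x \in \Lambda$. Hence the $0$-dimensional strata of the arrangement $\mathscr H = \bigcup_{i,m} H_{i,m}$ are exactly the points of $\Lambda$, which is the definition of a dicing.

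Conversely, suppose the $H_{i,m}$ dice $\Lambda$. The intersection $\bigcap_{i=1}^g H_{i,0}$ is a single point (by linear independence of the $\ell_i$, it is $0$-dimensional), and it contains $0$, so it equals $\{0\}$, consistent with $0 \in \Lambda$. More to the point, since the $\ell_i$ are linearly independent, for each $(m_1,\dots,m_g) \in \ZZ^g$ the intersection $\bigcap_i H_{i,m_i}$ is a single point $x(m)$, and the dicing hypothesis says exactly that $x(m) \in \Lambda$ for every $m \in \ZZ^g$ and, conversely, every point of $\Lambda$ arises this way (every lattice point is a $0$-stratum, and the only $0$-strata are the $x(m)$). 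Now consider the $\RR$-linear map $\varphi\colon \Lambda_\RR \to \RR^g$, $\varphi(x) = (\ell_1(x),\dots,\ell_g(x))$; it is an isomorphism, and the above says $\varphi^{-1}(\ZZ^g) = \Lambda$, i.e.\ $\varphi$ restricts to an isomorphism $\Lambda \xrightarrow{\sim} \ZZ^g$. Dualizing, $\varphi^\vee\colon (\ZZ^g)^\vee \xrightarrow{\sim} \Lambda^\vee$ sends the standard basis to $\ell_1,\dots,\ell_g$, so these form a $\ZZ$-basis of $\Lambda^\vee$, as desired.

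I expect the only genuine subtlety — the "hard part," though it is mild — to be the bookkeeping in the reduction from general $n \le g$ to $n = g$, i.e.\ making precise the claim that both conditions descend along a saturation/quotient of lattices so that one may assume full rank; the $n = g$ core of the argument is the elementary duality between a lattice and its dual computed against a basis, as sketched above.
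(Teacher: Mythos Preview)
Your core argument for $n = g$ is correct and is exactly the kind of elementary duality computation the paper has in mind; indeed the paper's own proof is simply ``This follows from the definitions and is left to the reader.''

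Your treatment of the case $n < g$, however, is an unnecessary detour and slightly misreads the statement. The lemma asserts that the $\ell_i$ form a $\ZZ$-basis of the \emph{full} lattice $\Lambda^\vee$, not of the saturation $\RR\langle\ell_i\rangle \cap \Lambda^\vee$. When $n < g$ this is impossible for rank reasons, so the left side is false. Equally, when $n < g$ every stratum of the arrangement $\{H_{i,m}\}$ contains a translate of the $(g-n)$-dimensional subspace $\bigcap_i \ker(\ell_i)$, so there are no $0$-dimensional strata and the dicing condition fails as well. Both sides are false and the biconditional holds vacuously; no reduction or fibering is needed. You may simply note this and proceed directly to the $n = g$ argument you gave.
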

\begin{proof}
This follows from the definitions and is left to the reader.
\end{proof}

\begin{rem}\label{remDelaunay}
Associated to a  quadratic form $q\in C$ is a so-called Delaunay decomposition of  $\Lambda\otimes _{\mathbb Z}\mathbb R$.  The second Voronoi decomposition is defined so that the Delaunay decomposition of a quadric remains unchanged for all quadrics in a given (open) second Voronoi cone.  We will only be interested in quadratic forms that lie in second Voronoi cones generated by rank $1$ quadrics.  In this case, the Delaunay decomposition has a well-known, and simple description (see \cite[Theorem 3.2]{ER2} or the proof of \cite[Lem.~3.1]{abh}):
 \emph{If  $\ell_1,\ldots,\ell_n \in \Lambda^\vee$, span  $\Lambda^\vee_{\mathbb R}$,  and $\sigma =\RR_{\ge 0}\langle \ell_1^{2},\ldots, \ell_n^{2}\rangle$ is a second Voronoi cone, then the Delaunay decomposition for any (positive definite) quadric $q\in \sigma^\circ$ is given by the (dicing) hyperplane arrangement  associated to $\ell_1,\ldots,\ell_n$.}
\end{rem}

\begin{lem}[Perfect cone] \label{lempc}
Let $\Lambda$ be a free $\ZZ$-module of rank $g$.    Suppose $\ell_1,\ldots,\ell_n\in \Lambda^\vee$ are primitive, non-zero, linear forms.  The following are equivalent.
\begin{enumerate}
\item $\{\ell_1^{2},\ldots, \ell_n^{2}\}$ lie in the same  cone of the perfect cone decomposition.
\item There exists a quadratic form $Q$ on $\Lambda^\vee_{\RR}$ such that
\begin{enumerate}
\item $Q(\ell)>0$ for all $\ell \in \Lambda^\vee_{\RR}\setminus \{0\}$; i.e.~$Q$ is positive definite.
\item $Q(\ell)\ge 1$ for all $\ell \in \Lambda^\vee \setminus \{0\}$.
\item $Q(\ell_i)=1$, $i=1,\ldots,n$.
\end{enumerate}
\end{enumerate}
\end{lem}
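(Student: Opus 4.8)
The plan is to recall the definition of the perfect cone (first Voronoi) decomposition and then unwind what it means for the rank $1$ quadrics $\ell_1^2,\dots,\ell_n^2$ to lie in a common cone. Recall that a quadratic form $Q$ on $\Lambda^\vee_\RR$ is called \emph{perfect} if it is determined, up to scaling, by its set of minimal vectors $M(Q)=\{\ell\in\Lambda^\vee : Q(\ell)=\min_{0\neq v\in\Lambda^\vee}Q(v)\}$ (after normalizing so that the arithmetic minimum is $1$). The perfect cone decomposition of $\overline{C}^{\QQ}\subseteq\Sym^2\Lambda$ has as its maximal cones the cones $\sigma(Q)=\RR_{\ge 0}\langle \ell^2 : \ell\in M(Q)\rangle$ as $Q$ ranges over perfect forms, together with all their faces; a cone $\tau$ is a cone of the decomposition precisely when $\tau = \RR_{\ge 0}\langle \ell^2 : \ell\in S\rangle$ for some subset $S\subseteq M(Q)$ of minimal vectors of a (not necessarily perfect) positive definite form $Q$ such that $S$ is exactly $M(Q)$, i.e.\ no other lattice vector achieves the minimum. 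This is the description used in \cite{ab} and \cite{MV12}, which we are free to cite.

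With this in hand, the proof is essentially a dictionary translation. First I would prove $(2)\Rightarrow(1)$: given a positive definite $Q$ with $Q(\ell)\ge 1$ for all $\ell\in\Lambda^\vee\setminus\{0\}$ and $Q(\ell_i)=1$ for $i=1,\dots,n$, the arithmetic minimum of $Q$ is $1$, the set of minimal vectors $M(Q)$ contains all the $\pm\ell_i$, and hence $\sigma := \RR_{\ge 0}\langle\ell_1^2,\dots,\ell_n^2\rangle$ is a face of the perfect cone $\sigma(Q)=\RR_{\ge 0}\langle\ell^2 : \ell\in M(Q)\rangle$. A face of a cone of the perfect cone decomposition is again such a cone, so in particular all the $\ell_i^2$ lie in a common cone of the decomposition, giving $(1)$. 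Conversely, for $(1)\Rightarrow(2)$: if $\{\ell_1^2,\dots,\ell_n^2\}$ lie in a common cone $\tau$ of the perfect cone decomposition, then $\tau$ is contained in a maximal cone $\sigma(Q)$ for some perfect form $Q$, normalized so its arithmetic minimum is $1$. Since the generators $\ell_i^2$ of $\tau$ lie in $\sigma(Q) = \RR_{\ge 0}\langle\ell^2:\ell\in M(Q)\rangle$ and each $\ell_i$ is primitive, one checks that each $\ell_i$ (up to sign) must itself be a minimal vector of $Q$ — here one uses that the extremal rays of $\sigma(Q)$ are exactly the rays $\RR_{\ge 0}\ell^2$ for $\ell\in M(Q)$, together with primitivity of $\ell_i$, to rule out $\ell_i^2$ being a positive combination of several $\ell^2$'s with $\ell\in M(Q)$. (In fact this last point is the standard fact that the rank $1$ forms in a perfect cone are precisely the squares of minimal vectors.) Then $Q(\ell_i)=1$ for all $i$, $Q$ is positive definite by construction, and $Q(\ell)\ge 1$ for all nonzero $\ell\in\Lambda^\vee$ because $1$ is the arithmetic minimum of $Q$; this is exactly $(2)$.

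The main obstacle, and the step deserving the most care, is the claim inside $(1)\Rightarrow(2)$ that a primitive $\ell_i$ with $\ell_i^2$ in the perfect cone $\sigma(Q)$ must be a minimal vector of $Q$, rather than merely having $\ell_i^2$ be an interior (positive combination) element of $\sigma(Q)$. The resolution is that if $\ell_i^2=\sum_{j} c_j m_j^2$ with $c_j>0$ and $m_j\in M(Q)$ distinct minimal vectors, then evaluating the corresponding symmetric bilinear forms on any $x\in\Lambda^\vee$ gives $\ell_i(x)^2=\sum_j c_j m_j(x)^2$; analyzing this over all $x$ (or passing to the associated linear span and using that squares of linearly independent forms are linearly independent in $\Sym^2$) forces $\ell_i$ to be proportional to a single $m_j$, and primitivity of both then gives $\ell_i=\pm m_j\in M(Q)$. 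I would spell this linear-algebra point out carefully but otherwise treat the argument as a routine translation; accordingly I would keep the writeup short, citing \cite{ab} and \cite{MV12} for the underlying facts about perfect forms, exactly as the surrounding text does for Lemma \ref{lemsecvor}.
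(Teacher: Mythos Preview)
Your proposal is correct and takes essentially the same approach as the paper, which simply states that the lemma follows from the definition of the perfect cone decomposition in \cite{nam} (and points to the proof of \cite[Thm.~4.7]{ab}). You have spelled out that unwinding in detail: the Ryshkov-polyhedron description of the perfect cones as $\sigma(Q)=\RR_{\ge 0}\langle m^2:m\in M(Q)\rangle$, the passage from an arbitrary $Q$ satisfying (a)--(c) to a perfect cone containing all the $\ell_i^2$, and, in the converse direction, the rank-$1$ argument showing that a primitive $\ell_i$ with $\ell_i^2\in\sigma(Q)$ must itself be a minimal vector. The only cosmetic point is that in $(2)\Rightarrow(1)$ you need not claim that $\RR_{\ge 0}\langle\ell_1^2,\dots,\ell_n^2\rangle$ is a \emph{face} of $\sigma(Q)$---containment in $\sigma(Q)$ already gives (1)---but this does not affect correctness.
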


\begin{proof}
This follows from the definition of the perfect cone decomposition in  \cite{nam}.  (See also the \emph{proof} of \cite[Thm.~4.7]{ab}.)
\end{proof}

\begin{rem}\label{remMV}
Since cones in the perfect cone decomposition are generated by rank $1$ quadrics, a cone in the perfect cone decomposition is a second Voronoi cone if and only if it is matroidal (i.e.~$\Sigma_P\cap \Sigma_V\subseteq \Sigma_{\text{mat}}$).  Recently Melo and Viviani  \cite[Thm.~A]{MV12} showed that matroidal cones are in the perfect cone decomposition  (i.e.~$\Sigma_{\text{mat}}\subseteq \Sigma_P$), establishing that
$\Sigma_V\cap\Sigma_P=\Sigma_{\text{mat}}$. Note in particular that the following  special  case of \cite[Thm.~A]{MV12} follows directly from the definitions and Lemma \ref{lempc}:  \emph{if $\sigma\in \Sigma_{\text{mat}}$ is  generated by at most $g$ rank 1 quadratic forms, then $\sigma\in \Sigma_P$.  In particular,  if $q\in \sigma\in \Sigma_P$ is a rank $1$ quadric, then $\RR_{\ge 0}\langle q\rangle$ is a face  of $\sigma$.}
\end{rem}

\begin{lem}[Central cone]\label{lemcc}
Let $\Lambda$ be a free $\ZZ$-module of rank $g$.    Suppose  $\ell_1,\ldots,\ell_n\in \Lambda^\vee$ are primitive, non-zero, linear forms.  The following are equivalent.
\begin{enumerate}
\item $\{\ell_1^{2},\ldots, \ell_n^{2}\}$ lie in the same  cone of the central  cone decomposition.
\item There exists a quadratic form $Q$ on $\Lambda^\vee_{\RR}$ such that
\begin{enumerate}
\item $Q(\ell)>0$ for all $\ell\in \Lambda^\vee_{\RR}\setminus \{0\}$; i.e.~$Q$ is positive definite.
\item $Q(\ell)\ge 1$ for all $\ell \in \Lambda^\vee\setminus \{0\}$.
\item $Q(\ell_i)=1$, $i=1,\ldots,n$.
\item $Q(\ell)\in \ZZ$ for all $\ell\in \Lambda^\vee$.
\end{enumerate}
\end{enumerate}
\end{lem}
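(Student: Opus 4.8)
The plan is to mirror the proof of Lemma \ref{lempc}, the only difference being the integrality condition (d), which reflects the fact that the central cone decomposition is built out of \emph{integer-valued} positive definite forms rather than arbitrary ones. Concretely, I would use the following description of the central cone decomposition $\Sigma_C$ (see \cite[\S9]{nam}, and also \cite{ab}): for a positive definite quadratic form $Q$ on $\Lambda^\vee_\RR$ with $Q(\Lambda^\vee)\subseteq\ZZ$ and arithmetic minimum $m(Q):=\min_{0\ne\ell\in\Lambda^\vee}Q(\ell)$ equal to $1$, put $M(Q):=\{\ell\in\Lambda^\vee:Q(\ell)=1\}$ and
$$
\sigma(Q):=\RR_{\ge 0}\big\langle \ell^2 : \ell\in M(Q)\big\rangle;
$$
then $\Sigma_C$ consists of the cones $\sigma(Q)$ together with all their faces (this is the analogue of the description of $\Sigma_P$, with ``perfect form'' replaced by ``integer-valued form with $m(Q)=1$''). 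The first step is to observe that conditions (a)--(d) on $Q$ in the statement say exactly that $Q$ is a form of this type with $\{\ell_1,\dots,\ell_n\}\subseteq M(Q)$: indeed (b) forces $m(Q)\ge 1$, then (c) together with $\ell_i\ne 0$ forces $m(Q)=1$ and $\ell_i\in M(Q)$, while conversely any such $Q$ visibly satisfies (a)--(d).

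With this in hand, $(2)\Rightarrow(1)$ is immediate: if $Q$ is as in (2), then $\sigma(Q)\in\Sigma_C$ and $\ell_i^2\in\sigma(Q)$ for every $i$, so $\{\ell_1^2,\dots,\ell_n^2\}$ lies in a single cone of $\Sigma_C$.

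For $(1)\Rightarrow(2)$, suppose $\{\ell_1^2,\dots,\ell_n^2\}$ lies in a common cone of $\Sigma_C$; since that cone is either some $\sigma(Q)$ or a face of one, it is contained in some $\sigma(Q)$ with $Q$ integer-valued and $m(Q)=1$. Fix $i$ and write $\ell_i^2=\sum_{\ell\in M(Q)}c_\ell\,\ell^2$ with $c_\ell\ge 0$; regarding each $\ell^2$ as a rank $1$ positive semidefinite matrix, the rank of the sum equals the dimension of the span of those $\ell$ with $c_\ell>0$, and here the rank is $1$. Hence every $\ell$ with $c_\ell>0$ is proportional to $\ell_i$; since $\ell_i$ is primitive and the elements of $M(Q)$ are primitive (if $\ell=k\ell'$ with $\ell'\in\Lambda^\vee$ and $k\ge 2$, then $Q(\ell')=k^{-2}<1=m(Q)$, impossible), this forces $\ell=\pm\ell_i$, so $\ell_i\in M(Q)$ and $Q(\ell_i)=1$. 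As $i$ was arbitrary, $Q$ satisfies (a)--(d), which is (2). (This is the central-cone counterpart of the observation, recorded for the perfect cone in Remark \ref{remMV}, that a rank $1$ quadric lying in a cone of the decomposition spans a face of it.)

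The one point that is not purely formal is matching the description of $\Sigma_C$ used above with the definition in \cite{nam}. If one takes as primary definition a narrower class of ``central'' forms --- the analogue of taking \emph{perfect} forms in the definition of $\Sigma_P$ --- then $(2)\Rightarrow(1)$ additionally requires showing that any integer-valued positive definite $Q$ with $m(Q)=1$ and $\{\ell_1,\dots,\ell_n\}\subseteq M(Q)$ can be deformed, through integer-valued forms and without losing any $\ell_i$ from the set of minimal vectors, to such a distinguished form. This is the exact analogue of the deformation to a perfect form in the proof of \cite[Thm.~4.7]{ab}, carried out by the same convexity argument. I expect this bookkeeping to be the only real obstacle; everything else is formal, just as for Lemma \ref{lempc}.
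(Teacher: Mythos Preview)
Your proposal is correct and is essentially the same approach as the paper's: the paper simply cites the definition of the central cone decomposition in \cite{nam} and the proof of \cite[Thm.~4.8]{ab}, and what you have written is exactly the unpacking of that definition together with the rank-$1$ face argument. Your handling of the possible bookkeeping issue (deforming an arbitrary integer-valued $Q$ with $m(Q)=1$ to a distinguished one without losing minimal vectors) is also the correct analogue of the perfect-cone case.
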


\begin{proof}
This follows from the definition of the central cone decomposition in  \cite{nam}.  (See also the \emph{proof} of \cite[Thm.~4.8]{ab}.)
\end{proof}

\begin{rem}
We note that all but the last condition above are the same as for the perfect cone compactification, and thus it turns out that {\em if a collection of \emph{rank 1} quadratic forms lies in a central cone, they also lie in a perfect cone}, but not vice versa
(see also Remarks \ref{remTorPC} and \ref{remTorCC} below).
 \end{rem}

Given an admissible cone decomposition $\Sigma$, we will denote by  $\Sigma^{(1)}$ the collection of cones that are generated by rank $1$ quadrics.
Note that if $\sigma\in \Sigma^{(1)}$ and $\tau$ is a face of $\sigma$, then $\tau\in \Sigma^{(1)}$.     Note also that by definition $\Sigma_P=\Sigma_P^{(1)}$.  We can summarize the discussion above as follows.
$$
 \sigma \in \Sigma^{(1)}_V\   (=\Sigma_{\text{mat}})\   \text { or }
\sigma \in \Sigma^{(1)}_C\   \implies \sigma \in \Sigma_P\ \ (=\Sigma^{(1)}_P)
$$

\begin{rem} \label{remlowdim}
The metrics
\begin{equation}\label{Acone}
  Q_A(\underline x):=\sum_{1\le i\le j \le n} x_ix_j,
\ \ \ \ \ \ \ Q_D(\underline x):=\sum_{1\le i\le j \le n, (i,j)\ne (1,2)} x_ix_j
\end{equation}
define cones of type $A$ and $D$ respectively in the perfect cone decomposition  (in fact also in the central cone) decomposition.  Cones of type $A$ are matroidal, whereas for $n\ge 4$, the type $D$ cones are not (and also fail to be simplicial).
\end{rem}

\begin{rem} \label{relationsconedec}
At this point we would like to recall the relation between the three known admissible decompositions. For $g= \operatorname{rank}\Lambda$,  $g\leq 3$, all three decompositions (namely the
second Voronoi, perfect cone and central cone) coincide. For $g=4$ it is still true that the perfect cone and the central cone decomposition coincide and the second
Voronoi decomposition is a refinement of these. More precisely the only non-basic cone of the perfect cone decomposition, namely the $D_4$ cone, is
subdivided into basic cones in the second Voronoi decomposition (see \cite{ER} for details). For $g=5$  the second Voronoi decomposition is still a refinement of the  perfect cone decomposition (\cite{RB}), but this is
no longer the case for $g\geq 6$ (\cite{EB}). In general all three decompositions are different in the sense that none is a refinement of the other.
\end{rem}

\section{Monodromy cones and extensions to toroidal compactifications}\label{sectHT}
The central question addressed in this paper is the question of extending the period map for Prym varieties to toroidal compactifications.  The basic set-up for such a problem is that of a locally liftable map $\mathcal P:B^\circ\to D/\Gamma$ from a smooth base $B^\circ$ to a locally symmetric variety (eg.~maps arising from weight $1$ VHS associated to families of varieties $\mathfrak X^\circ / B^\circ$). We then consider a partial simple normal crossing smooth compactification $B^\circ\subset B$ and we are asking for extensions of the map $\mathcal P$ from $B$ to a given (fixed) toroidal compactification  $\overline{D/\Gamma}^\Sigma$. Since the problem is essentially local, we may assume without loss of generality  that $B^\circ$ is a polycylinder (i.e.~$B^0=(S^\circ)^k\times S^{n-k}\subset B=S^n$, where $S^\circ=S\setminus\{0\}$ and $S$ is the unit disk), and that the monodromy operators  around the boundary divisors are unipotent.

With this set-up the extension question has an elegant answer. Namely, one defines a monodromy cone associated to the period map $\mathcal P$, and then $\mathcal P$ extends if and only if the monodromy cone is compatible with the cones of the admissible decomposition $\Sigma$. We review this below, following Cattani \cite{cattani},  with a focus on weight $1$ Variation of Hodge Structure (although some of the considerations apply more generally).

\subsection{Degenerations of weight $1$ Hodge structures}
The monodromy cone for a variation of Hodge structures is a basic tool in understanding extensions of period maps.  Here we review the definition of the log of monodromy, the monodromy cone, and the connection with quadratic forms.

\subsubsection{The log of monodromy}
We focus on the case of weight $1$ Hodge structures for simplicity. Let  $\pi^\circ:\mathfrak X^\circ \to S^\circ$, be a smooth,  projective morphism over the punctured disk $S^\circ$. Fix a  base-point $\ast \in S^\circ$, with fiber $X_\ast=(\pi^\circ)^{-1}(\ast)$,   and let $T$ be the associated monodromy operator on $H^1(X_\ast,\QQ)$. It is well known that $T$ is quasi-unipotent;
in fact since we are in weight $1$, we have  $(T^n-Id)^2=0$. For simplicity, we will assume further that $T$ is in fact unipotent; i.e.~$(T-Id)^2=0$.
Since unipotent monodromy can be obtained after a finite base change, this assumption will not affect extension questions (see Remark \ref{remFBC}). Thus
\begin{equation}\label{eqnlm1}
N=\log T=T-Id\in \End H^1(X_\ast,\QQ)
\end{equation}
is the log of monodromy operator. Note that $N\in \mathfrak{sp}(H,Q)$, where $H=H^1(X_\ast,\QQ)$ and $Q$ is intersection pairing on $H$, and $N^2=0$.

To relate with the discussion of Section \ref{secttoroidal}, we would like to view $N$ as a quadratic form.  To this end we recall that there is a
 limit polarized, mixed Hodge structure $H^1_{\lim}=H^1_{\lim}(N)$ on the torsion free quotient   $H^1(X_\ast,\ZZ)_\tau$, where the weight filtration $W_\bullet=W_\bullet(N) $ is defined (using $N^2=0$) by
 \begin{equation}\label{eqnNwf}
W_{-1}=\{0\}\subset W_0=\Im(N)\subseteq W_1:=\ker (N) \subseteq W_2:=H^1(X_\ast,\QQ).
\end{equation}
As in \eqref{eqncomp} and \eqref{eqnW} (which are essentially linear algebra statements about nilpotent symplectic endomorphisms), we can view
 the log of monodromy as a map
\begin{eqnarray}\label{eqnlm3}
Q(N(\cdot),\cdot ):\Gr_2(N) &\to& (\Gr_2(N))^\vee \in \Hom(\Sym^2 \Gr_2(N),\QQ)\\
 \bar z &\mapsto& Q(N(\cdot),z),\nonumber
\end{eqnarray}
or equivalently as a symmetric bilinear form on $\Gr_2(N)$.

\begin{rem} Since we will need to explicitly compute monodromy in several cases, we note that with respect to a suitable symplectic basis on $H^1(X_\ast)$, we can write (eg.~\cite[Prop.~4.8]{nam})
\begin{equation*}
T=\left(
\begin{smallmatrix}
1_{g'}&0&0&0\\
0&1_{\nu}&0&b\\
0&0&1_{g'}&0\\
0&0&0&1_{\nu}\\
\end{smallmatrix}
\right)  , \ \ N=\log T=\left(
\begin{smallmatrix}
0&0&0&0\\
0&0&0&b\\
0&0&0&0\\
0&0&0&0\\
\end{smallmatrix}
\right),
\end{equation*}
%\begin{equation*}
%\begin{tiny}
%T=\left(
%\begin{array}{cccc}
%1_{g'}&0&0&0\\
%0&1_{\nu}&0&b\\
%0&0&1_{g'}&0\\
%0&0&0&1_{\nu}\\
%\end{array}
%\right) \ , \ \ N=\log T=\left(
%\begin{array}{cccc}
%0&0&0&0\\
%0&0&0&b\\
%0&0&0&0\\
%0&0&0&0\\
%\end{array}
%\right),
%\end{tiny}
%\end{equation*}
with $b$ a symmetric non-degenerate $\nu\times \nu$ matrix, $\nu=\dim W_0=\Im(N)$ and $g'=g-\nu$. The identification of $N$ with a quadratic form is simply obtained by considering the matrix $b$. The salient point of the discussion above is that $b$ should be viewed a quadric form on $\Gr_2(N)$ which is essential for compatibility issues as discussed below.
\end{rem}

\begin{rem}
To a $1$-parameter unipotent degeneration of weight $1$ Hodge structures, one can associate either  a limit Mixed Hodge structure (from the point of view of degenerations of Hodge structures following Schmid \cite{schmid} and Steenbrink \cite{steenbrink})
or a semiabelian variety (see \S \ref{secModStAbVar}).
The two limit objects are canonically identified via the functorial equivalence of categories between semiabelian varieties and polarized weight $1$ MHS (e.g.~Deligne \cite[\S10]{deligne3}).
From the perspective of the monodromy matrices discussed above, the $g'\times g'$ blocks correspond to the compact part of the limit semiabelian variety and are essentially irrelevant to the extension question. On the other hand, the $\nu\times \nu$ matrix $b$ defining the quadratic form is a key ingredient for extension questions.
\end{rem}

\subsubsection{Monodromy cones}\label{smoncone}
 We now consider families over higher dimensional bases. Let $\pi^\circ:\mathfrak X^\circ \to (S^\circ)^k\times S^{n-k}$ be a smooth, projective morphism.   Fix a  base-point $\ast \in (S^\circ)^n$, with fiber $X_\ast=(\pi^\circ)^{-1}(\ast)$,   and let $T_i$ ($i=1,\ldots,k$)  be the associated monodromy operators on $H^1(X_\ast,\QQ)$; i.e.~generators for the induced homomorphism $\ZZ^k\cong \pi_1((S^\circ)^k,\ast)\to \Aut H^1(X_\ast,\QQ)$.     For simplicity, as before, we will assume further that the $T_i$ are in  fact unipotent, and   let $N_i=\log T_i=T_i-Id$ ($i=1,\ldots,k$) be the log of monodromy operators.   Again, since this can be obtained after finite base change, this will not affect extension questions.    We can now define the monodromy cone:
\begin{equation}\label{eqnmc1}
\sigma(\pi^\circ):=\RR^+\langle N_1\ldots, N_k\rangle\subseteq \mathfrak{sp}(H_\RR,Q),
\end{equation}
with $H=H^1(X_\ast,\QQ)$ and $Q$ the intersection pairing on $H$.

As before, we would like to identify this cone with a cone of quadratic forms on a \emph{fixed} vector space.
The point is that for  each $\lambda_1,\ldots,\lambda_k>0$, we obtain a limit mixed Hodge structure $H^1_{\lim}(\underline \lambda)=H^1_{\lim}(\sum_{i=1}^k\lambda_iN_i)$ on $H^1(X_\ast,\QQ)$, with monodromy weight filtration $W_\bullet(\underline \lambda)=W_\bullet(\sum_{i=1}^k\lambda_iN_i)$ given by \eqref{eqnNwf} with
$$N=\lambda_1N_1+\ldots+\lambda_kN_k.$$
 It is well known (see eg.~\cite{cattani}) that for $\lambda_1,\ldots,\lambda_k>0$,
\begin{equation}\label{eqnkerN}
\ker (\lambda_1N_1+\ldots+\lambda_kN_k)=\bigcap_{i=1}^k\ker (N_i).
\end{equation}
Thus $W_1(\underline \lambda)$, and hence $\Gr_2(\underline \lambda)$, is independent of the $\lambda_i>0$.   Consequently, for $$\lambda_1,\ldots,\lambda_k>0,$$ the $\lambda_1N_1+\ldots+\lambda_kN_k$ can all be viewed as quadratic forms on
$$\Gr_2:=H^1(X_\ast,\QQ)/\bigcap \ker (N_i).$$
In conclusion, the monodromy cone can be identified with a cone of symmetric matrices on the vector space $\Gr_2$
\begin{equation}\label{eqnmc3}
\sigma(\pi^\circ):=\RR^+\langle N_1\ldots, N_k\rangle\subseteq \Hom (\Sym^2\Gr_2,\RR).
\end{equation}

\begin{rem}
Using $N^2=0$ and the symplectic form $Q$, there is  a natural identification $W_0=W_1^\perp$, which then gives an identification
$$
W_0(N)=\Im(N)=\sum \Im N_i.
$$
\end{rem}

\subsubsection{Closures of monodromy cones}\label{sectclosure}
We now discuss the closure of the monodromy cone.
Clearly in regards to the description \eqref{eqnmc1}, we have
\begin{equation}\label{eqncmc1}
\overline{\sigma(\pi^\circ)}=\RR_{\ge 0}\langle N_1\ldots, N_k\rangle\subseteq \mathfrak{sp}(H_\RR,Q).
\end{equation}
However, in regards to \eqref{eqnmc3}, the description is not as obvious.
The issue is that in setting
$$
  \Gr_2=\Gr_2(\underline\lambda)=H^1(X_\ast,\QQ)/\bigcap \ker (N_i),
$$
the $N_i$ \emph{individually} are not naturally identified as quadratic forms in $\Hom (\Sym^2 \Gr_2,\RR)$; they are quadratic forms in $\Hom (\Sym^2 \Gr_2(W_\bullet(N_i)),\RR)$, respectively.   To remedy this, set $\overline N_i$ ($i=1,\ldots,k$) to be the composition:
\begin{footnotesize}
\begin{equation}\label{eqnCLM}
\begin{CD}
\Gr_2(W_\bullet(\underline \lambda))@>\rho_i >>  \Gr_2(W_\bullet(N_i)) @>N_i>> \Gr_0(W_\bullet(N_i)) @>\rho_i^\vee>> \Gr_0(W_\bullet(\underline \lambda).\\
@| @| @| @| \\
H^1/\bigcap \ker (N_i) @>\rho_i >> H^1/\ker (N_i) @>Q(\cdot, N_i(\cdot))>> (H^1/\ker (N_i))^\vee @>\rho_i ^\vee>>  (H^1/\bigcap \ker (N_i) )^\vee.
\end{CD}
\end{equation}
\end{footnotesize}
Then unwinding the definitions, we obtain
\begin{equation}\label{eqncmc3}
\overline{\sigma(\pi^\circ)}=\RR_{\ge 0}\langle \overline N_1\ldots, \overline  N_k\rangle\subseteq \Hom (\Sym^2 \Gr_2,\RR).
\end{equation}

\subsection{Monodromy cones in the geometric context} \label{secMGC} In the previous subsection we have discussed the abstract Hodge theoretic aspects associated to a degeneration. This discussion allows us to tie-in with the theory of toroidal compactifications discussed in Section \ref{secttoroidal}. Further, via the discussion of \S\ref{sectclosure}, it reduces the computation of monodromy cones to the case of $1$-parameter bases. Here we assume that this $1$-parameter VHS is arising from a $1$-parameter geometric family. In this situation, we would like to interpret the monodromy cones in terms of the geometry and combinatorics of the central fiber (i.e.~the limit geometric object).

Namely, we assume here that there is a smooth family $\pi^\circ:\mathfrak X^\circ\to S^\circ$ which has an extension  $\pi:\mathfrak X\to S$ to a projective morphism, with central fiber $X_0=\pi^{-1}(0)$ a simple normal crossing divisor in the family (this can be obtained after a finite base change by the semi-stable reduction theorem, and will not affect extension questions). Note this will also imply that the monodromy is unipotent. As is well known, the central fiber $X_0$ carries a canonical Mixed Hodge Structure (MHS).  Furthermore, the Clemens--Schmid exact sequence (eg.~\cite[p.109]{csseq}) relates the  limit mixed Hodge structure on $X_\ast$ to the MHS on  $X_0$.   To recall the sequence we will let $i:X_\ast \to X_0$ be the Clemens collapsing map (the composition of the inclusion $X_\ast\subseteq \mathfrak X$ with the contraction $\mathfrak X\to X_0$), and we will denote by $\PD$ any of the Poincar\'e duality isomorphisms.   In the weight $1$ case, the Clemens--Schmid exact sequence is
$$
\xymatrix{
0 \ar@{->}[r]& H^1(X_0) \ar@{->}[r]^{i^*}& H^1_{\lim}(X_\ast) \ar@{->}[r]^N&  H^1_{\lim}(X_\ast) \ar@{->}[r]^{\beta=\iota_*\PD}& H_{1}(X_0)  \ar@{->}[r]^\alpha & H^3(X_0) \ar@{->}[r]^{i^*} &\ldots
}
$$
Since we will not use the definition of $\alpha$, we refer the reader to \cite[p.108]{csseq}.  The maps $\alpha$, $i^*$, $N$, and $\beta$ are morphisms of mixed Hodge structures of types $(2,2)$, $(0,0)$, $(-1,-1)$, and $(-1,-1)$ respectively.   It follows that there  are  isomorphisms
$$
H^1(\Gamma,\QQ) =\Gr_0^{X_0}(H^1(X_0))\stackrel{i^*}{\longrightarrow} \Gr_0
$$
(the first identification being given by the Mayer--Vietoris spectral sequence for $X_0$) and
$$
\Gr_2\stackrel{\beta=i_*\PD}{\longrightarrow} \Gr_{0}^{X_0}(H_{1}(X_0))=\frac{(W_{-1}(H^{1}(X_0)))^\perp}{(W_{0}(H^{1}(X_0)))^\perp}=(\Gr_{0}^{X_0}(H^{1}(X_0)))^\vee.
$$
Thus composing, we may view the log of monodromy as a map
\begin{equation}\label{eqnCSLM}
(\Gr_{0}^{X_0}(H^{1}(X_0)))^\vee \stackrel{\beta^{-1}}{\longrightarrow} \Gr_2 \stackrel{N}{\longrightarrow} \Gr_0 \stackrel{(\iota^*)^{-1}}{\longrightarrow} H^1(\Gamma,\QQ).
\end{equation}
Again using the identification
$
\Gr_{0}^{X_0}(H^{1}(X_0))=H^1(\Gamma,\QQ)
$,
we may  identify the spaces  $H^1(\Gamma,\QQ)^\vee=H_1(\Gamma,\QQ)$ by the universal coefficients theorem, and
the composition \eqref{eqnCSLM} allows us to view the  log of monodromy as a map
\begin{equation}\label{eqnlm4}
N:H_1(\Gamma,\QQ)\to H^1(\Gamma,\QQ) \in \Hom (\Sym^2 H_1(\Gamma,\QQ),\QQ)
\end{equation}
Consequently, for the case of a family of stable curves $\pi:\mathfrak X \to S^n$, smooth over $(S^\circ)^k\times S^{n-k}$, the monodromy cone is given by
\begin{equation}\label{eqnmc4}
\sigma(\pi^\circ):=\RR^+\langle N_1\ldots, N_k\rangle\subseteq \Hom (\Sym^2H_1(\Gamma,\QQ),\QQ)_\RR
\end{equation}
where $\Gamma$ is the dual graph to the curve $X_0=\pi^{-1}(0)$, and $N_i$ is the log of monodromy   around  the hyperplane $\{x_i=0\}$.

In order to describe the closure of the monodromy cone, we introduce some further notation.
Let $0\ne 0_i\in S^n$ be a point in the hyperplane $\{x_i=0\}$, sufficiently close to $0$.  Let $X_{0_i}$ be the fiber over $0_i$, and let $\Gamma_i$ be the dual graph of $X_{0_i}$.   The issue with describing the closure of the monodromy cone is that $N_i$ is not a quadratic form on $H_1(\Gamma,\QQ)$, rather it is a quadratic form on $H_1(\Gamma_i,\QQ)$.    We resolve this using \eqref{eqnCLM}, and the identification $\beta:\Gr_2 \to H_1(\Gamma,\QQ)$.    Thus, there exist morphisms
\begin{equation}\label{eqnRHO}
\begin{CD}
H_1(\Gamma,\QQ)@>\rho_i >> H_1(\Gamma_i,\QQ)
\end{CD}
\end{equation}
so that setting $\overline N_i$ to be the composition
\begin{equation}\label{eqnMonComp}
\begin{CD}
H_1(\Gamma,\QQ)@>\rho_i >> H_1(\Gamma_i,\QQ)@> N_i>>  H^1(\Gamma_i,\QQ)@>\rho_i^\vee >> H^1(\Gamma,\QQ),
\end{CD}
\end{equation}
then the closure of the monodromy cone is given by
\begin{equation}\label{eqncmc4}
\overline{\sigma(\pi^\circ)}=\RR_{\ge 0}\langle \overline N_1\ldots, \overline  N_k\rangle\subseteq \Hom (\Sym^2 H_1(\Gamma,\QQ),\QQ)_\RR.
\end{equation}
Finally, the map $\rho_i$ in  \eqref{eqnRHO} can be described combinatorially.  For each $j=1,\ldots,k$ there is a natural map of chain complexes $C_\bullet(\Gamma,\ZZ)\to C_\bullet (\Gamma_j,\ZZ)$ (see \S\ref{secSimp}, where $\Gamma_j$ is denoted $\Gamma/S^c$), inducing surjective maps
\begin{equation}\label{eqnCM}
H_1(\Gamma,\QQ)\to H_1(\Gamma_j,\QQ).
\end{equation}
We claim that this  map agrees with the map $\rho_j$ above.    This follows from the definitions, and we sketch the argument here.  The key point is the identification
$$
\beta=i_*\PD:\Gr_2:=H^1_{\lim}/W_1\to \Gr_0^{X_0}(H_1):=H_1(X_0)/W_{-1}^{X_0}(H_1).
$$
We define $W_{-1}^{X_0}(H_1)=(W_0^{X_0}(H^1))^\perp$.      Dualizing the exact sequence (obtained from the Meyer--Vietoris spectral sequence)
$$
0\to W_0^{X_0}(H^1)\to H^1(X_0)\to H^1(\widehat X_0)\to 0
$$
we obtain that $(W_0^{X_0}(H^1))^\perp=H_1(\widehat X_0)$ using the universal coefficients theorem, where $\widehat X_0$ denotes the normalization of $X_0$.    In short, we have
$$
\beta=i_*\PD:\Gr_2\to H_1(X_0)/H_1(\widehat X_0).
$$
Thus there is a commutative diagram
\begin{footnotesize}
$$
\begin{CD}
 \Gr_2(\sum \lambda_jN_j)=H^1(X_\ast)/\bigcap \ker (N_j)@>\rho >> \Gr_2(N_j) =H^1(X_\ast)/\ker (N_j)@.\\
 @V\beta=i_*PD VV @V\beta=i_*PD VV\\
H_1(\Gamma)=H^1(\Gamma)^\vee=H_1(X_0)/H_1(\widehat X_0)@>(i_* \PD) (\PD^{-1} i_*^{-1})>> H_1(X_{0_j})/H_1(\widehat X_{0_j})=H^1(\Gamma_j)^\vee=H_1(\Gamma_j).
\end{CD}
$$
\end{footnotesize}
The bottom row  is easily seen to be the combinatorial map \eqref{eqnCM} above (note that $i_*^{-1}$ is only defined up to vanishing cycles, but nevertheless, $i_* i_*^{-1}$ is well defined as a map on the quotients).

\begin{rem}
One can also identify $\rho_j^\vee$ directly as well.  Following the definitions, one finds that it is given by
$$
H^1(\Gamma_j)=\Gr_0(H^1(X_0)) \stackrel{i^*}{\cong }\Gr_0(N_j)=\Im(N_j)\hookrightarrow \Im(\sum N_j)
$$
$$
=\Im(N)=\Gr_0(N)\
\stackrel{(i^*)^{-1}}{\cong } H^1(\Gamma).
$$
This can also be identified with the dual of the combinatorial map given above
\end{rem}

\subsection{Monodromy cones and extensions of period maps}\label{secextpermap}
Now that we have defined the pertinent terms, we can discuss the standard extension results for period maps to toroidal compactifications.  We begin by making one remark.
\begin{rem}\label{remFBC}
Let $B$ be a smooth variety.  It is a basic fact  that if a rational map from $B$ to any of the compactifications of the moduli of abelian varieties extends after a finite base change, then the rational map itself extends.  For this reason, we will be free in what follows to make finite base changes when considering extensions of period maps.
\end{rem}

\subsubsection{Extension via Hodge theory}
Fix a compatible collection of admissible cone decompositions $\Sigma$, and let $\bar A^\Sigma_g$ be the associated toroidal compactification.   Let $$f^\circ:(S^\circ)^k\times S^{n-k}\to A_g$$ be a locally liftable morphism (i.e.~one induced by a family of abelian varieties).    After a finite base change, we may assume that the monodromy operators $T_i$ around the boundary divisors $\{x_i=0\}$ are unipotent (see Remark \ref{remFBC}).   Then setting $N_i=\log T_i$, we have seen that for any $\lambda_1,\ldots,\lambda_k>0$, there is a fixed $Q$-isotropic subspace $W_0=\ker (\sum_i \lambda_iN_i)$.  The Borel extension theorem (\cite{borel}) implies that   $f^\circ$ extends to a morphism $f:S^n\to A^*_g$.  The isotropic subspace $W_0$ determines a boundary component $F_{W_0}$, and in turn, a boundary component $A^*_{g'}$. The point $f(0)$ is the point of $A^*_{g'}$ associated to the pure weight $1$ Hodge structure determined by the first graded piece of the limit mixed Hodge structure of any semi-stable reduction of the restriction of the (induced) family (of say abelian varieties) to a one-parameter base.    The following extension theorem is well known (see eg.~\cite[Thm.~7.29, Rem.~7.30]{nam}, \cite[Thm.~7.2]{AMRT}, \cite[Thm.~5.7, p.116]{FC90}):

\begin{fact}\label{fctext} The map $f^\circ$ extends to a morphism $S^n\to \bar A^\Sigma_g$ if and only if the monodromy cone (as defined in \eqref{eqnmc3}) is contained in a cone in $\Sigma$ (more specifically,  a cone in $\Sigma_{W_0}$).
\end{fact}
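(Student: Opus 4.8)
The plan is to reduce the assertion, via Schmid's nilpotent orbit theorem, to the standard valuative criterion for extending a morphism to a torus embedding: the analytic input localizes the period map near the boundary stratum and strips it to a ``toric shadow'', after which the statement becomes the classical combinatorial fact about torus embeddings. First I would run the Borel extension argument already sketched before the statement. By Remark~\ref{remFBC} we may assume all $T_i$ unipotent; throughout, $N_i=\log T_i$ lies in $\mathfrak n(W_0)$ (since $\Im N_i\subseteq W_0=\sum_j\Im N_j$), and under the identification \eqref{eqnW} it corresponds to the form $\overline N_i$ of \eqref{eqnCLM} on $\Gr_2$. Then \cite{borel} produces $f\colon S^n\to A_g^*$, pins down the isotropic subspace $W_0$ and hence the boundary component $F_{W_0}$, and identifies $f(0)\in A^*_{g'}$ (with $g'=g-\dim W_0$) as the point carrying the weight-$1$ graded piece of the limit mixed Hodge structure. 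This reduces the problem to a neighbourhood of the stratum $F_{W_0}/G_\ZZ$. Recall from Section~\ref{secttoroidal} (following \cite{cattani}) that there, up to the properly discontinuous $G_\ZZ$-action — which is local and, after a finite base change if necessary, irrelevant to extendability — the partial quotient of $D$ by the integral unipotent group $\exp(\mathfrak n(W_0)_\ZZ)$ maps to $F_{W_0}$ through a tower of bundles whose topmost layer is a bundle of algebraic tori with fibre $T_{W_0}:=\mathfrak n(W_0)_\CC/\mathfrak n(W_0)_\ZZ$; and adjoining $B(\sigma_{W_0})$ for a cone $\sigma_{W_0}\in\Sigma_{W_0}$ amounts to gluing in the affine torus embedding $U_{\sigma_{W_0}}=\operatorname{Spec}\CC[\sigma_{W_0}^\vee\cap M]$, with $M$ the character lattice dual to $\mathfrak n(W_0)_\ZZ$. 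So, locally over $F_{W_0}$, $D^\Sigma$ is the relative torus embedding attached to the fan $\Sigma_{W_0}$.

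Next I would apply the nilpotent orbit theorem \cite{schmid}: a local lift of $f^\circ$ to $D$ has the form
\[
  \tilde f^\circ(z)=\exp\!\Big(\tfrac{1}{2\pi i}\sum_{i=1}^k(\log z_i)\,N_i\Big)\cdot\psi(z),
\]
with $\psi\colon S^n\to\check D$ holomorphic into the compact dual and $\psi(0)$ the base point of the nilpotent orbit. Composing with the projection onto the torus factor $T_{W_0}$ absorbs $\psi$ into holomorphic units near $0$, so that in the coordinates given by the characters $m\in M$ the induced map to the torus is $z\mapsto\big(h_m(z)\cdot\prod_{i=1}^k z_i^{\langle m,N_i\rangle}\big)_{m\in M}$ with each $h_m$ holomorphic and non-vanishing near $0$; the remaining components of $\tilde f^\circ$ — along $F_{W_0}$ and along the intermediate abelian-variety fibration over it — extend holomorphically across $S^n$ because $\psi$ does and because $F_{W_0}$ and the point $f(0)$ are already fixed. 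Hence $f^\circ$ extends over $S^n$ to $\bar A^\Sigma_g$ if and only if this monomial map extends to some chart $U_{\sigma_{W_0}}$, $\sigma_{W_0}\in\Sigma_{W_0}$.

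For this last step I would invoke the valuative criterion for torus embeddings in both directions. If $\RR_{\ge 0}\langle N_1,\dots,N_k\rangle\subseteq\sigma_{W_0}$ for some $\sigma_{W_0}\in\Sigma_{W_0}$, then $\langle m,N_i\rangle\ge 0$ for every $m\in\sigma_{W_0}^\vee\cap M$ and every $i$, so each coordinate $h_m(z)\prod_i z_i^{\langle m,N_i\rangle}$ is holomorphic on $S^n$ and the monomial map extends into $U_{\sigma_{W_0}}$; thus $f^\circ$ extends. Conversely, if $f^\circ$ extends, then the image of $0$ lies in a unique torus orbit $O_\tau$ of the relative torus embedding, hence in the chart $U_\tau$, and holomorphy at $0$ of the coordinates indexed by $m\in\tau^\vee\cap M$ forces $\langle m,N_i\rangle\ge 0$ for all such $m$ and all $i$; since $\tau^\vee\cap M$ spans $\tau^\vee$ over $\RR_{\ge 0}$, this gives $N_i\in(\tau^\vee)^\vee=\tau$ for all $i$, whence $\RR_{\ge 0}\langle N_1,\dots,N_k\rangle\subseteq\tau\in\Sigma_{W_0}$. (That the monodromy cone lies in the rational closure $\overline{C}^\QQ$ at all, a prerequisite for membership in a cone of $\Sigma_{W_0}$, is automatic, as each $\overline N_i$ is a positive semidefinite form coming from a polarized limit mixed Hodge structure.) Together with the previous step this is the stated equivalence, the extension — when it exists — taking place in $\Sigma_{W_0}$ as claimed.

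I expect the main obstacle to be the analytic bookkeeping of the middle step: combining the nilpotent orbit theorem with the precise structure of the spaces $B(\sigma_{W_0})$ to argue rigorously that extendability of the genuine period map is equivalent to extendability of its ``toric shadow'' — in particular, controlling the non-monomial factor $\psi$ and the intermediate fibration over $F_{W_0}$ so that they contribute nothing to the obstruction. By contrast the purely toric computation (the valuative criterion) and the bookkeeping of finite base changes via Remark~\ref{remFBC} are routine.
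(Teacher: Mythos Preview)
The paper does not prove this fact at all: it is stated as well known, with pointers to \cite[Thm.~7.29, Rem.~7.30]{nam}, \cite[Thm.~7.2]{AMRT}, and \cite[Thm.~5.7, p.116]{FC90}, and no argument is given beyond those citations. Your proposal is a correct outline of the standard proof as found in those references --- reduce via the nilpotent orbit theorem to the toric skeleton of the period map near $F_{W_0}$, then invoke the valuative/combinatorial criterion for extending a monomial map into an affine torus embedding --- and your self-identified obstacle (controlling $\psi$ and the intermediate fibration so that only the toric shadow matters) is exactly where the cited sources do the real work.
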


\subsubsection{Resolving period maps} \label{secRPM}
We now recall how one can resolve the period map to a toroidal compactification $\bar A_g^\Sigma$ of the moduli of abelian varieties.
 Let us assume that we have a semiabelian scheme $\mathscr X\to B$ where $B$ is smooth, and that there is a simple normal crossing divisor $\Delta\subseteq B$ so that the family is abelian over $B^\circ=B-\Delta$.    Fix a base point $0\in \Delta$.  The variety $(B,\Delta)$ is toroidal at $0$, corresponding to the semi-group ring $\mathbb C[\mathbb N^k]$, where $k$ is the number of components of $\Delta$ meeting at $0$ (technically we mean that locally, the miniversal space  is smooth over this toric variety).   Fix a basis $e_1,\ldots,e_{k}$ for $\mathbb R^k$. We say that $\mathbb R_{\ge 0}\langle e_i\rangle$ is the cone associated to the toric data at $0$.     Now let $\sigma(\mathscr X/B)$ be the  monodromy cone associated to the semiabelian family at $0$.     The period map defines a map of cones
$$
\mu:\mathbb R_{\ge 0}\langle e_i\rangle \to \sigma(\mathscr X/B)
$$
$$
e_i\mapsto \log T_{e_i}
$$
where $T_{e_i}$ is the monodromy around the hyperplane associated to $e_i$.    Now the admissible decomposition $\Sigma$ decomposes $\sigma(\mathscr X/B)$ into a fan $\mathscr F_0^\Sigma$ of cones.

  Fact \ref{fctext} states that the period map extends at $0$ if and only if $\mathscr F_0^\Sigma$ has just one cone of maximal dimension.  Otherwise, there is an induced fan $\mu^{-1}\mathscr F_0^\Sigma$ decomposing the cone $\mathbb R_{\ge 0}\langle e_i\rangle$.    Any fan $\mathscr F$ decomposing $\mathbb
  R_{\ge 0}\langle e_i\rangle$ determines a birational
  morphism to $\mathbb A^k_{\mathbb C}$.  Any fan $\mathscr F$ that refines $\mu^{-1}\mathscr F_0^\Sigma$ determines a birational modification of $B$ (in an \'etale
  neighborhood of $0$) that resolves the period map in a (\'etale) neighborhood of $0$.  In particular, the fan $\mu^{-1}\mathscr F_0^\Sigma$ determines the minimal, toric birational modification that will resolve the period map.   This minimal, toric birational modification is canonical and glues to give  a birational modification for a period map on a moduli space.

\subsection{Moduli stacks and abelian varieties} \label{secModStAbVar}

We may also view the toroidal compactifications as compactifications of the moduli of abelian varieties.
We begin by recalling  the basic structure
of toroidal compactifications from this perspective. Every toroidal compactification  $A_g^{\Sigma}$ has a canonical map
$$
\varphi^{\Sigma}: A_g^{\Sigma} \to A_g^{*} =A_g \sqcup A_{g-1} \sqcup \ldots \sqcup  A_0
$$
which defines a stratification  $\beta^{\Sigma}_i= (\varphi^{\Sigma})^{-1}(A_{g-i})$. The strata  $\beta^{\Sigma}_i$ are themselves
stratified $\beta^{\Sigma}_i=\sqcup \beta(\sigma)$ where $\sigma$ runs through all $GL(i,\ZZ)$ orbits of cones in the decomposition $\Sigma$ of
$\Sym^2(\ZZ^i)$ containing rank $i$ matrices. The strata $\beta(\sigma)$ themselves are of the form $\beta(\sigma)= {\mathcal T}(\sigma) /\mathcal G(\sigma) $ where
 ${\mathcal T}(\sigma)$ is a torus bundle over the $i$-fold fiber product of the universal family $p: {\mathcal X}_{g-i} \to {\mathcal A}_{g-i}$. Indeed
$\pi(\sigma)=p^{\times i}\circ q(\sigma) : {\mathcal T}(\sigma) \to {\mathcal X}_{g-i}^{\times i} \to {\mathcal A}_{g-i}$ where $q(\sigma) $ is  a torus bundle whose fibers have dimension $i(i+1)/2 - \dim(\sigma)$.
More precisely
${\mathcal T}(\sigma)= {\mathcal T}_i / {\mathcal T}_{\sigma}$ where the fibers of ${\mathcal T}_i$ and  ${\mathcal T}_{\sigma}$ are $\Sym^2(\ZZ^i) \otimes \CC^*$
and $(\langle \sigma \rangle \cap \Sym^2(\ZZ^i)) \otimes \CC^*$ respectively.
The group $G(\sigma)$ is the stabilizer of the cone $\sigma$ in $GL(i,\ZZ)$ and acts naturally on  ${\mathcal T}(\sigma)$.
The codimension of   ${\mathcal T}(\sigma)$ in $A_g^{\Sigma}$ is $\dim(\sigma)$.

\subsubsection{The Faltings--Chai stacks}
Faltings--Chai have given a stack theoretic interpretation of the toroidal compactifications.
For each toroidal compactification $\bar A_g^\Sigma$,   there is an  irreducible, normal, proper,  Deligne--Mumford $\CC$-stack $\overline{\calA}_g^\Sigma$ with coarse moduli space $\bar A_g^\Sigma$, and a semiabelian scheme $\mathcal X_g^\Sigma\to \bar{\mathcal A}_g^\Sigma$ extending the universal abelian variety $\mathcal X_g\to \mathcal A_g$ (\cite[Thm.~5.7 (5), p.117]{FC90}).
 While the stack  does not represent a moduli functor of semiabelian varieties, we do have the following. A  semiabelian scheme  $X\to S$ over the disk, such that the restriction  $X^\circ\to S^\circ$ to the punctured disk is abelian  (i.e.~a morphism $S\to \bar{\mathcal A}_g^\Sigma$ with $S^\circ \to \mathcal A_g$)
 is determined by a set of degeneration data
(see also \cite{alexeev02}, \cite{abh}), namely:
(D0) A principally polarized abelian variety $(A,M)$ (with $M$ an ample line bundle) inducing an isomorphism  $\lambda_M:A\to \widehat A$, where $\widehat A=\operatorname{Pic}^0(A)$.  (D1a) A semiabelian variety
$
0\to \mathbb T\to G\to A\to 0
$
with split torus part, determined by a homomorphism $c:\Lambda\to \widehat A$, where $\Lambda$ is the character lattice of $\mathbb T$.  (D1b) A second semiabelian variety
$
0\to \widehat {\mathbb T}\to \widehat G\to \widehat A\to 0
$
induced by a homomorphism $\hat c: \hat \Lambda\to A$, where $\hat \Lambda$ is the character lattice of $\widehat{\mathbb T}$.  (D2) An isomorphism of lattices $\phi:\widehat \Lambda\to \Lambda$ so that $c\circ \phi=\lambda_M\circ \hat c$.
(D3) A bihomomorphism $\tau:\hat \Lambda\times \Lambda \to (\hat c\times c)^*(\mathscr P^\circ )^{-1}
$,
where $\mathscr P^\circ$ is the rigidified Poincar\'e bundle with zero section removed.   (D4) A cubical morphism
$\psi:\widehat \Lambda \to \hat c^*(M^\circ)^{-1}$,  where $M^\circ$ is the principal bundle obtained from removing the zero section of $M$.
(D5) For each $\lambda\in \Lambda $ a section $\theta_\lambda \in \Gamma(A,M\otimes c(\lambda))$  satisfying some further compatibility conditions with the data above.
This data, more precisely the bihomomorphism $\tau$, defines a quadratic form $B:\Lambda\times  \Lambda\to \mathbb Q$, which in fact agrees
with the log of monodromy for the $1$-parameter family (at least up to $\GL$-conjugation and scaling, which is irrelevant from the extension of period maps perspective).
(D6) The Delaunay decomposition of $\Lambda_{\mathbb R}$ determined by $B$.

Given this data, we can describe the image of the central point under the map $f: S\to \bar { A}_g^\Sigma$ as follows.  The data (D0) determines the image of $0$ under the composition $S\to \bar {A}_g^\Sigma\to A_g^*$; in particular it determines the stratum $\beta_i$ described above, in which $f(0)$ lies.  The quadratic form $B$ lies in a unique cone $\sigma\in \Sigma$ of minimal dimension.  The point $f(0)$ then lies in the stratum $\beta(\sigma)\subseteq \beta_i$, described above.    The remaining degeneration data determines the specific point $f(0)$ within $\beta(\sigma)$.
More precisely,   using the description of ${\mathcal T}_i$ given in \cite[Prop.~7.2]{ght} the biholomorphism $\tau$ defines a point in ${\mathcal T}_i$ and  $f(0)$ is its image in  $\beta^{\operatorname{Vor}}(\sigma)= ({\mathcal T}_i/{\mathcal T}_{\sigma})/G(\sigma)$.

\begin{rem}\label{remstack}  Let  $\overline {\mathcal M}$ be  a smooth,  Deligne--Mumford $\CC$-stack containing an open substack $\mathcal M$  with normal crossing boundary divisor $\Delta=\overline{\mathcal M}\setminus \mathcal M$.  Let $\overline M$ and $M$ be the respective coarse moduli spaces.    Suppose there exists a morphism $\mathcal M\to \mathcal A_g$, inducing a morphism $M\to A_g$. Using the  Abramovich--Vistoli purity lemma: {\it
The  morphism $\mathcal M\to \mathcal A_g$ extends to a  morphism $\overline{\mathcal M}\to \bar {\mathcal A}^\Sigma_g$  if and only if the morphism $M\to A_g$ extends to a morphism $\overline M \to \bar A^\Sigma_g$.}
\end{rem}

\begin{rem}
The singularities of the stack $\bar{\mathcal A}_g^\Sigma$ can be read off from the cones in $\Sigma$. Basic cones give rise to smooth points of the stack.
Simplicial but non-basic cones correspond to quotient
singularities by finite abelian groups. More precisely
these groups  are identified with  the quotient of the
lattice $(\operatorname{Sym}^2(\Lambda))^\vee \cap \langle \sigma \rangle$ by the sub-lattice generated by the integral generators of $\sigma$.   Non-simplicial cones give rise to more general (toric) singularities of the
moduli stack. Singularities of the  varieties $\bar A_g^\Sigma$ can also occur if the cones are basic, in fact they already occur on ${A}_g$ itself.
The singularities depend on the finite stabilizer of a point in the toric construction.
The codimension of the singular locus of the stack $\bar{\mathcal A}_g^{P}$  is $10$, whereas it is $3$ in the case of
$\bar{\mathcal A}_g^V$ \cite{DHS}.
\end{rem}

\subsubsection{Alexeev's stack of stable semiabelic pairs}   \label{secSSAP}
Alexeev has constructed a moduli space  $\bar {\mathcal A}_g^A$ of complex stable semiabelic pairs  that contains $\mathcal A_g$  as
an open substack \cite{alexeev02}.  The stack  $\bar {\mathcal A}_g^A$  is a proper,
algebraic (Artin) $\CC$-stack with finite diagonal \cite[Thm 5.10.1]{alexeev02}. Moreover, the stack
admits a coarse moduli space, with a component that has normalization
isomorphic to the second Voronoi compactification $\AV$ \cite[Thm. 5.11.6, p.701]{alexeev02} (see also Olsson \cite{olsson}). The main point is that the degeneration
data described above define locally relatively complete models which admit an action of the universal semiabelian variety. Gluing these to obtain a
universal family over a compactifcation of ${\mathcal A}_g$ one is
naturally led  to the second Voronoi decomposition.

We also recall how the degeneration data above determine a stable semi-abelic pair.    The Delaunay decomposition defines a  fan on $\mathbb R\oplus \Lambda_{\mathbb R}$ with cones determined by the Delaunay decomposition shifted by $(1,0)\in \mathbb N\oplus \Lambda$.
We use this to define an $\mathscr O_A$-algebra $\mathscr R$.  As a module, $\mathscr R$ is freely generated by $M_\chi:=M^{\otimes d}\otimes c(\lambda)$ for each $\chi=(d,\lambda)\in \mathbb N\oplus \Lambda$.  We define multiplication by the natural identification $M_{\chi_1}\otimes M_{\chi_2}=M_{\chi_1+\chi_2}$ when $\chi_1,\chi_2\in \delta$ lie in a common  cone in the fan over the Delaunay decomposition.   When $\chi_1,\chi_2$ do not lie in a common cone, sections multiply to $0$.
The morphisms $\tau$ and $\phi$ define a natural action of $\hat\Lambda$ on $\mathscr R$, and the action is properly discontinuous in the Zariski topology on the relative Proj, so that $X=(\operatorname{Proj}_A\mathscr R)/\hat\Lambda$ is a well-defined, polarized scheme.    This is the stable semiabelic pair.   Note that the fiber of $X$ over $A$ is a (possibly reducible)  projective toric variety, obtained by gluing the toric varieties determined by the tiling
in the Delaunay decomposition.

%%%%%%%%%%%%%%%%%%%%%%%%%%%%%%%%%%%%%%
%%% Part IB - Mondoromy cones and extension results for Pryms
%%%%%%%%%%%%%%%%%%%%%%%%%%%%%%%%%%%%%%

\section{Prym varieties and admissible covers}\label{sectPrym}
It is well known that  $\mathcal M_g$ has a normal crossing compactification $\overline{\mathcal M}_g$ obtained by allowing stable curves, whose limiting Hodge theoretic behavior is controlled by a combinatorial object, the dual graph $\Gamma$. Prym varieties are abelian varieties  obtained from connected \'etale double covers of curves. A normal crossing compactification for the moduli of connected \'etale double covers $\mathcal R_g$ (compatible with $\overline {\mathcal M}_g$) was constructed  by Beauville by considering admissible double covers of stable curves. The associated combinatorial object governing the limiting Hodge theoretic behavior is a dual graph with involution. We briefly review this below.

\subsection{Admissible covers}
Let $C$ be a stable curve of  genus $g+1\ge 2$.   Recall that an \emph{admissible double cover of $C$} is a finite, surjective morphism $\pi:\widetilde C \to C$ of stable curves such that:
\begin{enumerate}
\item  The arithmetic genus of $\widetilde C$ is equal to $\tilde g=2g+1$.
\item For each irreducible component $C'$ of $C$, the restriction $\pi: \pi^{-1}(C')\to C'$ has degree two (but $\pi^{-1}(C')$ may be  reducible or disconnected).
\item If $\iota: \widetilde C\to \widetilde C$ is the sheet interchange involution associated to $\pi$, the fixed points of $\iota$ are a subset of the nodes of $\widetilde C$, and at a fixed node the local branches of $\widetilde C$ are not exchanged.
\end{enumerate}

There exists a smooth, irreducible, proper Deligne--Mumford $\CC$-stack $\overline{\mathcal R}_{g+1}$ parameterizing admissible double covers of stable curves of genus $g+1\ge 2$ \cite{b}. We denote by $\mathcal R_{g+1}$ the open sub-stack of connected, \'etale double covers of smooth curves.  The forgetful functor $\overline{\mathcal R}_{g+1}\to \overline {\mathcal M}_{g+1}$ to the moduli of stable curves
defines a degree  $2^{2(g+1)}-1$ cover,  ramified along an irreducible boundary divisor $\delta_0^{\operatorname{ram}}\subset\overline{\calR}_{g+1}$ (see \S \ref{secBD}).
 The full boundary $\delta_{\overline{\mathcal R}_{g+1}}$ of $\overline {\mathcal R}_{g+1}$ is a simple normal crossing divisor with the property that  \'etale locally at a point $\pi:\widetilde C\to C$, its irreducible components
 correspond to the nodes of $C$.  We discuss the irreducible components of the boundary divisor $\delta_{\overline{\calR}_{g+1}}$  in \S \ref{secBD} below.    We denote by $\overline R_{g+1}$ and $R_{g+1}$ the coarse moduli spaces of the respective stacks.

\subsection{Involutions of graphs}
Here we fix our conventions on graphs. A graph $\Gamma$ is a set of {\em vertices} $V=V(\Gamma)$ and a set of {\em oriented edges} $\overrightarrow E=\overrightarrow E(\Gamma)$ together with  maps $(\overrightarrow E  \xymatrix{ \ar @{->}^s @< 2pt> [r] \ar@{->}_t @<-2pt> [r] &  } V, \overrightarrow E \stackrel{\tau}{\to} \overrightarrow E)$, where $\tau$ is a fixed-point free involution, and $s$ and $t$ are maps satisfying $s(\overrightarrow e)=t(\tau(\overrightarrow e))$ for all $\overrightarrow e \in \overrightarrow E$.  The maps $s$ and $t$ are called the \emph{source} and \emph{target} maps respectively.

We define the set of \emph{(unoriented) edges} to be  $E(\Gamma)=E:=\overrightarrow E/\tau$.   Given an oriented edge $\overrightarrow e \in \overrightarrow E $ we will denote by $\underline {\overrightarrow e}$ the class of $\overrightarrow e$ in  $E$.   An \emph{orientation of an edge} $e\in E$ is a representative for $e$ in $\overrightarrow E$; we use the notation $\overrightarrow e$ and $\overleftarrow e$ for the two possible orientations of $e$. An \emph{orientation of a graph $\Gamma$} is a section $\phi:E\to \overrightarrow E$ of the quotient map.  An \emph{oriented graph}  consists of a pair $(\Gamma,\phi)$ where $\Gamma$ is a graph and $\phi$ is an orientation.

A \emph{morphism of graphs} $\Gamma_1\to \Gamma_2$ consists of a pair of maps
$$
  V(\Gamma_1)\to  V(\Gamma_2) \ \ \text{ and } \ \  \overrightarrow E(\Gamma_1)\to \overrightarrow E(\Gamma_2)
$$
so that all of the associated diagrams commute.
An \emph{involution $\iota$ of a graph} is an endomorphism  of the graph such that $\iota^2$ is the identity.   We can define morphisms of oriented graphs as well; an involution of an oriented graph is defined in the obvious way.

Associated to a graph $\Gamma$ is a chain complex
$$
 (C_\bullet(\Gamma,\ZZ),\partial_\bullet),
$$
where $C_0(\Gamma,\ZZ)$ is the free $\ZZ$-module with basis $V(\Gamma)$, and $C_1(\Gamma,\ZZ)$ is the quotient of the free $\ZZ$-module with basis $\overrightarrow E(\Gamma)$ by the relation $\overleftarrow e=-\overrightarrow e$ for every $e\in E(\Gamma)$.
We denote by $[\overrightarrow e]$ the class of $\overrightarrow e$ in $C_1(\Gamma,\ZZ)$.
Note that while  $\underline {\overrightarrow e}=\underline {\overleftarrow e}$ in $E$ (they correspond to the same unoriented edge), in $C_1(\Gamma,\ZZ)$  we have $[\overrightarrow e]=-[\overleftarrow e]$.   An orientation $\phi$ determines a basis $\{[\phi(e)]\}_{e\in E}$ for $C_1(\Gamma,\ZZ)$, which identifies $C_1(\Gamma,\ZZ)$ with the usual chain group of $1$-chains for the associated simplicial complex.
The  boundary map is defined by:
\begin{equation*}
  \partial: C_1(\Gamma,\ZZ) \to C_0(\Gamma,\ZZ)
\end{equation*}
\begin{equation*}
  [\overrightarrow e]  \mapsto t(\overrightarrow e)-s(\overrightarrow e).
\end{equation*}
We will denote by  $H_\bullet (\Gamma,\ZZ)$ the groups obtained from the homology of  $C_\bullet (\Gamma,\ZZ)$; the group $H_\bullet (\Gamma,\ZZ)$ is isomorphic to the homology of the underlying topological space of $\Gamma$.

From the definitions one can check immediately that an involution $\iota$  of a graph $\widetilde \Gamma$ induces an involution $\iota$ of the chain complex $C_\bullet(\widetilde \Gamma,\ZZ)$.   This in turn induces an involution $\iota$ on $H_\bullet (\widetilde \Gamma,\ZZ)$; we denote by $H_1(\widetilde \Gamma, \ZZ)^\pm$ the eigenspaces of the action of $\iota$.  We define
$$
  H_1(\widetilde \Gamma,\ZZ)^{[+]}=H_1(\widetilde \Gamma,\ZZ)/H_1(\widetilde \Gamma,\ZZ)^{-} \ \ \text { and } \ \ H_1(\widetilde \Gamma,\ZZ)^{[-]}=H_1(\widetilde \Gamma,\ZZ)/H_1(\widetilde \Gamma,\ZZ)^+.
$$
Note that
$$
  H_1(\widetilde \Gamma,\ZZ)^{[-]}\cong \Im\left( \frac{1}{2}(\Id -\iota) \right) \subseteq \frac{1}{2}H_1(\widetilde \Gamma,\ZZ)
  $$
  $$
  H_1(\widetilde \Gamma,\ZZ)^{[+]}\cong \Im\left( \frac{1}{2}(\Id +\iota) \right) \subseteq \frac{1}{2}H_1(\widetilde \Gamma,\ZZ).
$$
As usual, we construct a cochain complex $C^\bullet(\widetilde \Gamma,\ZZ)=\Hom (C_\bullet(\widetilde \Gamma,\ZZ),\ZZ)$, and define the cohomology groups $H^\bullet(\widetilde \Gamma,\ZZ)$ to be the homology of this complex.  We have $H^i(\widetilde \Gamma,\ZZ)=H_i(\widetilde \Gamma,\ZZ)^\vee$ ($i=0,1$).  However, in contrast we have
$$
  H^i(\widetilde \Gamma,\ZZ)^\pm=\left(H_i(\widetilde \Gamma,\ZZ)^{[\pm]}\right)^\vee \ \ (i=0,1).
$$
\begin{rem}\label{remcoedge}
Here we make an observation that will be useful for later computations.  To simplify the discussion, fix an orientation of the graph $\Gamma$.  Then, by definition, $C^1(\Gamma,\ZZ)=C_1(\Gamma,\ZZ)^\vee=(\bigoplus_{e\in E}\ZZ\cdot e)^\vee=\bigoplus_{e\in E}\ZZ\cdot e^\vee$.  We call the elements $e^\vee$ co-edges.    There is by definition a surjection $C^1(\Gamma,\ZZ)\twoheadrightarrow H^1(\Gamma,\ZZ)$.   Denote temporarily by $\widehat {e^\vee}$ the image of a co-edge $e^\vee$ in $H^1(\Gamma,\ZZ)$.    Now note that if $\widehat {e_1^\vee}=\widehat {e_2^\vee}$, then for any $z\in H_1(\Gamma,\ZZ)\subseteq C_1(\Gamma,\ZZ)$, we have $e_1^\vee(z)=e_2^\vee(z)$.
\end{rem}

An \emph{admissible involution} of a graph $\widetilde \Gamma$ is  an involution $\iota$  such that  for all $\overrightarrow e\in \overrightarrow E$,   $\iota(\overrightarrow e)\ne \overleftarrow e$.
In other words, an involution is admissible if whenever an unoriented  edge of the graph is fixed by the involution, the vertices at the endpoints of the edge are not interchanged by the involution (unless the edge is a loop, in which case the condition requires the associated oriented edges not to be interchanged).

If $\pi:\widetilde C\to C$ is an admissible cover, then the associated involution
$\iota$ of $\widetilde C$ induces a well defined involution $\iota$ of the vertices $V(\Gamma_{\widetilde C})$ and of the unoriented edges $E(\Gamma_{\widetilde C})$.  There is also a well defined induced involution on the set of oriented edges.   We will call this   the  \emph{induced admissible involution of the dual graph of $\widetilde C$}.

%%%%%%%%%%%%%%%%%%%%%%%%%%%%%%%%%%%%%%%%%%%%%%%%%%%%%%%%%%%%%%%%%% PRYM VARIETIES
%%%%%%%%%%%%%%%%%%%%%%%%%%%%%%%%%%%%%%%%%%%%%%%%%%%%%%%%

\subsection{Prym varieties}
Let $C$ be a stable curve of genus $g\ge 2$.    The Jacobian $JC$ is defined to be the connected component of the identity in $\Pic(C)$.  The Jacobian is a semiabelian variety of dimension $g$, which can be described explicitly as follows.  Let $\nu:N\to C$ be the normalization and let $\Gamma=\Gamma_C$ be the dual graph of $C$.   Then there is an exact sequence
\begin{equation}\label{eqnExtJac}
\begin{CD}
0@>>> H^1(\Gamma,\ZZ)\otimes_{\ZZ}\CC^*@>>>  JC @>\nu^*>>  JN @>>> 0.
\end{CD}
\end{equation}
The extension determines a class in
$$
\Ext^1(JN,H^1(\Gamma,\ZZ)\otimes_{\ZZ}\CC^*)=\Hom (H_1(\Gamma,\ZZ),\widehat {JN}),
$$
where $\widehat {JN}=\Pic^0(JN)$ is the dual abelian variety.   We refer the reader to \cite[p.76]{abh} for an explicit description of the extension class (see also \S \ref{secsubFibers}).      For later reference we note that $JC$ is an extension of the torus $\TT_C:=H^1(\Gamma,\ZZ)\otimes_{\ZZ}\CC^*$, which has character lattice
\begin{equation}\label{eqnJClattice}
\Lambda_C:=\Hom (\TT_C,\CC^*)=H_1(\Gamma,\ZZ).
\end{equation}

Now let $\pi:\widetilde C\to C$ be an admissible double cover of a stable curve $C$ of genus $g+1\ge 2$.
We define the Prym variety
$$
P:=P(\widetilde C/C)=\ker \left(\Nm:J\widetilde C\to JC\right)_0
$$
to be the connected component of the identity in the kernel of the norm map.
The Prym variety is a semiabelian variety of dimension $g$, which can be explicitly described as follows.   Let $\tilde \nu:\widetilde N\to \widetilde C$ and  $\nu:N\to C$ be the normalizations,  and let $\tilde \Gamma=\Gamma_{\widetilde C}$ and  $\Gamma=\Gamma_C$ be the dual graphs of $\widetilde C$ and  $C$ respectively.   Then there is an exact sequence
\begin{equation}\label{eqnExtPrym}
\begin{CD}
0@>>> H^1(\widetilde \Gamma,\ZZ)^-\otimes_{\ZZ}\CC^*@>>>  P @>>>  A @>>> 0,
\end{CD}
\end{equation}
where $A$ is a finite cover of $P_N:=P(\widetilde N/N)=\ker \left(\Nm:J\widetilde N\to JN\right)_0$, the Prym variety of the normalization.
The extension determines a class in
$$
\Ext^1(A,H^1(\widetilde \Gamma,\ZZ)^-\otimes_{\ZZ}\CC^*)=\Hom (H_1(\Gamma,\ZZ)^{[-]},\widehat A).
$$
We direct the reader to \cite[\S 1, Prop.~1.5]{abh} for more details on the relationship between $A$ and $P_N$, as well as for an explicit description of the extension class (see also \S \ref{secsubFibers}).    For later reference we note that $P$ is an extension of the torus $\TT_P:=H^1(\widetilde \Gamma,\ZZ)^-\otimes_{\ZZ}\CC^*$, which has character lattice
\begin{equation}\label{eqnPrymlattice}
\Lambda_{\widetilde C/C}:=\Hom (\TT_P,\CC^*)=H_1(\Gamma,\ZZ)^{[-]}.
\end{equation}

%%%%%%%%%%%%%%%%%%%%%%%%%%%%%%%%%%%%%%
%%%%%%%%%%%%%%%%%%%%%%%%%%%%%%%%%%%%%% BOUNDARY DIVISORS
%%%%%%%%%%%%%%%%%%%%%%%%%%%%%%%%%%%%%%

\subsection{The  boundary divisors in  $\coR$}\label{secBD}
In some arguments in what follows we will want to enumerate certain types of admissible covers. We thus review the enumeration of the irreducible boundary components of $\coR$ following \cite{farkassurvey} and the  references therein (see especially \cite{bernstein}, \cite{FL10}), noting also the corresponding descriptions in terms of vanishing cycles (see also the preprint version of \cite{fs}).

Recall that for a smooth curve $C$ of genus $g$, there are natural  identifications of the following sets:
$$\text{  \{Conn.~\'et.~dbl.~cov.}\ \pi: \widetilde C\to C\}=H^1(C,\ZZ/2\ZZ)-\{0\}$$
$$ =\{ \eta\in \Pic^0(C): \eta \ncong \mathscr O_C, \ \eta^{\otimes 2}\cong \mathscr O_C\}.$$
For a stable curve $C_0$, with a unique node, we will denote by $C_\ast$ a nearby smooth curve, and $\gamma \in H^1(C_\ast,\ZZ)$ the associated vanishing co-cycle. The irreducible boundary components of $\coR$ are as follows.
The preimage in $\coR$ of the locus of irreducible stable curves $\delta_0\subset\overline{\calM}_{g+1}$ has three irreducible components $\delta_0'$,
$\delta_0''$, and $\delta_0^{ram}$ defined as follows:
$$
\begin{array}{lcl}
\delta_0'&=&\{(C_0,a): C_0\in \delta_0^\circ, \ a\in H^1(C_\ast,\ZZ/2\ZZ)-0,\  a \cdot \gamma =0, \text { but } a \notin\langle \gamma \rangle \}^-\\
\delta_0''&=& \{(C_0,a): C_0\in \delta_0^\circ,\  a\in H^1(C_\ast,\ZZ/2\ZZ)-0,\ a \cdot \gamma =0, \text { and } a \in\langle \gamma \rangle \}^-\\
\delta_0^{\operatorname{ram}} &=& \{(C_0,a): C_0\in \delta_0^\circ, \ a\in H^1(C_\ast,\ZZ/2\ZZ)-0, \  a \cdot \gamma \ne 0 \}^-
\end{array}
$$
In the above, and in what follows, we will denote by $\delta_i^\circ\subseteq \delta_i$ the locus of curves with a single node.
The bar after the sets above denotes taking the closure.

\begin{figure}[htb]
\begin{equation*}
\xymatrix{
\widetilde \Gamma \ \ \ \ & *{\bullet} \ar@{-}@(lu,ld)|-{\SelectTips{cm}{}\object@{>}}_{\tilde e^+}  \ar@{-}@(ru,rd)|-{\SelectTips{cm}{}\object@{>}}^{\tilde e^-} & & \Gamma \ \ \ \ &  *{\bullet} \ar@{-}@(lu,ld)|-{\SelectTips{cm}{}\object@{>}}_{e}  &
}
\end{equation*}

\caption{Dual graph of  a generic admissible cover in $\delta_0'$.}\label{Fig:d0'}
\end{figure}

\begin{figure}[htb]
\begin{equation*}
\xymatrix{
\widetilde \Gamma & *{\bullet} \ar @{-}@/_1pc/[rr]|-{\SelectTips{cm}{}\object@{<}}_{\tilde e^-}
 \ar@{-} @/^1pc/[rr]|-{\SelectTips{cm}{}\object@{>}}^{\tilde e^+}  ^<{\tilde v^-}^>{\tilde v^+}  &&*{\bullet} & & \Gamma  \ \ \ \ &  *{\bullet} \ar@{-}@(lu,ld)|-{\SelectTips{cm}{}\object@{>}}_{e}_<{v}  &
}
\end{equation*}

\caption{Dual graph of  a generic  admissible cover in $\delta_0''$.}\label{Fig:d0''}
\end{figure}

\begin{figure}[htb]
\begin{equation*}
\xymatrix{
\widetilde \Gamma \ \ \ \ & *{\bullet} \ar@{-}@(lu,ld)|-{\SelectTips{cm}{}\object@{>}}_{\tilde e} & & \Gamma \ \ \ \ &  *{\bullet} \ar@{-}@(lu,ld)|-{\SelectTips{cm}{}\object@{>}}_{e}  &
}
\end{equation*}

\caption{  Dual graph of  a generic admissible cover in $\delta_0^{\operatorname{ram}}$.}\label{Fig:d0ram}
\end{figure}

The preimage in $\coR$ of the boundary divisor $\delta_i\subset\overline{\calM}_{g+1}$ has three irreducible components (only
two for $i=(g+1)/2$, in which case the first two are the same) described as follows

$$
\begin{array}{lcl}
\delta_i&=&\{(C,\eta): C=C_i\cup C_{g+1-i}\in \delta_i^\circ,\  \eta |_{C_{g+1-i}}\cong \mathscr O_{C_{g+1-i}} \}^-   \\
\delta_{g+1-i}&=&\{(C,\eta): C=C_i\cup C_{g+1-i}\in \delta_i^\circ,\  \eta |_{C_i}\cong \mathscr O_{C_i} \}^-  \\
\delta_{i;g+1-i} &=& \{(C,\eta): C=C_i\cup C_{g+1-i}\in \delta_i^\circ,\  \eta |_{C_i}\ncong \mathscr O_{C_i}, \ \eta |_{C_{g+1-i}}\ncong \mathscr O_{C_{g+1-i}}  \}^-
\end{array}
$$

\begin{figure}[htb]
\begin{equation*}
\xymatrix@R=.3cm{
& *{\bullet} \ar @{-}@/_0pc/[rd]|-{\SelectTips{cm}{}\object@{>}}^{\tilde e^+} _<{\tilde v^+}&&&&&&   & &\\
\widetilde \Gamma \ \ \ \ && *{\bullet}&&\Gamma &*{\bullet} \ar @{-}@/_0pc/[r]|-{\SelectTips{cm}{}\object@{>}}^{e} _<{ } &*{\bullet}\\
 &*{\bullet}\ar@{-} @/^0pc/[ru]|-{\SelectTips{cm}{}\object@{>}}_{\tilde e^-} ^<{\tilde v^-} &&& &&& \\
}
\end{equation*}

\caption{  Dual graph of  a generic  admissible cover in $\delta_i$ (or $\delta_{g+1-i}$).}\label{Fig:di}
\end{figure}

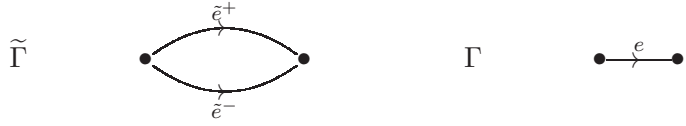
\begin{figure}[htb]
\begin{equation*}
\xymatrix{
\widetilde \Gamma \ \ \ \ & *{\bullet} \ar @{-}@/_1pc/[rr]|-{\SelectTips{cm}{}\object@{>}}_{\tilde e^-}
 \ar@{-} @/^1pc/[rr]|-{\SelectTips{cm}{}\object@{>}}^{\tilde e^+}   &&*{\bullet} & & \Gamma \ \ \ \ & *{\bullet}
 \ar@{-} @/^0pc/[r]|-{\SelectTips{cm}{}\object@{>}}^{e}  & *{\bullet}
}
\end{equation*}

\caption{  Dual graph of  a generic admissible cover in $\delta_{i,g+1-i}$}\label{Fig:digi}
\end{figure}

%%%%%%%%%%%%%%%%%%%%%%%%%%%%%%%%%%%%%%
%%% Specific discussion of the case of curves and Prym varieties
%%%%%%%%%%%%%%%%%%%%%%%%%%%%%%%%%%%%%%

\section{Monodromy cones for Prym varieties}\label{sectMon}
In this section, we compute the monodromy cones (in the terminology of \S\ref{sectHT}) for a boundary point $(\widetilde C,C)$ of $\oR$ in terms of the combinatorics of the dual graph of $(\widetilde C, C)$ (discussed in \S\ref{sectPrym}). As a warm-up, we first review the classical case of Jacobians. The case of Pryms then naturally follows. While essentially equivalent computations can be found in \cite{fs} and \cite{abh}, our presentation for Prym varieties seems to be somewhat new.

\subsection{Monodromy cones for stable curves}
Let $C$ be  a stable curve of genus $g\ge 2$.  Let $\mathscr C\to B$ be  a miniversal deformation  of $C$, with discriminant $\Delta$, and set $B^\circ=B_C^\circ=B-\Delta$.  Denote by $0\in B$ the point corresponding to $C$.
Let  $\Gamma$ be the dual graph  of $C$.  We have  $\dim B_C=3g-3$ and $\Delta$ is a collection of simple normal crossing hyperplanes, indexed by the nodes of $C$ (which in turn are indexed by the edges of $\Gamma$).
Recall that the Jacobian of $C$ is a semiabelian variety obtained as an extension of the torus $\TT_C= H^1(\Gamma,\ZZ) \otimes_{\ZZ} \CC^*$, which has character lattice
$$
\Lambda_C=H_1(\Gamma,\ZZ).
$$

Since $B^\circ$ is locally (near $0$) a polycylinder and the associated mondoromies are unipotent, we can apply the considerations of \S\ref{smoncone} and \S\ref{secMGC} and define the monodromy cone
$$\sigma(C)\subseteq \overline C^{\QQ}(\Lambda_C)$$
to be the cone spanned by the log of mondromies around the branches of $\Delta$ (see \eqref{eqnmc1}). More precisely, we recall that for each irreducible component $\Delta_e\subseteq \Delta$, corresponding to an edge $e\in \Gamma$, there is an associated quadratic form obtained from  the log of monodromy around $\Delta_e$ (cf. \eqref{eqnlm4}).  The closure  $\overline  \sigma(C)$ is the cone generated by these quadratic forms (see \eqref{eqncmc4}):
$$
\overline{\sigma}(C)=\RR_{\ge 0}\langle \overline N_e\rangle_{e\in \Gamma}\subseteq \Hom (\Sym^2 H_1(\Gamma,\QQ),\QQ)_\RR.
$$

We now state the following well-known description of the monodromy cone, and provide a sketch of the proof.

\begin{pro}\label{projmoncone}
Suppose that $C$ is a stable curve of genus $g\ge 2$. Let $e$ be an edge of the dual graph $\Gamma$ of $C$.   Then $(e^\vee)^{2}$ is  the quadratic form obtained as the log of  monodromy around the corresponding  component  $\Delta_e$ of the discriminant.
Consequently,
the closure of the monodromy cone for $C$ is
$$
\overline{\sigma}(C)=\RR_{\ge 0}\langle  (e^\vee)^2 \rangle_{e\in E(\Gamma)}.
$$
\end{pro}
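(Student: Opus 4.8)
The plan is to reduce everything to the one-parameter case, apply the Picard--Lefschetz formula there, and then translate the resulting rank-one operator into the quadratic form $(e^\vee)^2$ via the Clemens--Schmid dictionary of \S\ref{secMGC}; the passage to the closure of the full monodromy cone is then pure bookkeeping with the edge-contraction maps.

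First I would fix an edge $e$ of $\Gamma$ and restrict the miniversal family to a generic one-parameter disk transverse to $\Delta_e$. This gives a family of stable curves smooth away from $0$ whose central fiber has a single node, namely the one corresponding to $e$, and whose dual graph $\Gamma_e$ is obtained from $\Gamma$ by contracting all edges other than $e$ (this is the graph denoted $\Gamma/S^c$ in \S\ref{secSimp}). By the Picard--Lefschetz theorem, the monodromy $T_e$ on $H^1(X_\ast,\ZZ)$ of a one-nodal degeneration of curves is $T_e(x) = x + \langle x,\delta_e\rangle\,\delta_e$, where $\delta_e$ is the associated vanishing cohomology class and $\langle\,,\,\rangle = Q$ is the symplectic intersection form; hence $N_e = \log T_e = T_e - \Id$ is the operator $x \mapsto \langle x,\delta_e\rangle\,\delta_e$ of rank $\le 1$, with $\Im(N_e) = W_0(N_e) = \langle\delta_e\rangle$.

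Next I would run this through the identifications recalled in \S\ref{secMGC}. The Clemens--Schmid sequence yields $\Gr_0(N_e) = W_0(N_e) \xrightarrow{\ \sim\ } \Gr_0^{X_{0_e}}(H^1(X_{0_e})) = H^1(\Gamma_e,\QQ)$ and $\Gr_2(N_e) \xrightarrow{\ \beta\ } H_1(\Gamma_e,\QQ)$, and under these isomorphisms the vanishing class $\delta_e$ is carried to the co-edge $e^\vee$, a generator of $H^1(\Gamma_e,\ZZ)$; in the degenerate case where $e$ is separating, $\delta_e$ is homologically trivial and correspondingly $e^\vee$ restricts to $0$ on $H_1(\Gamma_e)$, so both sides of the claimed formula vanish and there is nothing to check. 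Granting this matching (which is classical — see the preprint version of \cite{fs} and \cite{nam76II} — and which I would justify by unwinding the Mayer--Vietoris description of $H^\bullet(X_{0_e})$ used in \S\ref{secMGC}), the operator $N_e$, viewed as a symmetric form on $\Gr_2(N_e) \cong H_1(\Gamma_e,\QQ)$ as in \eqref{eqnlm4}, sends the cycle $[\overrightarrow e]$ to $\pm\, e^\vee$, so that $N_e([\overrightarrow e])([\overrightarrow e]) = \pm\, e^\vee([\overrightarrow e]) = \pm 1$, which is exactly the value of $(e^\vee)^2$ on $[\overrightarrow e]$. Since $H_1(\Gamma_e,\QQ)$ has dimension at most one, this forces $N_e = \pm (e^\vee)^2$, and the sign is $+$ because the limit mixed Hodge structure is polarized, i.e.\ the form $Q_{N_e}$ is positive definite.

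Finally, for the statement about the whole cone I would invoke \S\ref{sectclosure}--\S\ref{secMGC}: the closure is $\overline\sigma(C) = \RR_{\ge 0}\langle \overline N_e\rangle_{e\in E(\Gamma)}$ with $\overline N_e = \rho_e^\vee \circ N_e \circ \rho_e$, where $\rho_e : H_1(\Gamma,\QQ) \to H_1(\Gamma_e,\QQ)$ is the edge-contraction map \eqref{eqnCM}. Because contracting all edges other than $e$ does not change the $e$-coefficient of a $1$-chain, the dual map $\rho_e^\vee$ pulls the co-edge $e^\vee$ on $\Gamma_e$ back to the co-edge $e^\vee$ on $\Gamma$; combining this with the previous paragraph gives $\overline N_e = (e^\vee)^2$ in $\Hom(\Sym^2 H_1(\Gamma,\QQ),\QQ)$, and hence $\overline\sigma(C) = \RR_{\ge 0}\langle (e^\vee)^2\rangle_{e\in E(\Gamma)}$. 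I expect the real content to lie in the middle step: making the identification of the vanishing class $\delta_e$ with the co-edge $e^\vee$ precise, uniformly in whether $e$ is separating, and pinning down the normalization and the sign through the chain of Clemens--Schmid, Poincaré duality, and Mayer--Vietoris isomorphisms; the Picard--Lefschetz input and the contraction bookkeeping are routine by comparison.
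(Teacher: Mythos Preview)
Your proposal is correct and follows essentially the same approach as the paper's proof: both reduce to the single-node case, split into the separating and non-separating sub-cases, invoke Picard--Lefschetz to identify $N_e$ with $(e^\vee)^2$ on $H_1(\Gamma_e,\QQ)$, and then obtain the general statement via the composition $\overline N_e = \rho_e^\vee\circ N_e\circ\rho_e$ from \S\ref{secMGC}. Your write-up is more explicit about the Clemens--Schmid identification $\delta_e\leftrightarrow e^\vee$ and about the sign (which the paper leaves implicit), but the structure of the argument is the same.
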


\begin{proof}
We start by describing the monodromy operators as in \eqref{eqnlm4}.
Let us first consider  the special  case where  $C$ has a single node.
There are two possibilities:
\begin{enumerate}
\item[(0)]  $e^\vee=0\in H^1(\Gamma,\QQ)$  (equivalently, $C\in \delta_i$, $i>0$).
\item [(1)]  $e^\vee\ne 0\in H^1(\Gamma,\QQ)$ (equivalently, $C\in \delta_0$).
\end{enumerate}
In case (0), $H_1(\Gamma,\QQ)=0$, so $JC$ is an abelian variety, the monodromy is trivial, and there is nothing to show.  In  case (1), $H_1(\Gamma,\QQ)= \QQ\langle e\rangle$, where $e$ is the unique edge of $\Gamma$.  As before, we view the log of monodromy as a map
$$
\begin{CD}
H_1(\Gamma,\mathbb Q)@> N_e>> H^1(\Gamma,\mathbb Q)=\left(H_1(\Gamma,\mathbb Q)\right)^\vee.
\end{CD}
$$
Since the  monodromy operator $T_e$ is given by the well-known Picard--Lefschetz transformation, it follows that $ N_e(e)=e^\vee$.
The associated quadratic  form is then  $(e^\vee)^2$.

The general case  follows by the arguments in \S \ref{secttoroidal},  \S \ref{sectHT}   (esp.~\S\ref{secMGC} and \eqref{eqnMonComp}), which establish that $\overline  N_e$ is given by the composition:
$$
\begin{CD}
H_1(\Gamma,\QQ)@>>> H_1(\Gamma_e,\QQ) @> N_e >> H^1(\Gamma_e,\QQ)@>>> H^1(\Gamma,\QQ)
\end{CD}
$$
where $\Gamma_e$ is the dual graph of the curve obtained from $C$ by smoothing all of the nodes except for the one corresponding to $e$.
\end{proof}

\begin{rem}
Let $\psi:S\to B$ be a morphism   from  the unit disc to a mini-versal deformation space of the curve $C$,  induced from a $1$-parameter deformation of $C$.   For each edge $e\in \Gamma$, the dual graph of $C$, let $z_e$ be a local parameter defining the hyperplane $H_e\subseteq B$ parameterizing curves with all nodes smoothed except the one corresponding to $e$.  Then the log of monodromy for the family is given by $\sum_{e\in \Gamma}\operatorname{ord}\psi^*(z_e) (e^\vee)^2$.
\end{rem}

\subsection{Monodromy cones for admissible double covers}
Let $\pi:\widetilde C\to C$ be an admissible cover of  a stable curve $C$ of arithmetic genus $g+1\ge 2$.  Let $B$ be the base of a miniversal deformation of the cover, with discriminant $\Delta$, and set $B^\circ=B-\Delta$.  Denote by $0\in B$ the point corresponding to $\pi:\widetilde C\to C$.
Let  $\widetilde \Gamma$ (resp.~$\Gamma$) be the dual graph  of $\widetilde C$ (resp.~$C$).  We have that $\dim B=3g$ and $\Delta$ is a collection of simple normal crossing hyperplanes, indexed by the nodes of $C$ (which are in turn indexed by the edges of $\Gamma$).
Recall that the Prym variety of $\pi:\widetilde C\to C$  is a semiabelian variety obtained as an extension of the torus $\TT_P= H^1(\widetilde \Gamma,\ZZ)^- \otimes_{\ZZ} \CC^*$, which has character lattice
$$
\Lambda_{\widetilde C/C}=H_1(\Gamma,\ZZ)^{[-]}.
$$
We wish to describe the associated (log of)  monodromy cone $$\sigma(\widetilde C/C)\subseteq \overline C^{\QQ}(\Lambda_{\widetilde C/C}).$$   Recall that for each irreducible component $\Delta_e\subseteq \Delta$, corresponding to an edge $e\in \Gamma$, there is an associated quadratic form obtained as the log of monodromy around $\Delta_e$.  The closure  $\overline \sigma(\widetilde C/C)$ is the cone generated by these quadratic forms.

From the perspective of Hodge theory,
 for each $1$-parameter family $f:S\to B$, with $f(0)=0\in B$, there is an associated $1$-parameter variation of Hodge structures determined by the family of Prym varieties.  We will denote by $N^-$ the log of monodromy for this VHS.  At the same time, there is a $1$-parameter VHS determined by the  Jacobians of the covering curves, with log of monodromy $\widetilde N$.  One can identify $\operatorname{Gr}_0(N^-)=(\operatorname{Gr}_0\widetilde N)^-=H^1(\widetilde \Gamma,\mathbb Q)^-$.
Then as in \eqref{eqnlm3}, we can view the log of monodromy as a map
\begin{equation}\label{eqnPrymLM}
\begin{CD}
H_1(\widetilde \Gamma,\mathbb Q)^-@>N^->>  H^1(\widetilde \Gamma,\mathbb Q)^-,
\end{CD}
\end{equation}
or dually as a positive definite quadratic form on $H_1(\widetilde \Gamma,\mathbb Q)^-$.
 Now to describe the closure of the monodromy cone, for each edge $e\in \Gamma$, one obtains a log of monodromy operator
\begin{equation}\label{eqnPrecall2}
\begin{CD}
H_1(\widetilde \Gamma,\mathbb Q)^-@>\overline N_e^->>  H^1(\widetilde \Gamma,\mathbb Q)^-,
\end{CD}
\end{equation}
so that the associated bilinear form is symmetric and positive semi-definite.    The closure of the monodromy cone is then given as:
$$
\overline{\sigma}(C)=\RR_{\ge 0}\langle \overline N_e^-\rangle_{e\in \Gamma}\subseteq \Hom (\Sym^2 H_1(\widetilde \Gamma,\QQ)^-,\QQ)_\RR.
$$

\begin{pro}\label{propmoncone}
Suppose that $\pi:\widetilde C\to C$ is an admissible cover of a stable curve of genus $g+1\ge 2$. Let $e$ be an edge of the dual graph $\Gamma$ of $C$ and let  $\tilde e$ be an edge of $\widetilde \Gamma$ lying above $e$.
Then (up to a scalar factor) $(\tilde e^\vee-\iota\tilde e^\vee)^{2}$ is  the quadratic form obtained as the log of  monodromy around the corresponding  component  $\Delta_e$ of the discriminant.
Consequently,
the closure of the  monodromy cone is
$$
\overline \sigma(\widetilde C/C)=\RR_{\ge 0}\langle  (\tilde e^\vee-\iota \tilde e^\vee)^2 \rangle_{e\in E(\Gamma)}.
$$
\end{pro}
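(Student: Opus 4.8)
The plan is to run the argument of Proposition~\ref{projmoncone} on the covering curve $\widetilde C$ and then restrict everything to the $\iota$-anti-invariant part, which is precisely the Prym. First, exactly as in the Jacobian case, the arguments of \S\ref{secttoroidal}--\S\ref{sectHT} --- in particular \S\ref{secMGC} and \eqref{eqnMonComp}, which apply verbatim to the anti-invariant setting (cf.~\eqref{eqnPrecall2}) --- show that for each edge $e\in E(\Gamma)$ the operator $\overline N_e^-$ is the composition
$$H_1(\widetilde\Gamma,\QQ)^-\longrightarrow H_1(\widetilde\Gamma_e,\QQ)^-\xrightarrow{\ N_e^-\ }H^1(\widetilde\Gamma_e,\QQ)^-\longrightarrow H^1(\widetilde\Gamma,\QQ)^-,$$
where $\widetilde\Gamma_e$ is the dual graph of the admissible cover obtained from $\pi\colon\widetilde C\to C$ by smoothing every node except the node $p$ corresponding to $e$, and the outer arrows are the restrictions to the $\iota$-anti-invariant parts of the combinatorial maps of \S\ref{secMGC}. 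Since the last arrow carries the co-edge $\tilde e^\vee$ on $\widetilde\Gamma_e$ to the co-edge $\tilde e^\vee$ on $\widetilde\Gamma$ and is $\iota$-equivariant, it suffices to compute $N_e^-$ for a single-node admissible cover.

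To do this I would read off the monodromy of the family of Jacobians $J\widetilde C$ from the local structure of $\pi$ at $p$, distinguishing two cases (\S\ref{secBD}). If $\pi$ is unramified over $p$, then $\pi^{-1}(p)$ is a pair of nodes of $\widetilde C$ interchanged by $\iota$, with edges $\tilde e$ and $\iota\tilde e$ of $\widetilde\Gamma_e$; since $\pi$ is \'etale-locally an isomorphism at each of these nodes, the single smoothing parameter of $p$ smooths both simultaneously, so the monodromy on $H^1(\widetilde C_\ast,\QQ)$ about $\Delta_e$ is $T_{\tilde e}\,T_{\iota\tilde e}$, a product of two commuting Picard--Lefschetz transformations; by Proposition~\ref{projmoncone} the associated quadratic form on $H^1(\widetilde\Gamma_e,\QQ)$ is $(\tilde e^\vee)^2+(\iota\tilde e^\vee)^2$. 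If $\pi$ is ramified over $p$ (the $\delta_0^{\operatorname{ram}}$ case), then $\pi^{-1}(p)$ is a single $\iota$-fixed node with edge $\tilde e$, and admissibility forces $\iota$ to fix the \emph{oriented} edge $\tilde e$; in local coordinates $\widetilde{\mathcal C}=\{uv=s\}$, $\mathcal C=\{xy=s^2\}$, $\pi(u,v)=(u^2,v^2)$, so $\overline R_{g+1}\to\overline M_{g+1}$ is the double cover $s\mapsto s^2$, a loop about $\Delta_e=\{s=0\}$ produces only a \emph{single} Picard--Lefschetz twist $T_{\tilde e}$ on $H^1(\widetilde C_\ast,\QQ)$, and the associated quadratic form on $H^1(\widetilde\Gamma_e,\QQ)$ is $(\tilde e^\vee)^2$.

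It remains to pass to the anti-invariant part. Since $P\subseteq J\widetilde C$ is $\iota$-stable and the monodromy commutes with $\iota$, the limit mixed Hodge structure of the family of Pryms and its monodromy are the $(-1)$-eigenspaces of the corresponding objects for $J\widetilde C$ (the identification $\Gr_0(N_e^-)=H^1(\widetilde\Gamma_e,\QQ)^-$ being the one recalled above), so $N_e^-$ is the restriction to $H_1(\widetilde\Gamma_e,\QQ)^-$ of the quadratic form just computed. For $z\in H_1(\widetilde\Gamma_e,\QQ)^-$ one has $\iota\tilde e^\vee(z)=\tilde e^\vee(\iota z)=-\tilde e^\vee(z)$, hence $(\tilde e^\vee-\iota\tilde e^\vee)(z)=2\,\tilde e^\vee(z)$, and a direct computation gives, as quadratic forms on $H_1(\widetilde\Gamma_e,\QQ)^-$,
$$(\tilde e^\vee)^2+(\iota\tilde e^\vee)^2=\tfrac12(\tilde e^\vee-\iota\tilde e^\vee)^2\qquad\text{and}\qquad(\tilde e^\vee)^2=\tfrac14(\tilde e^\vee-\iota\tilde e^\vee)^2;$$
in the ramified case both sides vanish identically, since there $\iota\tilde e^\vee=\tilde e^\vee$. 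Thus $N_e^-$, and hence $\overline N_e^-$, is a positive scalar multiple of $(\tilde e^\vee-\iota\tilde e^\vee)^2$, which is the first assertion; taking the closed cone over all $e\in E(\Gamma)$ then yields $\overline \sigma(\widetilde C/C)=\RR_{\ge0}\langle(\tilde e^\vee-\iota\tilde e^\vee)^2\rangle_{e\in E(\Gamma)}$.

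The step I expect to be the main obstacle is the local analysis at a ramified node, together with fixing the normalizations: one must check that, because $\overline R_{g+1}\to\overline M_{g+1}$ is ramified along $\delta_0^{\operatorname{ram}}$ (so that the downstairs smoothing parameter is $s^2$), a loop about $\Delta_e$ yields \emph{one} Picard--Lefschetz twist upstairs rather than two, and correspondingly that the contribution to the Prym monodromy is forced to \emph{vanish} rather than be doubled --- this is exactly what the formula $(\tilde e^\vee-\iota\tilde e^\vee)^2$ encodes. The remaining ingredients --- functoriality of the limit mixed Hodge structure under $P\hookrightarrow J\widetilde C$, the compatibility of the combinatorial maps with co-edges, and the elementary identities above (including the bookkeeping of orientation-induced signs in $\tilde e^\vee$) --- are routine.
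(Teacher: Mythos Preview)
Your argument is correct and gives a valid proof of the proposition, but it takes a genuinely different route from the paper's own proof.

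The paper argues directly on the anti-invariant cohomology $H^1(\widetilde C_\ast,\CC)^-$: after reducing to the single-node case it separates into the three sub-cases (0), (1), (2) according to whether $\tilde e^\vee-\iota\tilde e^\vee$ vanishes, equals $2\tilde e^\vee$, or neither, and then quotes explicit Picard--Lefschetz-type monodromy matrices on $H^1(\widetilde C_\ast,\CC)^-$ (with entry $2$ in case~(1), entry $1$ in case~(2)), citing the classification of admissible covers in \S\ref{secBD} and \cite{fs}. You instead compute the monodromy of the Jacobian of $\widetilde C$ on the full $H^1(\widetilde C_\ast,\QQ)$ via Proposition~\ref{projmoncone} --- getting $(\tilde e^\vee)^2+(\iota\tilde e^\vee)^2$ in the unramified case and $(\tilde e^\vee)^2$ in the ramified case --- and then restrict to the $(-1)$-eigenspace using the elementary identity $\iota\tilde e^\vee|_{H_1^-}=-\tilde e^\vee|_{H_1^-}$ (this is exactly the observation recorded later in Remark~\ref{REMle}).

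What each approach buys: your argument is more self-contained in that it leverages the already-established Jacobian case and pure Picard--Lefschetz, avoiding any appeal to the explicit anti-invariant monodromy matrices of \cite{fs}; it also treats the paper's cases (0)--(2) uniformly within the single ``unramified'' branch. The paper's approach, on the other hand, makes the scalar factors ($1$ versus $2$) transparent and ties in more directly with the boundary-divisor classification. Your local analysis at a ramified node --- the point you flag as the main obstacle --- is in fact standard and correct: the smoothing parameter for the node of $\widetilde C$ is $s$ itself, so one loop about $\Delta_e=\{s=0\}$ gives a single Picard--Lefschetz twist, and the resulting $(\tilde e^\vee)^2$ indeed restricts to zero on $H_1^-$ since $\iota\tilde e^\vee=\tilde e^\vee$ there.
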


\begin{proof}  Let $\pi:\widetilde C\to C$ be an admissible cover of a stable curve of genus $g+1\ge 2$.
We will start by describing   the closure of the monodromy cone via the  log of monodromy as described in  \eqref{eqnPrecall2}.

First we will consider  the special case where $C$ has a unique node.  It is convenient to break this down further into three sub-cases.  To do this, let us fix some notation.  Let $e$ be the unique edge of $\Gamma$, the dual graph of $C$.   Let $\tilde e$ be an edge of $\widetilde \Gamma$ lying over $e$.
Then exactly one of the following holds:

\begin{enumerate}
\item[(0)]   $\widetilde e^\vee-\iota\tilde e^\vee=0\in H^1(\widetilde \Gamma,\QQ)$  (equivalently, $\widetilde C\to C\in \delta_{\overline{\mathcal R}_{g+1}}\setminus(\delta_{i,g+1-i}\cup \delta_0'$)).
\item [(1)]  $\widetilde e^\vee-\iota\tilde e^\vee = 2\tilde e^\vee\ne  0\in H^1(\widetilde \Gamma,\QQ)$  (equivalently, $\widetilde C\to C\in \delta_{i,g+1-i}$).
\item [(2)] $\widetilde e^\vee-\iota\tilde e^\vee \ne 2\tilde e^\vee, 0\in H^1(\widetilde \Gamma,\QQ)$  (equivalently, $\widetilde C\to C\in \delta_0'$).
\end{enumerate}
In case (0),  $H^1(\widetilde \Gamma,\QQ)^-=0$, the Prym  is an abelian variety, the monodromy is trivial, and there is nothing to show.
In cases (1) and (2), $H_1(\widetilde \Gamma,\QQ)^{-}=\QQ\langle \tilde e-\iota \tilde e\rangle$, where $e$ is the unique edge of $\Gamma$ and $\tilde e$ and $\iota \tilde e$ are the edges of $\widetilde \Gamma$ over $e$ interchanged by the involution.

Now we will describe the log of monodromy as a map
$$
\begin{CD}
H_1(\widetilde \Gamma,\mathbb Q)^{-}@>N_e^->>  H^1(\widetilde \Gamma,\mathbb Q)^-=\left(H_1(\widetilde \Gamma,\mathbb Q)^{-}\right)^\vee,
\end{CD}
$$
To do this, let us fix $\ast\in B$ an appropriate base point with $\widetilde C_\ast \to C_\ast$ an \'etale double cover.  In suitable coordinates on $H^1(\widetilde C_\ast,\CC)^-$, the monodromy operator is given by
$$
T_e=\left(
\begin{smallmatrix}
1&2&0&\cdots&0\\
0&1&0&\cdots & 0\\
\vdots && \ddots  &&\vdots \\
0&\cdots&0&1&0\\
0&\cdots &0&0&1\\
\end{smallmatrix}
\right)
\ \ \ \text{(resp. } \ \
T_e=\left(
\begin{smallmatrix}
1&1&0&\cdots&0\\
0&1&0&\cdots & 0\\
\vdots && \ddots  &&\vdots \\
0&\cdots&0&1&0\\
0&\cdots &0&0&1\\
\end{smallmatrix}
\right)
\text{)}
$$
This follows from the classification of admissible covers in \S \ref{secBD} (see also \cite{fs}).
Since the log of monodromy is given by $N_e=T_e-\Id$, it follows that  (up to a scalar multiple) $N_e^-(\tilde e -\iota \tilde e)=\tilde e^\vee -\iota \tilde e^\vee $.
Thus the associated quadratic form is $\left(\tilde e^\vee -\iota \tilde e^\vee\right)^2$.

 The general case then follows by the arguments in \S \ref{secttoroidal},  \S \ref{sectHT}   (esp.~\S\ref{secMGC} and \eqref{eqnMonComp}) by considering the composition
$$
\begin{CD}
H_1(\widetilde \Gamma,\QQ)^{-}@>>> H_1(\widetilde \Gamma_e,\QQ)^{-} @> N_e^- >> H^1(\widetilde \Gamma_e,\QQ)^-@>>> H^1(\widetilde \Gamma,\QQ)^-,
\end{CD}
$$
where $\widetilde \Gamma_e$ is the dual graph of the curve obtained from $\widetilde C$ by smoothing all of the nodes except those lying above the node of $C$ corresponding to $e$.
\end{proof}

\begin{rem}\label{remPrymMonCone}
Let $\psi:S\to B$ be a morphism from the disk to a mini-versal deformation space of the admissible cover $\widetilde C\to C$  induced from a $1$-parameter deformation of $\widetilde C\to C$.    For each edge $e\in \Gamma$, the dual graph of $C$, let $z_e$ be a local parameter defining the hyperplane $\Delta_e\subseteq B$ parameterizing covers with all nodes smoothed except those corresponding to $e$.  Then the log of monodromy for the family is  given by $\sum_{e\in \Gamma} \operatorname{ord}\psi^*(z_e)(\tilde e^\vee-\iota\tilde e^\vee)^{2}$.
\end{rem}

%%%%%%%%%%%%%%%%%%%%%%%%%%%%%%%%%%%%%%
\section{Extension criteria for the Torelli and Prym map}\label{sectExt}
After the preliminaries of the previous sections, we can state our results regarding the extension of the period maps to various toroidal compactifications.
In general given a period map $\mathcal M\to \mathcal A_g$ and a normal crossing compactification $\mathcal M\subset \overline{\mathcal M}$, the question of extending to
the boundary essentially boils down to two steps: the computation of monodromy cones (which we did in the previous section) and then a check that a monodromy cone is contained in one of the cones of the fan defining a toroidal compactification (a combinatorial statement). Of course, this general process is well known and occurs in various guises in the literature,
but its systematic application in the case of admissible covers gives a good and uniform
understanding of the extensions of Prym maps to toroidal compactifications.

\subsection{Extension of the Torelli map}

To motivate the arguments for the Prym map,  we first review in this section the results of Alexeev and Brunyate \cite{ab} for the Torelli map.
Throughout  this subsection, we will use the following notation.   Fix $g\ge 2$ and $ C$ a stable curve in $\overline {\mathcal M}_g$.  Let   $\Gamma$ be the dual graph.
Recall from Proposition \ref{projmoncone} that the closure of the  monodromy cone for the admissible cover is the cone
$$
\overline \sigma(C):=\RR_{\ge 0}\langle (e^\vee)^2\rangle_{e\in E(\Gamma)}\subseteq \left(\Sym^2H_1(\Gamma,\ZZ)\right)^\vee_{\RR}.
$$

Fix a free $\ZZ$-module $\Lambda$ of rank $g$, and a $GL(\Lambda)$-admissible cone decomposition $\Sigma$ of $\overline C^{\QQ}(\Lambda)$.  Let $\bar A_{g}^{\Sigma}$ be the associated toroidal compactification.     Fix a surjection $\Lambda \to \Lambda_C =H_1( \Gamma,\ZZ)$, and denote by $\Sigma_C$ the  $GL(\Lambda_C)$-admissible cone decomposition of $\overline C^{\QQ}(\Lambda_C)$ (induced by the inclusion  $\overline C^{\QQ}(\Lambda_C)\hookrightarrow \overline C^{\QQ}(\Lambda)$).  Recall  that $\Sigma_C$ does not depend on the surjection $\Lambda\to \Lambda_C$.
We now compile results from the literature due to Mumford, Namikawa \cite{nam76II}, and Alexeev and Brunyate \cite{ab}.

\begin{teo}\label{teotorelliext}  Fix $g\ge 2$.
The Torelli map
$$
J^\Sigma:\overline {M}_{g}\dashrightarrow \bar {A}_{g}^{\Sigma}
$$
extends to a morphism in a neighborhood of a stable curve $C$ if and only if there exists a cone $\sigma\in \Sigma_C$ of the admissible decomposition containing the monodromy cone $\sigma(C)$.
\begin{enumerate}
\item \emph{(Mumford--Namikawa {\cite[Cor.~18.9]{nam76II}})} The Torelli map extends to a morphism to the second Voronoi compactification:
$$
J^V:\overline M_g\to \AV.
$$

\item \emph{(Alexeev--Brunyate {\cite[Thm.~4.7, Thm.~6.7]{ab}})} The Torelli  map extends to a morphism to the perfect cone compactification:
$$
  J^P:\overline M_g\to \AP.
$$

\item \emph{(Alexeev--Brunyate {\cite[Thm.~4.8]{ab}})}
The Torelli map extends to a morphism to  $\AC$  in a neighborhood of $C\in \overline{M}_{g}$ if and only if  there exists a quadratic form $Q$ on $H^1( \Gamma,\RR)$ such that
\begin{enumerate}
\item $Q(r)>0$ for all $r\in H^1(\Gamma,\RR)\setminus \{0\}$; i.e.~$Q$ is positive definite.

\item $Q(\ell)\ge 1$ for all $\ell \in H^1( \Gamma,\ZZ) \setminus \{0\}$.

\item $Q(e^\vee)=1$, for all $e\in E(\Gamma)$ such that $e^\vee\ne 0$.
\item $Q(\ell)\in \ZZ$ for all $\ell \in H^1( \Gamma,\ZZ)$.

\end{enumerate}

\end{enumerate}

\end{teo}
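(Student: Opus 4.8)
The plan is to deduce all parts from the general extension criterion of Fact~\ref{fctext}, the computation of the monodromy cone in Proposition~\ref{projmoncone}, and the three combinatorial Lemmas~\ref{lemsecvor}, \ref{lempc}, \ref{lemcc}. First I would reduce to the local Hodge-theoretic setting: by Remark~\ref{remstack} it is equivalent to extend the map of coarse spaces, and since the question is local on the smooth stack $\coM$ with simple normal crossing boundary (and $\mathcal M_g$ is dense), it suffices to treat a miniversal deformation polycylinder around a boundary point $C$; after a finite base change (Remark~\ref{remFBC}) the monodromy is unipotent, so Fact~\ref{fctext} applies. This already yields the first assertion: $J^\Sigma$ extends near $C$ if and only if the monodromy cone is contained in a cone of the induced decomposition $\Sigma_C$ on $\overline{C}^{\QQ}(\Lambda_C)$, $\Lambda_C = H_1(\Gamma,\ZZ)$; and by Proposition~\ref{projmoncone} this cone is the \emph{cographic cone} $\overline\sigma(C) = \RR_{\ge 0}\langle (e^\vee)^2\rangle_{e\in E(\Gamma)}$, spanned by the squares of the (nonzero) co-edges in $H^1(\Gamma,\ZZ) = \Lambda_C^\vee$.

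For part (1), I would show $\sigma(C)$ is matroidal, hence in particular itself a cone of the second Voronoi decomposition $\Sigma_C$. Choosing a spanning forest of $\Gamma$ identifies $H_1(\Gamma,\ZZ)$ with $\ZZ^{b_1(\Gamma)}$ via fundamental cycles, and in this basis the co-edges become the standard basis vectors (non-tree edges) together with the columns of the fundamental-cycle matrix (tree edges), which is totally unimodular; so every $\RR$-linearly independent subset of co-edges is a $\ZZ$-basis of its rational span intersected with $H^1(\Gamma,\ZZ)$, which is condition (3) of Lemma~\ref{lemsecvor} (equivalently, the co-edges dice $\Lambda_C$, Lemma~\ref{lemdice}). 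Thus $\sigma(C)\in\Sigma_{\mathrm{mat}}$, the criterion holds at every $C$, and the local extensions glue to $J^V\colon \overline M_g\to\AV$.

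For part (2), the previous step gives $\sigma(C)\in\Sigma_{\mathrm{mat}}$, and Melo--Viviani's theorem (Remark~\ref{remMV}) gives $\Sigma_{\mathrm{mat}}\subseteq\Sigma_P$, so $\sigma(C)$ is a cone of the perfect cone decomposition and the criterion again holds everywhere, yielding $J^P\colon\overline M_g\to\AP$. (Alternatively one may invoke Lemma~\ref{lempc} and exhibit the required positive definite quadratic form on $H^1(\Gamma,\RR)$ directly, recovering the argument of Alexeev--Brunyate; matroidality makes the existence of such a form automatic.) For part (3), Lemma~\ref{lemcc} characterizes when the rank~$1$ quadrics $(e^\vee)^2$ — the co-edges being primitive and nonzero for $e^\vee\ne 0$, since a non-bridge edge lies on a simple cycle — lie in a common central cone: precisely when there is a positive definite $Q$ on $H^1(\Gamma,\RR)$ with $Q(e^\vee)=1$ for $e^\vee\ne 0$, $Q(\ell)\ge 1$ for all nonzero $\ell\in H^1(\Gamma,\ZZ)$, and $Q(\ell)\in\ZZ$ for all $\ell\in H^1(\Gamma,\ZZ)$. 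Together with the criterion of the first paragraph this is exactly conditions (a)--(d), proving (3).

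I expect the main obstacle to be the one genuinely non-formal input in step (1): the total unimodularity of the cographic configuration — equivalently, the classical fact that the cographic matroid of a graph is regular — which is what makes the monodromy cone simultaneously a second Voronoi cone and a perfect cone. Once this is established the rest is bookkeeping; the subtlety specific to (3) is simply that, in contrast to (1) and (2), no such $Q$ need exist for a given $\Gamma$ (and indeed none exists in general once $g\ge 9$).
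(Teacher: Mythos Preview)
Your proposal is correct and follows essentially the same route as the paper: reduce to the local criterion via Fact~\ref{fctext} and Proposition~\ref{projmoncone}, then for (1) verify matroidality of the cographic cone via the classical regularity/total-unimodularity of cographic matroids (the paper phrases this equivalently as the spanning-tree characterization of $\ZZ$-bases of $H^1(\Gamma,\ZZ)$), for (2) invoke Melo--Viviani's $\Sigma_{\mathrm{mat}}\subseteq\Sigma_P$ (Remark~\ref{remMV}), and for (3) simply restate Lemma~\ref{lemcc}.
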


\begin{proof}
The first statement of the theorem  follows from the standard results on toroidal compactifications discussed in \S \ref{secextpermap}.
We sketch the proofs of the remaining parts following Alexeev and Brunyate.

(1)
From Proposition \ref{propmoncone} and Lemma \ref{lemsecvor}, it follows that the Torelli map extends to a morphism to $\AV$ in a neighborhood of  $C$ if and only if for any collection  $e_1,\ldots,e_m\in E(\Gamma)$  of edges such that the co-cycles $e_1^\vee,\ldots,e_m^\vee$ form a basis for $H^1( \Gamma,\RR)$, the co-cycles $e_1^\vee,\ldots,e_m^\vee$ in fact form a $\ZZ$-basis for $H^1( \Gamma,\ZZ)$.
On the other hand, an elementary result from graph theory  (see eg.~\cite[Lem.~3.3]{ab}, \cite[(J6), p.95]{abh}) asserts the following:  \emph{For a graph $\Gamma$ and a collection of edges  $e_1,\ldots, e_m\in E(\Gamma)$, the co-cycles $e_1^\vee,\ldots,e_m^\vee$ form a $\ZZ$-basis of $H^1(\Gamma,\ZZ)$ if and only if  the co-cycles  $e_1^\vee,\ldots,e_m^\vee$ form an $\RR$-basis of $H^1(\Gamma,\RR)$, if and only if  the graph obtained from $\Gamma$ by removing the edges $\{e_1,\ldots,e_m\}$ is a spanning tree} (i.e.~$b_0=1$, $b_1=0$, and it contains all the vertices).  This completes the proof.

(2) The monodromy cone is generated by rank $1$ quadrics, and
in the previous paragraph was shown to be matroidal.
Consequently, the monodromy cone is a perfect cone \cite[Thm.~A]{MV12} (see Remark \ref{remMV} and Remark \ref{remTorPC}).  Thus the period map extends.

(3) This is  a restatement of Lemma \ref{lemcc}.
\end{proof}

\begin{rem}\label{remTorPC}
For (2), it should be noted
 that  from  Lemma \ref{lempc} it follows that the Torelli map extends   in a neighborhood of $C\in \overline M_g$ if and only if there exists a positive definite quadratic form $Q$ on $H^1(\Gamma,\RR)$ such that  $Q(\ell)\ge 1$ for all $\ell \in H^1(\Gamma,\ZZ) -\{0\}$ and  $Q(e^\vee)=1$, for all $e\in E(\Gamma)$ such that $e^\vee \ne 0$.    In \cite[Thm.~6.7, p.194]{ab} Alexeev and Brunyate establish the existence of such quadratic forms, providing the proof of this case of   \cite[Thm.~A]{MV12}.
\end{rem}

\begin{rem}\label{remTorCC}
For $g\le 4$ the central cone compactification agrees with the perfect cone compactification, and consequently the Torelli map extends to a morphism to  $\AC$ for $g\le 4$.  In fact, in \cite[Cor.~5.4]{ab}, \cite[Cor.~1.2]{AETAL} it is established that all dual graphs of genus $g\le 8$ admit a quadratic form as in Theorem \ref{teotorelliext} (3), and so the Torelli map  extends to a morphism to $\AC$ for all $g\le 8$.  On the other hand, there are  dual graphs of curves of all genera $g\ge 9$ that do not admit such quadratic forms  \cite[Cor.~5.6]{ab}.  Consequently, the Torelli map does not extend to a morphism to  $\AC$ for $g\ge 9$.
\end{rem}

\begin{rem}
Recall from Remark \ref{remstack}
that
the Torelli map extends to a toroidal compactification, as a map of stacks, if and only if it extends as a map of the coarse moduli spaces.
\end{rem}

\subsection{Extension of the Prym map}
Throughout  this subsection, we will use the following notation.   Fix $g+1\ge 2$ and $\pi:\widetilde C\to C$ an admissible double cover in $\overline{\mathcal R}_{g+1}$.  Let $\widetilde \Gamma$ and $\Gamma$ be the dual graphs of $\widetilde C$ and $C$, respectively.  To simplify the discussion,  fix once and for all, for  each edge $e\in E(\Gamma)$ a choice of edge $\tilde e\in E(\widetilde \Gamma)$ lying over $e$.
Having made this choice, then for each  edge $e\in E(\Gamma)$, fix a co-cycle $\ell_e\in H^1(\widetilde \Gamma,\ZZ)^-$ by the rule:
\begin{equation}\label{EQNdefle}
 \ell_e:=
 \left\{
 \begin{array}{ll}
 \tilde e^\vee-\iota\tilde e^\vee& \text{if } \  \iota \tilde e^\vee \ne -\tilde e^\vee  \in H^1(\widetilde \Gamma,\mathbb Z),\\
 \tilde e^\vee& \text{if }\  \iota\tilde e^\vee=-\tilde e^\vee \in H^1(\widetilde \Gamma,\mathbb Z).\\
 \end{array}
 \right.
\end{equation}
Recall from Proposition \ref{propmoncone} that the closure of the  monodromy cone for the admissible cover is the cone
$$
\overline \sigma(\widetilde C/C):=\RR_{\ge 0}\langle \ell_e^2\rangle_{e\in E(\Gamma)}\subseteq \left(\Sym^2H_1(\widetilde \Gamma,\ZZ)^{[-]}\right)^\vee_{\RR}.
$$
Note that $\ell_e^2$ does not depend on the choice of $\tilde e$ lying over a fixed $e\in \Gamma$.

\begin{rem}\label{REMle}
The definition of $\ell_e$ is made to ensure that $\ell_e$ is primitive in $H^1(\widetilde \Gamma,\mathbb Z)^-$.   It is important that one takes the condition $\iota \tilde e^\vee=-\tilde e^\vee$ as being in  $H^1(\widetilde \Gamma,\mathbb Z)$. Note in particular that $\iota \tilde e^\vee$ never agrees with $-\tilde e^\vee$ in $C^1(\widetilde \Gamma,\mathbb Z)$, but always agrees with $-\tilde e^\vee$ viewed as a linear function on $H_1(\widetilde \Gamma,\mathbb Z)^{[-]}$.
\end{rem}

Fix a free $\ZZ$-module $\Lambda$ of rank $g$, and a $GL(\Lambda)$-admissible cone decomposition $\Sigma$ of $\overline C^{\QQ}(\Lambda)$.  Let $\bar A_{g}^{\Sigma}$ be the associated toroidal compactification.     Fix a surjection $\Lambda \to \Lambda_{\widetilde C/C} =H_1(\widetilde \Gamma,\ZZ)^{[-]}$, and denote by $\Sigma_{\widetilde C/C}$ the  $GL(\Lambda_{\widetilde C/C})$-admissible cone decomposition of $\overline C^{\QQ}(\Lambda_{\widetilde C/C})$ (induced by the inclusion  $\overline C^{\QQ}(\Lambda_{\widetilde C/C})\hookrightarrow \overline C^{\QQ}(\Lambda)$).  Recall that $\Sigma_{\widetilde C/C}$ does not depend on the surjection $\Lambda\to \Lambda_{\widetilde C/C}$.
We now use this to prove an extension theorem for the Prym map.  The case of the second Voronoi compactification gives another proof of  \cite[Thm.~3.2 (1), (4)]{abh} (see Remark \ref{remABH} below), while the results for the perfect and central cone are new.

\begin{teo}\label{teoprymext}  Fix $g\ge 1$.
The Prym map
$$
P^\Sigma:\overline {R}_{g+1}\dashrightarrow \bar {A}_{g}^{\Sigma}
$$
extends to a morphism in a neighborhood of an admissible cover  $\pi:\widetilde C\to C$ if and only if there exists a cone $\sigma\in \Sigma_{\widetilde C/C}$ of the admissible decomposition containing the monodromy cone $\sigma(\widetilde C/C)$.

\begin{enumerate}
\item \emph{(Alexeev--Birkenhake--Hulek \cite[Thm.~3.2 (1), (4)]{abh})}   The Prym map extends to a morphism to the second Voronoi compactification $\AV$ in a neighborhood of  $(\pi:\widetilde C\to C)\in \overline{R}_{g+1}$ if and only if:
\vskip .2 cm

\begin{enumerate}

\item[(V)] For any collection  $e_1,\ldots,e_m\in E(\Gamma)$  of edges such that the corresponding co-cycles $\ell_{e_1},\ldots,\ell_{e_m}$ form a basis for $H^1(\widetilde \Gamma,\RR)^-$, the co-cycles $\ell_{e_1},\ldots,\ell_{e_m}$ in fact form a $\ZZ$-basis for $H^1(\widetilde \Gamma,\ZZ)^-$.
\end{enumerate}
 \vskip .2 cm

\item  The Prym map extends to a morphism to the perfect cone compactification   $\bar{  A}_{g}^{P}$  in a neighborhood of $(\pi:\widetilde C\to C)\in \overline{R}_{g+1}$ if and only if:
 \vskip .2 cm

\begin{enumerate}
\item[(P)] There exists a quadratic form $Q$ on $H^1(\widetilde \Gamma,\RR)^-$ such that:
\begin{enumerate}
\item $Q(r)>0$ for all $r\in H^1(\widetilde \Gamma,\RR)^--\{0\}$; i.e.~$Q$ is positive definite.

\item $Q(\ell)\ge 1$ for all $\ell \in H^1(\widetilde \Gamma,\ZZ)^- -\{0\}$.

\item $Q(\ell_e)=1$, for all $e\in E(\Gamma)$ such that $\ell_e\ne 0$.
\end{enumerate}
\end{enumerate}
 \vskip .2 cm

\item The Prym map extends to a morphism to the central cone compactification   $\bar {A}_{g}^{C}$  in a neighborhood of $(\pi:\widetilde C\to C)\in \overline{R}_{g+1}$ if and only if:

\begin{enumerate}
\item[(C)]  There exists a quadratic form $Q$ on $H^1(\widetilde \Gamma,\RR)^-$ such that in addition to satisfying \emph{(i)-(iii)} above, $Q$ also satisfies:

 \vskip .2 cm
\begin{enumerate}
\item[(iv)] $Q(\ell)\in \ZZ$ for all $\ell \in H^1(\widetilde \Gamma,\ZZ)^-$.

\end{enumerate}
\end{enumerate}

\end{enumerate}
\end{teo}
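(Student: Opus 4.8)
The plan is to deduce the general criterion from Fact \ref{fctext} together with the computation of the Prym monodromy cone in Proposition \ref{propmoncone}, and then to read off the three itemized statements by feeding the explicit shape of this cone into the combinatorial Lemmas \ref{lemsecvor}, \ref{lempc}, and \ref{lemcc}. No genuinely new idea enters; the content lies in correctly matching the ambient lattice of the monodromy cone with the lattice attached to the relevant boundary component.

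\emph{The general criterion.} I would work on the miniversal deformation space $B$ of the admissible cover $\pi\colon\widetilde C\to C$: it is smooth of dimension $3g$ with simple normal crossing discriminant $\Delta$ indexed by the nodes of $C$, and over $B^\circ=B\setminus\Delta$ the curve is smooth and the cover is \'etale, so the Prym construction yields a locally liftable period map $B^\circ\to A_g$. After a finite base change, harmless by Remark \ref{remFBC}, all monodromies around the branches of $\Delta$ are unipotent, and for any positive combination $\sum_e\lambda_eN_e$ the associated isotropic subspace $W_0$ --- hence the boundary component and the character lattice $\Lambda_{\widetilde C/C}=H_1(\widetilde\Gamma,\ZZ)^{[-]}$ --- is independent of the $\lambda_e$, with $\Sigma_{W_0}=\Sigma_{\widetilde C/C}$. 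By Proposition \ref{propmoncone} the (closed) monodromy cone is $\overline\sigma(\widetilde C/C)=\RR_{\ge0}\langle\ell_e^2\rangle_{e\in E(\Gamma)}$ inside $(\Sym^2 H_1(\widetilde\Gamma,\ZZ)^{[-]})^\vee_\RR$, with each $\ell_e\in H^1(\widetilde\Gamma,\ZZ)^-=\Lambda_{\widetilde C/C}^\vee$ primitive by Remark \ref{REMle}. Fact \ref{fctext} then gives the displayed ``iff'' for the map $B^\circ\to A_g$ (hence \'etale-locally on $\overline{\mathcal R}_{g+1}$), and Remark \ref{remstack} (Abramovich--Vistoli purity) transports it to the statement for the coarse spaces $\overline R_{g+1}\dashrightarrow \bar A_g^\Sigma$.

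\emph{Two auxiliary observations and the three cases.} Before invoking the combinatorial lemmas I would record: (a) the forms $\ell_e$, $e\in E(\Gamma)$, span $H^1(\widetilde\Gamma,\RR)^-$, because the images of the $\tilde e^\vee-\iota\tilde e^\vee$ already span the $(-1)$-eigenspace of $H^1(\widetilde\Gamma,\QQ)$ when $\tilde e$ ranges over one edge above each $e$ (the two orientations above a given edge give proportional co-cycles), and $\ell_e$ is a positive rational multiple of such an image; and (b) restricting any of the three classical decompositions from a rank-$g$ lattice to the rank-$g'$ sublattice $\Lambda_{\widetilde C/C}$, where $g'=\operatorname{rank} H_1(\widetilde\Gamma,\ZZ)^{[-]}\le g$, yields the decomposition of the same type on that lattice (\S\ref{secAdCD}, \cite{nam}). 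Then each case is a direct application. For (1), Lemma \ref{lemsecvor} says $\overline\sigma(\widetilde C/C)$ lies in a second Voronoi cone iff every $\RR$-linearly independent subset of maximal rank is a $\ZZ$-basis of its $\RR$-span intersected with $\Lambda_{\widetilde C/C}^\vee$; by (a) such a subset spans $H^1(\widetilde\Gamma,\RR)^-$, so this intersection is all of $H^1(\widetilde\Gamma,\ZZ)^-$ and the condition is exactly (V). For (2), Lemma \ref{lempc} says $\overline\sigma(\widetilde C/C)$ lies in a perfect cone iff there is a positive definite $Q$ on $H^1(\widetilde\Gamma,\RR)^-$ with $Q\ge1$ on $H^1(\widetilde\Gamma,\ZZ)^-\setminus\{0\}$ and $Q(\ell_e)=1$ for every $e$, that is, condition (P) (the restriction to $\ell_e\ne0$ in (iii) is automatic, since $\ell_e=0$ exactly when the node lands in the compact part and so produces no cone generator). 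For (3), Lemma \ref{lemcc} adjoins the integrality condition (iv), giving (C).

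\emph{Main difficulty.} There is no serious obstacle --- the analytic and combinatorial work is already done in Proposition \ref{propmoncone} and Lemmas \ref{lemsecvor}--\ref{lemcc}. The two points requiring care are observation (b), namely that passing to the character lattice of the Prym torus preserves the type of the cone decomposition, and observation (a), the spanning statement, which is precisely what lets the ``maximal rank'' clause of Lemma \ref{lemsecvor}(4) collapse to the clean condition (V). One must also keep in mind that the primitivity of $\ell_e$ is meant in $H^1(\widetilde\Gamma,\ZZ)^-$ and not in $C^1(\widetilde\Gamma,\ZZ)$ (Remark \ref{REMle}), since the combinatorial lemmas are phrased for primitive forms in the dual lattice.
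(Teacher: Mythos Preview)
Your proposal is correct and follows essentially the same approach as the paper: the general criterion comes from Fact \ref{fctext} and Proposition \ref{propmoncone}, and parts (1)--(3) are direct applications of Lemmas \ref{lemsecvor}, \ref{lempc}, and \ref{lemcc} respectively. Your auxiliary observations (a) and (b) make explicit two points the paper leaves implicit in its one-line proof, but they are exactly the right details to check and your justifications for them are sound.
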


\begin{proof}
The first statement of the theorem  follows from the standard results on toroidal compactifications discussed in \S \ref{secextpermap}.  (1) then follows from  Proposition \ref{propmoncone} and Lemma \ref{lemsecvor}.  (2) follows from Lemma \ref{lempc} and (3) from Lemma \ref{lemcc}.
\end{proof}

\begin{rem}\label{remABH}
To see that  Theorem \ref{teoprymext} (1)  is equivalent to \cite[Thm.~3.2 (1), (4)]{abh} observe that it follows from Lemma \ref{lemdice} that (V) is equivalent to
\begin{enumerate}
\item[(V')] The linear functions $\{\ell_{e}\}_{e\in E(\Gamma)}$  define a dicing of the lattice $H_1(\widetilde \Gamma,\ZZ)^{[-]}$.
\end{enumerate}
Then note that
 as functions on $H_1(\widetilde \Gamma,\ZZ)^{[-]}$, the linear forms  $\tilde e^\vee-\iota \tilde e^\vee$ and  $2\tilde e^\vee$ agree (see Remark \ref{REMle}).
Thus the  condition (V') here is the same as the condition ($\ast$) of \cite[p.98]{abh}, and so   (V) is equivalent to  the condition for extension given in \cite[Thm.~3.2 (1), (4)]{abh}.
\end{rem}

\begin{rem}
Recall from Remark \ref{remstack}
that the following statement holds also for stacks.  The Prym map
$$
P^\Sigma:\overline {\mathcal R}_{g+1}\dashrightarrow \bar{\mathcal A}_g^\Sigma
$$
 extends to a morphism in a neighborhood of an admissible cover  $\pi:\widetilde C\to C$ if and only if there exists a cone $\sigma\in \Sigma_{\widetilde C/C}$ of the admissible decomposition containing the monodromy cone $\sigma(\widetilde C/C)$.
 \end{rem}

%%%%%%%%%%%%%%%%%%%%%%%%%%%%%%%%%%%%%%
%%% Part II - Computation of monodromy cones for specific examples
%%%%%%%%%%%%%%%%%%%%%%%%%%%%%%%%%%%%%%

\section{Monodromy cones for Friedman--Smith covers}\label{secFSexamples}
We now investigate a class of admissible covers discovered by Friedman and Smith \cite{fs}, who used these examples to show that the Prym map does not extend to the second Voronoi, perfect cone or central cone compactifications.  Alexeev, Birkenhake, and Hulek \cite{abh} and Vologodsky \cite{vologodsky}  then showed that   these examples characterize the indeterminacy locus of the Prym map to the second
Voronoi compactification.  In this section we give a detailed description of the monodromy cone for these examples with the aim of giving  a geometric
characterization of the indeterminacy locus of the Prym map to the perfect and central cone compactifications.  In the subsequent sections  we will actually need some
more elaborate monodromy computations for further degenerations of these examples.  The method for obtaining these is the same as the one discussed here,
and thus all such further computations will be given in the appendix.  In the main body of the paper we will reference those combinatorial results as needed.

\subsection{Friedman--Smith covers}
A \emph{Friedman--Smith cover with $2n\ge 2$ nodes} (see also Figure \ref{Fig:dgFSG}) is an admissible cover $\pi:\widetilde C\to C$ such that
\begin{enumerate}
\item $\widetilde C=\widetilde C_1\cup \widetilde C_2$ with $\widetilde C_1$ and $\widetilde C_2$ irreducible and smooth, and $$\widetilde C_1\cap \widetilde C_2=\{\tilde p_1^+,\tilde p_1^-\ldots,\tilde p_{n}^+,\tilde p_{n}^-\}.$$
\item  $\iota \widetilde C_i=\widetilde C_i$ for $i=1,2$,
\item   $\iota \tilde p_i^+=\tilde p_i^-$ for $i=1,\ldots,n$.
\end{enumerate}

\begin{rem}
An admissible cover $\pi:\widetilde C\to C$ is called a {\em degeneration of a Friedman--Smith cover with $2n$ nodes} if it can be obtained from  a Friedman--Smith cover by a further degeneration. More precisely, an admissible cover $\pi:\widetilde C\to C$  is such a degeneration if and only if   $\widetilde C=\widetilde C_1\cup \widetilde C_2$ with $\widetilde C_1$ and $\widetilde C_2$ connected (possibly reducible), $\widetilde C_1\cap \widetilde C_2=\{\tilde p_1^+,\tilde p_1^-\ldots,\tilde p_{n}^+\tilde p_{n}^-\}$,  $\iota \widetilde C_i=\widetilde C_i$ for $i=1,2$, and   $\iota \tilde p_i^+=\tilde p_i^-$ for $i=1,\ldots,n$.
\end{rem}
For later use, we denote by $FS_{n}\subseteq \overline{R}_{g+1}$ the locus of Friedman--Smith covers, and by $\overline{FS}_{n}$ its closure; i.e.~the locus of degenerations of Friedman--Smith covers.
A \emph{(degeneration of a) Friedman--Smith graph} is a dual graph together with an involution, which can be obtained as the dual graph of a (degeneration of a) Friedman--Smith cover with induced involution.

\begin{rem}
The following is slightly stronger than a direct translation of the remark above into the language of graphs.
A graph $\widetilde \Gamma$ with admissible  involution $\iota$ is a  degeneration of a Friedman--Smith graph with at least $2n\ge 2$ edges if and only if  $\widetilde \Gamma$ admits disjoint,  connected subgraphs  $\widetilde \Gamma_1,\widetilde \Gamma_2$ connected by exactly  $2m\ge 2n$ edges $\tilde e_1^+,\tilde e_1^-,\ldots,\tilde e_m^+,\tilde e_m^-$,  with  $\iota (\widetilde \Gamma_i)=\widetilde \Gamma_i$ ($i = 1,2$), and $\iota \tilde e_i^+=\tilde e_i^-$ ($i=1,\ldots,m$), and furthermore  $\widetilde \Gamma_1$ and  $\widetilde \Gamma_2$ are not connected by a $\iota$-invariant path \cite[Lem.~1.2]{vologodsky}.
\end{rem}

One can see that $\overline{FS}_1=\bigcup \delta_{i,g-i}$ and for $n\ge 2$,  $\overline{FS}_n$ is codimension $n$ (if non-empty), and  contained in  the $n$-fold self-intersection of  $\delta_0'$.
In  $\overline {\mathcal R}_{g+1}$  there are $
\lfloor \frac{g-n+2}{2} \rfloor
$
irreducible components of $FS_{n}$, determined by the pairs of genera $(g(C_1),g(C_2))$ given by
$$
(1,g-n+1), (2,g-n),\ldots (\lfloor \frac{g-n+2}{2} \rfloor,\lfloor \frac{g-n+3}{2} \rfloor).
$$
In particular $FS_{n}=\emptyset$ in $\overline{\mathcal R}_{g+1}$  if $n\ge g+1$.  Note also that the covers $\widetilde C_i\to C_i$ are \'etale, so that in particular, the curves $\widetilde C_i$  have odd genus $2g(C_i)-1$.

\begin{rem}
The index $n$ for $\overline{FS}_{n}$ refers to the codimension of the locus  in $\overline{R}_{g+1}$, or equivalently the number of edges in the dual graph of the base curve. We will use similar notational conventions for other loci occurring later in the paper.
\end{rem}

\subsection{The monodromy cone}\label{secFSMonCone}
Let $\pi:\widetilde C\to C$ be a Friedman--Smith cover with $2n\ge 2$ nodes.    The dual graph $\widetilde \Gamma$ of $\widetilde C$ has vertices $V(\widetilde \Gamma)=\{\tilde v_1,\tilde v_2\}$ and edges $E(\widetilde \Gamma)=\{\tilde e_1^+,\tilde e_1^-,\ldots,\tilde e_n^+,\tilde e_n^-\}$.    The involution $\iota$ acts by $\iota(\tilde v_i)=\tilde v_i$ ($i=1,2$) and $\iota(\tilde e_i^+)=\tilde e_i^-$ ($i=1,\ldots,n$).
For simplicity, we will fix a compatible orientation on $\widetilde \Gamma$, as in Figure \ref{Fig:dgFSG};  i.e.~for all $i$ set $t(\tilde e_i^{\pm})=\tilde v_2$ and $s(\tilde e_i^{\pm})=\tilde v_1$.

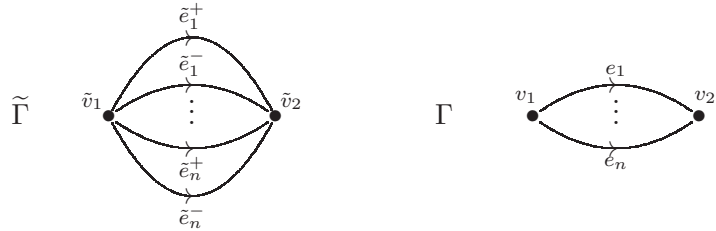
\begin{figure}[htb]
\begin{equation*}
\xymatrix{
\widetilde \Gamma & *{\bullet} \ar @{-}@/_1pc/[rr]|-{\SelectTips{cm}{}\object@{>}}_{\tilde e_n^+} \ar @{-} @/_2.5pc/[rr] |-{\SelectTips{cm}{}\object@{>}}_{\tilde e_n^-}
 \ar@{-} @/^1pc/[rr]|-{\SelectTips{cm}{}\object@{>}}^{\tilde e_1^-}  \ar@{-}@/^2.5pc/[rr]|-{\SelectTips{cm}{}\object@{>}}^{\tilde e_1^+}^<{\tilde v_1}^>{\tilde v_2}  &\vdots &*{\bullet}
&&\Gamma&
 *{\bullet} \ar @{-}@/_1pc/[rr]|-{\SelectTips{cm}{}\object@{>}}_{e_n} \ar @{-}
 \ar@{-} @/^1pc/[rr]|-{\SelectTips{cm}{}\object@{>}}^{e_1}  ^<{ v_1}^>{ v_2}  &\vdots &*{\bullet}
}
\end{equation*}

\caption{  Dual graph of a Friedman--Smith example with $2n\ge 2$ nodes ($FS_n$).}\label{Fig:dgFSG}
\end{figure}
One has
\begin{equation}\label{eqnh1fs}
H_1(\widetilde \Gamma,\ZZ)= \ZZ\langle \tilde e_1^+-\tilde e_1^-,\ldots,\tilde e_n^+-\tilde e_n^-, \tilde e_1^+-\tilde e_2^-,\ldots,\tilde e_{n-1}^+-\tilde e_n^-\rangle.
\end{equation}
Indeed, we have
$b_1(\widetilde \Gamma)=\#E(\widetilde \Gamma)-\#V(\widetilde \Gamma)+b_0(\widetilde \Gamma)=2n-1$, since $\widetilde \Gamma$ is connected.    The $2n-1$ elements listed above are in fact  a generating set for $H_1(\widetilde \Gamma,\ZZ)$,  as can be easily detected from  the associated matrix.  For instance, if one takes the elements in the order
$\tilde e_1^+-\tilde e_1^-, \tilde e_1^+-\tilde e_2^-,\ldots,\tilde e_n^+-\tilde e_n^-,\tilde e_{n-1}^+-\tilde e_n^-$ and constructs a matrix with rows expressing these elements  with respect to  the basis $ \tilde e_1^-,\tilde e_1^+, \ldots, \tilde e_n^-,\tilde e_n^+$,  one obtains a $(2n-1)\times (2n)$ matrix whose first $(2n-1)\times (2n-1)$ sub-matrix is upper triangular
with all the diagonal entries  equal to $\pm1$.

Recall that $H_1(\widetilde \Gamma,\ZZ)^{[-]}=H_1(\widetilde \Gamma, \ZZ)/H_1(\widetilde\Gamma,\ZZ)^+$ and is isomorphic to the image of the map $$\frac{1}{2}(\Id -\iota):H_1(\widetilde \Gamma,\ZZ)\to H_1(\widetilde \Gamma,\RR).$$
From \eqref{eqnh1fs}, one has
$$
H_1(\widetilde \Gamma,\ZZ)^{[-]}\cong  \ZZ\langle \tilde e_1^+-\tilde e_1^-,\frac{1}{2}(\tilde e_1^+-\tilde e_1^-)+\frac{1}{2}(\tilde e_2^+-\tilde e_2^-),\ldots,\frac{1}{2}(\tilde e_{n-1}^+-\tilde e_{n-1}^-)+\frac{1}{2}(\tilde e_n^+-\tilde e_n^-)\rangle.
$$
For brevity, set
$$
z_1=\tilde e_1^+-\tilde e_1^- , \ z_2=\frac{1}{2}(\tilde e_1^+-\tilde e_1^-)+\frac{1}{2}(\tilde e_2^+-\tilde e_2^-), \ldots, \ z_n=\frac{1}{2}(\tilde e_{n-1}^+-\tilde e_{n-1}^-)+\frac{1}{2}(\tilde e_n^+-\tilde e_n^-)
$$
so that
$
H_1(\widetilde \Gamma,\ZZ)^{[-]}\cong \ZZ\langle z_1,\ldots,z_n\rangle.
$
Then $$H^1(\widetilde \Gamma,\ZZ)^-=\left(H_1(\widetilde \Gamma,\ZZ)^{[-]}\right)^\vee\cong \ZZ\langle z_1^\vee,\ldots,z_n^\vee \rangle .$$

Now observe that
$$H^1(\widetilde \Gamma,\ZZ)=\ZZ\langle (\tilde e_1^+)^\vee, (\tilde e_1^-)^\vee, \ldots, (\tilde e_n^+)^\vee, (\tilde e_n^-)^\vee \rangle/ \langle  (\tilde e_1^+)^\vee+(\tilde e_1^-)^\vee+ \ldots+ (\tilde e_n^+)^\vee+ (\tilde e_n^-)^\vee \rangle.
$$
It follows that for $i=1,\ldots,n$,
$$
\begin{array}{ll}
\iota(\tilde e_i^+)^\vee=(\tilde e_i^-)^\vee=-(\tilde e_i^+)^\vee& \text{if }\  n=1,\\
\iota (\tilde e_i^+)^\vee=(\tilde e_i^-)^\vee\ne -(\tilde e_i^+)^\vee& \text{if }\  n\ge 2.\\
\end{array}
$$
Consequently, we may choose for $i=1,\ldots,n$,
$$
\ell_{e_i}:=
\left\{
\begin{array}{ll}
(\tilde e_i^+)^\vee& \text{if }\ n=1,\\
(\tilde e_i^+)^\vee-(\tilde e_i^-)^\vee& \text{if } \ n\ge 2 .\\
\end{array}
\right.
$$

For $n=1$, $\ell_{e_1}$ is clearly a basis for $H^1(\widetilde \Gamma,\ZZ)^-$, and so we note that condition (V) of Theorem \ref{teoprymext} holds in this case.  Now consider the case $n\ge 2$.
  Evaluating the $\ell_{e_i}$ on the basis $z_1,\ldots, z_n$, we obtain that
\begin{equation}
\begin{array}{ccccccccccc}
\ell_{e_1}& =& 2z_1^\vee& +&z_2^\vee&&&&&&\\
\ell_{e_2}& =& & &z_2^\vee&+&z_3^\vee&&&&\\
\vdots &\vdots &&&&\ddots&&&&\\
\vdots &\vdots &&&&&\ddots&&&\\
\ \ \ell_{e_{n-1}}& =& & &&&&&z_{n-1}^\vee&+&z_n^\vee\\
\ell_{e_{n}}& =& &&&&&&&&z_n^\vee\\
\end{array}
\end{equation}
Thus, with respect to these bases,  the matrix representation of the monodromy cone is:
\begin{equation}\label{eqnFSMCo}
\left(\begin{smallmatrix}
2&1&&&&&\\
&1&1&&&&\\
&&1&1&&&\\
&&&\ddots&\ddots&&\\
&&&&1&1&\\
&&&&&1&1\\
&&&&&&1\\
\end{smallmatrix}\right).
\end{equation}
%
%\begin{equation}\label{eqnFSMCo}
%\begin{tiny}
%\left(\begin{array}{ccccccc}
%2&1&&&&&\\
%&1&1&&&&\\
%&&1&1&&&\\
%&&&\ddots&\ddots&&\\
%&&&&1&1&\\
%&&&&&1&1\\
%&&&&&&1\\
%\end{array}\right).
%\end{tiny}
%\end{equation}
Since the determinant of this matrix is $2$, it follows that for $n\ge 2$,  $\{\ell_{e_1},\ldots,\ell_{e_n}\}$ is a basis for  $H^1(\widetilde \Gamma,\RR)^-$, but is \emph{not} a $\ZZ$-basis for $H^1(\widetilde \Gamma,\ZZ)^-$.   Consequently, condition (V) does not hold for $n\ge 2$.

\subsection{Properties of Friedman--Smith monodromy cones}

With the above description of the Friedman--Smith monodromy cone, it is now a combinatorial problem to describe the relationship of the Friedman--Smith monodromy cone to the various cone decompositions.  The details of the arguments are contained in the appendix.  Here we compile the results for reference.

\begin{teo}\label{teoFSMCP}  A Friedman--Smith cone is:
\begin{enumerate}
\item Basic for $n \geq 3$, $n=1$, and simplicial but not basic for $n=2$.

\item
Matroidal if and only if $n=1$.  Every proper face of a Friedman--Smith cone is matroidal.

\item
Contained in a cone in the  perfect cone decomposition  if and only if $n\ne 2,3$.   In fact, a Friedman--Smith cone  \emph{is} a cone in the perfect cone decomposition if and only if $n\ne 2,3,4$.

\end{enumerate}

\end{teo}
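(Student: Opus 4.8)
The plan is to reduce all three parts to integral linear algebra with the matrix \eqref{eqnFSMCo} and, for (3), to the existence problem for a positive definite quadratic form governed by Lemma \ref{lempc}. Write $\Lambda=H_1(\widetilde\Gamma,\ZZ)^{[-]}$ with the basis $z_1,\dots,z_n$ of \S\ref{secFSMonCone}, so that $\Lambda^\vee=H^1(\widetilde\Gamma,\ZZ)^-$ has dual basis $z_1^\vee,\dots,z_n^\vee$ and the Friedman--Smith cone is $\sigma=\RR_{\ge 0}\langle \ell_{e_1}^2,\dots,\ell_{e_n}^2\rangle$, with the $\ell_{e_i}$ the rows of \eqref{eqnFSMCo}. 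For $n=1$ all three assertions are immediate, since $\sigma$ is a single ray spanned by a primitive rank $1$ quadric, hence basic, matroidal, and (by Remark \ref{remMV}) a cone of the perfect cone decomposition; so assume $n\ge 2$. For part (1): because \eqref{eqnFSMCo} has nonzero determinant the $\ell_{e_i}$ are $\RR$-linearly independent, hence the $\ell_{e_i}^2$ are independent in $\Sym^2(\Lambda^\vee_\RR)$ and $\sigma$ is simplicial for all $n$; its $1$-dimensional faces are the rays through the $\ell_{e_i}^2$, which are primitive in $(\Sym^2\Lambda)^\vee$ because the $\ell_{e_i}$ are (Remark \ref{REMle}). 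So $\sigma$ is basic iff $\ell_{e_1}^2,\dots,\ell_{e_n}^2$ extend to a $\ZZ$-basis of $(\Sym^2\Lambda)^\vee$, i.e.\ iff the matrix of their coordinates in the monomial basis $\{(z_iz_j)^\vee\}_{i\le j}$ has unit content. For $n=2$ one reads off $\ell_{e_1}^2=4(z_1z_1)^\vee+(z_2z_2)^\vee+2(z_1z_2)^\vee$ and $\ell_{e_2}^2=(z_2z_2)^\vee$, and every $2\times 2$ minor is even, so $\sigma$ is simplicial but not basic. For $n\ge 3$ I would keep the $n$ coordinate monomials $(z_2z_2)^\vee$, the forms $(z_iz_{i+1})^\vee$ for $2\le i\le n-1$, and $(z_nz_n)^\vee$; a short computation shows the resulting $n\times n$ submatrix reduces by elementary row operations to the identity, so $\sigma$ is basic.

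For part (2): by Lemma \ref{lemsecvor}, $\sigma$ is matroidal iff every $\RR$-linearly independent subset of $\{\ell_{e_1},\dots,\ell_{e_n}\}$ is a $\ZZ$-basis of its $\RR$-span intersected with $\Lambda^\vee$. The full set is $\RR$-independent and spans $\Lambda^\vee_\RR$, but by \eqref{eqnFSMCo} ($\det=2$) generates a sublattice of index $2$, so $\sigma$ is not matroidal for $n\ge 2$. For a proper face of $\sigma$, indexed by a proper subset $S\subsetneq\{1,\dots,n\}$, it suffices by Lemma \ref{lemsecvor} (every subset of a proper subset being again proper) to check that $\{\ell_{e_i}\}_{i\in S}$ generates a saturated sublattice of $\Lambda^\vee$. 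Expanding $\sum_{i\in S}a_i\ell_{e_i}$ in the $z_j^\vee$-coordinates and using $\ell_{e_n}=z_n^\vee$ and $\ell_{e_i}=z_i^\vee+z_{i+1}^\vee$ for $2\le i\le n-1$, one solves the integrality congruences for the $a_i$ one index at a time, starting from the index omitted by $S$; the sole obstruction, the leading entry $2$ of $\ell_{e_1}$, is removed as soon as any generator is dropped, so all $a_i\in\ZZ$ and the face is matroidal.

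For part (3): by Lemma \ref{lempc}, $\sigma$ is contained in a cone of the perfect cone decomposition iff there exists a positive definite quadratic form $Q$ on $H^1(\widetilde\Gamma,\RR)^-$ with $Q(\ell_{e_i})=1$ for all $i$ and $Q(\ell)\ge 1$ for all $\ell\in H^1(\widetilde\Gamma,\ZZ)^-\setminus\{0\}$; moreover $\sigma$ is itself a cone of $\Sigma_P$ iff $\sigma$ is a face of a perfect cone, which holds in particular whenever the minimal vectors of some such $Q$ (the $\ell\in\Lambda^\vee$ with $Q(\ell)=1$) are exactly $\pm\ell_{e_1},\dots,\pm\ell_{e_n}$, in which case $\sigma$ equals the perfect cone of $Q$. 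For $n=2$ the lattice vector $z_1^\vee=\frac{1}{2}(\ell_{e_1}-\ell_{e_2})$ forces $1\le Q(z_1^\vee)=\frac{1}{4}\bigl(Q(\ell_{e_1})+Q(\ell_{e_2})-2B(\ell_{e_1},\ell_{e_2})\bigr)=\frac{1}{2}\bigl(1-B(\ell_{e_1},\ell_{e_2})\bigr)$, hence $B(\ell_{e_1},\ell_{e_2})\le -1$, contradicting the Cauchy--Schwarz bound $|B(\ell_{e_1},\ell_{e_2})|<1$ for the polar form $B$ of $Q$; so $\sigma$ lies in no perfect cone. For $n=3$: writing $Q$ in the $z^\vee$-coordinates as $\sum q_{ij}x_ix_j$, the equalities $Q(\ell_{e_1})=Q(\ell_{e_2})=Q(\ell_{e_3})=1$ together with $Q\ge 1$ on $z_1^\vee$, $z_2^\vee$, $z_1^\vee+z_2^\vee+z_3^\vee$ and $z_1^\vee-z_3^\vee$ force first $q_{22}=1$ and then $q_{12}=-2q_{11}$ with $q_{11}\ge 1$, contradicting positive definiteness; so again $\sigma$ lies in no perfect cone. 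For $n\ge 4$ I would instead exhibit an explicit positive definite $Q$ satisfying Lemma \ref{lempc} --- checking $Q(\ell_{e_i})=1$, positive definiteness, and $Q(v)\ge 1$ for the finitely many $v\in\Lambda^\vee$ with $Q(v)<2$ --- which shows $\sigma$ lies in a perfect cone; for $n\ge 5$ I would check in addition that the only $v\in\Lambda^\vee$ with $Q(v)=1$ are the $\pm\ell_{e_i}$, so that $\sigma$ is itself a perfect cone; and for $n=4$ I would show that every admissible $Q$ has minimal vectors besides the $\pm\ell_{e_i}$ and that $\sigma$ is not a face of the enclosing perfect cone, so $\sigma\notin\Sigma_P$.

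The step I expect to be the main obstacle is the last one: producing the explicit perfect forms for $n\ge 4$ and pinning down the dichotomy between $n=4$, where $\sigma$ sits inside a perfect cone only as a non-face, and $n\ge 5$, where $\sigma$ is itself perfect. These short-vector enumerations and the face analysis for $n=4$ are precisely the computations collected in Appendix \ref{seccombinatorics}.
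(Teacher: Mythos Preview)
Your overall structure matches the paper: reduce to the criteria in Lemmas \ref{lemsecvor} and \ref{lempc} applied to the explicit matrix \eqref{eqnFSMCo}. Parts (1) and (2) are essentially the paper's arguments in the other coordinate system \eqref{eqnFSMC}; the paper's check that proper faces are matroidal is the one-line observation that any $n-1$ rows of \eqref{eqnFSMC} extend to a $\ZZ$-basis of $\ZZ^n$, which is cleaner than your congruence-chasing but equivalent.

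For part (3) there are two substantive differences. First, for $n=2,3$ the paper does not attempt a direct impossibility argument for $Q$: it simply invokes the fact that in rank $\le 3$ the perfect cone, second Voronoi, and central cone decompositions coincide (Remark \ref{relationsconedec}), so every perfect cone is matroidal, and then part (2) finishes it. Your Cauchy--Schwarz argument for $n=2$ is correct and a pleasant alternative, but your $n=3$ sketch has a gap: from $Q(\ell_{e_1})=Q(\ell_{e_2})=Q(\ell_{e_3})=1$ and $Q\ge 1$ on the four lattice vectors you list, one only gets $q_{22}\ge 1$ and $q_{12}\le -q_{11}$, not $q_{22}=1$ or $q_{12}=-2q_{11}$; more inequalities (or the paper's structural shortcut) are needed to reach a contradiction.

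Second, for $n\ge 4$ the paper does produce the explicit form you defer: the matrix \eqref{eqnQ} with $n/4$ in the upper-left corner, $1/2$ on the rest of the first row and column, and the identity elsewhere. The point that makes the minimal-vector analysis transparent is an integral change of lattice: $(\Lambda^\vee,Q)$ is identified with $(\mathbb L,Q_0)$ where $\mathbb L=\ZZ^n+\ZZ\cdot\tfrac{1}{2}(1,\dots,1)$ and $Q_0$ is the standard form, and the $\ell_{e_i}$ go to the standard basis vectors $f_i^\vee$. In that model the short vectors of $\mathbb L$ are visibly the $\pm f_i^\vee$ (length $1$) and the half-integer vectors $(\pm\tfrac12,\dots,\pm\tfrac12)$ (length $n/4$), so for $n\ge 5$ the $\pm\ell_{e_i}$ are exactly the minimal vectors and the Friedman--Smith cone \emph{is} the perfect cone of $Q$; for $n=4$ the extra half-integer vectors make the enclosing perfect cone the $D_4$ cone, and one then checks the Friedman--Smith cone is not a face of $D_4$. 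This lattice identification is the missing idea in your proposal; without it, the short-vector enumeration and the $n=4$/$n\ge 5$ dichotomy you flag as the obstacle do not have an obvious handle.
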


\begin{proof}
(1) See Lemma \ref{lemBasicCone}.
(2) See Lemma \ref{lemFSMAT}.
(3)  See Proposition \ref{proFS=PC}.
\end{proof}

\begin{rem}
In Appendix \ref{secappMDS}, Dutour  Sikiri\'c shows that   the Friedman--Smith monodromy cone is contained in a cone in the  central cone decomposition  if and only if $n\ne 2,3$.
\end{rem}

\begin{rem}\label{remFSSVD}
It is an unpublished result of Alexeev that \emph{for $n\ge 2$, each cone in the  barycentric subdivision of a Friedman--Smith cone  is contained in a cone in the   second Voronoi decomposition} (see \cite[p.3159]{vologodskyres}). It is easy to see that the decomposition of the Friedman--Smith cone into cones contained in second Voronoi cones must be  a refinement of the barycentric subdivision.  One can then argue as in Vologodsky \cite{vologodskyres} to show that the  barycentric subdivision suffices.    We will use this in the case $n=2,3$ in our investigation of the resolution of the period map to the perfect cone compactification  (see Remark \ref{relationsconedec}).  In the appendix (\S \ref{secFScd}) we give an explicit description of second  Voronoi cones  generated by rank $1$ quadrics that decompose the Friedman--Smith cone for $n=2,3$, providing another proof in these special cases.  We will use these explicit cones in studying other monodromy cones of degenerations of Friedman--Smith covers, and also in describing Delaunay decompositions.
\end{rem}

%%%%%%%%%%%%%%%%%%%%%%%%%%%%%%%%%%%%%%
%%% Part III - Concluding results for low genus
%%%%%%%%%%%%%%%%%%%%%%%%%%%%%%%%%%%%%%
%\newpage

\section{The indeterminacy locus of the Prym map}\label{sectindeterm}

Here we further investigate the indeterminacy locus of the Prym map by  reformulating  the combinatorial characterization given in  Theorem \ref{teoprymext}, in terms of geometry.  For the second Voronoi compactification, Vologodsky \cite[Thm.~0.1]{vologodsky} has shown that the combinatorial condition in Theorem \ref{teoprymext} (1) is equivalent to the cover being a degeneration of a Friedman--Smith cover with at least $4$ nodes.
In other words, the indeterminacy locus for the Prym map to the   second Voronoi compactification is equal to $\bigcup_{n\ge 2} \overline {FS}_n$.   Consequently, here we   focus on the period map to the  perfect cone compactification, for which it turns out that the indeterminacy locus is smaller.  While at the moment we are unable to obtain a statement if full generality analogous to \cite[Thm.~0.1]{vologodsky}, we describe completely the situation  up to codimension $6$.
\begin{teo}\label{teoindPM}
The indeterminacy locus of the Prym map $P_g^P:\overline R_{g+1}\dashrightarrow \bar A_g^P$ satisfies
\begin{equation}\label{eqnIndLoc}
\overline {FS}_2\cup \overline {FS}_3\subseteq Ind(P_g^P)\subseteq \overline {FS}_2\cup \overline {FS}_3\cup \partial \overline{FS}_4\cup\ldots \cup \partial \overline {FS}_{g}
\end{equation}
where $\partial \overline{FS}_{n}=\overline{FS}_{n}-FS_{n}$.  Moreover,
$$
 \operatorname{codim}_{\overline R_{g+1}} Ind(P_g^P)\setminus \left(\overline {FS}_2\cup \overline {FS}_3\right)\ge 6.
$$
\end{teo}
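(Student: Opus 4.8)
\emph{Set-up.} Combining Fact~\ref{fctext} with Theorem~\ref{teoprymext}, the Prym map $P_g^P$ is a morphism in a neighborhood of an admissible cover $\pi:\widetilde C\to C$ exactly when the monodromy cone $\sigma(\widetilde C/C)$, which by Proposition~\ref{propmoncone} is generated by the rank-$1$ quadrics $\ell_e^2$ ($e\in E(\Gamma)$), is contained in a cone of the perfect cone decomposition; equivalently, when condition (P) of Theorem~\ref{teoprymext}(2) holds (Lemma~\ref{lempc}). Observe also that $Ind(P_g^P)$ is closed in $\overline R_{g+1}$, being the complement of the open locus on which the rational map $\overline R_{g+1}\dashrightarrow\bar A_g^P$, already regular on $R_{g+1}$, is a morphism. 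Thus the proof reduces to deciding this one combinatorial condition for the graphs with involution that arise, and to estimating the dimension of the failure locus.

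\emph{The inclusions \eqref{eqnIndLoc}.} For the lower bound, the computation of \S\ref{secFSMonCone} shows that a cover in $FS_2$, resp.\ $FS_3$, has monodromy cone equal to the Friedman--Smith cone \eqref{eqnFSMCo} with $n=2$, resp.\ $n=3$, which by Theorem~\ref{teoFSMCP}(3) lies in no perfect cone; hence $P_g^P$ does not extend along $FS_2\cup FS_3$, and $Ind(P_g^P)$ being closed gives $\overline{FS}_2\cup\overline{FS}_3\subseteq Ind(P_g^P)$. For the upper bound, if $\pi\notin\bigcup_{n\ge 2}\overline{FS}_n$ then by Vologodsky \cite[Thm.~0.1]{vologodsky} condition (V) of Theorem~\ref{teoprymext}(1) holds, so $\sigma(\widetilde C/C)$ is matroidal and hence a perfect cone (Remark~\ref{remMV}), and $P_g^P$ extends at $\pi$. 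If $\pi\in FS_n$ with $n\ge 4$ then $\sigma(\widetilde C/C)$ is again the Friedman--Smith cone \eqref{eqnFSMCo}, contained in a perfect cone by Theorem~\ref{teoFSMCP}(3), so $P_g^P$ extends at $\pi$. Therefore $Ind(P_g^P)$ meets each $\overline{FS}_n$ ($n\ge 4$) only inside $\partial\overline{FS}_n=\overline{FS}_n\setminus FS_n$, whence $Ind(P_g^P)\subseteq\overline{FS}_2\cup\overline{FS}_3\cup\bigcup_{n\ge 4}\partial\overline{FS}_n$ (a finite union, as $FS_n=\varnothing$ for $n>g$).

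\emph{The codimension bound.} Since $\overline{FS}_n$ has codimension $n$ and is the closure of $FS_n$, the proper closed subset $\partial\overline{FS}_n$ has codimension $\ge n+1$ in $\overline R_{g+1}$; in particular $\bigcup_{n\ge 5}\partial\overline{FS}_n$ has codimension $\ge 6$. So it remains to prove that $Ind(P_g^P)\cap\partial\overline{FS}_4$ has codimension $\ge 6$, i.e.\ contains no irreducible component of $\partial\overline{FS}_4$, i.e.\ that $P_g^P$ extends at the generic point of each such component. These generic points are exactly the codimension-$5$ covers obtained from a Friedman--Smith cover with $8$ nodes by creating one further node of $C$ inside one of the two $\iota$-invariant halves; using \S\ref{secBD} and the description of (degenerations of) Friedman--Smith graphs, there are finitely many ($g$-independent, for $g$ large) combinatorial types, according as the new node of $C_i$ is of $\delta_0^{\operatorname{ram}}$, $\delta_0'$, $\delta_0''$ or $\delta_j$ type. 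For each type we compute $\sigma(\widetilde C/C)$: when the new edge has a single $\iota$-fixed lift one gets $\ell_{e_0}=0$ from \eqref{EQNdefle}, so $\sigma(\widetilde C/C)$ coincides with the $FS_4$ cone and is contained in a perfect cone by Theorem~\ref{teoFSMCP}(3); otherwise $\sigma(\widetilde C/C)$ is the $FS_4$ cone with one rank-$1$ ray adjoined, and one verifies via Lemma~\ref{lempc} (exhibiting the required positive definite quadratic form), or by the reduction method of Appendix~\ref{secSimp} together with the explicit second Voronoi subdivisions of Appendix~\ref{secFScd}, that it still lies in a perfect cone. These verifications occupy Appendix~\ref{secDegen} and finish the proof.

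\emph{Main obstacle.} The substantive part is this last step. It is purely combinatorial, but delicate: the $FS_4$ cone is non-matroidal and is not itself a cone of the perfect cone decomposition (Theorem~\ref{teoFSMCP}(2),(3)), so one cannot invoke Melo--Viviani, and for each one-step degeneration one must control by hand how adjoining a single rank-$1$ quadric to the $FS_4$ cone interacts with the existence of the defining form of Lemma~\ref{lempc} — the point being that although the $FS_4$ cone lies in a perfect cone it lies in several, and the new ray need not lie in all of them. A secondary bookkeeping task is to confirm that the generic points of the components of $\partial\overline{FS}_4$ are precisely the enumerated one-step degenerations, which is where the graph-theoretic description of Friedman--Smith covers and their degenerations is used.
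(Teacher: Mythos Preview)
Your approach is essentially the paper's, and the inclusions \eqref{eqnIndLoc} are handled correctly. However, the codimension argument has a genuine (if easily repaired) gap.

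You reduce to showing that $Ind(P_g^P)\cap\partial\overline{FS}_4$ has codimension $\ge 6$, and then assert this means $P_g^P$ extends at the generic point of \emph{every} irreducible component of $\partial\overline{FS}_4$. But that assertion is false, and it is not what you need. Several of the one-step degenerations of $FS_4$ are themselves degenerations of $FS_2$ or $FS_3$ (for instance $FS_{3+1}+FS_1$, $FS_{2+2}+FS_1$, and the $FS_{n_1+n_2}+\delta_i$ cases with $n_2\in\{1,2\}$), and at those generic points the monodromy cone is \emph{not} contained in any perfect cone; the Prym map does not extend there. What saves the theorem is that those components lie in $\overline{FS}_2\cup\overline{FS}_3$, so they do not contribute to $Ind(P_g^P)\setminus(\overline{FS}_2\cup\overline{FS}_3)$. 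You dropped the ``$\setminus(\overline{FS}_2\cup\overline{FS}_3)$'' when passing from the codimension statement to the component-by-component claim. The paper keeps it: for each one-step degeneration it verifies that the monodromy cone lies in a perfect cone \emph{if and only if} the cover is not a degeneration of $FS_2$ or $FS_3$ (Lemmas~\ref{lemmcFSW}, \ref{lemFS+FS}, \ref{lemFS+d}, \ref{AlemFSB}, \ref{AlemFSEE}).

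Two smaller points. Your list of node types omits $\delta_{i,g+1-i}$ (or conflates it with $\delta_j$); in $\overline{\mathcal R}_{g+1}$ these are distinct boundary components and give different degenerations (the $FS_{n_1+n_2}+FS_1$ vs.\ $FS_{n_1+n_2}+\delta_i$ cases), with different monodromy cones, so they must be treated separately. And your description ``the $FS_4$ cone with one rank-$1$ ray adjoined'' is not uniformly accurate: for $FS_4+EE_1$ and $FS_{n_1+n_2}+FS_1$ the cone sits in a lattice of rank $5$, block-diagonal with the $FS_4$ cone; for $FS_4+B_1$ and $FS_{0+4}+\delta_i$ it is literally the $FS_4$ cone. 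Finally, the paper also carries out the case $n=5$ to get the stronger statement that $Ind(P_g^P)\cap\overline{FS}_5$ has codimension $\ge 2$ in $\overline{FS}_5$; you do not need this for the theorem as stated, but it is worth noting.
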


\begin{rem}
Since $\bar A^P_1=A^*_1$, it is immediate (from the Borel extension theorem) that for $g=1$ the Prym map is a morphism $\overline{R}_2\to\bar {A}_1^P$; in this case both $FS_2$ and $FS_3$ are empty. For $g=2$ the locus $FS_3$ is empty, so we have $Ind(P_2^P)=\overline {FS}_2$.  Similarly, $Ind(P_3^P)=\overline {FS}_2\cup \overline {FS}_3$.
For $P^P_4:\overline R_5\dashrightarrow \bar A_4^P$ we have $\partial \overline {FS_4}\setminus \left(\overline {FS}_2\cup \overline {FS}_3\right)\ne \emptyset$.  In this case, the theorem above says roughly that the ``generic points'' of this locus do not lie in the indeterminacy locus. We note that in the course of the proof we will obtain a slightly stronger result than the statement of the theorem, by showing that the Prym map extends to $\overline{FS}_4$ and $\overline{FS}_5$ up to codimension two.
\end{rem}

\begin{rem}  We have the following relationships among the indeterminacy loci.  For $g\le 3$, $Ind(P_g^P)=Ind(P_g^V)=Ind(P_g^C)$.  
For $g\ge 4$, $Ind(P_g^P)\subsetneq Ind(P_g^V)$.        In  Appendix \ref{secappMDS}, Mathieu Dutour Sikiri\'c  shows that for $g\ge 4$, $Ind(P_g^C)\nsupseteq  Ind(P_g^V)$, and 
for $g\ge 9$, $Ind(P_g^C)\nsubseteq Ind(P_g^V)$.   
\end{rem}

In the process of proving the results in this paper, we have considered a number of degenerations of Friedman--Smith covers.   In these examples, the monodromy cone has failed to be contained in a cone in the PCD if and only if the example lies in $\overline{FS}_2\cup \overline{FS}_3$.    We thus pose the following question.

\begin{ques} \label{quesindeterminacy}
Is it true that the indeterminacy locus $Ind(P_g^P)$ is equal to $\overline{FS}_2\cup \overline{FS}_3$?
\end{ques}

\begin{proof}[Proof of Theorem \ref{teoindPM}]
We start by showing:
$$
\overline {FS}_2\cup \overline {FS}_3\subseteq Ind(P_g^P)\subseteq \overline {FS}_2\cup \overline {FS}_3\cup \partial \overline{FS}_4\ldots \cup \partial \overline {FS}_{g}.
$$
Theorem \ref{teoFSMCP} (3) implies that  the covers in the loci $FS_2, FS_3$ have monodromy cones not contained in cones in the PCD.  This gives the left inclusion. For the right inclusion, the results \cite[Thm.~3.2 (1), (4)]{abh} and \cite[Thm.~0.1]{vologodsky} imply that on $\overline R_{g+1}\setminus \left(\overline {FS}_2\ldots \cup  \overline {FS}_{g}\right)$ the monodromy cones are matroidal, which by \cite[Thm.~A]{MV12} are also perfect (see Remark \ref{remMV}). Moreover, in Theorem \ref{teoFSMCP} (3) we showed that for $4\le n\le g$ a  cover in  $FS_n$ has a monodromy cone contained in a cone in the PCD, and thus the period map extends there as well.

We now prove   $$ \operatorname{codim}_{\overline R_{g+1}} Ind(P_g^P)\setminus \left(\overline {FS}_2\cup \overline {FS}_3\right)\ge 6.$$   Since $\operatorname{codim} \overline{FS}_n=n$, it is enough to restrict attention to $\partial \overline{FS}_4$.    In fact
we will show the stronger statement that
$$
\operatorname{codim}_{\overline {FS}_{n}} (Ind(P_g^P) \cap \overline{FS}_n) \setminus \left(\overline {FS}_2\cup \overline {FS}_3\right)\ge 2
$$
for $n=4,5$.
To achieve this, we simply need to enumerate the codimension $1$ degenerations in $\overline{FS}_n$ for $n=4,5$ and check that for each of them the monodromy cone is not contained in a cone in the PCD if and only the degeneration also lies in $\overline {FS}_2$ or $\overline{FS}_3$.
To be precise, we will consider all degenerations of an $\overline{FS}_n$ cover so that the dual graph of the base curve has exactly $n+1$ edges.  The complement of this locus has codimension $2$ in $\overline {FS}_n$.  
We observe that the dual graph of the base of a  degeneration of an $\overline{FS}_n$ cover has     exactly $n+1$ edges if and only if the dual graph of the covering curve is obtained by replacing a vertex in the dual graph of an $FS_n$ cover (see Figure \ref{Fig:dgFSG})  with one of the dual graphs in Figures
\ref{Fig:d0'}-\ref{Fig:digi}.    Thus we have five cases to consider.

First consider the case where we replace the vertex with a graph as in Figure \ref{Fig:d0''} (corresponding to $\delta_0''$).   This give rise to  an $FS_{n}+W_1$ example (see Figure \ref{Fig:FSn+Wn} with  $m=1$). The monodromy computation is made in  \S \ref{secFS+W}, and establishes Lemma \ref{lemmcFSW} stating that for $n\le 7$, the monodromy cone is contained in a cone in the PCD if and only if the cover is not a degeneration of an $FS_2$ or $FS_3$ cover.

Next consider the case where we replace the vertex with a graph as in Figure \ref{Fig:digi} (corresponding to $\delta_{i,g+1-i}$).
This gives  rise to $FS_{n_1+n_2}+FS_1$    examples  with $n_1+n_2=n$ (see Figure \ref{Fig:FSn+FSm} with $m=1$). The monodromy computation is made in   \S \ref{secFS+FS}, and establishes Lemma \ref{lemFS+FS} stating that the monodromy cone is contained in a cone in the PCD if and only if the cover is not a degeneration of an $FS_2$ or $FS_3$ cover.

Next consider the case where we replace the vertex with a graph as in Figure \ref{Fig:di} (corresponding to $\delta_{i}$).
This gives rise to  $FS_{n_1+n_2}+\delta_i$ examples with $n_1+n_2=n$ (see Figure \ref{Fig:dgFSn+d}).  The monodromy computation is made in \S \ref{secFS+d}, culminating in Lemma  \ref{lemFS+d}, which shows that
for $n\le 5$, the monodromy cone is contained in a cone in the PCD if and only if the cover is not a degeneration of an $FS_2$ or $FS_3$ cover.

The cases where we replace the vertex with a graph as in Figure \ref{Fig:d0ram} ($\delta_0^{\operatorname{ram}}$) or Figure \ref{Fig:d0'} $(\delta_0'$) are similar.  These give rise to $FS_n+B_1$ (resp.~$FS_n+EE_1$) examples (see \S \ref{secFS+B}, resp.~\S \ref{secFS+EE}).  Lemmas \ref{AlemFSB} and \ref{AlemFSEE} show that the monodromy cone is contained in a cone in the PCD if and only if the cover is not a degeneration of an $FS_2$ or $FS_3$ cover.
\end{proof}

%%%%%%%%%%%%%%%%%%%%%%%%%%%%%%%%%%%%%%
%%% S4 - Resolution of the Prym map
%%%%%%%%%%%%%%%%%%%%%%%%%%%%%%%%%%%%%%

\section{Resolving the Prym map}\label{sectRes}
As discussed previously, in contrast to the case of the Torelli map for curves, the Prym map is not regular along (certain components of) the Friedman--Smith locus.  For geometric applications (eg.~the study of moduli of cubic threefolds) it is important to have a regular map.  Using the fact that the normal crossing compactifications and the toroidal compactifications have a toric structure at the boundary, it is always possible to refine the normal crossing compactification (by further toric blow-ups) to get a regular map. In this section we   resolve the Prym map up to codimension $4$. In the appendices  we have worked out some further  special cases in all genera for the perfect cone compactification; some special cases in all genera have also been considered by Alexeev and Vologodsky for the second Voronoi compactification (see \S \ref{secDR} and \cite{vologodskyres}).

\begin{teo} \label{teoresPM}
There is a closed locus $Z\subseteq  \bigcup_{n=2}^g \partial \overline {FS}_n\subseteq \overline{R}_{g+1}$ with $\operatorname{codim}_{\overline{R}_{g+1}} Z\ge 4$,
such that setting $U=\overline R_{g+1}\setminus Z$, the restriction to $U$  of the  Prym period map  $P_g^P:\overline R_{g+1}\dashrightarrow \bar A_g^P$ can be resolved in the following way:
\begin{enumerate}
\item The period map is regular on $U\setminus (\overline {FS}_2\cup \overline{FS}_3)$.
\item If $x\in U\cap \overline {FS}_2$, then \'etale locally there are either 1, 2, or 3 components of $\overline{FS}_2$ meeting at $x$.   If there are 1 or 2 components meeting, the period map is resolved by blowing up the union of the components.   If there are 3 components meeting at $x$, the period map is resolved by the toric morphism determined by Figure \ref{Fig:3comp}.
\item We have $U\cap \overline{FS}_3=FS_3$, and at a point $x\in FS_3$ the period map is resolved by blowing up the locus $FS_3$.
\end{enumerate}
In addition, for $g=2$ the period map $\overline R_3\dashrightarrow \bar A_2^P$ ($=\bar A_2^V,\bar A_2^C$)  is resolved simply by blowing up $\overline {FS}_{2}$, which is irreducible (globally and \'etale locally).
\end{teo}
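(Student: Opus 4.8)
\emph{Overall strategy and the choice of $Z$.} The plan is to reduce everything to an \'etale-local, toric computation via the resolution recipe of \S\ref{secRPM}, and then feed in the combinatorial input already assembled: Theorem \ref{teoindPM} (location of the indeterminacy locus), Theorem \ref{teoFSMCP} (Friedman--Smith cones), and the Appendix monodromy computations of their codimension $\le 3$ degenerations. Recall from \S\ref{secRPM} that near a boundary point $x$, after replacing $\overline R_{g+1}$ \'etale-locally by (a smooth space over) the miniversal deformation $B$ of the cover with normal crossing boundary $\Delta=\{x_1=0\}\cup\dots\cup\{x_k=0\}$ — so $k$ is the number of nodes of the base curve, i.e.\ the number of edges of $\Gamma$ — the minimal toric modification resolving $P_g^P$ is governed by the fan $\mu^{-1}\mathscr F_0^P$ subdividing $\mathbb R_{\ge 0}\langle e_1,\dots,e_k\rangle$, where $\mathscr F_0^P$ is the perfect-cone subdivision of the monodromy cone $\sigma(\widetilde C/C)$ and $\mu(e_i)=\overline N_{e_i}^-$ is the period map on cones; this modification is canonical, hence glues, and Proposition \ref{propmoncone} makes $\sigma(\widetilde C/C)$ completely explicit. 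I would take $Z$ to be the union of $Ind(P_g^P)\setminus(\overline{FS}_2\cup\overline{FS}_3)$, of $\partial\overline{FS}_3$, and of the locus of covers in $\partial\overline{FS}_2$ whose base graph has at least four edges; by Theorem \ref{teoindPM} the first piece lies in $\partial\overline{FS}_4\cup\dots\cup\partial\overline{FS}_g$ and has codimension $\ge 6$, and the other two consist of degenerations of Friedman--Smith covers acquiring at least one further node, hence have codimension $\ge 4$. Thus $Z\subseteq\bigcup_{n=2}^g\partial\overline{FS}_n$ is closed of codimension $\ge 4$, and on $U=\overline R_{g+1}\setminus Z$ one has $Ind(P_g^P)\cap U\subseteq\overline{FS}_2\cup\overline{FS}_3$, which is statement (1).

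\emph{Statement (3).} On $U$ we have $U\cap\overline{FS}_3=FS_3$, and at a point of $FS_3$ the base has exactly three nodes, so $k=3$; since both $\widetilde C_i$ are smooth irreducible there is a single branch of $\overline{FS}_3$ through $x$, and $x\notin\overline{FS}_2$ because the number of $\widetilde C_1$-$\widetilde C_2$ nodes cannot drop under specialization. The monodromy cone is the three-dimensional Friedman--Smith cone, basic by Theorem \ref{teoFSMCP}(1) but not contained in a perfect cone by Theorem \ref{teoFSMCP}(3). I would first check with Lemma \ref{lemsecvor} that each of its three two-dimensional faces is matroidal, hence a face of a perfect cone by Remark \ref{remMV}, so $\mathscr F_0^P$ can only refine $\sigma$ in its interior; then, using the explicit rank $1$ second Voronoi (= perfect, in rank $3$) cones decomposing the Friedman--Smith cone from \S\ref{secFScd} — equivalently, using that the barycentric subdivision suffices (Remark \ref{remFSSVD}) and checking via Lemma \ref{lempc} that each of the three cones joining a two-face to the ray through $\ell_{e_1}^2+\ell_{e_2}^2+\ell_{e_3}^2$ lies in a perfect cone — I would identify $\mathscr F_0^P$ with the star subdivision of $\sigma$ at that ray. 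Since $\mu$ is a linear isomorphism sending $e_i$ to $\ell_{e_i}^2$, $\mu^{-1}\mathscr F_0^P$ is then the star subdivision of $\mathbb R_{\ge 0}\langle e_1,e_2,e_3\rangle$ at $e_1+e_2+e_3$, i.e.\ the blow-up of $\{x_1=x_2=x_3=0\}=FS_3$.

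\emph{Statement (2) and the $g=2$ addendum.} At a generic point of $FS_2$ one has $k=2$ and $\sigma$ is the two-dimensional, simplicial, non-basic Friedman--Smith cone $\mathbb R_{\ge 0}\langle(2z_1^\vee+z_2^\vee)^2,(z_2^\vee)^2\rangle$; a short rank $2$ calculation shows the perfect cone decomposition crosses exactly one wall inside it, along the ray through $\ell_{e_1}^2+\ell_{e_2}^2$, so $\mu^{-1}\mathscr F_0^P$ is the blow-up of the single branch $\{x_1=x_2=0\}=\overline{FS}_2$. For the codimension $3$ points of $U\cap\overline{FS}_2$ — those whose base graph has exactly three edges — I would run through the five ways to replace a vertex of the $FS_2$ base graph by one of the graphs of Figures \ref{Fig:d0'}--\ref{Fig:digi}, as in the proof of Theorem \ref{teoindPM}: the $FS_2+W_1$, $FS_{n_1+n_2}+FS_1$, $FS_{n_1+n_2}+\delta_i$ (with $n_1+n_2=2$), $FS_2+B_1$ and $FS_2+EE_1$ configurations, with monodromy cones computed in \S\S\ref{secFS+W}, \ref{secFS+FS}, \ref{secFS+d}, \ref{secFS+B}, \ref{secFS+EE} (Lemmas \ref{lemmcFSW}, \ref{lemFS+FS}, \ref{lemFS+d}, \ref{AlemFSB}, \ref{AlemFSEE}). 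For each I would read off from the dual graph the number of components of $\overline{FS}_2$ through $x$ — never more than three, since a three-edge base graph admits at most three partitions into two connected halves joined by exactly two edges, which is exactly what makes the codimension $\ge 4$ cutoff in the definition of $Z$ work — compute $\mathscr F_0^P$ for the resulting $\le 3$-dimensional monodromy cone, and compare $\mu^{-1}\mathscr F_0^P$ with the fan of the blow-up of the union of the relevant branches; the unique configuration where three branches meet is expected to produce the toric morphism of Figure \ref{Fig:3comp}. Finally, for $g=2$ the space $\overline R_3$ has dimension $6$, $FS_3$ is empty, and a direct check shows that no codimension $3$ degeneration of an $FS_2$ cover in $\overline R_3$ creates a second branch (the genera $g(C_1)=g(C_2)=1$ leave no room), so $\overline{FS}_2$ is irreducible even \'etale-locally and the resolution is the single blow-up asserted.

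\emph{Main obstacle.} The hard part will be the explicit determination of the perfect cone decompositions $\mathscr F_0^P$ of the monodromy cones of the five families of codimension $3$ degenerations of $FS_2$ covers (and of the basic $FS_3$ cone), which rests on the lengthy rank $1$ quadric calculations of the Appendix, together with — in the single case where three branches of $\overline{FS}_2$ meet — the matching of the resulting fan $\mu^{-1}\mathscr F_0^P$ with the one drawn in Figure \ref{Fig:3comp}. By contrast, the bound of at most three branches and the codimension estimates entering the definition of $Z$ are routine graph-theoretic bookkeeping.
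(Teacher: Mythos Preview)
Your approach is essentially the paper's: define $Z$ to cut out everything of codimension $\ge 4$ except the ``first-order'' degenerations of $FS_2$, then resolve the period map \'etale-locally via the toric recipe of \S\ref{secRPM} by decomposing each monodromy cone into perfect cones, running through the same five families $FS_2+W_1$, $FS_{n_1+n_2}+FS_1$, $FS_{n_1+n_2}+\delta_i$, $FS_2+B_1$, $FS_2+EE_1$ (and $FS_3$ itself). Your definition of $Z$ is worded slightly differently from the paper's but is equivalent, and your identification of the $1$-, $2$-, and $3$-branch cases with the corresponding toric modifications (blow-up of a single branch, blow-up of the union, and the fan of Figure~\ref{Fig:3comp}) is exactly what the paper verifies case by case.

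There is, however, a genuine gap in your treatment of the $g=2$ addendum. You argue that ``no codimension $3$ degeneration of an $FS_2$ cover in $\overline R_3$ creates a second branch'' and conclude that $\overline{FS}_2$ is \'etale-locally irreducible and that a single blow-up resolves everywhere. But the $g=2$ statement asserts a resolution on \emph{all} of $\overline R_3$, not only up to codimension $4$; since $\dim\overline R_3=6$ and $\operatorname{codim}\overline{FS}_2=2$, you must analyze degenerations up to codimension $6$ (equivalently, base graphs with up to six edges), not just three. The paper acknowledges this: ``There are more cases to consider, but in each case the associated combinatorial data is a simplex that is star-subdivided along the edge corresponding to the two Friedman--Smith edges.'' Your genus-counting argument (that the multi-branch configurations such as $DR_3$ or $FS_{1+1}+\delta_1$ require total component genus $\ge 3$, impossible when $g_1=g_2=1$) is the right idea and does exclude those particular configurations, but it is not by itself a proof that \emph{every} deeper degeneration stays single-branched and yields only a star subdivision; that still requires the exhaustive (if routine) check the paper alludes to. A minor additional point: your reason that a point of $FS_3$ lies outside $\overline{FS}_2$ (``the number of $\widetilde C_1$--$\widetilde C_2$ nodes cannot drop under specialization'') is phrased backwards---specialization adds nodes---though the conclusion is correct because an $FS_3$ cover has only two irreducible components and hence admits no $FS_2$-type splitting.
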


\begin{rem}
One may take $Z$ in the theorem above so that for $n\ge 4$,  $U\cap\overline {FS}_n=FS_n$.  Then if in addition one blows-up along $FS_n$ for $n\ge 4$, this resolves the period map on $U$ to the second Voronoi compactification. 
\end{rem}

\begin{rem}
In the appendix, we provide explicit resolutions of the period map to $\bar A_g^P$ for many more types of degenerations of Friedman--Smith covers.    While these  still do not cover enough special cases to resolve the period map $\overline{R}_{g+1}\dashrightarrow \bar A_g^P$ for any $g\ge 3$, in principle, these computations could be carried out further to completely resolve the period map for low $g$.
\end{rem}

\begin{rem}\label{remresR3R4}
In part (2) of the theorem,   in the case where $3$ components of $\overline {FS}_2$ meet, we point out   that the birational modification  is not the blow-up of the union of the  $3$ components, nor is it obtained by blowing up the intersection of the $3$ components, followed by blowing up the strict transforms of the components (and neither of these birational modifications resolves the period map).
\end{rem}

\begin{proof}
First let us define the locus $Z$ in the statement of the theorem.
 Let $Z_2\subseteq \partial \overline{FS}_2$ be the locus of degenerations
whose dual graph is not obtained  by replacing a vertex in the dual graph of an $FS_2$ cover (see Figure \ref{Fig:dgFSG})  with one of the dual graphs in Figures
\ref{Fig:d0'}-\ref{Fig:digi}.   Let
$$Z=Z_2\cup \bigcup_{n=3}^g \partial \overline{FS}_n$$
Thus $\operatorname{codim} Z\ge 4$, and on $U$ the period map only fails to be regular along $U\cap \overline{FS}_2$ and $U\cap \overline{FS}_3=FS_3$.  From Remark \ref{remFSSVD}, at points of $FS_2$ and $FS_3$, the period map is resolved in a neighborhood by a blow-up of the $FS$ locus.
The proof now proceeds in a similar fashion to the proof of Theorem \ref{teoindPM}.  We enumerate the dual graphs obtained from covers in $U\cap \partial \overline{FS}_2$, and for each of them decompose the corresponding monodromy cone into cones in the PCD.
Recall that this provides a resolution of the period map in the following way.  Given an admissible double cover $\widetilde C\to C$, the miniversal space has snc boundary with components in bijection with edges of the  dual graph $\Gamma$ of  $C$.  Consequently, for each edge $e$ of $\Gamma$, there is a corresponding quadratic form obtained via the log of monodromy.  This induces  a map from the standard simplex with vertices indexed by the edges of $\Gamma$, to the closure of the monodromy cone.    Decomposing the monodromy cone into cones contained in the admissible cone decomposition, and then pulling back to the standard simplex, gives a decomposition of the standard simplex, which determines the minimal resolution of the period map in a neighborhood of the admissible cover $\widetilde C\to C$.

We now proceed to implement this, using the same enumeration as in the proof of Theorem \ref{teoindPM}
For the case of a $FS_2+W_1$ cover $\widetilde C\to C$, the dual graph $\Gamma$ has $3$ edges $e_1,e_2,f$.  The cone decomposition is given in \S \ref{secFS+W} and has the form

$$
\xymatrix{
&*{\bullet} \ar@{-}[rd]|-{\SelectTips{cm}{}\object@{}}^>{e_2} \ar@{-}[ld]|-{\SelectTips{cm}{}\object@{}}_<{f}  _>{e_1} \ar@{-}[d]|-{\SelectTips{cm}{}\object@{}} & \\
*{\bullet} \ar@{-}[rr]|-{\SelectTips{cm}{}\object@{}}&*{} &*{\bullet}\\
}\ \  \ \  \ \ \ \ \
\xymatrix{
&*{\bullet} \ar@{-}[rd]|-{\SelectTips{cm}{}\object@{}}^>{x_2^2} \ar@{-}[ld]|-{\SelectTips{cm}{}\object@{}}_<{x_1^2}  _>{(2x_1-x_2)^2} \ar@{-}[d]|-{\SelectTips{cm}{}\object@{}} & \\
*{\bullet} \ar@{-}[rr]|-{\SelectTips{cm}{}\object@{}}&*{} &*{\bullet}\\
}
$$
where $e_1\mapsto (2x_1-x_2)^2$,  $e_2\mapsto x_2^2$, and $f\mapsto x_1^2$.
\'Etale locally, the divisors corresponding to $f$, $e_1$, and $e_2$ are all of type $\delta_0'$.  The intersection of the two copies of $\delta_0'$ corresponding to $e_1$ and $e_2$ is exactly the locus of Friedman--Smith covers, which   are of type $\overline{FS}_2$.  The decomposition above indicates that this locus is blown-up in the minimal resolution.

For the case of a $FS_{2+0}+FS_1$ cover,  the cone decomposition is given in \S \ref{secFS+FS} (see Figure \ref{figFS20FS1decomp}) and (in similar notation to the example above) has the form

$$
\xymatrix{
&*{\bullet} \ar@{-}[rd]|-{\SelectTips{cm}{}\object@{}}^>{e_2} \ar@{-}[ld]|-{\SelectTips{cm}{}\object@{}}_<{f}  _>{e_1} \ar@{-}[d]|-{\SelectTips{cm}{}\object@{}} & \\
*{\bullet} \ar@{-}[rr]|-{\SelectTips{cm}{}\object@{}}&*{} &*{\bullet}\\
}\ \  \ \  \ \ \ \ \
\xymatrix{
&*{\bullet} \ar@{-}[rd]|-{\SelectTips{cm}{}\object@{}}^>{x_2^2} \ar@{-}[ld]|-{\SelectTips{cm}{}\object@{}}_<{x_3^2}  _>{(2x_1-x_2)^2} \ar@{-}[d]|-{\SelectTips{cm}{}\object@{}} & \\
*{\bullet} \ar@{-}[rr]|-{\SelectTips{cm}{}\object@{}}&*{} &*{\bullet}\\
}
$$
where $e_1\mapsto (2x_1-x_2)^2$,  $e_2\mapsto x_2^2$, and $f\mapsto x_3^2$.
\'Etale locally, the divisors corresponding to $e_1$, and $e_2$ are of type $\delta_0'$.    The divisor corresponding to $f$ is of type $\delta_{1,1}$ (the $\overline {FS}_1$ locus). The intersection of the two copies of $\delta_0'$ corresponding to $e_1$ and $e_2$ is exactly the locus of Friedman--Smith covers of type $\overline{FS}_2$.  The decomposition above indicates that this locus is blown-up in the minimal resolution.

For the case of a $FS_{1+1}+FS_1$ cover it is shown in \S \ref{secFS+FS} (see also \S \ref{secDR} and \cite{vologodskyres}),   that the  cone decomposition is (in similar notation)
\begin{equation}\label{Fig:3comp}
\xymatrix@C=.4cm@R=.5cm{
&&*{\bullet} \ar@{-}[rrdd]|-{\SelectTips{cm}{}\object@{}}^>{e_2} ^<{f} \ar@{-}[lldd]|-{\SelectTips{cm}{}\object@{}}_>{e_1}&&\\
&*{} \ar@{-}[rd] \ar@{-}[rr]&&*{} \ar@{-}[ld] &\\
*{\bullet}  \ar@{-}[rrrr] && *{}&&*{\bullet}
} \ \ \ \ \  \ \ \  \ \ \  \
\xymatrix@C=.4cm@R=.5cm{
&&*{\bullet} \ar@{-}[rrdd]|-{\SelectTips{cm}{}\object@{}}^>{x_2^2} ^<{(-x_2+2x_3)^2} \ar@{-}[lldd]|-{\SelectTips{cm}{}\object@{}}_>{(2x_1-x_2)^2}&&\\
&*{} \ar@{-}[rd] \ar@{-}[rr]&&*{} \ar@{-}[ld] &\\
*{\bullet}  \ar@{-}[rrrr] && *{}&&*{\bullet}
}
\end{equation}
\'Etale locally, the divisors corresponding to $f$, $e_1$ and $e_2$ are of type $\delta_0'$.  In this case, each of the $3$ pairwise intersections of these divisors  is an $\overline {FS}_2$ locus.    The associated birational modification that resolves the period map is an isomorphism away from this locus, and introduces $3$ exceptional divisors.  The corresponding birational modification of $\mathbb A^3_{\mathbb C}$ has as  fiber over the origin equal to $3$ copies of $\mathbb P^1_{\mathbb C}$ attached at a point.

For the $FS_{2+0}+\delta_1$ example, from the analysis in section  \S \ref{secFS+d} we see that the cone decomposition is
$$
\xymatrix{
&*{\bullet} \ar@{-}[rd]|-{\SelectTips{cm}{}\object@{}}^>{e_2} \ar@{-}[ld]|-{\SelectTips{cm}{}\object@{}}_<{f}  _>{e_1} \ar@{-}[d]|-{\SelectTips{cm}{}\object@{}} & \\
*{\bullet} \ar@{-}[rr]|-{\SelectTips{cm}{}\object@{}}&*{} &*{\bullet}\\
}\ \  \ \  \ \ \ \ \
\xymatrix{
&*{\bullet} \ar@{-}[rd]|-{\SelectTips{cm}{}\object@{}}^>{x_2^2} \ar@{-}[ld]|-{\SelectTips{cm}{}\object@{}}_<{x_1^2}  _>{(2x_1-x_2)^2} \ar@{-}[d]|-{\SelectTips{cm}{}\object@{}} & \\
*{\bullet} \ar@{-}[rr]|-{\SelectTips{cm}{}\object@{}}&*{} &*{\bullet}\\
}
$$
The divisors corresponding to $e_1$, $e_2$ and $f$ are of type $\delta_0'$.  The Friedman--Smith locus ($\overline {FS}_2$)  is given as the intersection of the two divisors corresponding to $e_1$ and $e_2$.  The decomposition tells us that in the neighborhood of such a point, the minimal resolution is the blow-up of the $\overline {FS}_2$ locus.

For the $FS_{1+1}+\delta_1$ example, in \S \ref{secFS+d} we see that
the cone decomposition is given as
\begin{equation*}
\xymatrix@C=.4cm@R=.5cm{
&&*{\bullet} \ar@{-}[rrdd]|-{\SelectTips{cm}{}\object@{}}^>{E_1} ^<{f} \ar@{-}[lldd]|-{\SelectTips{cm}{}\object@{}}_>{e_1}&&\\
&*{} &&*{}\ar@{-}[ld]   &\\
*{\bullet}  \ar@{-}[rrrr] && *{}&&*{\bullet}
} \ \ \ \ \  \ \ \  \ \ \  \
\xymatrix@C=.5cm@R=.5cm{
&&&&\\
&&\\
*{\bullet}  \ar@{-}[rrrr]^>{x_2^2} ^<{(2x_1-x_2)^2}&& *{|}&&*{\bullet}
}
\end{equation*}
where $e_1\mapsto (2x_1-x_2)^2$, $E_1\mapsto x_2^2$ and $f\mapsto (2x_1-x_2)^2$.
The divisors corresponding to $e_1$, $E_1$ and $f$ are of type $\delta_0'$.    In this case, the two components of the Friedman--Smith locus correspond to the intersection of the divisor corresponding to $e_1$ with the divisor corresponding to $E_1$, and also to the divisor corresponding to $f$ intersecting the divisor corresponding to $E_1$.  These two loci are both of type $\overline {FS}_2$.  The period map is resolved by blowing up the union of these loci.

For the $FS_{0+2}+\delta_1$ example, from the analysis in section  \S \ref{secFS+d} we see that the cone decomposition is
$$
\xymatrix{
&*{\bullet} \ar@{-}[rd]|-{\SelectTips{cm}{}\object@{}}^>{E_2} \ar@{-}[ld]|-{\SelectTips{cm}{}\object@{}}_<{f}  _>{E_1} \ar@{-}[d]|-{\SelectTips{cm}{}\object@{}} & \\
*{\bullet} \ar@{-}[rr]|-{\SelectTips{cm}{}\object@{}}&*{} &*{\bullet}\\
}\ \  \ \  \ \ \ \ \
\xymatrix{
& \ar@{}[rd]|-{\SelectTips{cm}{}\object@{}}^>{x_2^2} \ar@{}[ld]|-{\SelectTips{cm}{}\object@{}}_<{}  _>{(2x_1-x_2)^2} \ar@{}[d]|-{\SelectTips{cm}{}\object@{}} & \\
*{\bullet} \ar@{-}[rr]|-{\SelectTips{cm}{}\object@{}}&*{|} &*{\bullet}\\
}
$$
where $E_1\mapsto (2x_1-x_2)^2$, $E_2\mapsto x_2^2$, and $f\mapsto 0$.
The divisors corresponding to $E_1$, $E_2$  are of type   $\delta_0'$, while the divisor corresponding to $f$ is of type $\delta_1$.  The Friedman--Smith locus ($\overline {FS}_2$)  is given as the intersection of the two divisors corresponding to $E_1$ and $E_2$.  The decomposition tells us that in the neighborhood of such a point, the minimal resolution is the blow-up of the $\overline {FS}_2$ locus.

For the $FS_{2}+B_1$ example, from the analysis in section  \S \ref{secFS+B} we see that the cone decomposition is
$$
\xymatrix{
&*{\bullet} \ar@{-}[rd]|-{\SelectTips{cm}{}\object@{}}^>{e_1} \ar@{-}[ld]|-{\SelectTips{cm}{}\object@{}}_<{f}  _>{e_2} \ar@{-}[d]|-{\SelectTips{cm}{}\object@{}} & \\
*{\bullet} \ar@{-}[rr]|-{\SelectTips{cm}{}\object@{}}&*{} &*{\bullet}\\
}\ \  \ \  \ \ \ \ \
\xymatrix{
& \ar@{}[rd]|-{\SelectTips{cm}{}\object@{}}^>{x_2^2} \ar@{}[ld]|-{\SelectTips{cm}{}\object@{}}_<{}  _>{(2x_1-x_2)^2} \ar@{}[d]|-{\SelectTips{cm}{}\object@{}} & \\
*{\bullet} \ar@{-}[rr]|-{\SelectTips{cm}{}\object@{}}&*{|} &*{\bullet}\\
}
$$
where $e_1\mapsto (2x_1-x_2)^2$, $e_2\mapsto x_2^2$, and $f\mapsto 0$.
The divisors corresponding to $e_1$, $e_2$  are of type   $\delta_0'$, while the divisor corresponding to $f$ is of type $\delta_0^{\operatorname{ram}}$.  The Friedman--Smith locus ($\overline {FS}_2$)  is given as the intersection of the two divisors corresponding to $e_1$ and $e_2$.  The decomposition tells us that in the neighborhood of such a point, the minimal resolution is the blow-up of the $\overline {FS}_2$ locus.

For the $FS_{2}+EE_1$ example, from the analysis in section  \S \ref{secFS+EE} we see that the cone decomposition is
$$
\xymatrix{
&*{\bullet} \ar@{-}[rd]|-{\SelectTips{cm}{}\object@{}}^>{e_2} \ar@{-}[ld]|-{\SelectTips{cm}{}\object@{}}_<{f}  _>{e_1} \ar@{-}[d]|-{\SelectTips{cm}{}\object@{}} & \\
*{\bullet} \ar@{-}[rr]|-{\SelectTips{cm}{}\object@{}}&*{} &*{\bullet}\\
}\ \  \ \  \ \ \ \ \
\xymatrix{
&*{\bullet} \ar@{-}[rd]|-{\SelectTips{cm}{}\object@{}}^>{x_2^2} \ar@{-}[ld]|-{\SelectTips{cm}{}\object@{}}_<{x_3^2}  _>{(2x_1-x_2)^2} \ar@{-}[d]|-{\SelectTips{cm}{}\object@{}} & \\
*{\bullet} \ar@{-}[rr]|-{\SelectTips{cm}{}\object@{}}&*{} &*{\bullet}\\
}
$$
The divisors corresponding to $e_1$, $e_2$ and $f$ are of type $\delta_0'$.  The Friedman--Smith locus ($\overline {FS}_2$)  is given as the intersection of the two divisors corresponding to $e_1$ and $e_2$.  The decomposition tells us that in the neighborhood of such a point, the minimal resolution is the blow-up of the $\overline {FS}_2$ locus.

The proof that the period map $\overline R_3\dashrightarrow \bar A_2$, is resolved by blowing up $\overline {FS}_{2}$ is similar.  There are more cases to consider, but in each case, the associated combinatorial data is a simplex that is star-subdivided along the edge corresponding to two the Friedman--Smith edges.
\end{proof}

%%%%%%%%%%%%%%%%%%%%%%%%%%%%%%%%%%%%%%%%%%%%%%%%%%%%%%%%%%%%%%%%%%%%%%%%%%%%%%%%
\section{Fibers of the resolution} \label{secsubFibers}
We now consider the question of describing the fibers of the resolution.  We expect that with \cite{donagi}  and the techniques we describe here, it should be possible to give compete descriptions of the fibers over certain strata in low genus.  We will pursue this elsewhere; here we limit ourselves to the following. Given a point $x\in \bar A_g$, in a given stratum, describe loci in the resolution of $\overline R_{g+1}$ that  map to the same stratum.  This can be rephrased in terms of $1$-parameter families, which is what we actually consider.    Moreover, since    $\bar A_g^V$, $\bar A_g^P$ and $\bar A_g^C$ coincide outside $\overline{\beta}_{4}$ (torus rank $4$ or more),  for $\beta_0\cup\beta_1\cup \beta_2 \cup \beta_3$
we can work with any one of them, eg.~we can adopt the language of the second Voronoi compactification as we shall do below.

\subsection{Degeneration data for Pryms}
Limits of one parameter families of ppav are determined by degeneration data, which in turn determine the limit point in the toroidal compactification  (see \S\ref{secModStAbVar}).  Here we recall from \cite{abh} the degeneration data for Pryms.    We begin with an admissible cover $\widetilde C\to C$, and a $1$-parameter deformation associated to a map $\psi:S\to \operatorname{Def}_{\widetilde C/C}$, from the  unit disk to the base of a mini-versal deformation.   Let $\Psi:S\to \bar A_{g}^\Sigma $ be the composition of $\psi$ with the rational map $\operatorname{Def}_{\widetilde C/C}\dashrightarrow \bar A_{g}^\Sigma$.

First let us consider the degeneration data for the  Jacobian of the covering curve $\widetilde C$.  In this case, we saw in \eqref{eqnExtJac} that the generalized Jacobian $J\widetilde C$ corresponds to a morphism $\tilde c:H_1(\widetilde \Gamma,\mathbb Z)\to \widehat {J\widetilde N}$, where $\widetilde N$ is the normalization of $\widetilde C$.
We recall the morphism $\tilde c$.
A node on ${\widetilde  C}$ corresponds to an edge $e$ in $C_1(\widetilde {\Gamma},\mathbb Z)={\oplus} \ZZ {\tilde e_j}$ going from a vertex $\tilde v^+$ to a vertex $\tilde v^{-}$.
Let $\tilde Q_+(e)$ be the point corresponding to $\tilde v^+$ in ${\widetilde N}$ and similarly with $\tilde v^-$ (if $\tilde v^+=\tilde v^-$ then it does not matter which of the two points above the double point
is  $\tilde Q_+(e)$ and which one is $\tilde Q_-(e)$).
The map $\tilde c$ is defined by restricting the map
$$
\tilde c: C_1({\tilde{\Gamma}},\mathbb Z) \to J{\widetilde N}_0, \ \ \ \
e \mapsto {\mathcal O}(Q_+(e)) \otimes {\mathcal O}(Q_-(e))^{-1}
$$
to the sub-lattice $H_1(\widetilde \Gamma,\mathbb Z)$.
The isomorphism $\phi:H^1(\widetilde \Gamma,\mathbb Z)\to H_1(\widetilde \Gamma,\mathbb Z)$ is the canonical isomorphism, and $\hat{\tilde c}:H^1(\widetilde \Gamma,\mathbb Z)\to J\widetilde N$ is defined as $\lambda^{-1}\circ c \circ \phi$, where $\lambda$ is the canonical principal polarization.   The biholomorphism $\tilde \tau$ is related to the Deligne symbol, and the quadratic form $\widetilde B$ is given by the valuation of $\tilde \tau$ or alternatively the log of monodromy computed in Proposition \ref{projmoncone}.

Now let us describe the degeneration data for the Prym.  As discussed in \eqref{eqnExtPrym}, the generalized (open) Prym  corresponds to a morphism $ c^-:H_1(\widetilde \Gamma,\mathbb Z)^{[-]}\to \widehat A$, where $A$ is a finite  cover  of  the abelian variety
$$P_N:=\ker(\operatorname{Nm}:J\widetilde N \to JN)_0$$ (see \cite[Prop.~1.5]{abh}).
The map $c^-$ is given by the  commutative diagram
$$
\begin{CD}
 H_1(\widetilde \Gamma,\mathbb Z) @>{} >> H_1(\widetilde \Gamma,\mathbb Z)^{[-]}@.\\
 @V \tilde c VV @V c^- VV\\
\widehat {J\widetilde N} @>{}>> \widehat A.
\end{CD}
$$
The biholomorphism $\tau^-$  is given by the ``restriction'' of the bihomomorphism $\tilde \tau$  (\cite[\S 3.2, esp.~\S 2.2, p.93]{abh}).
 The bilinear form $B^-$
is again given by the valuation of $\tau^-$ or equivalently the log of monodromy computed in Proposition \ref{propmoncone}.

In summary, $\Psi(0) \in \beta_i$ if and only if  $\operatorname{rank} H_1(\widetilde \Gamma,\mathbb Z)=i$, and $\Psi(0)\in \beta(\sigma)$ if and only if $B^-\in \sigma$ where
$\sigma$ is the minimal cone with this property.
The remaining modulus for $\Psi(0)$ is  determined by the remaining degeneration data.

Note that by definition, $B^-\in \sigma(\widetilde C/C)$, the monodromy cone.  If $\sigma(\widetilde C/C)$ is contained in a cone in $\Sigma$, then the Prym map extends in a neighborhood of $\widetilde C\to C$, and so $\Psi(0)$ depends only on $\widetilde C\to C$, and not on the $1$-parameter family.   Otherwise, a decomposition of $\sigma(\widetilde C/C)$ into cones in $\Sigma$ shows the dependence of even the combinatorial data on the $1$-parameter family.  More precisely, in the notation of Remark \ref{remPrymMonCone}, $B^-=\sum_{e\in \Gamma}\operatorname{ord} \psi^*(z_e)(\tilde e^\vee-\iota\tilde e^\vee)^2$,   and
the order of vanishing determines the sub cone of $\sigma(\widetilde C/C)$ in which the quadratic form $B^-$ lies.    We shall now illustrate the above discussion with several examples.

As a final note,  the combinatorics of rank $1$ quadratic forms on a lattice are best considered by using squares of primitive rank $1$ linear forms.  This is the convention we use in the appendices.  To match those descriptions to the ones in this section,  it works best to describe the quadratic form $B^-$ as
$$
B^-=\sum_{e\in \Gamma}\alpha_e\ell_e^2
$$
where  $\ell_e$ is defined in \eqref{EQNdefle} and
$$
 \alpha_e:=
 \left\{
 \begin{array}{ll}
 \operatorname{ord}\psi^*(z_e) & \text{if } \  \iota \tilde e^\vee \ne -\tilde e^\vee .\\
4\operatorname{ord}\psi^*(z_e)& \text{if }\  \iota\tilde e^\vee=-\tilde e^\vee,\\
 \end{array}
 \right.
$$

\subsection{The Friedman--Smith loci $FS_n$}   In this section we consider the image of the strict transforms of the Friedman--Smith loci. That is, given a $1$-parameter family of admissible covers, degenerating to a Friedman--Smith cover in $FS_n$, we want to describe the associated point in $\bar A_g^V$.

As we have already seen the Friedman--Smith loci $FS_{n}$ consist of several components. These can be enumerated as follows: the curve $C$ is
reducible, more precisely $C=C_1 \cup C_2$ where $C_1$ and $C_2$ intersect in $n$ points. The curves $C_i$ are smooth and  irreducible. If $g_i=g(C_i)$,
then $g_1 + g_2 + n-1=g+1$. The components  of $FS_{n}$ then correspond to the different possibilities for $g_1 \geq g_2 > 0$.   From \S \ref{sectPrym}, \S \ref{secFSexamples} one can see that $\operatorname{rank}H_1(\widetilde \Gamma,\mathbb Z)^{[-]}=n$ and  $A=P_N=P_{N_1}\times P_{N_2}$ (see also \cite{abh}). This determines the image in $A_{g-1}^*$.
For the remaining extension data, we will focus on the quadratic form $B^-$, starting with a general point on an irreducible component of $FS_2$.

\subsubsection{The $FS_2$ locus}
Here we are in the torus rank $2$ case, i.e.~$i=2$. Thus we are no longer in Mumford's partial compactification, but we are still in the range
where all known toroidal compactifications coincide, in particular also the   second Voronoi compactification and the perfect cone compactification.   In the notation of \S \ref{secFScd}, where the decomposition of the monodromy cone is established,  the form $B^-$ is given by
$
\alpha_1(\tilde e_1^\vee-\iota \tilde e_1^\vee )^2+\alpha_2(\tilde e_2^\vee-\iota \tilde e_2^\vee )^2
$
with $\alpha_i=\operatorname{ord}\psi^*(z_{e_i})$, and the monodromy  cone decomposes as $\alpha_1<\alpha_2$, $\alpha_1=\alpha_2$, $\alpha_1>\alpha_2$.
In the general case, $\alpha_1=\alpha_2$.
As explained in Remark \ref{remFS2Del}, the associated Delaunay decomposition of $\RR^2$ is that of squares and the corresponding cone  is equivalent to  the standard cone $\sigma_{1+1}=\langle x_1^2,x_2^2 \rangle$.
The degenerate Prym is in this case  a $\PP^1 \times \PP^1$ bundle over $A$ with ``opposite'' coordinate lines $\{0\} \times \PP^1$ and $\{\infty\} \times \PP^1$  as well
as  $\PP^1 \times \{0\}$ and $\PP^1 \times \{\infty\}$ identified with a shift over $A$. This shift is
determined by $c^-$. There is a further parameter $b\in \CC^*$ which describes the gluing of the lines which are identified. This parameter is given by the
bihomomorphism $\tau$ and varies with the chosen $1$-parameter family (even if $B^-$ does not change). In fact this is the parameter in the fibers of the $\PP^1$-bundle
given by blowing up the general point of an irreducible component of $FS_2$.

We can also choose $1$-parameter families with $\alpha_1>\alpha_2$ (the case $\alpha_1<\alpha_2$ can be obtained by symmetry).
Now, as explained in Remark \ref{remFS2Del}, the Delaunay decomposition changes: every square breaks up into two triangles
and the corresponding cone is equivalent to $\sigma_{K_3} = \langle x_1^2, x_2^2, (x_1 + x_2)^2 \rangle$.
The degenerate Prym is the union of two $\PP^2$-bundles over $A$
with their coordinate lines identified appropriately again with a shift over $A$ which is determined by $c^-$. In this case the torus bundle ${\mathcal T}(\sigma)$ has rank $0$
and the bihomomorphism  $\tau$ is trivial. We shall see below that points in $\beta(\sigma_{1+1})$ can also arise from other degenerations.

\subsubsection{The $FS_3$ locus}
We shall now move on to general points on $FS_3$. As before the crucial point is the form $B^-$.
In the notation of \S \ref{secFScd}, where the decomposition of the monodromy cone is established,  the form $B^-$ is given by
$
\sum_{i=1}^3\alpha_i(\tilde e_i^\vee-\iota \tilde e_i^\vee )^2
$
with $\alpha_i=\operatorname{ord}\psi^*(z_{e_i})$, and the monodromy  cone decomposes as $\alpha_1=\alpha_2=\alpha_3$, $\alpha_1<\alpha_2=\alpha_3$,  $\alpha_1<\alpha_2 < \alpha_3$,
together with  all permuations.    Here we discuss the general case, $\alpha_1=\alpha_2=\alpha_3$.
In Remark \ref{remFS3Del} is is shown that  the associated cone is equivalent to  $\sigma_{C_4}=\langle x_1^2,  x_2^2, x_3^2, (x_1 + x_2 + x_3)^2 \rangle$. The associated Delaunay decomposition of $\RR^3$ consists of $1$ octahedron and $2$ tetrahedra. The homomorphism $c^-$ again defines
shift parameters and $\tau$ defines gluing parameters, see \cite[Section 7.2]{ghdegenerations}. The latter depends on the $1$-parameter family.   The $1$-parameter families with different orders of vanishing will result in $B^-$ lying in one of the other cones in the decomposition of the monodromy cone (see Remark \ref{remFS3Del}).

\begin{rem}
The interesting point to note here is that this is a codimension $1$ stratum
in $\beta_2^{\Sigma}$ and hence the exceptional divisors over the general points of the irreducible components of $FS_3$ do {\em{not}} map dominantly to $\beta_2^{\Sigma}$, even for small $g$.
Thus the question remains to find an example which maps to the (unique) maximal stratum in $\beta_2^{\Sigma}$, namely the stratum
$\beta(\sigma_{1+1+1})$ where $\sigma_{1+1+1}= \langle x_1^2, x_2^2, x_3^2\rangle$. Indeed this is not difficult to find. We can take an
elementary
\'etale  covering with $6$ nodes (discussed below).
\end{rem}

\begin{rem}
More generally, for an $FS_n$ locus with $n\ge 2$,
if both $g_1, g_2 > 1$, then
the abelian variety $A$ is reducible.   Hence the strict transforms of the corresponding Friedman--Smith loci cannot map dominantly onto $\beta_i$,  even when mapping to the relevant stratum $A_{g-n}$ of  $A_g^{*}$.
For the remaining stratum $g_2=1$, it is possible that the map from this component of $FS_2$ to $\beta_2$ is dominant for $g\leq 5$.
This is clear for $g=2$, but in general needs some discussion of the
continuous parameters. The main issue is whether the projection under $q$ (see \S \ref{secModStAbVar}) maps surjectively to  ${\mathcal X}^{\times 2}$.
\end{rem}

\subsection{Elementary \'etale examples}
In this section we show that the elementary \'etale examples (see \S \ref{secEEE}) map to the (unique) maximal stratum in $\beta_2^{\Sigma}$, namely the stratum
$\beta(\sigma_{1+1+1})$ where $\sigma_{1+1+1}= \langle x_1^2, x_2^2, x_3^2\rangle$.
  Indeed it is  shown in \S \ref{secEEE} that the monodromy cone of an elementary \'etale example with $2n$ nodes is of type $\sigma_{1+\cdots+1}$.
 The associated degenerate abelian varieties
are $(\PP^1)^n$-bundles over abelian varieties with ``opposite sides'' glued with a shift. The shift is given by $c^-$, the gluing by $\tau$.
In particular points in $\beta(\sigma_{1 + 1})$ can arise not only from $FS_2$ covers but also from elementary \'etale covers with $4$ nodes.

\subsection{The Wirtinger locus $W_n$}
We conclude this discussion with the Wirtinger examples $W_n$ (see \S \ref{secWE}).  In this case $\widetilde C$  has two components which are
exchanged by the involution and $2n$ nodes which are pairwise interchanged.
In Remark \ref{remWDel} it is established that the
$W_n$ monodromy cone is $\mathbb R_{\ge 0}\langle x_1^2,(x_1-x_2)^2, \ldots, (x_{n-2}-x_{n-1})^2,x_{n-1}^2\rangle$.  For $n=3$, the
Delaunay decomposition of $\mathbb R^2$ is the tiling by $2$-simplices obtained by slicing the standard square into $2$ triangles.
The associated degenerate abelian variety is a union of $2$ copies of  $\PP^2$-bundles, corresponding to the slicing of the square into $2$ triangles. As always the gluing
is determined by $c^-$ the biholomorphism $\tau$ is trivial here. We also see that points in $\beta(\sigma_{K_3})$ can arise from both the blow-up of $FS_2$ as well
as from Wirtinger examples $W_3$.
For $n=4$ the toric part of the semi-abelic variety corresponds to the dicing of a cube into an octahedron and two tetrahedra, and thus consists of a complete intersection
$F(2,2)$ of two quadrics in $\PP^5$ and two copies of $\PP^3$. We take an opportunity to correct here an error in \cite[Example 5.2.2]{abh} where it was claimed that the
toric parts are all projective spaces. We note that the general $FS_3$ degenerations are mapped to the same stratum.

%%%%%%%%%%%%%%%%%%%%%%%%%%%%%%%%%%%%%%%%%%%%%%%%%%%%%%%%%%%%%%%%%%%%%%%%%%%%%%%%%%%%%%%%
%  EXAMPLES USED IN THE PROOFS
%%%%%%%%%%%%%%%%%%%%%%%%%%%%%%%%%%%%%%%%%%%%%%%%%%%%%%%%%%%%%%%%%%%%%%%%%%%%%%%%%%%%%%%%  %%%%%%%

\appendix

\section{Combinatorics of Friedman--Smith monodromy cones} \label{seccombinatorics}

In this appendix we establish various combinatorial properties of the monodromy cones of Friedman--Smith covers.    Our starting point will be the computation in \S \ref{secFSMonCone}, which culminated in \eqref{eqnFSMCo} giving a description of the monodromy cone in matrix form.
We first note that changing basis, one may take the Friedman--Smith monodromy cone for a cover of type $FS_n$, $n\ge 2$,  to be generated by the quadratic forms that are given by the squares of the linear forms determined by the rows of the following matrix:
 \begin{equation}\label{eqnFSMC}
FS_n \ \ \ \  \left(\begin{smallmatrix}
2&-1&-1&&&&-1\\
0&1&0&&&&0\\
0&0&1&0&&&\\
&&&\ddots&\ddots&&\\
&&&0&1&0&0\\
&&&&0&1&0\\
&&&&&0&1\\
\end{smallmatrix}\right)
\end{equation}
% \begin{equation}\label{eqnFSMC}
%  \begin{tiny}
%FS_n \ \ \ \  \left(\begin{array}{rrrrrrr}
%2&-1&-1&&&&-1\\
%0&1&0&&&&0\\
%0&0&1&0&&&\\
%&&&\ddots&\ddots&&\\
%&&&0&1&0&0\\
%&&&&0&1&0\\
%&&&&&0&1\\
%\end{array}\right)
%\end{tiny}
%\end{equation}
This matrix can be obtained from \eqref{eqnFSMCo} by  integral column operations, and by replacing rows  with their negatives.

\subsection{Friedman--Smith cones, simplicial cones, and basic cones}

\begin{lem}\label{lemBasicCone}
The Friedman--Smith cones are basic for  $n \geq 3$, $n=1$, and simplicial but not basic for $n=2$.
\end{lem}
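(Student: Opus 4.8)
The plan is to read off the cone generators from the explicit matrix \eqref{eqnFSMC} and then carry out a short determinant/saturation computation inside the lattice $(\Sym^2\Lambda)^\vee$, where $\Lambda=H_1(\widetilde\Gamma,\ZZ)^{[-]}$. First I would dispose of $n=1$: then $\Lambda$ has rank $1$ and, as recorded in \S\ref{secFSMonCone}, the primitive linear form $\ell_{e_1}$ generates $\Lambda^\vee$, so $\ell_{e_1}^2$ generates the rank-$1$ lattice $(\Sym^2\Lambda)^\vee$ and the ray it spans is (trivially) basic. For $n\ge 2$ the generators of the $1$-dimensional faces are the squares of the rows of \eqref{eqnFSMC}, i.e.\ $\ell_1=2x_1-x_2-\cdots-x_n$ and $\ell_i=x_i$ for $i=2,\dots,n$, where $x_1,\dots,x_n$ is the basis of $\Lambda^\vee$ dual to the chosen basis of $\Lambda$. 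Since \eqref{eqnFSMC} is upper triangular with nonzero diagonal, $\ell_1,\dots,\ell_n$ are linearly independent, hence so are $\ell_1^2,\dots,\ell_n^2$; thus the Friedman--Smith cone is simplicial for every $n\ge 2$, and only the ``basic'' question remains.

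Work in the standard $\ZZ$-basis $\{x_ix_j\}_{1\le i\le j\le n}$ of $(\Sym^2\Lambda)^\vee$, the one dual to $\{e_ie_j\}_{i\le j}$, in which the coefficient of $x_ix_j$ in $\ell^2$ is $\ell(e_i)\ell(e_j)$. One computes
\[
\ell_1^2=4x_1^2+\sum_{i=2}^n x_i^2-2\sum_{j=2}^n x_1x_j+\sum_{2\le i<j\le n}x_ix_j,\qquad \ell_i^2=x_i^2\ \ (i\ge 2).
\]
For $n\ge 3$ the monomial $x_2x_3$ occurs in $\ell_1^2$ with coefficient exactly $1$; hence replacing $x_2x_3$ by $\ell_1^2$ in the standard basis gives a new $\ZZ$-basis of $(\Sym^2\Lambda)^\vee$ (the change-of-basis matrix differs from the identity in a single row, and that row has entry $1$ in its diagonal position, so has determinant $\pm1$). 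As $\ell_2^2,\dots,\ell_n^2$ are among the $x_ix_j$ that were kept, $\{\ell_1^2,\dots,\ell_n^2\}$ sits inside this $\ZZ$-basis and so extends to one: the cone is basic for $n\ge 3$.

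For $n=2$ I would show instead that the sublattice $\ZZ\langle\ell_1^2,\ell_2^2\rangle$ is not saturated (equivalently, not a direct summand), which obstructs basicness. Here $\ell_1^2=4x_1^2+x_2^2-2x_1x_2$ and $\ell_2^2=x_2^2$, so $\ell_1^2-\ell_2^2=2(2x_1^2-x_1x_2)$ with $w:=2x_1^2-x_1x_2$ primitive in $(\Sym^2\Lambda)^\vee$ and not a $\QQ$-multiple of $\ell_2^2$; thus $w$ lies in $(\Sym^2\Lambda)^\vee\cap\QQ\langle\ell_1^2,\ell_2^2\rangle$ but not in $\ZZ\langle\ell_1^2,\ell_2^2\rangle$ (concretely, the $2\times2$ minors of the matrix of $\ell_1^2,\ell_2^2$ in the basis $\{x_1^2,x_2^2,x_1x_2\}$ have gcd $2$). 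Together with the linear independence above this yields ``simplicial but not basic'' for $n=2$. The only point demanding care is bookkeeping of the integral structure of $(\Sym^2\Lambda)^\vee$ — i.e.\ consistently handling the factor of $2$ between the diagonal and off-diagonal monomials — but once the convention of \S\ref{secttoroidal} (quadratic forms, with the monomial basis dual to $\{e_ie_j\}$) is fixed, the computations are entirely routine and there is no serious obstacle.
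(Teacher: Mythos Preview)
Your proof is correct and follows essentially the same route as the paper's: simpliciality from the triangular matrix \eqref{eqnFSMC}, the $n=2$ case by observing that $\ell_1^2-\ell_2^2$ is divisible by $2$ in $(\Sym^2\Lambda)^\vee$, and the $n\ge 3$ case by completing to a $\ZZ$-basis. The only difference is cosmetic: for $n\ge 3$ the paper completes with explicit rank-$1$ forms $x_1^2,(x_1-x_i)^2,(x_i-x_j)^2$, whereas you use the standard matrix-entry basis and the observation that the $x_2x_3$-coefficient of $\ell_1^2$ equals $1$ --- a slightly slicker bookkeeping, but the same argument.
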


\begin{proof}  For $n=1$ the assertion is clear.  For $n\ge 2$
we use the description in \eqref{eqnFSMC}. For $n=2$ the forms $(2x_1-x_2)^2$ and $x_2^2$ are independent, hence the cone is simplicial. On the other hand these
forms are not part of a basis since they do not span a primitive sublattice: the difference  $(2x_1-x_2)^ 2- x_2^2$ is not primitive.
For $n \geq 3$ the  situation is different. Adding the forms $x_1^2, (x_1-x_i)^2, i= 2, \ldots, n$ and $(x_i-x_j)^2, 2 \leq i< j \leq n, (i,j)\neq (2,3)$
one obtains a basis of $\Sym^2(\ZZ^n)$ and thus the cone is basic.
\end{proof}

\subsection{Friedman--Smith monodromy cones and   second Voronoi cones}

\begin{lem}\label{lemFSMAT}
The Friedman--Smith monodromy cone is matroidal if and only if $n=1$.  Every proper face of a Friedman--Smith cone is matroidal.
\end{lem}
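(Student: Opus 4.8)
First I would handle the case $n=1$ directly: from \S\ref{secFSMonCone} the monodromy cone for $n=1$ is generated by the single rank $1$ quadric $\ell_{e_1}^2$ with $\ell_{e_1} = (\tilde e_1^+)^\vee$ a primitive generator of $H^1(\widetilde\Gamma,\ZZ)^-\cong\ZZ$. A single primitive linear form trivially satisfies condition (3) of Lemma \ref{lemsecvor} (any $\RR$-linearly independent subset is either empty or $\{\ell_{e_1}\}$, which spans the whole lattice), so the cone is matroidal. This establishes the ``if'' direction.

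For the ``only if'' direction, I would show that for $n\ge 2$ the Friedman--Smith cone fails condition (3) of Lemma \ref{lemsecvor}. Using the normalized matrix \eqref{eqnFSMC}, the generating linear forms are $\ell_1 = 2x_1-x_2-x_3-\cdots-x_n$ and $\ell_i = x_i$ for $i=2,\dots,n$ (with $n\ge 2$). The full set $\{\ell_1,\dots,\ell_n\}$ is $\RR$-linearly independent of maximal rank $n$, but the $\ZZ$-span of these forms has index $2$ in $H^1(\widetilde\Gamma,\ZZ)^-\cong\ZZ^n$: indeed $x_1 = \tfrac12(\ell_1 + \ell_2 + \cdots + \ell_n)$ lies in $\RR\langle\ell_1,\dots,\ell_n\rangle\cap\ZZ^n$ but not in $\ZZ\langle\ell_1,\dots,\ell_n\rangle$, exactly as already observed via the determinant computation following \eqref{eqnFSMCo}. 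Hence $\{\ell_1,\dots,\ell_n\}$ is not a $\ZZ$-basis of $\RR\langle\ell_i\rangle\cap H^1(\widetilde\Gamma,\ZZ)^-$, so condition (3) of Lemma \ref{lemsecvor} fails and the cone is not matroidal. (Equivalently, the whole cone is not even simplicial-plus-unimodular, which is all condition (2) would require.)

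For the final assertion, that every proper face is matroidal: a proper face of the Friedman--Smith cone is spanned by a proper subset $\{\ell_i\}_{i\in I}$, $I\subsetneq\{1,\dots,n\}$, of the generators in \eqref{eqnFSMC}. I would verify condition (3) of Lemma \ref{lemsecvor} for each such subset. If $I$ omits some index $j\ge 2$, then I would check that any $\RR$-linearly independent subset of $\{\ell_i\}_{i\in I}$ spans a primitive sublattice of $H^1(\widetilde\Gamma,\ZZ)^-$; the point is that the obstruction above used \emph{all} the $\ell_i$ to recover $x_1$ with a denominator $2$, and dropping any one generator (in particular any $x_j$) destroys that relation — after a suitable change of coordinates the remaining forms become part of a $\ZZ$-basis, which follows from the explicit basis extension given in the proof of Lemma \ref{lemBasicCone}, where the forms $x_1^2,(x_1-x_i)^2,(x_i-x_j)^2$ together with the $\ell_i$ span $\Sym^2(\ZZ^n)$. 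If instead $I = \{1,2,\dots,n\}$ with only $\ell_1$ dropped, the remaining $\{x_2,\dots,x_n\}$ is obviously part of a $\ZZ$-basis of $\ZZ^n$. In all cases condition (3) holds, so every proper face is matroidal.

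The main obstacle I anticipate is the proper-face claim: one must either argue uniformly over all proper subsets $I$ or reduce cleanly to the two representative cases above, and care is needed because the generator $\ell_1$ is the only non-coordinate form, so the combinatorics of which index is omitted matters. The cleanest route is probably to observe that a proper face drops at least one generator; if it drops $\ell_1$ the remaining forms are literally standard basis vectors (up to the $n$-vs-$(n-1)$ ambiguity), and if it drops some $\ell_j$ with $j\ge2$, then the change of basis $x_1\mapsto x_1$, $x_j\mapsto 2x_1 - \sum_{i\ge2}x_i$ (valid since this is an integral unimodular substitution once $\ell_1$ is among the retained generators) turns $\ell_1$ into $\pm x_j$, reducing again to coordinate forms — after which Lemma \ref{lemsecvor}(3) is immediate. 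Everything else is routine linear algebra over $\ZZ$.
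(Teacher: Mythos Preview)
Your proposal is correct and follows essentially the same route as the paper: the failure of matroidality for $n\ge 2$ is the index-$2$ (determinant $2$) observation already made after \eqref{eqnFSMCo}, and the proper-face claim is exactly the assertion that any $n-1$ of the rows of \eqref{eqnFSMC} extend to a $\ZZ$-basis of $\ZZ^n$, which you verify by an explicit unimodular change of coordinates. (Your appeal to Lemma~\ref{lemBasicCone} is a slight misfire---that lemma produces a basis of $\Sym^2(\ZZ^n)$, not of $\ZZ^n$---but your direct change-of-basis argument stands on its own and does the job.)
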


\begin{proof}
The first statement  follows from \S \ref{secFSMonCone} and  \eqref{eqnFSMC} (the determinant is $2$, if $n> 1$).   The second
follows since any $n-1$ rows of the matrix \eqref{eqnFSMC} can be extended to a $\ZZ$-basis of $\ZZ^n$ (this also shows that every face of a
Friedman--Smith monodromy cone is contained in a cone of type $A$, see \eqref{Acone}).
\end{proof}

\subsection{Friedman--Smith monodromy cones and perfect cones}

\begin{pro}\label{proFS=PC}
The Friedman--Smith monodromy cone is contained in a cone in the  perfect cone decomposition  if and only if $n\ne 2,3$.   In fact, it \emph{is} a cone in the perfect cone decomposition if and only if $n\ne 2,3,4$.
\end{pro}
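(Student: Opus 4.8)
The plan is to apply the combinatorial characterizations of the perfect cone decomposition (Lemma \ref{lempc}) and of matroidal cones (Lemma \ref{lemsecvor}, Remark \ref{remMV}) directly to the explicit matrix \eqref{eqnFSMC}. Write $\sigma_n = \RR_{\ge 0}\langle \ell_1^2,\dots,\ell_n^2\rangle$ where $\ell_1 = 2x_1 - x_2 - x_3 - \cdots - x_n$ and $\ell_i = x_i$ for $2 \le i \le n$. First I would dispose of the cases $n = 1$ and $n \ge 4$ being contained in a perfect cone: for $n=1$ the cone is a ray and there is nothing to prove; for $n \ge 5$, Lemma \ref{lemFSMAT} says $\sigma_n$ is matroidal, hence by \cite[Thm.~A]{MV12} (Remark \ref{remMV}) it lies in (in fact is) a perfect cone. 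Wait — $\sigma_n$ is \emph{not} matroidal for $n\ge 2$ (the determinant is $2$); so this argument does not apply. Instead, for $n\ge 4$ I would exhibit an explicit positive definite quadratic form $Q$ on $(\RR^n)^\vee$ satisfying conditions (a)--(c) of Lemma \ref{lempc}(2): $Q$ positive definite, $Q(\ell) \ge 1$ for all nonzero $\ell$ in the lattice, and $Q(\ell_i) = 1$ for $i = 1,\dots,n$. A natural candidate is a small perturbation of the standard form; one checks that for $n\ge 5$ such a $Q$ exists and that for $n\ge 5$ one can even arrange that the $\ell_i^2$ are \emph{all} the rank-$1$ forms on which $Q$ takes minimal value, so that $\sigma_n$ is exactly a perfect cone, whereas for $n=4$ any such $Q$ has strictly more minimal vectors (so $\sigma_4$ is a proper face of a larger perfect cone, hence contained in one but not equal to one).

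The heart of the argument is therefore twofold: (i) the \emph{construction} of a valid quadratic form for $n \ge 4$, and (ii) the \emph{non-existence} proof for $n = 2, 3$. For (ii), suppose $Q$ satisfies (a)--(c) for $\sigma_2$ or $\sigma_3$. Using $Q(\ell_i) = 1$ for all $i$ together with $Q(\ell_1 + \ell_2 + \cdots + \ell_n) = Q(2x_1) = 4Q(x_1) = 4Q(\ell_2 - (\ell_1 + \ell_2 + \cdots + \ell_n)/2 \cdot \text{something})$ — more precisely, exploit that $x_1 = \tfrac12(\ell_1 + \ell_2 + \cdots + \ell_n)$ lies in $\tfrac12$ of the lattice, so $2x_1 \in \Lambda^\vee$, and the constraints force a contradiction with condition (b) applied to a cleverly chosen lattice vector. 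The key identity is that for $n = 2$, $\ell_1 + \ell_2 = 2x_1$, so $Q(\ell_1 + \ell_2) = 4Q(x_1)$; combined with the parallelogram law $Q(\ell_1+\ell_2) + Q(\ell_1 - \ell_2) = 2Q(\ell_1) + 2Q(\ell_2) = 4$, one gets $Q(\ell_1 - \ell_2) = 4 - 4Q(x_1)$, and since $\ell_1 - \ell_2 = 2x_1 - 2x_2 = 2(x_1 - x_2)$ is an even vector while $x_1 - x_2 \in \Lambda^\vee$ is nonzero, condition (b) gives $Q(x_1 - x_2) \ge 1$, i.e. $4 - 4Q(x_1) \ge 4$, forcing $Q(x_1) \le 0$, contradicting positive-definiteness. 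The case $n = 3$ is analogous but requires tracking a few more lattice vectors (e.g. $x_1 - x_i$, $x_i - x_j$, and $x_1$ itself) to derive the contradiction; this is the bookkeeping-heavy step.

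The step I expect to be the main obstacle is the $n = 4$ boundary case, where one must show $\sigma_4$ \emph{is} contained in a perfect cone but is \emph{not itself} one. For containment, I would produce a specific $Q$ (the "type $D_4$" style form, cf. Remark \ref{remlowdim}, or an explicit rational perturbation thereof) and verify (a)--(c) by a finite check over short lattice vectors. For non-maximality, I would argue that any positive definite $Q$ realizing $\sigma_4$ as its set of minimal rank-$1$ forms would have to have at least $5$ minimal vectors — e.g. using the identity $\ell_1 + \ell_2 + \ell_3 + \ell_4 = 2x_1$ and the fact that $x_1 \in \Lambda^\vee$ forces $Q(x_1) \ge 1$, while the four constraints $Q(\ell_i)=1$ plus convexity force $Q(x_1) \le 1$ with equality, so $x_1^2$ is also a minimal form not among $\ell_1^2,\dots,\ell_4^2$, whence $\sigma_4$ is a proper face of the perfect cone it sits in. I would also note that the cone generated by $\ell_1^2,\dots,\ell_4^2$ and $x_1^2$ is precisely a $D_4$-type cone, which by Remark \ref{remlowdim} is a perfect cone, making the "proper face" statement concrete. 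This consistency check — that the enlarged cone is genuinely the perfect cone containing $\sigma_4$ — is the delicate point, since one must rule out that $\sigma_4$ sits inside some \emph{other}, unrelated perfect cone.
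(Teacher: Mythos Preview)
Your approach for $n=2,3$ is correct and genuinely different from the paper's. The paper argues that in rank $\le 3$ every perfect cone is matroidal (Remark \ref{relationsconedec}), so a non-matroidal cone cannot sit inside one. Your direct argument via the parallelogram identity is cleaner and more self-contained: for $n=2$ the identity $Q(x_1)+Q(x_1-x_2)=1$ immediately contradicts $Q\ge 1$ on nonzero lattice vectors, and for $n=3$ the identity $\sum_{S\subseteq\{2,3\}}Q(x_1-\sum_{i\in S}x_i)=3$ (a special case of $\sum_{\epsilon\in\{\pm 1\}^{n-1}}Q(\ell_1+\sum\epsilon_i\ell_i)=2^{n-1}\sum Q(\ell_i)$) does the same. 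This argument even explains why $n=4$ is borderline: the analogous sum equals $8$ over $8$ terms, forcing equality rather than a contradiction. The paper's explicit $Q$ for $n\ge 4$ (the matrix \eqref{eqnQ}, identified with the standard form on the lattice $\mathbb L=\langle \tfrac12(f_1^\vee+\cdots+f_n^\vee),f_2^\vee,\dots,f_n^\vee\rangle$) fills in the containment step you left vague.

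However, your treatment of the $n=4$ ``not a cone'' assertion contains a genuine error. You correctly observe that any admissible $Q$ must satisfy $Q(x_1)=1$, so $x_1^2$ is always an extra minimal form. But you then conclude ``whence $\sigma_4$ is a proper face of the perfect cone it sits in'' --- this is precisely backwards. If $\sigma_4$ \emph{were} a proper face of a perfect cone, it \emph{would} be a cone in the PCD, since the PCD is a fan. What you have actually shown is that $\sigma_4\ne\sigma(Q)$ for any positive definite $Q$ (where $\sigma(Q)$ denotes the cone spanned by the squares of the minimal vectors of $Q$), and since the cones of the PCD are exactly the $\sigma(Q)$ as $Q$ ranges over positive definite forms, this \emph{does} prove $\sigma_4$ is not a cone in the PCD --- but your stated conclusion contradicts this. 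Equivalently: the identity $\sum_{S\subseteq\{2,3,4\}}A(x_1-\sum_{i\in S}x_i)=2\sum_iA(\ell_i)=0$ shows that any linear functional $A$ vanishing on $\ell_1^2,\dots,\ell_4^2$ and nonnegative on the remaining $D_4$ generators must vanish on all of them, hence $\sigma_4$ is not a face of the $D_4$ cone. The paper instead verifies this by direct enumeration of the faces of the $D_4$ cone; your observation, once correctly interpreted, gives a conceptual proof.
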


\begin{rem}
In Appendix \ref{secappMDS},  Sikiri\'c shows that   the Friedman--Smith monodromy cone is contained in a cone in the  central cone decomposition  if and only if $n\ne 2,3$.  His proof also shows that the Friedman--Smith monodromy cone is contained in a cone in the  perfect cone decomposition  if and only if $n\ne 2,3$.  The proof we give below establishes, for the PCD, the stronger statement that the Friedman--Smith monodromy cone
 \emph{is} a cone in the PCD if and only if $n\ne 2,3,4$. \end{rem}

\begin{proof}
Let $\Lambda=\ZZ^n$ and fix the standard basis $e_1,\ldots,e_n$.   Let $e_1^\vee,\ldots,e_n^\vee$ be the standard dual basis  for $\Lambda^\vee$. First let us show  that the  Friedman--Smith cones, the cones generated by the quadratic forms $(2e_1^\vee-e_2^\vee-\ldots-e_n^\vee)^2, \ (e_2^\vee)^2,\   \ldots,\ (e_n^\vee)^2$, are contained in
a cone in the perfect cone decomposition if and only if $n\ne 2,3$.

For $n=1$, the cone is clearly matroidal,  so it is a cone in the perfect cone decomposition (eg.~Remark \ref{remMV}).  For lattices of rank $2,3$, every cone in the perfect cone decomposition is also a matroidal cone (see Remark \ref{remlowdim}).  Consequently,  the fact that the Friedman--Smith cone is generated by rank $1$ quadrics and is not matroidal (Lemma \ref{lemFSMAT}) implies it is not contained in a matroidal cone (Lemma \ref{lemsecvor}), and hence not contained in a cone in the perfect cone decomposition.

For $n\ge 4$, consider the metric on $\mathbb R^n$ induced by the matrix
\begin{equation}\label{eqnQ}
Q=\left(
\begin{smallmatrix}
\frac{n}{4}&\frac{1}{2}&\ldots&\frac{1}{2}&\frac{1}{2}\\
\frac{1}{2}&1&0&\ldots&0\\
\vdots &&\ddots&&\\
\vdots &&&\ddots&\\
\frac{1}{2}&0&\ldots&0&1\\
\end{smallmatrix}
\right)
\end{equation}
%\begin{equation}\label{eqnQ}
%\begin{tiny}
%Q=\left(
%\begin{array}{rrrrr}
%\frac{n}{4}&\frac{1}{2}&\ldots&\frac{1}{2}&\frac{1}{2}\\
%\frac{1}{2}&1&0&\ldots&0\\
%\vdots &&\ddots&&\\
%\vdots &&&\ddots&\\
%\frac{1}{2}&0&\ldots&0&1\\
%\end{array}
%\right)
%\end{tiny}
%\end{equation}
The matrix  has determinant $1/4$, and so is clearly positive definite (considering leading principal minors starting from say the bottom right corner).   The metric takes value $1$ on $(2e_1^\vee-e_2^\vee-\cdots-e_n^\vee), e_2^\vee, \ldots,e_n^\vee$.  Thus the Friedman--Smith cone is contained in a cone in the perfect cone decomposition for every $n\ge 4$, provided we can show that $Q$ takes values at least $1$ on all non-zero elements of $\Lambda^\vee$.
This is immediate by inspection for $n\equiv 0\pmod 4$; we will prove this in general by describing the cones determined by $Q$ in more detail.

To this end, consider  another copy of $\ZZ^n$ with basis $f_1,\ldots,f_n$.  We will also use the standard basis $f_1^\vee,\ldots,f_n^\vee$ for $(\ZZ^n)^\vee$.  Inside of $(\RR^n)^\vee$ consider the lattice $\mathbb L$ generated by $\ell=\frac{1}{2}f_1^\vee+\ldots +\frac{1}{2}f_n^\vee$, and $ f_2^\vee, \ldots, f_n^\vee$.    We have $(\ZZ^n)^\vee\subseteq \mathbb L$, since
$$
f_1^\vee=2\ell-f_2^\vee-\cdots-f_n^\vee.
$$
Let $Q_0$  be the standard quadratic form on $(\RR^n)^\vee$:
$$
Q_0=f_1^2+\ldots+f_n^2.
$$
For simplicity, let us work momentarily in coordinates.  It is clear that any vector $$(a_1,\ldots,a_n)\in (\RR^n)^\vee$$ will have length greater than $1$ if any of the coefficients $a_i$ has magnitude greater than $1$, or any one coefficient has magnitude  $1$ and any other coefficient is non-zero.  We can then easily enumerate the non-zero elements of $\mathbb L$ that do not satisfy these conditions:
$$
(\pm 1,0,\ldots,0), (0,\pm 1,0,\ldots,0), \ldots, (0,\ldots, 0,\pm1), (\pm \frac{1}{2},\ldots,\pm \frac{1}{2}).
$$
Since
$$
Q_0(\pm \frac{1}{2},\ldots,\pm \frac{1}{2})=\frac{n}{4}
$$
we see that the non-zero elements of $\mathbb L$ of minimal length are,
\begin{enumerate}
\item $ (\pm \frac{1}{2},\ldots,\pm \frac{1}{2})$, $n<4$,
\item $(\pm 1,0,\ldots,0), (0,\pm 1,0,\ldots,0), \ldots, (0,\ldots, 0,\pm1), (\pm \frac{1}{2},\ldots,\pm \frac{1}{2})$, $n=4$,
\item $(\pm 1,0,\ldots,0), (0,\pm 1,0,\ldots,0), \ldots, (0,\ldots, 0,\pm1)$, $n>4$.
\end{enumerate}
Now consider the isomorphism $\Lambda^\vee \to \mathbb L$ given by $e_1^\vee \mapsto \ell, e_2^\vee \mapsto f_2^\vee,\ldots, e_n^\vee\mapsto f_n^\vee$.   The pull-back of $Q_0$ under this isomorphism is $Q$, and the pull-back of the basis elements $f_1^\vee, \ldots,f_n^\vee$ are the elements $2e_1^\vee-e_2^\vee-\cdots-e_n^\vee, e_2^\vee,\ldots, e_n^\vee$.  Thus for $n>4$, the Friedman--Smith cone is exactly the cone in the perfect cone decomposition determined by $Q$.  For $n=4$, the cone determined by $Q$ has $12$ extremal rays, and is in fact a type $D$ cone, see \eqref{Acone}.  One can show by enumerating the faces of a type $D$ cone in dimension $4$ that the Friedman--Smith cone is not a face (and thus is not a cone in the perfect cone decomposition).
\end{proof}

\subsection{Cone decompositions  for $FS_2$ and $FS_3$} \label{secFScd}

\begin{lem}\label{lemFSPCD}
For $n=2,3$, each cone in the barycentric subdivision of the Friedman--Smith cone is contained in a matroidal cone (and thus also in a cone in the perfect cone decomposition and the central cone decomposition).
\end{lem}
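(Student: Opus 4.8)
The statement to prove is Lemma~\ref{lemFSPCD}: for $n=2,3$, every cone in the barycentric subdivision of the Friedman--Smith cone $FS_n$ is contained in a matroidal cone (equivalently, by Lemmas~\ref{lemsecvor}, \ref{lempc} and Remark~\ref{remMV}, in a cone of the second Voronoi, perfect cone, and central cone decompositions). Since $n=2,3$ are small, the approach is to be completely explicit: write down the extremal rays of $FS_n$ using the matrix presentation \eqref{eqnFSMCo}/\eqref{eqnFSMC}, list the barycenters of all faces, and then for each maximal cone of the barycentric subdivision exhibit an $\RR$-linearly independent set of primitive rank~$1$ quadrics (i.e.\ squares of primitive linear forms) whose $\ZZ$-span is saturated in $(\Sym^2\Lambda)^\vee$ — this is exactly condition (3)/(4) of Lemma~\ref{lemsecvor}, so such a set generates a matroidal cone, and the cone of the barycentric subdivision is a face of it. The passage to matroidal cones then gives membership in $\Sigma_P$ and $\Sigma_C$ for free via the remarks following Lemma~\ref{lemcc}.

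\emph{First, the case $n=2$.} Here $FS_2 = \RR_{\ge 0}\langle (2x_1-x_2)^2,\ x_2^2\rangle$ is $2$-dimensional with exactly two extremal rays and one barycenter, spanned by $(2x_1-x_2)^2 + x_2^2$. The barycentric subdivision has two maximal cones: $\tau_1 = \RR_{\ge 0}\langle (2x_1-x_2)^2,\ (2x_1-x_2)^2+x_2^2\rangle$ and $\tau_2 = \RR_{\ge 0}\langle x_2^2,\ (2x_1-x_2)^2+x_2^2\rangle$. For $\tau_1$: note $(2x_1-x_2)^2+x_2^2 = (x_1-x_2)^2 + x_1^2 + 2\cdot(\text{something})$? — rather, one should look for a matroidal cone containing $\tau_1$. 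The key observation is that $(2x_1-x_2)^2 + x_2^2$ is not itself of rank~$1$, so one must find two primitive rank~$1$ forms $\ell^2, m^2$ with $\{\ell,m\}$ a $\ZZ$-basis of $\Lambda^\vee$ such that $(2x_1-x_2)^2$ and $(2x_1-x_2)^2+x_2^2$ both lie in $\RR_{\ge0}\langle \ell^2, m^2\rangle$; but a cone spanned by two rank~$1$ quadrics contains only rank~$1$ quadrics on its boundary and rank~$2$ in its interior, and $(2x_1-x_2)^2+x_2^2$ has rank~$2$, so this forces $\ell^2 = (2x_1-x_2)^2$ and then $m^2$ must make $(2x_1-x_2)^2+x_2^2 = \lambda\ell^2 + \mu m^2$; this has no rank-$1$ solution. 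Hence one genuinely needs a matroidal cone of dimension $\ge 3$; the natural candidate is the type $A_2$ cone $\RR_{\ge0}\langle x_1^2, (x_1-x_2)^2, x_2^2\rangle$ (or $\RR_{\ge 0}\langle (2x_1-x_2)^2, (x_1-x_2)^2, x_2^2\rangle$ — need to check its generators form a $\ZZ$-basis of the span intersected with the lattice), and one checks directly that both $(2x_1-x_2)^2$ and $(2x_1-x_2)^2+x_2^2$ lie in it with nonnegative coefficients. I would run this check and the symmetric one for $\tau_2$ by hand; these are the explicit cones promised in Remark~\ref{remFSSVD} and Remark~\ref{remFS2Del}.

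\emph{Then, the case $n=3$.} Now $FS_3 = \RR_{\ge 0}\langle (2x_1-x_2-x_3)^2,\ x_2^2,\ x_3^2\rangle$ is $3$-dimensional and basic as a cone (Lemma~\ref{lemBasicCone}), but not matroidal (Lemma~\ref{lemFSMAT}), so again the point is that the barycentric pieces, being smaller, can be engulfed in (higher-dimensional) matroidal cones. The barycentric subdivision of a $3$-simplex has $6$ maximal cones, indexed by chains (vertex $<$ edge $<$ facet) of faces of the triangle of extremal rays. For each such chain I would write the three barycenters — an extremal ray $v_i$, an edge barycenter $v_i+v_j$, the central barycenter $v_1+v_2+v_3$ — and exhibit a matroidal cone containing all three; because these combinations again involve rank~$\ge 2$ quadrics, the containing matroidal cones will be of dimension $4$ (e.g.\ an $A_3$ or $D_3=A_3$ matroidal cone), and one verifies the Lemma~\ref{lemsecvor} saturation criterion for its generators and nonnegativity of the coefficients expressing each barycenter. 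This is the explicit description announced for $\S\ref{secFScd}$, used later for Delaunay decompositions and for further degenerations.

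\emph{The main obstacle.} The conceptual content is minimal; the work is entirely bookkeeping. The one genuine subtlety — and the step I expect to be fiddly — is choosing, for each barycentric cone, the \emph{right} matroidal cone: it must simultaneously (a) contain the given barycenters with nonnegative coefficients, and (b) have generators that actually form a $\ZZ$-basis of their $\RR$-span intersected with $(\Sym^2\Lambda)^\vee$ (not merely a $\QQ$-basis), since a simplicial-but-not-basic cone would fail to be matroidal. For $n=3$ this means being careful that, e.g., switching between $2x_1-x_2-x_3$ and $x_1$ as a generator preserves saturation of the lattice spanned by the squares; I would verify this by computing the relevant determinants/Smith normal forms of the $n\times n$ (in fact up to $\binom{n+1}{2}\times\binom{n+1}{2}$) integer matrices of coefficients. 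Once the correct cones are pinned down, invoking Lemma~\ref{lemsecvor} to conclude matroidality, and then the remarks after Lemma~\ref{lemcc} (together with Remark~\ref{remMV}) to conclude membership in $\Sigma_P$ and $\Sigma_C$, finishes the proof.
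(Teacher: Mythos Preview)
Your approach is essentially the paper's: explicitly exhibit, for each piece of the barycentric subdivision, a matroidal cone containing it, and verify matroidality via Lemma~\ref{lemsecvor}. Two concrete corrections are worth noting.

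For $n=2$, neither candidate you name for $\tau_1$ works. The standard $A_2$ cone $\RR_{\ge0}\langle x_1^2,(x_1-x_2)^2,x_2^2\rangle$ does \emph{not} contain $(2x_1-x_2)^2$: solving $(2x_1-x_2)^2=a\,x_1^2+b\,(x_1-x_2)^2+c\,x_2^2$ gives $(a,b,c)=(2,2,-1)$. Your alternative $\RR_{\ge0}\langle (2x_1-x_2)^2,(x_1-x_2)^2,x_2^2\rangle$ is not matroidal, since $\{2x_1-x_2,\,x_2\}$ has determinant $2$. The paper takes $C_2=\RR_{\ge0}\langle (2x_1-x_2)^2,\,x_1^2,\,(x_1-x_2)^2\rangle$ (all $2\times2$ minors are $\pm1$, so matroidal) and observes the key identity
\[
\tfrac{1}{2}\bigl((2x_1-x_2)^2+x_2^2\bigr)=x_1^2+(x_1-x_2)^2,
\]
so the barycenter lies on the common face $\RR_{\ge0}\langle x_1^2,(x_1-x_2)^2\rangle$ of $C_1$ and $C_2$.

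For $n=3$, the paper does not use six separate matroidal cones of dimension $4$ but rather three copies (related by the evident $S_3$ symmetry) of the full principal cone
\[
C_1=\RR_{\ge0}\langle x_1^2,\,x_2^2,\,x_3^2,\,(x_1-x_2)^2,\,(x_1-x_3)^2,\,(x_1-x_2-x_3)^2\rangle,
\]
each absorbing two of the six barycentric pieces. The analogue of the identity above is
\[
(2x_1-x_2-x_3)^2+x_2^2+x_3^2=x_1^2+(x_1-x_2)^2+(x_1-x_3)^2+(x_1-x_2-x_3)^2,
\]
which places the central barycenter in $C_1$; since $x_2^2,x_3^2\in C_1$ as well, the whole sub-triangle $\RR_{\ge0}\langle x_2^2,x_3^2,\text{center}\rangle$ sits in $C_1$. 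So your instinct that the work is ``entirely bookkeeping'' is right, but the bookkeeping hinges on spotting these sum-of-squares identities rather than on a generic Smith-normal-form search.
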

\begin{rem}
This lemma follows from the fact (see Remark \ref{remFSSVD}) that each cone in the barycentric subdivision of the Friedman--Smith cone is contained in a cone in the   second Voronoi decomposition  (since for $n=2,3$ the   second Voronoi cones are matroidal).  However, we will want to know the exact cones containing the cones in the barycentric subdivision for later computations.
\end{rem}
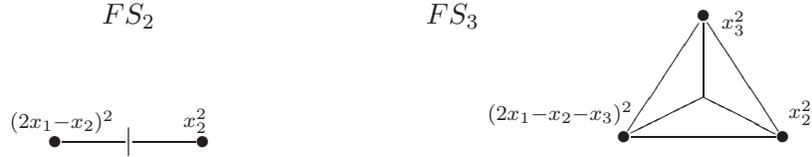
\begin{figure}[htb]
\begin{equation*}
\xymatrix@C=.4cm@R=1.2cm{
&FS_2&\\
*{\bullet}\ar@{-}[r]|-{\SelectTips{cm}{}\object@{}}^<{(2x_1-x_2)^2}& *{|} \ar@{-}[r]|-{\SelectTips{cm}{}\object@{}}^>{x_2^2} &*{\bullet}\\
}
\ \  \hskip 1 in FS_3
\xymatrix@C=.3cm@R=.3cm{
&&*{\bullet} \ar@{-}[dd]|-{\SelectTips{cm}{}\object@{}}^<{\ x_3^2}\ar@{-}[rrddd] \ar@{-}[llddd]&&\\
&&&&\\
&&*{}\ar@{-}[rrd]|-{\SelectTips{cm}{}\object@{}}^>{\ \ x_2^2} \ar@{-}[lld]|-{\SelectTips{cm}{}\object@{}}_>{(2x_1-x_2-x_3)^2 \ \  \  \ }& &\\
*{\bullet}  \ar@{-}[rrrr]&& &&*{\bullet}
}
\end{equation*}
\caption{Decomposition of the monodromy cone for $FS_2$ and $FS_3$ covers.}\label{Fig:degFS2FS3}
\end{figure}

\begin{proof}
First consider the case $n=2$.  The monodromy cone is
$$
\RR_{\ge 0}\langle (2x_1-x_2)^2,  x_2^2\rangle.
$$
Consider the cones
$$
C_1=\RR_{\ge 0}\langle x_1^2,  x_2^2, (x_1-x_2)^2\rangle
 \ \ \text{ and }  \ \
C_2=\RR_{\ge 0}\langle (2x_1-x_2)^2, x_1^2 , (x_1-x_2)^2\rangle.
$$
One can easily  check (by say computing all the minors of the associated matrices) that  these cones are matroidal.
In fact they are of type $A$ \eqref{Acone}.
Now clearly $x_2^2$ is contained in $C_1$ and $(2x_1-x_2)^2$ is contained in $C_2$.   It only remains to check that the ray dividing the monodromy cone, generated by
$$
(2x_1-x_2)^2+x_2^2,
$$
is in both cones.
We have
$$
\frac{1}{2}\left((2x_1-x_2)^2+x_2^2\right)=\frac{1}{2}\left((4x_1^2-4x_1x_2+x_2^2)+x_2^2\right)=x_1^2+(x_1-x_2)^2.
$$
Thus the central ray of the monodromy cone is also the central ray of the common face of $C_1$ and $C_2$.

Now consider the case $n=3$.
The monodromy cone is
$$
\RR_{\ge 0}\langle (2x_1-x_2-x_3)^2,  x_2^2,x_3^2\rangle.
$$
Motivated by the previous example, let us first see if we can find a cone that contains $x_2^2$, $x_3^2$ and also the middle ray of the cone, generated by
$$
(2x_1-x_2-x_3)^2+ x_2^2+x_3^2.
$$
Let $C_1$ be the cone
$$
C_1=\RR_{\ge 0}\langle x_1^2,x_2^2,x_3^2, (x_1-x_2)^2,(x_1-x_3)^2,(x_1-x_2-x_3)^2\rangle.
$$
It is somewhat tedious, but one can easily  check (by say computing all the minors of the associated matrix) that  this cone is matroidal.
In fact this is equivalent to the principal or $A_n$-cone in genus $3$, and every cone in the perfect cone decomposition is equivalent to a face of this cone.
Computing, we have
$$
(2x_1-x_2-x_3)^2+ x_2^2+x_3^2=4x_1^2+2x_2^2+2x_3^2-4x_1x_2-4x_1x_3+2x_2x_3.
$$
We also have
$$
x_1^2+(x_1-x_2)^2+(x_1-x_3)^2+(x_1-x_2-x_3)^2=4x_1^2+2x_2^2+2x_3^2-4x_1x_2-4x_1x_3+2x_2x_3.
$$
Thus $C_1$ contains
$x_2^2$, $x_3^2$ and also the middle ray $(2x_1-x_2-x_3)^2+ x_2^2+x_3^2$.    Note that again, the middle ray is also the middle ray of the appropriate face of the matroidal cone.
This computation is in fact symmetric: by appropriate change of coordinates, one can cover the monodromy cone with $3$ matroidal cones of this type.
\end{proof}

\begin{rem}\label{remFS2Del}
For the $FS_2$ cone, the midpoint is contained in the matroidal cone $$\mathbb R_{\ge 0}\langle x_1^2,(x_1-x_2)^2\rangle.$$    The right half of the $FS_2$ cone (in Figure \ref{Fig:degFS2FS3}) is contained in the matroidal cone $$\mathbb R_{\ge 0}\langle x_1^2,x_2^2,(x_1-x_2)^2\rangle$$ (the other sub-cone may be studied by symmetry).  We may change basis so these cones are  $\mathbb R_{\ge 0}\langle y_1^2,y_2^2 \rangle$ and  $\mathbb R_{\ge 0}\langle y_1^2,y_2^2,(y_1-y_2)^2\rangle$ respectively.    The Delaunay decomposition of $\mathbb R^2$ for the standard cone $\sigma_{1+1}=\mathbb R_{\ge 0}\langle y_1^2,y_2^2 \rangle$
 is that of squares.  The corresponding chain of toric varieties has components equal to
  $\PP^1 \times \PP^1$, glued to each other along   ``opposite'' coordinate lines $\{0\} \times \PP^1$ and $\{\infty\} \times \PP^1$  as well
as  $\PP^1 \times \{0\}$ and $\PP^1 \times \{\infty\}$.
For the cone $\sigma_{K_3}=\mathbb R_{\ge 0}\langle y_1^2,y_2^2,(y_1-y_2)^2\rangle$, the Delaunay decomposition changes: every square breaks up into two triangles.  The corresponding chain of toric varieties has components equal to $\PP^2$, glued to each other along their coordinate lines.
\end{rem}

\begin{rem}\label{remFS3Del}
For the $FS_3$ cone, the midpoint is contained in the matroidal cone $$\mathbb R_{\ge 0}\langle x_1^2,(x_1-x_2)^2, (x_1-x_3)^2,(x_1-x_2-x_3)^2\rangle.$$    The line
segment joining the midpoint to the lower right corner (in Figure \ref{Fig:degFS2FS3}) is contained in the matroidal cone $\mathbb R_{\ge 0}\langle x_1^2,x_2^2,(x_1-x_2)^2, (x_1-x_3)^2,(x_1-x_2-x_3)^2\rangle$.     The right hand cone of full dimension is contained in the matroidal cone $\mathbb R_{\ge 0}\langle x_1^2,x_2^2,x_3^2,(x_1-x_2)^2, (x_1-x_3)^2,(x_1-x_2-x_3)^2\rangle$.
  (The other sub-cones may be studied by symmetry.)  We may change basis so these cones are  $\mathbb R_{\ge 0}\langle y_1^2,y_2^2,y_3^2,(y_1-y_2-y_3)^2 \rangle$,  $\mathbb R_{\ge 0}\langle y_1^2,y_2^2,y_3^2,(y_1-y_2)^2, (y_1-y_2-y_3)^2 \rangle$, and  $\mathbb R_{\ge 0}\langle y_1^2,y_2^2,y_3^2,(y_1-y_2)^2,(y_1-y_3)^2, (y_1-y_2-y_3)^2 \rangle$ respectively.
The Delaunay decomposition of $\mathbb R^3$ with respect to the cone $\sigma_{C_4}=\mathbb R_{\ge 0}\langle y_1^2,y_2^2,y_3^2,(y_1-y_2-y_3)^2 \rangle$
consists of the tiling by  $1$ octahedron and $2$ tetrahedra (i.e.~the cube with $2$ tetrahedra cut from opposite corners).  We direct the reader to Remark \ref{remDelaunay} for the Delaunay decompositions for the other cones.
\end{rem}

\section{Some examples where the Prym map extends} \label{secexamples}

In this section we consider a number of further examples of admissible covers.
In short, in each of these examples, the Prym map extends to the   second Voronoi, perfect cone and central cone compactifications.  These examples will also be useful later when we consider degenerations of Friedman--Smith covers.

\subsection{Beauville examples}

A Beauville example with $n$ nodes is an admissible cover where $\widetilde C$ is irreducible, has exactly  $n$ nodes, and all of the nodes are fixed by the involution.    The dual graph has a unique vertex $\tilde v$ and exactly $n$ edges $\tilde e_1,\ldots,\tilde e_n$, all fixed by the involution (the dual graph in the case $n=1$ is given in Figure \ref{Fig:d0ram}).
 One obtains that $H_1(\widetilde \Gamma,\ZZ)=H_1(\widetilde \Gamma,\ZZ)^+=\ZZ\langle \tilde e_1,\ldots,\tilde e_n \rangle$.  Consequently, $H_1(\widetilde \Gamma,\ZZ)^-=0$.   Thus the Prym is an abelian variety, and the Prym map extends in  a neighborhood of $\widetilde C$.  These examples lie in the $n$-fold self-intersection of  $\delta_0^{\operatorname{ram}}$.

\subsection{Elementary \'etale examples} \label{secEEE} An elementary \'etale example with $2n$ nodes is an admissible cover  where $\widetilde C$ is irreducible, has exactly  $2n$ nodes, and all of the nodes are exchanged pairwise by the involution.    The dual graph has a unique vertex $\tilde v$ and exactly $2n$ edges $\tilde e_1^+,\tilde e_1^-,\ldots,\tilde e_n^+,\tilde e_n^-$, with $\iota\tilde e_i^+=\tilde e_i^-$ ($i=1,\ldots,n$).    The dual graph of $C$ consists of a unique vertex $v$ and $n$ edges $e_1,\ldots,e_n$, with $\tilde e_i^{\pm}$ lying over $e_i$ (the dual graph in the case $n=1$ is given in Figure \ref{Fig:d0'}).
In this case one has $H_1(\widetilde \Gamma,\ZZ)=\ZZ\langle \tilde e_1^+,\tilde e_1^-,\ldots,\tilde e_n^+,\tilde e_n^-\rangle$.  Consequently, we have  $H_1(\widetilde \Gamma,\ZZ)^{[-]}=\ZZ\langle \frac{1}{2}(\tilde e_1^+-\tilde e_1^-),\ldots,\frac{1}{2}(\tilde e_n^+-\tilde e_n^-)\rangle$.   As $H_1(\widetilde \Gamma,\ZZ)^{[-]}$ is dual to $H^1(\widetilde \Gamma,\ZZ)^-$, we can see that if we set $z_i= \frac{1}{2}(\tilde e_i^+-\tilde e_i^-)$ for $i=1,\ldots,n$, then we may take $\ell_{e_i}=z_i^\vee$ ($i=1,\ldots,n$).    The matrix expressing the $\ell_{e_i}$  in terms of this basis is then the identity, so clearly (V) of Theorem \ref{teoprymext}  holds. In addition, we can take the standard positive definite form $z_1^2+\ldots +z_n^2$  to show that (P) and (C) of Theorem \ref{teoprymext} both hold.    In conclusion, the Prym map extends to the   second Voronoi, perfect cone, and central cone compactifications in a neighborhood of an elementary \'etale example.  These examples lie in the  $n$-fold self-intersection of  $\delta_0'$.

\begin{rem}\label{remEtDel}
The monodromy cone is the standard cone  $\sigma_{1 + \cdots + 1}=\mathbb R_{\ge 0}\langle x_1^2,\ldots,x_n^2 \rangle$.  The associated Delaunay decomposition of $\mathbb R^n$ is the tiling by $n$-cubes.  The associated chain of toric varieties has components equal to $(\mathbb P^1)^n$  with ``opposite sides'' glued.
\end{rem}

\subsection{Wirtinger examples ($W_n$)}\label{secWn} \label{secWE}
A Wirtinger example with $2n$ nodes is an admissible double cover where $\widetilde C$ has exactly two irreducible components, which are interchanged by the involution, and $\widetilde C$ has exactly $2n$ nodes, all of which join the two components, and are interchanged pairwise by the involution.
  The dual graph $\widetilde \Gamma$ of $\widetilde C$ has vertices $V(\widetilde \Gamma)=\{\tilde v^+,\tilde v^-\}$ and edges $E(\widetilde \Gamma)=\{\tilde e_1^+,\tilde e_1^-,\ldots,\tilde e_n^+,\tilde e_n^-\}$; we orient $\tilde e_i^+$ from $\tilde v^-$ to $\tilde v^+$, and $\tilde e_i^-$ in the opposite direction.    The involution $\iota$ acts by $\iota(\tilde v^+)=\tilde v^-$  and $\iota(\tilde e_i^+)=\tilde e_i^-$ ($i=1,\ldots,n$).    The dual graph of $C$ consists of a single vertex $v$ and $n$ edges $e_1,\ldots,e_n$ with $\tilde e_i^\pm$ lying over $e_i$.

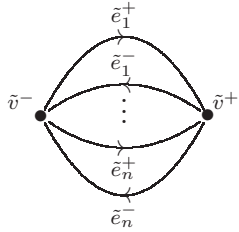
\begin{figure}[htb]
\begin{equation*}
\xymatrix{
 *{\bullet} \ar @{-}@/_1pc/[rr]|-{\SelectTips{cm}{}\object@{>}}_{\tilde e_n^+} \ar @{-} @/_2.5pc/[rr] |-{\SelectTips{cm}{}\object@{<}}_{\tilde e_n^-}
 \ar@{-} @/^1pc/[rr]|-{\SelectTips{cm}{}\object@{<}}^{\tilde e_1^-}  \ar@{-}@/^2.5pc/[rr]|-{\SelectTips{cm}{}\object@{>}}^{\tilde e_1^+}^<{\tilde v^-}^>{\tilde v^+}  &\vdots &*{\bullet}
}
\end{equation*}

\caption{  Dual graph of a Wirtinger example with $2n\ge 2$ nodes ($W_n$).}\label{Fig:dgWE}
\end{figure}
One has
$H_1(\widetilde \Gamma,\ZZ)= \ZZ\langle \tilde e_1^++\tilde e_1^-,\ldots,\tilde e_n^++\tilde e_n^-, \tilde e_1^++\tilde e_2^-,\ldots,\tilde e_{n-1}^++\tilde e_n^-\rangle$
so that in turn
$
H_1(\widetilde \Gamma,\ZZ)^{[-]}\cong  \ZZ\langle \frac{1}{2}(\tilde e_1^+-\tilde e_1^-)-\frac{1}{2}(\tilde e_2^+-\tilde e_2^-),\ldots,\frac{1}{2}(\tilde e_{n-1}^+-\tilde e_{n-1}^-)-\frac{1}{2}(\tilde e_n^+-\tilde e_n^-)\rangle
$.
For brevity, set
$$
z_1=\frac{1}{2}(\tilde e_1^+-\tilde e_1^-)-\frac{1}{2}(\tilde e_2^+-\tilde e_2^-), \ldots, \ z_{n-1}=\frac{1}{2}(\tilde e_{n-1}^+-\tilde e_{n-1}^-)-\frac{1}{2}(\tilde e_n^+-\tilde e_n^-)
$$
so that
$
H_1(\widetilde \Gamma,\ZZ)^{[-]}\cong \ZZ\langle z_1,\ldots,z_{n-1}\rangle.
$
Then $$H^1(\widetilde \Gamma,\ZZ)^-=\left(H_1(\widetilde \Gamma,\ZZ)^{[-]}\right)^\vee\cong \ZZ\langle z_1^\vee,\ldots,z_{n-1}^\vee \rangle .$$
One can check
that for $i=1,\ldots,n$, one may take
$
\ell_{e_i}=
(\tilde e_i^+)^\vee-(\tilde e_i^-)^\vee
$.
The $n\times (n-1)$ matrix expressing the $\ell_{e_i}$ in terms of the basis
$z_1^\vee,\ldots, z_{n-1}^\vee$ is
\begin{equation}\label{eqnWirtinger}
\left(\begin{smallmatrix}
1&&&&&&\\
-1&1&&&&&\\
&-1&1&&&&\\
&&\ddots&\ddots&&&\\
&&&-1&1&&\\
&&&&-1&1&\\
&&&&&-1&1\\
&&&&&&-1\\
\end{smallmatrix}\right)
\end{equation}
%\begin{equation}\label{eqnWirtinger}
%\begin{tiny}
%\left(\begin{array}{ccccccc}
%1&&&&&&\\
%-1&1&&&&&\\
%&-1&1&&&&\\
%&&\ddots&\ddots&&&\\
%&&&-1&1&&\\
%&&&&-1&1&\\
%&&&&&-1&1\\
%&&&&&&-1\\
%\end{array}\right)
%\end{tiny}
%\end{equation}
Considering the $(n-1)\times (n-1)$ minors, we see that (V) holds.
Now consider the quadratic form
$$
z_1^2+z_1z_2+z_2^2+\ldots+z_{n-2}^2+z_{n-2}z_{n-1}+z_{n-1}^2.
$$
The associated matrix has entries equal $1$ on the diagonal, and $1/2$ above and below the diagonal.  The determinant of such a square matrix of size $m$ is $\frac{m+1}{2^m}$.  Consequently, the quadratic form is positive definite, since all of its leading minors are positive.
It is easy to see by inspection that the rows of the matrix \eqref{eqnWirtinger} are exactly the shortest vectors of this form.
Hence the monodromy cone is a matroidal cone which also satisfies (C). In fact this cone is contained in the $A_{n-1}$-cone, see \eqref{Acone}.
In conclusion, the Prym map extends to the  second Voronoi, perfect cone, and central cone compactifications in a neighborhood of a Wirtinger example.

\begin{rem}\label{remWDel}
The $W_n$ monodromy cone is $\mathbb R_{\ge 0}\langle x_1^2,(x_1-x_2)^2, \ldots, (x_{n-2}-x_{n-1})^2,x_{n-1}^2\rangle$.   This is equivalent to the cone $\mathbb R_{\ge 0}\langle (y_1+\ldots+y_{n-1})^2,y_1^2,\ldots, y_{n-1}^2\rangle$.
For $n=3$, the
Delaunay decomposition of $\mathbb R^2$ is the tiling by $2$-simplices obtained by slicing the standard square into $2$ triangles. The associated chain of toric varieties has components equal to $\mathbb P^2$, glued along coordinate hyperplanes.
We have already seen in Remark \ref{remFS3Del} that for  $n=4$, the   Delaunay decomposition of $\mathbb R^3$ with respect to the cone $\sigma_{C_4}=\mathbb R_{\ge 0}\langle y_1^2,y_2^2,y_3^2,(y_1-y_2-y_3)^2 \rangle$
consists of the tiling by  $1$ octahedron and $2$ tetrahedra (i.e.~the cube with $2$ tetrahedra cut from opposite corners).
\end{rem}

\subsection{Mixed Beauville-\'etale examples}
In these examples we consider admissible double covers where $\tilde C$ has a  dual graph with  a single vertex $\tilde v$, and $2n+m$ edges $$\tilde e_1^+,\tilde e_1^-,\ldots,\tilde e_n^+,\tilde e_n^-,\tilde e_{2n+1},\ldots,\tilde e_{2n+m},$$ where $\iota\tilde e_i^+=\tilde e_i^-$ ($i=1,\ldots,n$) and $\iota \tilde e_j=\tilde e_j$, ($j=2n+1,\ldots,2n+m$).
Then $H_1(\widetilde \Gamma,\ZZ)=\ZZ\langle \tilde e_1^+,\tilde e_1^-,\ldots,\tilde e_n^+,\tilde e_n^-,\tilde e_{2n+1},\ldots,\tilde e_{2n+m} \rangle $, and
$$H_1(\widetilde \Gamma,\ZZ)^{[-]}=\ZZ\langle  \frac{1}{2}(\tilde e_1^+-\tilde e_1^-),\ldots,\frac{1}{2}(\tilde e_n^+-\tilde e_n^-) \rangle .$$  One can then check that $\ell_{e_i}=\tilde (e_i^+)^\vee-(\tilde e_i^-)^\vee$ for $i=1,\ldots,n$ and $\ell_{e_j}=0$ for $j=2n+1,\ldots,2n+m$.  In other words we are exactly back in the situation of the elementary \'etale examples, and consequently the Prym map extends.

\subsection{Mixed Beauville--Friedman--Smith example}
In these examples we consider admissible covers where
 $\tilde C$ has a  dual graph with  two  vertices $\tilde v_1,\tilde v_2$, fixed by the involution, and $2n+m$ edges $$\tilde e_1^+,\tilde e_1^-,\ldots,\tilde e_n^+,\tilde e_n^-,\tilde e_{2n+1},\ldots,\tilde e_{2n+m},$$ where $\iota\tilde e_i^+=\tilde e_i^-$ ($i=1,\ldots,n$) and $\iota \tilde e_j=\tilde e_j$, ($j=2n+1,\ldots,2n+m$).
One finds that $H_1(\widetilde \Gamma,\ZZ)^{[-]}=\ZZ\langle\frac{1}{2}(\tilde e_1^+-\tilde e_1^-,\ldots,\frac{1}{2}(\tilde e_n^+-\tilde e_n^-) \rangle $,  $\ell_{e_i}=\tilde (e_i^+)^\vee-(\tilde e_i^-)^\vee$ for $i=1,\ldots,n$ and $\ell_{e_j}=0$ for $j=2n+1,\ldots,2n+m$.
Consequently, we find ourselves exactly back in the case of the elementary \'etale examples, and the Prym map extends in a neighborhood of these examples.

\section{Degenerations of Friedman--Smith covers}\label{secDegen}

Here we investigate several classes of degenerations of Friedman--Smith covers that arise in describing the resolution of the Prym map in low genus.

\subsection{Vologodsky's degenerations ($DR_n$)} \label{secDR}
In \cite{vologodskyres}, Vologodsky investigates a certain class of degenerations of Friedman--Smith covers, which  he denotes by $DR_n$.  These are covers
with $n$ smooth irreducible components $\widetilde C_1,\ldots,\widetilde C_n$, all preserved by the involution,  with each irreducible component $\widetilde C_i$ meeting $\widetilde C_{i-1}$ and $\widetilde C_{i+1}$ in exactly two nodes each, which are interchanged pairwise by the involution.  Here $\widetilde C_0=\widetilde C_n$ and $\widetilde C_{n+1}=\widetilde C_{1}$.

\begin{figure}[htb]
\begin{equation*}
\xymatrix@C=.5cm{
&*{\bullet} \ar @{-} @/_.5pc/[ld]|-{\SelectTips{cm}{}\object@{>}}_{\tilde e_2^+} \ar @{-} @/^.5pc/[ld]|-{\SelectTips{cm}{}\object@{>}}^{\tilde e_2^-}& &*{\bullet} \ar @{-} @/_.5pc/[ll]|-{\SelectTips{cm}{}\object@{>}}_{\tilde e_1^+}  \ar @{-} @/^.5pc/[ll]|-{\SelectTips{cm}{}\object@{>}}^{\tilde e_1^-} &\\
*{\bullet} \ar @{-} @/_.5pc/[rd]|-{\SelectTips{cm}{}\object@{>}}_{\tilde e_3^+} \ar @{-} @/^.5pc/[rd]|-{\SelectTips{cm}{}\object@{>}}^{\tilde e_3^-}  &&&& *{\bullet}\ar @{-} @/_.5pc/[lu]|-{\SelectTips{cm}{}\object@{>}}_{\tilde e_6^+} \ar @{-} @/^.5pc/[lu]|-{\SelectTips{cm}{}\object@{>}}^{\tilde e_6^-}  \\
&*{\bullet} \ar @{-} @/_.5pc/[rr]|-{\SelectTips{cm}{}\object@{>}}_{\tilde e_4^+} \ar @{-} @/^.5pc/[rr]|-{\SelectTips{cm}{}\object@{>}}^{\tilde e_4^-} &&*{\bullet} \ar @{-} @/_.5pc/[ru]|-{\SelectTips{cm}{}\object@{>}}_{\tilde e_5^+} \ar @{-} @/^.5pc/[ru]|-{\SelectTips{cm}{}\object@{>}}^{\tilde e_5^-} &\\
}
\end{equation*}
\caption{  Dual graph of a $DR_6$ cover.}\label{Fig:dgDR6}
\end{figure}
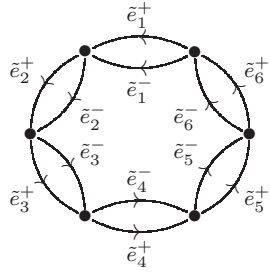

The $DR_n$ dual graph is similar to the one in Figure \ref{Fig:dgDR6}, except with $2n$ edges.
 One can see that $DR_1$ lies in $\delta_0'$, $DR_2=FS_2$,
and that for $n\ge 2$ $DR_n$ is contained in $\overline {FS}_2$ as well as as in the $n$-fold self-intersection of $\delta_0'$.   Moreover, for $n\ge 2$, $DR_{n+1}\subseteq \overline{DR}_n$.   \'Etale locally near a $DR_n$ example with $n\ge 2$, every $\binom{n}{2}$ intersections of $2$ \'etale local components of the  $\delta_0'$ divisors is an \'etale local component of  the $\overline{FS}_2$ locus.

   One can check that
$$
H_1(\widetilde \Gamma,\mathbb Z)=\mathbb Z\langle
\tilde e_1^+-\tilde e_1^-,\ldots,\tilde e_n^+-\tilde e_n^-,\tilde e_1^+\ldots+\tilde e_n^+ \rangle
$$
and
$$
H_1(\widetilde \Gamma,\mathbb Z)^{[-]}=\mathbb Z\langle
\tilde e_1^+-\tilde e_1^-,\ldots,\tilde e_{n-1}^+-\tilde e_{n-1}^-,\frac{1}{2}(\tilde e_1^+-\tilde e_1^-)+\ldots+\frac{1}{2}(\tilde e_n^+-\tilde e_n^-) \rangle.
$$
The monodromy cone is then given by the $n\times n$ matrix:
%\begin{equation}\label{eqnDRn}
%DR_n \ \ \ \
%\left(\begin{array}{c|ll}
%2\operatorname{Id}_{n-2}& 0_{n-2} & 1_{n-2}\\\hline
%0_{n-2}^t& 2&1\\
%0_{n-2}^t&0&1\\
%\end{array}\right)
%\end{equation}
\begin{equation}\label{eqnDRn}
\begin{tiny}
DR_n \ \ \ \ \left(
\begin{array}{cccc|cc}
2&0&\cdots&&0&1\\
0&2&0&&0&1\\
0&0&2&0&&\\
&&&&0&1\\\hline
&&&&2&1\\
&&&&&1\\
\end{array}\right)=
\left(\begin{array}{c|cc}
2\operatorname{Id}_{n-2}& 0 & 1\\\hline
0& FS_{2}&
\end{array}\right)
\end{tiny}
\end{equation}
One can see immediately that these cones are simplicial.
Vologodsky \cite[Prop.~1.3]{vologodskyres} gives the   second Voronoi decomposition of these cones:
\emph{Viewing the cone as a cone over the standard $n-1$ simplex in $\mathbb R^n$, the   second Voronoi decomposition is the collection of cones defined by the hyperplanes in $\mathbb R^n$ defined by
$
\sum_{j\in J}c_j=\sum_{j\notin J}c_j
$
for every proper subset $J\subset \{1,\ldots,n\}$.}
For $n=2,3$ one obtains the decompositions depicted in Figure \ref{Fig:MCDR2DR3}.
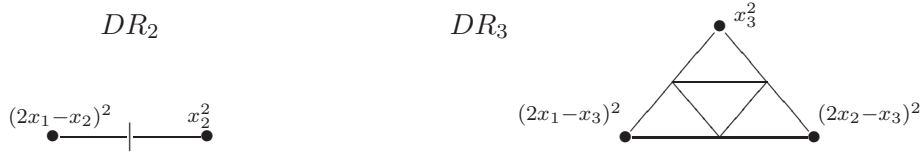
\begin{figure}[htb]
\begin{equation*}
\xymatrix@C=.4cm@R=1cm{
&DR_2&\\
*{\bullet}\ar@{-}[r]|-{\SelectTips{cm}{}\object@{}} ^<{(2x_1-x_2)^2}& *{|} \ar@{-}[r]|-{\SelectTips{cm}{}\object@{}}^>{x_2^2}  &*{\bullet}\\
}
\ \ \ \  \hskip 1 in DR_3
\xymatrix@C=.4cm@R=.5cm{
&&*{\bullet} \ar@{-}[rrdd]|-{\SelectTips{cm}{}\object@{}}^>{(2x_2-x_3)^2} ^<{x_3^2} \ar@{-}[lldd]|-{\SelectTips{cm}{}\object@{}}_>{(2x_1-x_3)^2} &&\\
&*{} \ar@{-}[rd] \ar@{-}[rr]&&*{} \ar@{-}[ld] &\\
*{\bullet}  \ar@{-}[rrrr]&& *{}&&*{\bullet}
}
\end{equation*}
\caption{Decomposition of the monodromy cone for $DR_2$ and $DR_3$ covers.}\label{Fig:MCDR2DR3}
\end{figure}
For dimension reasons, this also gives the   perfect  and central cone decomposition.

We now describe  the perfect cone decomposition of the monodromy cone of $DR_n$ covers for all $n$.    Before doing this, let us introduce some notation.
Let $\Delta^n$ be the standard $n$-simplex in $\mathbb R^{n+1}$;  i.e.~the convex hull on the basis vectors $e_1,\ldots,e_{n+1}$.    Define $\Delta_i^n$ to be the convex hull over the $n+1$ vectors
$$
\frac{e_i+e_1}{2},\ldots,\frac{e_i+e_{n+1}}{2}
$$
In other words $\Delta_i^n$ is the $n$-simplex obtained by cutting off the $i$-th corner of the original simplex at the mid-point of the edges containing $e_i$.

\begin{pro}
Viewing the monodromy cone for a $DR_n$ degeneration as the cone over the $n$-simplex $\Delta^n$, the perfect cone decomposition is given by
$$
\Delta^n=\Delta_1^n\cup \ldots \cup \Delta^n_n \cup \overline{(\Delta^n-\bigcup_{i=1}^n\Delta^n_i)}.
$$
Moreover, the cones over the $\Delta^n_i$ are all contained in matroidal cones of type $A$, and the remaining  cone, the cone over $\overline{(\Delta^n-\bigcup_{i=1}^n\Delta^n_i)}$, is contained in a perfect cone, of type $D$ (see Remark \ref{remlowdim}).  In particular, the    second Voronoi decomposition of a $DR_n$ monodromy cone is a refinement of the perfect cone decomposition.
\end{pro}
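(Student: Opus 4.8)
The plan is to imitate the treatment of the $FS_n$ cones in Proposition \ref{proFS=PC} and Lemma \ref{lemFSPCD}: starting from the explicit matrix \eqref{eqnDRn} for the $DR_n$ monodromy cone $\sigma$, exhibit for each of the claimed pieces a single maximal perfect cone (of type $A$ or $D$) that contains it, and then check that these pieces are exactly the traces of the perfect cone fan on $\sigma$. Write $\sigma=\RR_{\ge 0}\langle\ell_1^2,\dots,\ell_n^2\rangle$ with $\ell_1,\dots,\ell_n$ the primitive linear forms read off from \eqref{eqnDRn}, and use the identification of $\sigma$ with the cone over $\Delta^n$ from the statement, with $c_1,c_2,\dots$ the corresponding barycentric coordinates. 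Then $\Delta_i^n$ is the region $\{c_i\ge\sum_{j\ne i}c_j\}$ and $\overline{\Delta^n-\bigcup_i\Delta_i^n}$ is $\{c_i\le\sum_{j\ne i}c_j\ \text{for all }i\}$; these $n+1$ polytopes clearly cover $\Delta^n$ with pairwise disjoint interiors, so the displayed decomposition is at least a genuine polyhedral subdivision, and its internal walls $\{c_i=\sum_{j\ne i}c_j\}$ form a sub-collection of the walls $\{\sum_{j\in J}c_j=\sum_{j\notin J}c_j\}$ defining Vologodsky's second Voronoi subdivision of $\sigma$ \cite[Prop.~1.3]{vologodskyres} (take $J$ a singleton). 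Hence the second Voronoi subdivision refines the claimed one, and the last (``in particular'') sentence follows the moment the claimed subdivision is identified with the perfect cone subdivision.

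For the corner cone $\tau_i$ (the cone over $\Delta_i^n$), whose extremal rays are $\ell_i^2$ together with the edge-midpoints $\ell_i^2+\ell_j^2$ ($j\ne i$), I would exhibit a single matroidal cone $\widehat\tau_i$ of type $A$ — a copy of the principal ($A_n$) cone, playing the role of the cone $C_1$ in Lemma \ref{lemFSPCD} and Remark \ref{remFS3Del} — with $\tau_i\subseteq\widehat\tau_i$. Concretely this reduces to (i) producing an explicit family of $\binom{n+1}{2}$ primitive linear forms on the rank $n$ lattice (the $\ell_j$'s entering the rays of $\tau_i$ together with suitable ``difference'' forms) and checking, by evaluating the relevant minors, that it is unimodular as a vector configuration, so by Lemma \ref{lemsecvor} it generates a matroidal cone; and (ii) checking that each extremal ray of $\tau_i$ is a nonnegative combination of the corresponding rank $1$ quadrics. (For $n=2,3$ this is already done in Lemma \ref{lemFSPCD} and Figure \ref{Fig:MCDR2DR3}; the general case has the same flavour.) In particular $\tau_i$ then lies in a cone of the perfect cone decomposition.

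For the central cone $\tau_D$ (the cone over $\overline{\Delta^n-\bigcup_i\Delta_i^n}$) I would instead use Lemma \ref{lempc} directly: write down a positive definite quadratic form $Q$ on the rank $n$ lattice with $Q\equiv 1$ on all linear forms occurring as extremal rays of $\tau_D$ and $Q(\ell)\ge 1$ for every nonzero lattice vector $\ell$; this exhibits a perfect cone $\widehat\tau_D\supseteq\tau_D$, and comparing the set of $Q$-shortest vectors to the generators of the $Q_D$-cone of Remark \ref{remlowdim} identifies $\widehat\tau_D$ as a cone of type $D$ (for $n\le 3$, where $D_n=A_n$, this cone is still matroidal, and the whole assertion is just the explicit picture in Figure \ref{Fig:MCDR2DR3}). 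The form $Q$ should be the obvious extension of \eqref{eqnQ}: in the block coordinates of \eqref{eqnDRn}, where $\sigma$ has the shape $\left(\begin{smallmatrix}2\,\Id_{n-2}&0&1\\ 0&FS_2&\end{smallmatrix}\right)$, take the form that restricts to \eqref{eqnQ} on the $FS$-block and to the standard form on the $2\,\Id$-block; the verification that $Q\ge 1$ on the lattice is then the same short-vector enumeration as in the proof of Proposition \ref{proFS=PC}. Granting all this, each of $\tau_1,\dots,\tau_n,\tau_D$ sits in a single maximal perfect cone. To see that no two of them can lie in one perfect cone, note for distinct corners that $\tau_i\cup\tau_j$ contains the $2$-dimensional face $\RR_{\ge 0}\langle\ell_i^2,\ell_j^2\rangle$ of $\sigma$, which by \eqref{eqnDRn} is an $FS_2$-type cone and hence, by Proposition \ref{proFS=PC}, is contained in no perfect cone at all; a corner $\tau_i$ and the centre $\tau_D$ are separated because $\widehat\tau_i$ is matroidal while $\widehat\tau_D$ (for $n\ge 4$) is not, once one knows that the interiors of $\tau_i,\tau_D$ each meet only a single maximal perfect cone.

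The main obstacle is precisely this last point: showing that the perfect cone fan does not subdivide any of the pieces $\tau_i,\tau_D$ further, i.e.\ that $\tau_i=\sigma\cap\widehat\tau_i$ and $\tau_D=\sigma\cap\widehat\tau_D$ with the (relative) interiors of the pieces landing in the interiors of the corresponding perfect cones. For $\tau_D$ this needs a sufficiently precise description of the facets of a $D_n$ perfect cone to see that $\tau_D$ lies in the interior of exactly one of them — the same delicate facet analysis that is needed to handle the case $n=4$ in Proposition \ref{proFS=PC} (there the $FS_4$ cone is contained in, but is not a face of, a type $D$ cone). Once the non-subdivision of the pieces and their non-merging are both established, the $n+1$ pieces are exactly the maximal cones of the trace of the perfect cone fan on $\sigma$, which is the first assertion of the proposition; the type $A$/type $D$ statements and the second Voronoi refinement then follow as indicated above.
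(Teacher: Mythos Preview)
Your plan for the corner simplices $\tau_i$ is fine and matches the paper's approach in spirit.  The real problem is your treatment of the central piece $\tau_D$.  Its extremal rays are the edge-midpoints $N_{ij}=\tfrac12(\ell_i^2+\ell_j^2)$, which are \emph{rank~$2$} quadrics, not squares of linear forms; so there are no ``linear forms occurring as extremal rays of $\tau_D$'' to which Lemma~\ref{lempc} could be applied.  Your fallback---a block-diagonal form built from \eqref{eqnQ} on the $FS_2$-block of \eqref{eqnDRn}---cannot work either: the form \eqref{eqnQ} is only useful for $n\ge 4$, and for $n=2$ no positive definite form with minimum $\ge 1$ takes value $1$ on the $FS_2$ linear forms (that is exactly the content of Proposition~\ref{proFS=PC} for $n=2$).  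So this route does not produce a perfect cone containing $\tau_D$, let alone identify it as type $D$.

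What the paper does instead is short and bypasses Lemma~\ref{lempc} entirely.  After the change of coordinates making the generators $M_i=(2x_i-x_1)^2$ (so also $M_1=x_1^2$), one has the identity
\[
N_{ij}=\tfrac12\bigl((2x_i-x_1)^2+(2x_j-x_1)^2\bigr)=(x_1-x_i-x_j)^2+(x_i-x_j)^2,
\]
which rewrites each midpoint as a sum of two \emph{new} rank~$1$ quadrics.  For a fixed $i$ the forms appearing in $N_{i1},\dots,N_{in}$ visibly sit inside a type~$A$ cone, and the full collection $\{(x_1-x_i-x_j)^2,(x_i-x_j)^2\}_{i\ne j}$ sits inside a type~$D$ cone; this is what places $\tau_i$ in a type~$A$ perfect cone and $\tau_D$ in a type~$D$ one.  (Incidentally, your non-merging argument for two corners is right in conclusion but not in justification: in rank $n$ the $2$-face $\RR_{\ge 0}\langle\ell_i^2,\ell_j^2\rangle$ is not literally an $FS_2$ cone.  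The clean reason no perfect form $Q$ can have both $2x_i-x_1$ and $2x_j-x_1$ as shortest vectors is the parallelogram law, which forces $Q(x_i-x_j)<1$.)  Finally, note that the paper's own proof, like yours, does not spell out the ``no further subdivision'' step you flag as the main obstacle; it only exhibits the containing perfect cones.
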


\begin{rem}
This also gives the decomposition of the monodromy cone  into cones contained in cones in the CCD.
\end{rem}

\begin{proof}
From \eqref{eqnDRn}, and an elementary change of coordinates, the $DR_n$ monodromy cone can be taken to be  generated by the $n$ quadratic forms $M_1=x_1^2$, $M_i=(2x_i-x_1)^2$,  ($i=2,\ldots,n$); note that in fact with these definitions $M_i=(2x_i-x_1)^2$ for all $i$.     Now define
$$
N_{ab}:=\frac{1}{2}(M_a+M_b) \ \ \ 1\le a\le b\le n,
$$
to be the midpoint of the segment joining $M_a$ to $M_b$.  We claim that the cones
$$
\Delta^n_i:=\mathbb R_{\ge 0}\langle N_{i,1},\ldots, N_{i,n}\rangle
$$
$$
D:=\mathbb R_{\ge 0}\langle N_{ij}\rangle_{i\ne j}
$$
are contained in cones in the perfect cone decomposition, of types $A$ and $D$ respectively.

To see this, observe that
$$
N_{ij}=\frac{1}{2}(2x_i-x_1)^2+(2x_j-x_1)^2)=(x_1-x_i-x_j)^2+(x_i-x_j)^2,
$$
which reduces in the case $i=1$ to
$$
N_{1i}=(x_1-x_i)^2+x_i^2.
$$
Now one can easily check that $\Delta^n_1=\mathbb R_{\ge 0}\langle N_{1,1},\ldots, N_{1,n}\rangle
$ is contained in a cone of type $A$ (see \eqref{Acone}).
The other cones $\Delta^n_2,\ldots, \Delta^n_n$ are contained in type $A$ cones by symmetry.  Similarly, one can check that the cone $D$ is contained in a cone of type $D$ (see \eqref{Acone}).
\end{proof}

\begin{rem}
As mentioned above, the geometric description near a $DR_n$ example is that there are $n$ copies of $\delta_0'$ meeting.  The $\binom{n}{2}$, $2$-fold intersections correspond to $\overline{FS}_2$ loci.  The toric resolution given by the above decomposition of the monodromy cone  is supported along the union of these loci.  For $n=2$ it is the blow-up of the $\overline {FS}_2$ locus.  For $n=3$ it is a more complicated birational modification described in more detail in \S \ref{sectRes}.
\end{rem}

\subsection{Friedman--Smith--Wirtinger degenerations ($FS_n+W_m$)} \label{secFS+W}  In these examples, we degenerate one of the curves in a Friedman--Smith cover ($FS_n$) to a Wirtinger example ($W_m$).     The dual  graph is given in Figure \ref{Fig:FSn+Wn}.

\begin{figure}[htb]
\begin{equation*}
\xymatrix@C=2cm@R=.05cm{
 *{\bullet} \ \ \ar @{-} @/^1.5pc/[dd]|-{\SelectTips{cm}{}\object@{>}}^{\tilde f_m^-}  \ar @{-} @/_1.5pc/[dd]|-{\SelectTips{cm}{}\object@{>}}_{\tilde f_1^-} \ar @{-} @/^3.5pc/[rrd]|-{\SelectTips{cm}{}\object@{>}}^{\tilde e_1^+} ^<{\tilde v_1^+} \ar @{-} @/^1.5pc/[rrd]|-{\SelectTips{cm}{}\object@{>}}^{\tilde e_{2}^+} && \\
\cdots &\vdots&*{\bullet}\\
*{\bullet} \  \   \ar @{-} @/^3.5pc/[uu]|-{\SelectTips{cm}{}\object@{>}}^{\tilde f_1^+}\ar @{-} @/_3.5pc/[uu]|-{\SelectTips{cm}{}\object@{>}}_{\tilde f_m^+} \ar @{-} @/_3.5pc/[rru]|-{\SelectTips{cm}{}\object@{>}}_{\tilde e_{n}^-} _<{\tilde v_1^-}  _>{\tilde v_2}\ar @{-} @/_1.5pc/[rru]|-{\SelectTips{cm}{}\object@{>}}_{\tilde e_{n-1}^-} &&\\
}
\end{equation*}
\caption{  Dual graph of $FS_{n}+W_{m}$  degeneration of a Friedman--Smith cover with $2n\ge 2$ nodes.}\label{Fig:FSn+Wn}
\end{figure}
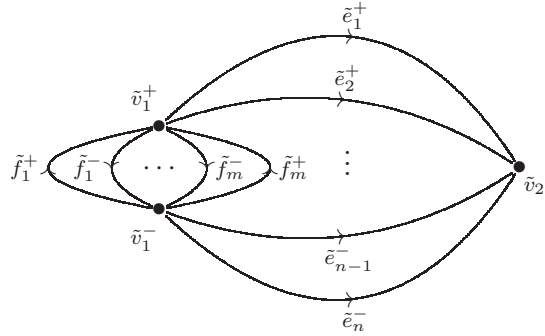

These examples lie in the $(n+m)$-fold self-intersection of $\delta_0'$.  They lie in $\overline{FS}_2\cup \overline{FS}_3$ if and only if $n=2,3$.
One can check that
$$
H_1(\widetilde \Gamma,\mathbb Z)=\mathbb Z\langle (\tilde e_1^+-\tilde e_1^-)+\tilde f_1^+, \ldots, (\tilde e_n^+-\tilde e_n^-)+\tilde f_1^+, (\tilde e_1^+-\tilde e_2^-)+\tilde f_1^+, \ldots, (\tilde e_{n-1}^+-\tilde e_n^-)+\tilde f_1^+,
$$
$$
\tilde f_1^+-\tilde f_1^-,\ldots,\tilde f_m^+-\tilde f_m^-,\tilde f_1^+-\tilde f_2^-,\ldots, \tilde f_{m-1}^+-\tilde f_m^- \rangle.
$$
and
$$
H_1(\widetilde \Gamma,\mathbb Z)^{[-]}=
\mathbb Z\langle (\tilde e_1^+-\tilde e_1^-)+\frac{1}{2}(\tilde f_1^+-\tilde f_1^-),
\frac{1}{2}(\tilde e_1^+-\tilde e_1^-)+\frac{1}{2}(\tilde e_2^+-\tilde e_2^-)+\frac{1}{2}(\tilde f_1^+-\tilde f_1^-),
$$
$$\ldots, \frac{1}{2}(\tilde e_{n-1}^+-\tilde e_{n-1}^-)+\frac{1}{2}(\tilde e_n^+-\tilde e_n^-)+\frac{1}{2}(\tilde f_1^+-\tilde f_1^-),
$$
$$
\frac{1}{2}(\tilde f_1^+-\tilde f_1^-)-\frac{1}{2}(\tilde f_2^+-\tilde f_2^-),\ldots,  \frac{1}{2}(\tilde f_{m-1}^+-\tilde f_{m-1}^-)-\frac{1}{2}(\tilde f_m^+-\tilde f_m^-)\rangle
$$

One can then show that the monodromy cone is generated by the squares of the rows of
the matrix:
\begin{equation}\label{eqnFSWmat}
\begin{tiny}
FS_n+W_m \ \ \  \left(
\begin{array}{ccccc|ccccc}
2&-1&-1&&-1&0&&&0&0\\
&1&0&0&&0&&&0&0\\
&&&\ddots&&& &&\\
FS_{n}&&&&1&0&0&&0&0\\ \hline
1&0&-1&0&*&1&0&&&0\\
0&0&0&&&-1&1&&&0\\
0&0&0&&&&-1&1&&0\\
&&&&&&&\\
&&&&&W_{m}&&&&-1\\
\end{array}
\right)
=
\left(
\begin{array}{c|c}
FS_n& 0\\\hline
v_n&W_m\\
0&\\
\end{array}
\right)
\end{tiny}
\end{equation}
where $*$   is defined to be $-1$ or $0$ depending on if $n$ is odd or even respectively.  Recall that the $W_{m}$ matrix is an $m\times (m-1)$ matrix (see \S \ref{secWn}). In particular, for $m=1$ the two right hand blocks of \eqref{eqnFSWmat} are missing.

\begin{lem}\label{lemmcFSW}  Consider the case $m=1$.   The monodromy cone for a $FS_n+W_1$ degeneration is not contained in a cone in the PCD for $n=2,3$.  The monodromy cone is  contained in a cone in the PCD for $n=1,4,5,6,7$.
\end{lem}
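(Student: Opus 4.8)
The plan is to treat the two directions separately; the negative direction ($n=2,3$) is short, and the positive direction ($n=1,4,5,6,7$) is where the work lies.

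\textbf{The cases $n=2,3$ (no extension).} Here I would compare the $FS_n+W_1$ monodromy cone with the pure Friedman--Smith cone. For $m=1$ the matrix \eqref{eqnFSWmat} reduces to the $(n+1)\times n$ matrix consisting of the $FS_n$-block on top and the single extra row $v_n$ below, and the monodromy cone is spanned by the squares of its rows in $H^1(\widetilde\Gamma,\ZZ)^-\cong\ZZ^n$. The subcone $\tau_0$ spanned by the squares of the \emph{first $n$} rows is, as data (a rank-$n$ lattice together with a set of primitive rank-$1$ covectors), precisely the matrix \eqref{eqnFSMC}, hence $\GL(n,\ZZ)$-equivalent to the Friedman--Smith monodromy cone $\overline\sigma(FS_n)$. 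By Proposition \ref{proFS=PC}, for $n=2,3$ the cone $\overline\sigma(FS_n)$ is \emph{not} contained in any cone of the perfect cone decomposition; since being contained in a PCD cone is invariant under lattice isomorphism and passes to subcones, and $\tau_0$ sits inside the $FS_n+W_1$ monodromy cone, the latter cannot be contained in a PCD cone either. (This simultaneously recovers the $\overline{FS}_2\cup\overline{FS}_3$ part needed in Theorem \ref{teoindPM}.)

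\textbf{The cases $n=1,4,5,6,7$ (extension), setup.} For these I would invoke the criterion of Lemma \ref{lempc}: the monodromy cone lies in a PCD cone if and only if there is a positive definite quadratic form $Q$ on $H^1(\widetilde\Gamma,\RR)^-\cong\RR^n$ with $Q\ge 1$ on $\ZZ^n\setminus\{0\}$ and $Q(\ell)=1$ on each of the $n+1$ generating linear forms $\ell_{e_1},\dots,\ell_{e_n},\ell_f$ read off from \eqref{eqnFSWmat}. The case $n=1$ is immediate: there the extra generator $\ell_f$ is proportional to the single $FS_1$ generator, so the cone is $1$-dimensional, hence matroidal, hence a PCD cone by Remark \ref{remMV}. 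For $n=4$ I would observe that the form $Q$ of \eqref{eqnQ} constructed in the proof of Proposition \ref{proFS=PC} already does the job: it is positive definite, takes value $1$ on the $FS_4$ generators, satisfies $Q\ge 1$ on $\ZZ^4\setminus\{0\}$ (established there), and a one-line computation shows $Q(\ell_f)=1$ as well for the extra row $\ell_f$ of \eqref{eqnFSWmat}.

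\textbf{The cases $n=5,6,7$, and the main obstacle.} For $n=5,6,7$ the form \eqref{eqnQ} no longer satisfies $Q(\ell_f)=1$, so a new form must be produced for each of these three values of $n$. I would search within the ansatz $Q_{ii}=1$ ($i\ge 2$), $Q_{11}=c$, $Q_{1i}=a$, $Q_{ij}=b$ ($2\le i<j$): the two conditions $Q(\ell_1)=1$ and $Q(\ell_f)=1$ become two linear equations in $a,b,c$, cutting out a one-parameter family, from which a Schur-complement computation selects an interval of the free parameter guaranteeing positive definiteness; this yields an explicit rational matrix for each $n\in\{5,6,7\}$. \emph{The main obstacle} is then verifying $Q(\ell)\ge 1$ for \emph{all} nonzero $\ell\in\ZZ^n$: unlike in \eqref{eqnQ}, where a change of lattice identified $Q$ with a standard form and the short vectors could be listed transparently, here the minimum of $Q$ on the lattice must be bounded below directly. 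I would clear denominators to an integral form, bound the diameter of the ellipsoid $\{Q\le 1\}$ to reduce to a finite list of candidate lattice points, and check that list (Dutour Sikiri\'c's computations in Appendix \ref{secappMDS} serve as a cross-check here). Since only these finitely many $n$ occur --- which is exactly what Theorem \ref{teoindPM} requires --- this finite verification finishes the proof; note that the lemma only asserts \emph{containment} in a PCD cone, so no analysis of whether the cone is itself a PCD cone (as in Proposition \ref{proFS=PC} for $FS_n$) is needed.
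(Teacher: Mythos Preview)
Your proposal is correct and follows essentially the same strategy as the paper: the subcone/degeneration argument for $n=2,3$, the trivial $n=1$ case, reuse of the form \eqref{eqnQ} for $n=4$, and the appeal to Lemma~\ref{lempc} for $n=5,6,7$ all match the paper's proof. The only difference is that for $n=5,6,7$ the paper simply writes down three explicit witness matrices $Q_5,Q_6,Q_7$ (with $Q_6$ supplied by Dutour Sikiri\'c) rather than searching within your symmetric three-parameter ansatz---which is a perfectly valid alternative---and your cross-reference to Appendix~\ref{secappMDS} is slightly off, since that appendix treats the central cone decomposition of the pure $FS_n$ cones rather than the $FS_n+W_1$ cones.
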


\begin{rem}
In fact the monodromy cone is also contained in a cone in the CCD for $n=1,4,5,7$.
\end{rem}

\begin{proof}
For $n=1$, this is clear.  For $n=2,3$, we have observed already  above   that these examples are degenerations of $\overline{FS}_2$ and $\overline {FS}_3$ examples, and so are not contained in cones in the PCD.  For $n=4$, the quadratic form $Q$ defined in  \eqref{eqnQ} shows that the monodromy cone is contained in a cone in the PCD.  For $n=5,6,7$, the quadratic forms below   \begin{equation}\label{eqnQ56}
Q_5 =
\left(
\begin{smallmatrix}
1 & \frac{1}{2} & \frac{1}{2}& \frac{1}{2} & \frac{1}{2}\\
\frac{1}{2}&1& 0 & 0 & \frac{1}{2}\\
\frac{1}{2}&0&1&0 & 0\\
\frac{1}{2}&0&0&1 &0\\
\frac{1}{2}&\frac{1}{2} & 0 &0&1\\
\end{smallmatrix}
\right),
Q_6=\frac{1}{20}\left(
\begin{smallmatrix}
  23&   7 &   9 &   7 &   9 &   7  \\
    7 &    15&   1 &  -1 &   1 &  -1  \\
  9 &   1 &    15&   1 &  -1 &   1  \\
    7 &  -1 &   1 &    15&   1 &  -1  \\
   9 &   1 &  -1 &   1 &    15&   1  \\
    7 &  -1 &   1 &  -1 &   1 &    15 \\
\end{smallmatrix}
\right),
Q_7=
\left(
\begin{smallmatrix}
1 & \frac{1}{2} & -\frac{1}{2}& \frac{1}{2} & \frac{1}{2} &\frac{1}{2}&\frac{1}{2}\\
\frac{1}{2}&1& 0 & 0 & 0 &0&0 \\
-\frac{1}{2}&0&1&-\frac{1}{2} & -\frac{1}{2} &-\frac{1}{2}&-\frac{1}{2}\\
\frac{1}{2}&0&-\frac{1}{2}&1 &\frac{1}{2} &\frac{1}{2}&0\\
\frac{1}{2}&0 & -\frac{1}{2} &\frac{1}{2}&1&\frac{1}{2}&0\\
\frac{1}{2}&0&-\frac{1}{2}&\frac{1}{2}&\frac{1}{2}&1&0\\
\frac{1}{2}&0&-\frac{1}{2}&0&0&0&1
\end{smallmatrix}
\right)
\end{equation}
show that the monodromy cone is contained in a cone in the PCD.
We thank Mathieu Dutour Sikiri\'c for providing us with the metric $Q_6$, which actually shows that the monodromy  cone \emph{is} a cone in the PCD.
\end{proof}

We now turn our attention to the perfect cone decomposition of the monodromy cone for $n=2,3$, $m=1$; that is to say for the cones:
$$
FS_2+W_1\ \ \ \left(\begin{array}{cc}
2&-1\\
0&1\\ \hline
1&0\\
\end{array}\right)
\ \ \ \text{ and } \ \  FS_3+W_1 \ \
\left(\begin{array}{ccc}
2&-1&-1\\
0&1&0\\
0&0&1\\ \hline
1&0&-1\\
\end{array}\right)
$$
It turns out that the analysis in the proof of Lemma \ref{lemFSPCD} actually provides the decomposition in this case as well.
For $n=2$, the star subdivision associated to one of the edges of the cone provides the decomposition; see Figure \ref{figFS2W1decomp}.
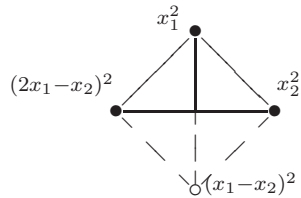
\begin{figure}[htb]
$$
\xymatrix{
&*{\bullet} \ar@{-}[rd]|-{\SelectTips{cm}{}\object@{}}^>{x_2^2} \ar@{-}[ld]|-{\SelectTips{cm}{}\object@{}}_<{x_1^2}  _>{(2x_1-x_2)^2} \ar@{-}[d]|-{\SelectTips{cm}{}\object@{}} & \\
*{\bullet} \ar@{-}[rr]|-{\SelectTips{cm}{}\object@{}}&*{} \ar@{--}[d]|-{\SelectTips{cm}{}\object@{}}^>{(x_1-x_2)^2}&*{\bullet}\\
&*{\circ} \ar@{--}[ru]|-{\SelectTips{cm}{}\object@{}} \ar@{--}[lu]|-{\SelectTips{cm}{}\object@{}}&\\
}
$$
\caption{Perfect cone decomposition of  an $FS_{2}+W_{1}$  monodromy cone.}\label{figFS2W1decomp}
\end{figure}
The dashed lines in the figure show the ambient matroidal cones giving the decomposition of the monodromy cone (which are depicted by  the solid lines).

For $n=3$, the star subdivision associated to one of the $2$-dimensional faces of the cone provides the decomposition; see Figure \ref{Fig:FS3+W1MC}.
\begin{figure}[htb]
$$
\xymatrix@C=.5cm@R=.3cm{
&&*{\bullet}\ar@{-}[rrd]|-{\SelectTips{cm}{}\object@{}} ^<{(x_1-x_3)^2}\ar@{-}[rdddd] \ar@{-}[lldd] \ar@{-}[dd]|-{\SelectTips{cm}{}\object@{}}&&\\
&&&&*{\bullet} \ar@{-}[lld]|-{\SelectTips{cm}{}\object@{}} \\
*{\bullet} \ar@{-}[rrrru]|-{\SelectTips{cm}{}\object@{}}  ^<{(2x_1-x_2-x_3)^2\ \ \ \ \ \ \  }  \ar@{-}[rrrdd] \ar@{-}[rr]|-{\SelectTips{cm}{}\object@{}}&&*{\circ}&&\\
&&&&\\
&&&*{\bullet} \ar@{-}[ruuu]|-{\SelectTips{cm}{}\object@{}} _>{x_3^2}_<{x_2^2}\ar@{-}[luu]|-{\SelectTips{cm}{}\object@{}}&\\
}
$$
\caption{Perfect cone decomposition of a  $FS_{3}+W_{1}$  monodromy cone.}\label{Fig:FS3+W1MC}
\end{figure}
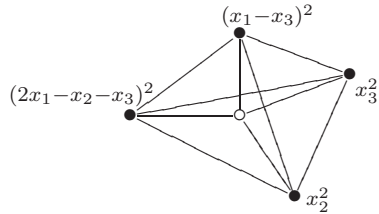
The open bullet in the figure is  the midpoint of the bottom face; i.e.~it corresponds to the quadratic form $\frac{1}{3}((2x_1-x_2-x_3)^2+x_2^2+x_3^2)$.
Note that the face  $C(x_2^2,x_3^2,(x_1-x_3)^2)$ is  contained in the cone
$$
C_1=\mathbb R_{\ge 0}\langle x_1^2,x_2^2,x_3^2, (x_1-x_2)^2,(x_1-x_3)^2,(x_1-x_2-x_3)^2\rangle,
$$
discussed in the proof of Lemma \ref{lemFSPCD}.
We showed there that this cone $C_1$ contains the midpoint of the bottom face (the ray generated by $(2x_1-x_2-x_3)^2+x_2^2+x_3^2$, corresponding to the bullet point in the diagram).  The other sub-cones  can be obtained by change of coordinates.

\begin{rem}
Geometrically, we obtain the following picture of the resolution.  For the case $n=2$, there are $3$ copies of $\delta_0'$ meeting; exactly one pair of intersections gives rise to a single $\overline{FS}_2$ locus.  This locus is blown-up.  For the $n=3$ case, there is a single  $3$-fold intersection of the $\delta_0'$ divisors that gives a $\overline{FS}_3$ locus.  This is blown-up.
\end{rem}

\subsection{Friedman--Smith--Friedman--Smith degenerations \\  ($FS_{n_1+n_2}+FS_m$)} \label{secFS+FS} Here we  consider the case where one of the curves in a Friedman--Smith cover of type $FS_n$ degenerates to a Friedman--Smith cover of type $FS_m$.   It is best to break $n$ down into $n=n_1+n_2$.
The dual  graph is depicted in Figure \ref{Fig:FSn+FSm}.

\begin{figure}[htb]
\begin{equation*}
\xymatrix@C=3cm@R=.8cm{
 *{\bullet} \ \ \ar @{-} @/^1.5pc/[dd]|-{\SelectTips{cm}{}\object@{<}}^{\tilde f_m^-}  \ar @{-} @/_1.5pc/[dd]|-{\SelectTips{cm}{}\object@{<}}_{\tilde f_1^-} \ar @{-} @/^3.5pc/[rrd]|-{\SelectTips{cm}{}\object@{>}}^{\tilde e_1^+} ^<{\tilde v_1} \ar @{-} @/^1.5pc/[rrd]|-{\SelectTips{cm}{}\object@{>}}^{\tilde e_{1}^-}    \ar @{-} @/_.5pc/[rrd]|-{\SelectTips{cm}{}\object@{>}}_{\ \ \ \tilde e_{n_1}^-}^\vdots && \\
\cdots &&*{\bullet}\\
*{\bullet} \  \   \ar @{-} @/^1pc/[rru]|-{\SelectTips{cm}{}\object@{>}}^{\tilde E_{1}^+ \ \ \ \ }_\vdots  \ar @{-} @/^3.5pc/[uu]|-{\SelectTips{cm}{}\object@{>}}^{\tilde f_1^+}\ar @{-} @/_3.5pc/[uu]|-{\SelectTips{cm}{}\object@{>}}_{\tilde f_m^+} \ar @{-} @/_3.5pc/[rru]|-{\SelectTips{cm}{}\object@{>}}_{\tilde E_{n_2}^-} _<{\tilde v_3}  _>{\tilde v_2}\ar @{-} @/_1.5pc/[rru]|-{\SelectTips{cm}{}\object@{>}}_{\tilde E_{n_2}^+} &&\\
}
\end{equation*}

\caption{  Dual graph of $FS_{n_1+n_2}+FS_{m}$  degeneration of a Friedman--Smith example with $2n=2(n_1+n_2)\ge 2$ nodes.}\label{Fig:FSn+FSm}
\end{figure}
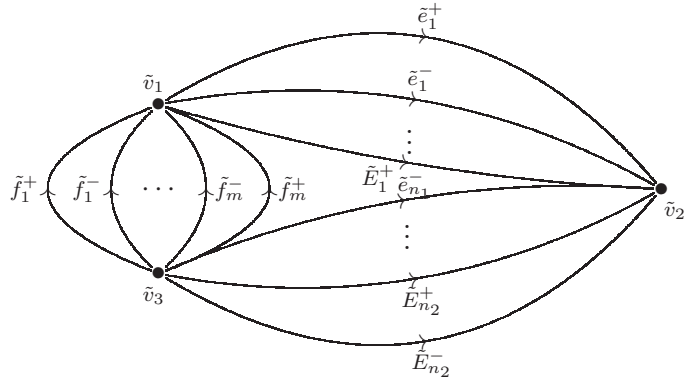

Geometrically, we have $n+m$ copies of $\delta_0'$ meeting.    A $FS_{n_1+n_2}+FS_m$ degeneration is a
 degeneration of a $FS_{2}$ or $FS_3$ example if and only if one (or more) of  $m+n_1$, $m+n_2$ or $n_1+n_2$ is equal to $2$ or $3$.   This is also a $DR_3$ example if $n_1=n_2=m=1$.

The case where $n_1$, $n_2$ or $m$ is $0$  is elementary  (the monodromy matrix can be made block diagonal) so we will ignore this in the analysis that follows.
One can show that
$$
H_1(\widetilde \Gamma,\mathbb Z)=\mathbb Z\langle  (\tilde e_1^+-\tilde e_1^-),\ldots, (\tilde e_{n_1}^+-\tilde e_{n_1}^-),
$$
$$
 (\tilde E_1^+-\tilde E_1^-),\ldots, (\tilde E_{n_2}^+-\tilde E_{n_2}^-),
$$
$$
 (\tilde e_1^+-\tilde e_2^-),\ldots, (\tilde e_{n_1-1}^+-\tilde e_{n_1}^-),
$$
$$
 (\tilde e_{n_1}^+-\tilde E_1^-)+\tilde f_1^+,
$$
$$
 (\tilde E_1^+-\tilde E_2^-),\ldots, (\tilde E_{n_2-1}^+-\tilde E_{n_2}^-),
$$
$$
\tilde f_1^+-\tilde f_1^-,\ldots,\tilde f_m^+-\tilde f_m^-,\tilde f_1^+-\tilde f_2^-,\ldots, \tilde f_{m-1}^+-\tilde f_m^- \rangle
$$
and
$$
H_1(\widetilde \Gamma,\mathbb Z)^{[-]}=\mathbb Z\langle  (\tilde e_1^+-\tilde e_1^-), \frac{1}{2}(\tilde e_1^+-\tilde e_1^-)+\frac{1}{2}(\tilde e_2^+-\tilde e_2^-),\ldots, \frac{1}{2}(\tilde e_{n_1-1}^+-\tilde e_{n_1-1}^-)+\frac{1}{2}(\tilde e_{n_1}^+-\tilde e_{n_1}^-),
$$
$$
\frac{1}{2}(\tilde E_1^+-\tilde E_1^-)+\frac{1}{2}(\tilde E_2^+-\tilde E_2^-)
,\ldots, \frac{1}{2}(\tilde E_{n_2-1}^+-\tilde E_{n_2-1}^-)+\frac{1}{2}(\tilde E_{n_2}^+-\tilde E_{n_2}^-),
$$
$$
(\tilde f_1^+-\tilde f_1^-), \frac{1}{2}(\tilde f_1^+-\tilde f_1^-)+\frac{1}{2}(\tilde f_2^+-\tilde f_2^-),\ldots, \frac{1}{2}(\tilde f_{m-1}^+-\tilde f_{m-1}^-)+\frac{1}{2}(\tilde f_{m}^+-\tilde f_{m}^-)
 \rangle.
$$

The matrix for the monodromy can then be put in the form:
\begin{equation}\label{eqnFSn+FSm}
\begin{tiny}
FS_{n_1+n_2}+FS_m \ \ \  \left(
\begin{array}{ccccc|ccccc}
2&-1&-1&&-1&0&&&0&0\\
&1&0&0&&0&&&0&0\\
&&&\ddots&&& &&\\
FS_{n}&&&&1&0&0&&0&0\\ \hline
0\cdots&0&|-1&\cdots &-1&2&-1&-1&&-1\\
0&0&0&&&&1&&&0\\
0&0&0&&&&&1&&0\\
&&&&&&&\\
&&&&&FS_{m}&&&&1\\
\end{array}
\right)
\end{tiny}
\end{equation}
The matrix is almost block-diagonal, with the $FS_{n}$ and $FS_{m}$ matrices.  The $(n+1)$-st row starts with $n_1$ zeros, then has $n_2$ negative ones, and has for its last $m$ entries the first row  of the $FS_{m}$ matrix.

\begin{lem}\label{lemFS+FS}
The monodromy cone for a $FS_{n_1+n_2}+FS_n$ degeneration is contained in a cone in the PCD if and only if the cover is not a degeneration of an $FS_2$ or $FS_3$ example (i.e.~if and only if $m+n_1, m+n_2, n_1+n_2\ne 2,3$).
\end{lem}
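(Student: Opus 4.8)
Write the three \emph{Friedman--Smith indices} of the configuration as $N_{12}=n_1+m$, $N_{13}=n_2+m$ and $N_{23}=n_1+n_2$: these count the $\iota$-pairs of edges joining the two parts in each of the three ways of splitting $\{\widetilde\Gamma_1,\widetilde\Gamma_2,\widetilde\Gamma_3\}$ into two pieces. Relabelling the three blocks realises the symmetric group on $\{N_{12},N_{13},N_{23}\}=\{n_1+m,\ n_2+m,\ n_1+n_2\}$ by $\GL$-equivalences of the cone \eqref{eqnFSn+FSm}. Since we assume (as in the running discussion preceding the lemma) that $n_1,n_2,m\ge 1$, each index is $\ge 2$, so ``none of $N_{12},N_{13},N_{23}$ is $2$ or $3$'' is the same as ``each is $\ge 4$''; this is the condition to be matched.

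For the forward implication I would argue by contradiction, restricting the perfect-cone quadratic form. Suppose $N_{12}=n_1+m\in\{2,3\}$; the other two cases follow by the symmetry above. The subgraph $\widetilde\Gamma_1$ and its complement $\widetilde\Gamma_2\cup\widetilde\Gamma_3$ are connected, $\iota$-stable, joined by exactly $2N_{12}$ edges which are paired by $\iota$, and joined by no $\iota$-invariant path (every edge lies in an $\iota$-orbit of size two), so by Vologodsky's criterion \cite[Lem.~1.2]{vologodsky} the cover is a degeneration of a Friedman--Smith cover with $2N_{12}$ nodes. Smoothing the nodes over $\widetilde\Gamma_2$, over $\widetilde\Gamma_3$, and over the edges $\tilde E_j$ yields a partial degeneration whose monodromy cone is (canonically) the $FS_{N_{12}}$ cone, embedded into \eqref{eqnFSn+FSm} by the map $\rho^\vee$ of \eqref{eqnMonComp}; this $\rho^\vee$ is injective, carries $H^1$ of the smaller graph into the lattice $H^1(\widetilde\Gamma,\ZZ)^-$, and sends the $FS_{N_{12}}$ generators to generators of \eqref{eqnFSn+FSm}. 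If \eqref{eqnFSn+FSm} lay in a perfect cone, Lemma~\ref{lempc} would furnish a positive definite $Q$ on $H^1(\widetilde\Gamma,\RR)^-$, equal to $1$ on the generators and $\ge 1$ on $H^1(\widetilde\Gamma,\ZZ)^-$; then $Q\circ\rho^\vee$ is such a form for the $FS_{N_{12}}$ cone, contradicting Proposition~\ref{proFS=PC} (equivalently Theorem~\ref{teoFSMCP} (3)) for $N_{12}\in\{2,3\}$.

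For the reverse implication, assume $N_{12},N_{13},N_{23}\ge 4$; I would produce an explicit positive definite quadratic form $Q$ on $H^1(\widetilde\Gamma,\RR)^-$ that is identically $1$ on the generators of \eqref{eqnFSn+FSm} and $\ge 1$ on $H^1(\widetilde\Gamma,\ZZ)^-$, and conclude by Lemma~\ref{lempc}. In a basis adapted to \eqref{eqnFSn+FSm} the cone is ``almost'' the product of an $FS_n$-cone (with $n=n_1+n_2\ge 4$) and an $FS_m$-cone, coupled through the single row $\ell_{f_1}$ and through a half-integral twisting of the lattice -- the same phenomenon responsible for the index-$2$ behaviour in Lemma~\ref{lemFSMAT}. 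Accordingly, the natural candidate for $Q$ is the block form assembled from the explicit perfect-cone metric \eqref{eqnQ} of $FS_n$ and the metric of $FS_m$ (for small $m$ the block $FS_m$ is matroidal with at most $m$ generators, hence perfect by Remark~\ref{remMV}, and one uses the standard matroidal metric), corrected by a cross term read off from $\ell_{f_1}$ and the twist. An alternative I would try first is the simplification procedure of Appendix~\ref{secSimp}: I expect it reduces \eqref{eqnFSn+FSm} to a direct sum of $FS_N$-cones with $N\ge 4$, Wirtinger cones, and rays, after which one uses that a direct sum $\sigma_1\times\sigma_2$ of cones each lying in a perfect cone again lies in one (the orthogonal sum of the two metrics is $\ge 1+1$ on a lattice vector with both components nonzero, and $\ge 1$ otherwise).

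The main obstacle is this reverse implication, and inside it the verification that $Q(\ell)\ge 1$ for every nonzero $\ell\in H^1(\widetilde\Gamma,\ZZ)^-$, i.e.\ a short-vector enumeration for the candidate metric, just as in the proof of Proposition~\ref{proFS=PC}. This is exactly where the hypothesis enters: for each of the three Friedman--Smith substructures there is a ``diagonal'' lattice vector, the analogue of $(\pm\frac{1}{2},\dots,\pm\frac{1}{2})$ from \eqref{eqnQ}, whose squared $Q$-length is $N_{ij}/4$, hence $\ge 1$ precisely when $N_{ij}\ge 4$ and strictly less than $1$ when $N_{ij}\in\{2,3\}$. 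Showing that these are (after building in the appropriate cross terms) the only candidate short vectors, uniformly in $n_1,n_2,m$ -- or, in the simplification approach, verifying that the reduction really produces only $FS_{\ge 4}$, Wirtinger, and ray summands -- is the technical heart of the argument.
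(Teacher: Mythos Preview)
Your overall strategy matches the paper's exactly: the paper's proof is one sentence, ``This is essentially identical to the proof of Proposition~\ref{proFS=PC}, and is left to the reader,'' i.e.\ construct an explicit metric as in \eqref{eqnQ} for the reverse direction and use the $FS_2$/$FS_3$ obstruction for the forward one. Your forward implication (pulling back the metric $Q$ along the injection $\rho^\vee$ coming from the partial smoothing) is correct and in fact more detailed than what the paper offers; the key point you need, that $\rho^\vee$ has saturated image, follows from the surjectivity of $\rho$ on $H_1^{[-]}$ (Remark~\ref{remSESH1}). A still shorter route, used implicitly in Theorem~\ref{teoindPM}, is that the indeterminacy locus is closed and already contains $FS_2\cup FS_3$ by Proposition~\ref{proFS=PC}, hence contains these degenerations.

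One caution: your proposed alternative via Appendix~\ref{secSimp}, reducing \eqref{eqnFSn+FSm} to a direct sum of $FS_{\ge 4}$, Wirtinger, and ray cones, will not work. The $(n{+}1)$-st row of \eqref{eqnFSn+FSm} genuinely couples the $FS_n$ and $FS_m$ blocks (through both the $-1$'s in columns $n_1{+}1,\dots,n$ and the half-integral lattice twist), and no integral change of basis decouples them; Lemma~\ref{AlemD} requires the two subgraphs to meet in a single vertex, which fails here. So the explicit block metric with cross terms, as you outline, is the route that has to be carried through, and the short-vector analysis you describe (where the three ``diagonal'' vectors of length $N_{ij}/4$ are the only dangerous ones) is indeed the heart of it.
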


\begin{proof}
This is essentially identical to the proof of Proposition \ref{proFS=PC},  and is left to the reader.
\end{proof}

\begin{rem}
If all of the sums $n_1+n_2,n_1+m,n_2+m$ are divisible by $4$, then the monodromy cone  is  contained in a cone in the central cone decomposition.
\end{rem}

\begin{rem}
For the case $n_1=n_2=m=1$, it was observed above that this is also a $DR_3$ cone.  In particular, we have already worked out the decomposition into perfect (in fact matroidal) cones.   The other cases where  $FS_{n_1+n_2}+FS_m$ degenerations are degenerations of $FS_2$ or $FS_3$ covers  satisfy the condition that one of $n_1,n_2,m=0$ and so the decompositions can be described in terms of Friedman--Smith examples (see \S  \ref{secFSexamples}).
\end{rem}

Specifically, let us consider the cone decomposition for $FS_{2+0}+FS_1$.  The matrix for the monodromy cone can be put in the form
$$
\left(\begin{array}{ccc}
2&-1&0\\
0&1&0\\
0&0&1\\
\end{array}\right)
$$
Then one can easily check that the matrices
$$
\left(\begin{array}{ccc}
2&-1&0\\
0&0&1\\
1&-1&0\\
\end{array}\right) \ \  \
\left(\begin{array}{ccc}
1&-1&0\\
0&1&0\\
0&0&1\\
\end{array}\right)
$$
are matroidal.  In short, the perfect cone decomposition is given in Figure \ref{figFS20FS1decomp}.
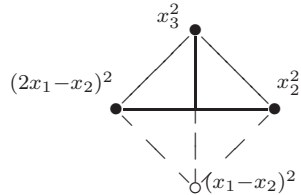
\begin{figure}[htb]
$$
\xymatrix{
&*{\bullet} \ar@{-}[rd]|-{\SelectTips{cm}{}\object@{}}^>{x_2^2} \ar@{-}[ld]|-{\SelectTips{cm}{}\object@{}}_<{x_3^2}  _>{(2x_1-x_2)^2} \ar@{-}[d]|-{\SelectTips{cm}{}\object@{}} & \\
*{\bullet} \ar@{-}[rr]|-{\SelectTips{cm}{}\object@{}}&*{} \ar@{--}[d]|-{\SelectTips{cm}{}\object@{}}^>{(x_1-x_2)^2}&*{\bullet}\\
&*{\circ} \ar@{--}[ru]|-{\SelectTips{cm}{}\object@{}} \ar@{--}[lu]|-{\SelectTips{cm}{}\object@{}}&\\
}
$$
\caption{Perfect cone decomposition of  an $FS_{2+0}+FS_{1}$  monodromy cone.}\label{figFS20FS1decomp}
\end{figure}
The dashed lines in the figure show the ambient matroidal cones giving the decomposition of the monodromy cone (which are depicted by  the solid lines).

\subsection{Friedman--Smith--$\delta_i$ degenerations } \label{secFS+d} Here we consider the case where one of the curves in a Friedman--Smith cover  degenerates to a generic cover in $\delta_i$ (or $\delta_{g+1-i}$).  We call these  $FS_{n_1+n_2}+\delta_i$ degenerations.

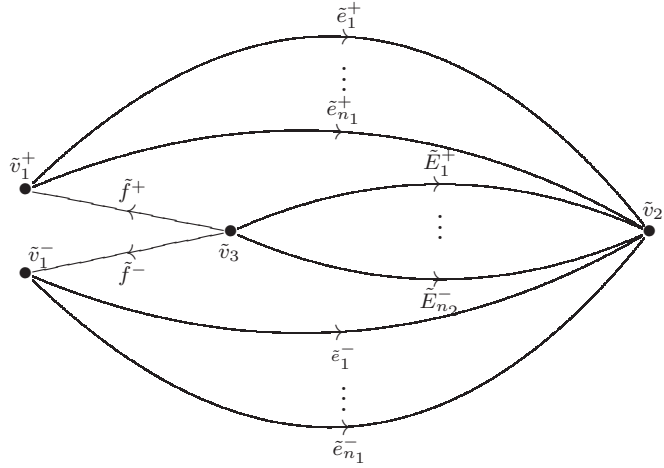
\begin{figure}[htb]
\begin{equation*}
\xymatrix@C=2.5cm@R=.05cm{
*{\bullet} \ar@{-}[rd]|-{\SelectTips{cm}{}\object@{<}}^{\tilde f^+} \ar @{-} @/^5.5pc/[rrrd]|-{\SelectTips{cm}{}\object@{>}}^{\tilde e_1^+} \ar @{-} @/^2.5pc/[rrrd]|-{\SelectTips{cm}{}\object@{>}}  ^>{\tilde v_2} ^<{\tilde v_1^+}^{\stackrel{\vdots}{\tilde e_{n_1}^+}} &&&\\
&*{\bullet}  \ar @{-} @/^1.5pc/[rr]|-{\SelectTips{cm}{}\object@{>}}^{\tilde E_1^+} \ar @{-} @/_1.5pc/[rr]|-{\SelectTips{cm}{}\object@{>}}_{\tilde E_{n_2}^-} _<{\tilde v_3}&\vdots&*{\bullet} \\
*{\bullet} \ar@{-}[ru]|-{\SelectTips{cm}{}\object@{<}}_{\tilde f^-} \ar @{-} @/_5.5pc/[rrru]|-{\SelectTips{cm}{}\object@{>}}_{\tilde e_{n_1}^-} \ar @{-} @/_2.5pc/[rrru]|-{\SelectTips{cm}{}\object@{>}}_{\stackrel{\tilde e_{1}^-}{\vdots}} ^<{\tilde v_1^-}&&&\\
}
\end{equation*}

\caption{  Dual graph of a $FS_{n_1+n_2}+\delta_i$   degeneration of a Friedman--Smith example with $2n\ge 2$ nodes.}\label{Fig:dgFSn+d}
\end{figure}

Geometrically, this is the $(n+1)$-fold intersection of $\delta_0'$ (unless $n_1=0$ in which case it is the $n$-fold intersection of $\delta_0'$ with $\delta_{i}$ (or $\delta_{g+1-i}$)).
This is a degeneration of an $FS_2$ or $FS_3$ cover if and only if $n_1+n_2=2,3$ or $n_2+1=2,3$.
One can show that
$$
H_1(\widetilde \Gamma,\mathbb Z)=\mathbb Z\langle  (\tilde e_1^+-\tilde e_1^-)+(\tilde f^+-\tilde f^-),\ldots, (\tilde e_{n_1}^+-\tilde e_{n_1}^-)+(\tilde f^+-\tilde f^-),
$$
$$
 (\tilde E_1^+-\tilde E_1^-),\ldots, (\tilde E_{n_2}^+-\tilde E_{n_2}^-),
$$
$$
 (\tilde e_1^+-\tilde e_2^-)+(\tilde f^+-\tilde f^-),\ldots, (\tilde e_{n_1-1}^+-\tilde e_{n_1}^-)+(\tilde f^+-\tilde f^-),
$$
$$
 (\tilde e_{n_1}^+-\tilde E_1^-)+\tilde f^+,
$$
$$
 (\tilde E_1^+-\tilde E_2^-),\ldots, (\tilde E_{n_2-1}^+-\tilde E_{n_2}^-) \rangle
$$
and
$$
H_1(\widetilde \Gamma,\mathbb Z)^{[-]}=\mathbb Z\langle  (\tilde e_1^+-\tilde e_1^-)+(\tilde f^+-\tilde f^-), \frac{1}{2}(\tilde e_1^+-\tilde e_1^-)+\frac{1}{2}(\tilde e_2^+-\tilde e_2^-)+(\tilde f^+-\tilde f^-),\ldots,
$$
$$
 \frac{1}{2}(\tilde e_{n_1-1}^+-\tilde e_{n_1-1}^-)+\frac{1}{2}(\tilde e_{n_1}^+-\tilde e_{n_1}^-) +(\tilde f^+-\tilde f^-),
$$
$$
 \frac{1}{2}(\tilde e_{n_1}^+-\tilde e_{n_1}^-)+\frac{1}{2}(\tilde E_{1}^+-\tilde E_{1}^-) +\frac{1}{2}(\tilde f^+-\tilde f^-),
$$
$$
\frac{1}{2}(\tilde E_1^+-\tilde E_1^-)+\frac{1}{2}(\tilde E_2^+-\tilde E_2^-)
,\ldots, \frac{1}{2}(\tilde E_{n_2-1}^+-\tilde E_{n_2-1}^-)+\frac{1}{2}(\tilde E_{n_2}^+-\tilde E_{n_2}^-)
 \rangle.
$$
The monodromy matrix can then be put in the form:
\begin{equation}\label{eqnFSdmat}
\begin{tiny}
FS_{n_1+n_2}+\delta_i  \ \ \  \ \left(\begin{array}{ccccccc}
2&-1&-1&&&&-1\\
0&1&0&&&&0\\
&0&1&0&&&\\
&&&\ddots&&&\\
&&&&1&&\\
&&&&& 1&\\
FS_{n}&&&&&& 1\\ \hline
2&0&-2&\cdots&|-1&\cdots&-1 \\
\end{array}\right)
\end{tiny}
\end{equation}
The bottom row is a string of the form $2$, $0$, $-2$, $0$, $-2, \cdots$ of length $n_1$ followed by a string of the form $-1, -1, -1, \cdots, -1$ of length   $n_2$.

\begin{rem}
For the $n_1=0$ case, we simply have the $FS_{n}$ matrix (the bottom row does not appear).  Note also that for the $n_2=0$ case, the bottom row is divided by $2$ (so that it is primitive).
\end{rem}

\begin{lem}\label{lemFS+d}  For the $FS_{n_1+n_2}+\delta_i$ examples, if  $n=n_1+n_2\le 5$, then the monodromy cone is \emph{not} contained in a cone in the PCD if and only if the example is a degeneration of a $FS_2$, $FS_3$ example.   Moreover:
\begin{enumerate}
\item If $ n_1=0,1$,  the monodromy cone is a $FS_{n}$ cone, and so it is contained in a cone in the PCD if and only if $n\ne 2,3$.

\item If $n_2=0$, the monodromy cone is the same as for Lemma \ref{lemmcFSW}. Thus  if in addition  $n\le 7$, then the monodromy cone  is \emph{not} contained in a cone in the PCD if and only if  $n=2,3$.

\item If $n_2=1,2$ or $n_1+n_2=2,3$,  the monodromy cone is \emph{not} contained in a cone in the PCD (these are exactly the  examples that are   degenerations of a $FS_2,FS_3$).

\end{enumerate}

\end{lem}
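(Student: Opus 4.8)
The plan is to deduce everything from the matrix form \eqref{eqnFSdmat} of the monodromy cone, reducing each case either to a cone already treated or to an explicit quadratic form, and then to assemble the main equivalence by a finite case check. I will prove the three ``moreover'' items first, and then the claim for $n=n_1+n_2\le 5$.

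For item (1): when $n_1=0$ the bottom row of \eqref{eqnFSdmat} is absent (see the Remark after \eqref{eqnFSdmat}), so the monodromy cone is literally the $FS_n$ cone \eqref{eqnFSMC}; when $n_1=1$ the bottom row is $(2,-1,\dots,-1)$, i.e.\ the first row of the $FS_n$ block, so it spans the same extremal ray and again the cone equals the $FS_n$ cone. In both cases the assertion is Proposition \ref{proFS=PC} (equivalently Theorem \ref{teoFSMCP}(3)): it lies in a perfect cone iff $n\ne 2,3$. For item (2): when $n_2=0$ the bottom row, divided by $2$ to make it primitive, becomes $(1,0,-1,0,-1,\dots)$, and one checks that the resulting cone coincides, up to an integral change of basis, with the $FS_n+W_1$ monodromy cone \eqref{eqnFSWmat} (the case $m=1$); hence item (2) is exactly Lemma \ref{lemmcFSW}. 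For item (3): the hypotheses $n_2\in\{1,2\}$ or $n_1+n_2\in\{2,3\}$ are precisely those for which $FS_{n_1+n_2}+\delta_i$ is a degeneration of an $FS_2$ or $FS_3$ cover, and the content is that such a degeneration inherits ``not contained in a perfect cone.''

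To make that inheritance precise I would argue as follows. Selecting the co-cycles $\ell_e$ associated to the relevant pair (resp.\ triple) of nodes, the corresponding sub-collection of generators spans a face of the monodromy cone which, after the lattice-quotient reduction of Appendix \ref{secSimp}, is $\GL$-equivalent to the $FS_2$ (resp.\ $FS_3$) monodromy cone. If the full monodromy cone lay in a perfect cone, so would this face, so by Lemma \ref{lempc} there would be a positive definite $Q$ with $Q\ge 1$ on nonzero lattice vectors and $Q=1$ on the generators. On the rank-$2$ quotient this gives $Q(2x_1-x_2)=Q(x_2)=1$, whence by the parallelogram law $Q(x_1)+Q(x_1-x_2)=1$, contradicting $Q\ge1$ on the nonzero lattice vectors $x_1$ and $x_1-x_2$; the $FS_3$ case is the analogous computation (cf.\ the proof of Proposition \ref{proFS=PC}). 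Since a face of a cone contained in a PCD cone lies in that same PCD cone, this shows the monodromy cone is not contained in a perfect cone, and it is consistent with Theorem \ref{teoFSMCP}(3).

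Finally, for the main equivalence with $n=n_1+n_2\le 5$ I would enumerate the pairs $(n_1,n_2)$: those with $n_1\le1$ fall under (1), those with $n_2=0$ under (2), and those with $n_2\in\{1,2\}$ or $n_1+n_2\in\{2,3\}$ under (3), in each case with the dichotomy ``not in a perfect cone $\iff$ degeneration of $FS_2$ or $FS_3$'' immediate from the cited statement. The only remaining pair is $(n_1,n_2)=(2,3)$ (codimension $6$), which is not a degeneration of $FS_2$ or $FS_3$; for it I would exhibit an explicit positive definite quadratic form on the rank-$5$ monodromy lattice, of the shape of $Q$ in \eqref{eqnQ} and \eqref{eqnQ56}, taking value $1$ on the five generators and $\ge1$ on all nonzero lattice vectors, so that by Lemma \ref{lempc} its monodromy cone does lie in a perfect cone. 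The main obstacle I anticipate is the inheritance step of (3) --- making the face/quotient identification uniform rather than case-by-case --- together with finding and verifying the explicit quadratic form for the $(2,3)$ case (in particular that it takes value $\ge1$ on every nonzero lattice vector); there is also some genuine bookkeeping in matching the degenerate extremes of the $(n_1,n_2)$-parametrization with the geometric criterion for being a degeneration of an $FS_2$ or $FS_3$ cover.
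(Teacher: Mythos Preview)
Your overall strategy matches the paper's: reduce the main equivalence for $n\le 5$ to the three ``moreover'' items by case enumeration, and treat the one leftover pair $(n_1,n_2)=(2,3)$ with an explicit quadratic form. The paper's proof is exactly this enumeration, and for $(2,3)$ it supplies the concrete metric
\[
Q_5'=\begin{pmatrix}
\tfrac23&\tfrac16&\tfrac13&\tfrac13&\tfrac13\\
\tfrac16&\tfrac23&0&0&0\\
\tfrac13&0&\tfrac23&0&0\\
\tfrac13&0&0&\tfrac23&0\\
\tfrac13&0&0&0&\tfrac23
\end{pmatrix},
\]
credited to Dutour Sikiri\'c. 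So the ``obstacle'' you flag for $(2,3)$ is real: the paper does not derive this form, it exhibits it (computer-assisted), and you would need to do the same.

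The one place your route differs from the paper is item (3). The paper does not argue combinatorially; it treats (3) as immediate from the geometric statement that these covers lie in $\overline{FS}_2\cup\overline{FS}_3$ together with the closedness of the indeterminacy locus (this is exactly the left inclusion in \eqref{eqnIndLoc}, proved in Theorem \ref{teoindPM}). Your parallelogram-law argument is a valid alternative, but the phrasing needs a fix: you should not pass to a \emph{quotient} lattice (a form on the big lattice does not descend), but rather use the \emph{sublattice} inclusion $H^1(\widetilde\Gamma_k,\ZZ)^-\hookrightarrow H^1(\widetilde\Gamma,\ZZ)^-$ coming from smoothing all but the $FS_k$ nodes. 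The compatibility of admissible decompositions (\S\ref{secAdCD}) says this inclusion is saturated and the PCD restricts, so your $x_1,\,x_1-x_2$ are genuine nonzero lattice vectors in the big lattice and the contradiction via $Q(x_1)+Q(x_1-x_2)=1$ goes through. With that correction, your approach for (3) is more elementary and self-contained than the paper's, at the cost of a small lattice check; the paper's closedness argument is shorter but relies on the Hodge-theoretic extension framework.
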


\begin{proof}  The only things to show are  the cases $n=4,5$.
 Suppose that $n=4$.  For the cases $n_1=0,1$, we can conclude by (1).  For the case $n_1=2,3$, we have $n_2=2,1$, so we can conclude by (3).   If $n_1=4$, then $n_2=0$, and then we can conclude by (2).

The same analysis works for $n=5$, except when $n_1=2$ and $n_2=3$ (monodromy matrix below left).
We thank  Mathieu Dutour Sikiri\'c for  providing us with the following metric $Q_5'$ (matrix below right), which shows that the cone is contained in (but not equal to)  a cone in the PCD.
$$
\left(\begin{smallmatrix}
2&-1&-1&-1&-1\\
0&1&0&0&0\\
0&0&1&0&0\\
0&0&0&1&0\\
0&0&0&0& 1\\ \hline
2&0&-1&-1&-1 \\
\end{smallmatrix}\right) \ \ \ \  \hskip .6 in
Q_5'=\left(
\begin{smallmatrix}
  \frac{2}{3}&   \frac{1}{6}&   \frac{1}{3}&   \frac{1}{3}&   \frac{1}{3} \\
    \frac{1}{6}&   \frac{2}{3}&    0&    0&    0 \\
    \frac{1}{3}&    0&   \frac{2}{3}&    0&    0 \\
    \frac{1}{3}&    0&    0&   \frac{2}{3}&    0 \\
    \frac{1}{3}&    0&    0&    0&   \frac{2}{3} \\
\end{smallmatrix}
\right)
$$
%$$
%\begin{tiny}
%\left(\begin{array}{rrrrr}
%2&-1&-1&-1&-1\\
%0&1&0&0&0\\
%0&0&1&0&0\\
%0&0&0&1&0\\
%0&0&0&0& 1\\ \hline
%2&0&-1&-1&-1 \\
%\end{array}\right) \ \ \ \  \hskip .6 in
%Q_5'=\left(
%\begin{array}{ccccc}
%  \frac{2}{3}&   \frac{1}{6}&   \frac{1}{3}&   \frac{1}{3}&   \frac{1}{3} \\
%    \frac{1}{6}&   \frac{2}{3}&    0&    0&    0 \\
%    \frac{1}{3}&    0&   \frac{2}{3}&    0&    0 \\
%    \frac{1}{3}&    0&    0&   \frac{2}{3}&    0 \\
%    \frac{1}{3}&    0&    0&    0&   \frac{2}{3} \\
%\end{array}
%\right)
%\end{tiny}
%$$
\end{proof}

\subsection{Friedman--Smith--Beauville degenerations $FS_n+B_m$} \label{secFS+B}

In this section, we consider the case where we replace a vertex of a Friedman--Smith graph in $FS_n$ with a Beauville example with $m$ edges (that is we simply add $m$ fixed loops to the graph at one of the vertices).   We will denote these degenerations by $FS_n+B_m$.

\begin{lem}\label{AlemFSB}
For an $FS_n+B_m$ degeneration, the monodromy cone is the same as for an $FS_n$ cover. Consequently, the monodromy cone is \emph{not} contained in a cone in the PCD if and only if $n=2,3$.
\end{lem}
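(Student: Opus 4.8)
The plan is to show directly that attaching $m$ $\iota$-fixed loops to a Friedman--Smith graph alters neither the lattice $H_1(\widetilde\Gamma,\ZZ)^{[-]}$ on which the monodromy cone lives nor the primitive linear forms that span that cone, and then to invoke Proposition \ref{proFS=PC}. First I would fix notation: the dual graph $\widetilde\Gamma$ of an $FS_n+B_m$ cover is the $FS_n$ graph of Figure \ref{Fig:dgFSG} --- two vertices $\tilde v_1,\tilde v_2$ joined by the $2n$ edges $\tilde e_1^\pm,\dots,\tilde e_n^\pm$ with $\iota\tilde e_i^+=\tilde e_i^-$ --- together with $m$ extra loops $\tilde f_1,\dots,\tilde f_m$ attached at (say) $\tilde v_1$, each fixed by $\iota$ and, by admissibility, with its two orientations not interchanged. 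By Proposition \ref{propmoncone} and the discussion of \S\ref{sectExt}, the closure of the monodromy cone is $\overline\sigma(\widetilde C/C)=\RR_{\ge 0}\langle\ell_e^2\rangle_{e\in E(\Gamma)}\subseteq\bigl(\Sym^2 H_1(\widetilde\Gamma,\ZZ)^{[-]}\bigr)^\vee_\RR$, with $\ell_e$ defined by \eqref{EQNdefle}; so it suffices to understand the loop contributions and the ambient lattice.

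Next I would carry out the (elementary) linear algebra. Choosing a spanning tree of $\widetilde\Gamma$ that restricts to a spanning tree of the $FS_n$ subgraph gives a decomposition $H_1(\widetilde\Gamma,\ZZ)=H_1(\widetilde\Gamma^{FS_n},\ZZ)\oplus\bigoplus_j\ZZ[\tilde f_j]$; since $\partial[\tilde f_j]=0$ and $\iota$ fixes each $[\tilde f_j]$, these loop classes lie in $H_1(\widetilde\Gamma,\ZZ)^+$, so the projection $H_1(\widetilde\Gamma,\ZZ)\twoheadrightarrow H_1(\widetilde\Gamma,\ZZ)^{[-]}=H_1/H_1^+$ kills all $[\tilde f_j]$ and identifies $H_1(\widetilde\Gamma,\ZZ)^{[-]}$ with the corresponding lattice of the $FS_n$ cover. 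Dually $H^1(\widetilde\Gamma,\ZZ)^-$ and the forms $\ell_{e_1},\dots,\ell_{e_n}$ are precisely those of $FS_n$ (the rows of \eqref{eqnFSMCo}). Moreover, for each loop-edge $f_j$ of $\Gamma$, $\iota$ fixes the oriented edge $\tilde f_j$, hence $\iota\tilde f_j^\vee=\tilde f_j^\vee$ in $C^1(\widetilde\Gamma,\ZZ)$; as $H^1(\widetilde\Gamma,\ZZ)$ is torsion-free and $\tilde f_j^\vee\neq 0$ there, we are in the first case of \eqref{EQNdefle}, so $\ell_{f_j}=\tilde f_j^\vee-\iota\tilde f_j^\vee=0$. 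Therefore $\overline\sigma(FS_n+B_m)=\RR_{\ge 0}\langle\ell_{e_1}^2,\dots,\ell_{e_n}^2,0,\dots,0\rangle=\RR_{\ge 0}\langle\ell_{e_1}^2,\dots,\ell_{e_n}^2\rangle$, which is exactly the $FS_n$ monodromy cone.

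The lemma then follows immediately: by Proposition \ref{proFS=PC} (equivalently Theorem \ref{teoFSMCP}(3)) this cone is contained in a cone of the perfect cone decomposition if and only if $n\neq 2,3$. I do not expect a genuine obstacle; the only point demanding care is the computation $\ell_{f_j}=0$, where one must distinguish whether the equality $\iota\tilde e^\vee=-\tilde e^\vee$ is to be read in $C^1(\widetilde\Gamma,\ZZ)$ or in $H^1(\widetilde\Gamma,\ZZ)$ --- exactly the subtlety flagged in Remark \ref{REMle}. Once that is handled the loop-edges contribute the zero quadratic form and the reduction of the whole monodromy cone to that of $FS_n$ is automatic.
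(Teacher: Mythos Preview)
Your proposal is correct and takes essentially the same approach as the paper. The paper's proof simply cites Lemma \ref{lemMCloop} (that adding a single $\iota$-fixed loop leaves the monodromy cone unchanged), which in turn is deduced from the block-diagonal Lemma \ref{AlemD}; you have unpacked exactly this computation directly, showing that each $\tilde f_j$ lies in $H_1^{+}$ and that $\ell_{f_j}=0$, so the ambient lattice and generating forms agree with those of $FS_n$, after which Proposition \ref{proFS=PC} finishes the job.
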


\begin{proof}
See Lemma \ref{lemMCloop}.
\end{proof}

\subsection{Friedman--Smith--Elementary \'Etale  degenerations $FS_n+EE_m$} \label{secFS+EE}

In this section, we consider the case where we replace a vertex of a Friedman--Smith graph in $FS_n$ with an elementary example with $2m$ edges (that is we simply add $2m$  loops to the graph at one of the vertices, with the loops interchanged pairwise by the involution).   We will denote these degenerations by $FS_n+EE_m$.

\begin{lem}\label{AlemFSEE}
For an $FS_n+EE_m$ degeneration, the monodromy matrix can be put in the form
$$
\left(
\begin{array}{c|c}
FS_n& 0 \\\hline
0& Id_m\\
\end{array}
\right)
$$
 Consequently, the monodromy cone is \emph{not} contained in a cone in the PCD if and only if $n=2,3$.   For $n=2,3$, the decomposition into cones in the PCD (as well as SVD and CCD) is given by the star subdivision associated to the face associated to the $FS_n$ cone.
\end{lem}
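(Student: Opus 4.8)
The plan is to first compute $H_1(\widetilde\Gamma,\ZZ)^{[-]}$ for the $FS_n+EE_m$ graph, read off the monodromy cone, and then reduce everything to the facts about the $FS_n$ cone already established in \S\ref{secFSMonCone} and Theorem \ref{teoFSMCP}. Write $\widetilde\Gamma$ for the dual graph of an $FS_n+EE_m$ cover: it has the two vertices $\tilde v_1,\tilde v_2$ joined by the $2n$ Friedman--Smith edges $\tilde e_i^{\pm}$, together with $2m$ loops $\tilde f_j^{\pm}$ at $\tilde v_1$ with $\iota\tilde f_j^+=\tilde f_j^-$. Since each loop is itself a $1$-cycle, $H_1(\widetilde\Gamma,\ZZ)$ is the $\iota$-invariant direct sum of the Friedman--Smith cycle lattice (of rank $2n-1$, generated as in \eqref{eqnh1fs}) and the loop lattice $\ZZ\langle\tilde f_j^{\pm}\rangle$; passing to the $[-]$-quotient, which is exact on direct sums, one obtains $H_1(\widetilde\Gamma,\ZZ)^{[-]}\cong\ZZ\langle z_1,\dots,z_n\rangle\oplus\ZZ\langle w_1,\dots,w_m\rangle$, where the $z_i$ are the generators of \S\ref{secFSMonCone} and $w_j=\tfrac12(\tilde f_j^+-\tilde f_j^-)$, exactly as in the elementary \'etale computation of \S\ref{secEEE}. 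Dually $H^1(\widetilde\Gamma,\ZZ)^-$ splits the same way, and by Proposition \ref{propmoncone} the log of monodromy around the boundary divisor of a loop $f_j$ is the square of $\ell_{f_j}=(\tilde f_j^+)^\vee-(\tilde f_j^-)^\vee=w_j^\vee$ (supported on the second summand), while the generators $\ell_{e_i}$ attached to the Friedman--Smith edges are precisely those of \S\ref{secFSMonCone} (supported on the first summand). Arranging the generators accordingly, and applying the integral change of basis of \eqref{eqnFSMC} to the Friedman--Smith block, yields the block form $\left(\begin{smallmatrix}FS_n&0\\0&\Id_m\end{smallmatrix}\right)$; equivalently, the monodromy cone of $FS_n+EE_m$ is the orthogonal join $\sigma(FS_n)\oplus\RR_{\ge0}\langle(w_1^\vee)^2,\dots,(w_m^\vee)^2\rangle$ inside the split lattice $H_1(\widetilde\Gamma,\ZZ)^{[-]}$, and in particular $\sigma(FS_n)$ is a face of it.

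The second step is the routine observation that, because the lattice genuinely splits as a direct sum, such an orthogonal join lies in a cone of the PCD (resp. SVD, CCD) if and only if its $FS_n$-summand does. The ``only if'' direction holds because a face of a cone in any of these decompositions again lies in that decomposition. For ``if'', given a quadratic form $Q$ on the span of $\sigma(FS_n)$ satisfying the criterion of Lemma \ref{lempc} (resp. Lemma \ref{lemsecvor}, Lemma \ref{lemcc}), the form $Q\oplus\Id_m$ on the split lattice is positive definite, takes value $1$ on each generator $\ell_{e_i}$ and each $w_j^\vee$, and is $\ge1$ on every nonzero lattice vector (use $Q(\cdot)\ge1$ if the first component is nonzero, and $\Id_m$ otherwise); moreover $Q\oplus\Id_m$ is integral whenever $Q$ is. Hence the join satisfies the same criterion. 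Combined with Theorem \ref{teoFSMCP}(3) (and, for the central cone statement, with Dutour Sikiri\'c's result in Appendix \ref{secappMDS}), this shows that the monodromy cone of $FS_n+EE_m$ is contained in a cone of the PCD precisely when $n\ne2,3$.

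Finally, for $n=2,3$ I would take the decomposition of the $FS_n$ cone into matroidal cones of type $A$ given in \S\ref{secFScd} (Lemma \ref{lemFSPCD}, Figure \ref{Fig:degFS2FS3}) and join each of its pieces with the standard cone $\RR_{\ge0}\langle(w_1^\vee)^2,\dots,(w_m^\vee)^2\rangle$. By the orthogonal-join argument above, now with the integral metric $Q_A\oplus\Id_m$ (with $Q_A$ as in \eqref{Acone}) in place of $Q\oplus\Id_m$, each joined piece is contained in a matroidal cone of type $A$, hence in a cone of the PCD, the SVD, and the CCD. Since the standard $EE_m$ cone is a common face of all these joined pieces, and the pieces of the $FS_n$ decomposition are exactly the cones obtained by coning the facets of $\sigma(FS_n)$ over the barycenter of $\sigma(FS_n)$, the resulting fan is precisely the star subdivision of the full monodromy cone along the face spanned by the $FS_n$ generators; this is the assertion of the lemma. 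The content is entirely in the bookkeeping of the first paragraph and the join argument of the second, and I expect no genuine obstacle; the single point that needs care is verifying that $Q\oplus\Id_m$ remains $\ge1$ on all integral vectors, which is exactly where one uses that $H_1(\widetilde\Gamma,\ZZ)^{[-]}$ splits as an honest internal direct sum.
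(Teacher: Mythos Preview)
Your proof is correct and follows essentially the same approach as the paper. The paper's proof is much terser: it obtains the block-diagonal form by invoking Lemma~\ref{AlemD} (the general fact that when $\widetilde\Gamma=\widetilde\Gamma_1\cup\widetilde\Gamma_2$ with $\widetilde\Gamma_1\cap\widetilde\Gamma_2$ a single vertex and both subgraphs $\iota$-invariant, the monodromy matrix is block diagonal), and then simply asserts that the remaining claims ``follow directly from the definitions, and the block diagonal form.'' You have instead carried out the $H_1^{[-]}$ computation by hand and made the orthogonal-join argument explicit, which amounts to spelling out what the paper leaves implicit. One small wording slip: in your second paragraph the parenthetical ``(resp.\ Lemma~\ref{lemsecvor}, \ldots)'' is off, since the second Voronoi criterion is not a quadratic-form condition; but this does not affect the argument, as the PCD claim is all that is needed there and the SVD part is handled correctly in your third paragraph via matroidality.
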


\begin{proof}
For the monodromy matrix, see Lemma \ref{AlemD}.   The statements about the decomposition follow directly from the definitions, and the block diagonal form of the monodromy matrix.
\end{proof}

From our computations in these different examples, we are led to the natural question:

\begin{que}
For monodromy cones associated to degenerations of Friedman--Smith covers of type $FS_2$ or $FS_3$, is  the    second Voronoi decomposition  a refinement of the perfect cone decomposition?
\end{que}

%%%%%%%%%%%%%%%%%%%%%%%%%%%%%%%%%%%%%%%%%%%%%%%%%%%%%%%%%%%%%%%%%%%%%%%%

\section{Simplifications of monodromy cones}\label{secSimp}

In this section we discuss some morphisms of homology of dual graphs, which are used in the discussion of the Hodge theory.     These morphisms  also lead to  some techniques to simplify the study of the monodromy cones.
While we do not strictly need these for the content of this paper, we do include some lemmas below that are similar in spirit, and which we do use to simplify some computations in the preceding sections.

Let $\Gamma$ be a graph, and let $S$ be a subset of the unoriented edges of $\Gamma$.  Define  $\Gamma-S$  to be the graph obtained by removing the edges in $S$, by which we mean removing both oriented edges lying over an unoriented edge, and $\Gamma/S$ to be the graph obtained by contracting  the edges in $S$.

\begin{rem}
If $\Gamma$ is the dual graph of a stable curve $X$, and $S$ corresponds to nodes $p_1,\ldots,p_n\in X$, then $\Gamma-S$ is the dual graph of the curve obtained from $X$ by normalizing at the nodes $p_1,\ldots,p_n$, and $\Gamma/S$ is the dual graph of the curve obtained from $X$ by smoothing the nodes $p_1,\ldots,p_n$.
\end{rem}

Since $\Gamma-S\subseteq \Gamma$, there is a natural inclusion of complexes
$$
  C_\bullet(\Gamma-S,\ZZ)\hookrightarrow C_\bullet(\Gamma,\ZZ).
$$
There is also a surjective morphism of complexes
$$
  C_\bullet(\Gamma,\ZZ)\twoheadrightarrow C_\bullet(\Gamma/S,\ZZ)
$$
given in the following way.  For an edge $e$, then $e\mapsto 0$ if $e\in S$, and otherwise, $e\mapsto e$.   The vertices of $\Gamma/S$ correspond to equivalence classes of vertices in $\Gamma$ joined by edges in $S$.    The map on vertices is the quotient map.    Consequently, setting $S^c$ to be the set of unoriented edges of $\Gamma$ complementary to those of $S$, there are natural maps
$$
H_i(\Gamma-S,\ZZ)\to H_i(\Gamma,\ZZ)\to H_i(\Gamma/S^c,\ZZ).
$$

\begin{rem}\label{remSESH1}
While the sequence of complexes
$$
C_\bullet(\Gamma-S,\ZZ)\hookrightarrow C_\bullet(\Gamma,\ZZ) \twoheadrightarrow C_\bullet(\Gamma/S^c,\ZZ)
$$
is \emph{not} exact,  the following sequence is (split) exact:
$$
0\to H_1(\Gamma-S,\ZZ)\to H_1(\Gamma,\ZZ)\to H_1(\Gamma/S^c,\ZZ)\to 0.
$$
The analogous  sequence in degree $0$ need not be exact.    From the exact sequence on $H_1$, in many instances one may choose bases to obtain block triangular matrices in monodromy computations for Pryms.  As an application, one can give a short  combinatorial proof  that the monodromy cone of a degeneration of a Friedman--Smith cover with at least $4$ nodes ($FS_n$, $n\ge 2$)  is not matroidal.  This provides an alternate combinatorial  proof of \cite[Prop.~2.1 and p.120]{vologodsky}.
\end{rem}

The observations in the remark lead to some general techniques for simplifying monodromy cones for Prym varieties.  For brevity, we omit these.  Two lemmas with similar statements and proofs, which we do use in earlier arguments,  are stated below.

\begin{lem}\label{AlemD} Let $\widetilde \Gamma$ be a graph with admissible involution $\iota$.  Suppose that $\widetilde \Gamma_1,\widetilde \Gamma_2\subseteq \widetilde \Gamma$ are connected sub-graphs preserved by the involution $\iota \widetilde \Gamma_i=\widetilde \Gamma_i$ ($i=1,2$).
If $\widetilde \Gamma=\widetilde \Gamma_1\cup \widetilde \Gamma_2$ and $\widetilde \Gamma_1\cap \widetilde \Gamma_2=\{v\}\subseteq V(\widetilde \Gamma)$ is a single vertex of the graph, then  the  matrix defining the monodromy cone can be put in the form:
$$
MC(\widetilde \Gamma)=\left(
\begin{array}{c|c}
MC( \widetilde \Gamma_1)& 0\\ \hline
 0 & MC(\widetilde \Gamma_2)\\
\end{array}
\right)
$$
\end{lem}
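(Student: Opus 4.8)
The plan is to exploit the block structure of the homology of $\widetilde\Gamma$ induced by the wedge decomposition $\widetilde\Gamma=\widetilde\Gamma_1\cup\widetilde\Gamma_2$ at a single vertex. First I would observe that for a wedge of two connected graphs at a vertex $v$ one has a canonical splitting $H_1(\widetilde\Gamma,\ZZ)=H_1(\widetilde\Gamma_1,\ZZ)\oplus H_1(\widetilde\Gamma_2,\ZZ)$: every $1$-cycle supported on $\widetilde\Gamma$ decomposes uniquely into its $\widetilde\Gamma_1$-part and $\widetilde\Gamma_2$-part, since any path crossing between the two subgraphs must pass through $v$, so there are no edges joining the two pieces. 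Because the involution preserves each $\widetilde\Gamma_i$, it acts on each summand separately, hence the splitting is $\iota$-equivariant and descends to a splitting of the $(-)$-eigenspaces, and therefore of the quotient lattice $H_1(\widetilde\Gamma,\ZZ)^{[-]}=H_1(\widetilde\Gamma_1,\ZZ)^{[-]}\oplus H_1(\widetilde\Gamma_2,\ZZ)^{[-]}$ on which the monodromy quadratic forms live (cf. \eqref{eqnPrymlattice}). Dually $H^1(\widetilde\Gamma,\ZZ)^- = H^1(\widetilde\Gamma_1,\ZZ)^-\oplus H^1(\widetilde\Gamma_2,\ZZ)^-$, and under the decomposition $E(\Gamma)=E(\Gamma_1)\sqcup E(\Gamma_2)$ the co-cycle $\ell_e$ attached to an edge $e$ of $\Gamma_i$ (as in \eqref{EQNdefle}) lies entirely in the $i$-th summand $H^1(\widetilde\Gamma_i,\ZZ)^-$, because $\tilde e^\vee$ and $\iota\tilde e^\vee$ vanish on cycles supported away from $\widetilde\Gamma_i$.

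Next I would invoke Proposition \ref{propmoncone}: the generators of the closed monodromy cone $\overline\sigma(\widetilde C/C)$ are exactly the rank-one quadrics $\ell_e^2$ for $e\in E(\Gamma)$. Choosing a basis of $H_1(\widetilde\Gamma,\ZZ)^{[-]}$ adapted to the direct sum decomposition above, namely a basis of $H_1(\widetilde\Gamma_1,\ZZ)^{[-]}$ followed by a basis of $H_1(\widetilde\Gamma_2,\ZZ)^{[-]}$, the linear form $\ell_e$ for $e\in E(\Gamma_1)$ has nonzero coordinates only in the first block and $\ell_e$ for $e\in E(\Gamma_2)$ only in the second block. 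Writing these linear forms as the rows of the defining matrix (in the matrix convention of \S\ref{secAdCD}, where the $i$-th row is the expression of $\ell_{e_i}$ in the chosen dual basis), one gets precisely a block-diagonal matrix whose upper-left block is the matrix $MC(\widetilde\Gamma_1)$ defining the monodromy cone of the admissible cover restricted to $\widetilde\Gamma_1$ and whose lower-right block is $MC(\widetilde\Gamma_2)$, with the off-diagonal blocks zero. That is the claimed form.

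The one subtlety — and the only place where care is needed — is matching the combinatorial "monodromy cone of the subgraph" $MC(\widetilde\Gamma_i)$ with the intrinsic object: one must check that the recipe \eqref{EQNdefle} for $\ell_e$ computed inside $\widetilde\Gamma$ restricts, on the summand $H^1(\widetilde\Gamma_i,\ZZ)^-$, to the recipe computed inside $\widetilde\Gamma_i$. This is where Remark \ref{REMle} matters: the condition $\iota\tilde e^\vee=-\tilde e^\vee$ is to be read in $H^1(\widetilde\Gamma,\ZZ)$ versus $H^1(\widetilde\Gamma_i,\ZZ)$, and one should verify these agree. Since $H^1(\widetilde\Gamma_i,\ZZ)$ is a direct summand of $H^1(\widetilde\Gamma,\ZZ)$ (by the splitting above, using that the $H_1$-sequence is split exact as in Remark \ref{remSESH1}), primitivity and the sign relation are detected identically in both, so there is no discrepancy. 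I expect this compatibility check to be the main, though minor, obstacle; everything else is a direct unwinding of the definitions in \S\ref{sectPrym}, \S\ref{sectMon}, and \S\ref{sectExt}. Finally I would remark that the same argument handles $MC(\widetilde\Gamma_1)$ and $MC(\widetilde\Gamma_2)$ being the full monodromy matrices of admissible subcovers obtained by smoothing the nodes corresponding to edges of the complementary subgraph, consistently with the notation already used in Lemmas \ref{AlemFSB} and \ref{AlemFSEE}.
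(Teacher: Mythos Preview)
Your proposal is correct and is precisely the argument a reader would supply: the paper's own proof reads in full ``The proof is left to the reader.'' Your use of the wedge splitting $H_1(\widetilde\Gamma,\ZZ)=H_1(\widetilde\Gamma_1,\ZZ)\oplus H_1(\widetilde\Gamma_2,\ZZ)$, its $\iota$-equivariance, and the observation that each $\ell_e$ lies in the appropriate summand is exactly the intended unwinding; the primitivity compatibility you flag via Remark~\ref{REMle} is a nice touch and is handled correctly by the direct-summand argument.
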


\begin{proof}
The proof is left to the reader.
\end{proof}

\begin{rem}
In particular, fix an admissible collection of admissible cone decompositions $\Sigma$.   Then in the notation of the lemma, $MC(\widetilde \Gamma)$ is contained in a cone in $\Sigma$ if and only if $MC(\widetilde \Gamma_1)$ and $MC(\widetilde \Gamma_2)$ are contained in cones in $\Sigma$.
\end{rem}

\begin{lem}\label{lemMCloop}
Let $\widetilde \Gamma$ be a graph with admissible involution $\iota$. Suppose that  $\widetilde \Gamma'$ is a graph with admissible involution $\iota'$, which is obtained from $\Gamma$ by adding a single loop (an edge $\tilde e'$ such that $s(\tilde e')=t(\tilde e')$).  Then
$MC(\widetilde \Gamma)=MC(\widetilde \Gamma')$.
\end{lem}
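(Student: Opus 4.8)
The plan is to trace through the construction of the monodromy cone in Section~\ref{sectExt} and observe that attaching a single loop changes neither the relevant character lattice nor the list of generating rank~$1$ quadrics. Write $\widetilde\Gamma'=\widetilde\Gamma\cup\{\tilde e'\}$, where $\tilde e'$ is the new loop with $s(\tilde e')=t(\tilde e')=v$. Since $\widetilde\Gamma'$ carries an involution $\iota'$ restricting to $\iota$ on $\widetilde\Gamma$ and there is only one new unoriented edge, $\iota'$ must fix $\tilde e'$; admissibility of $\iota'$ forces $\iota'$ to fix the orientation of $\tilde e'$ as well, whence $\iota'(v)=v$. The first thing I would check is that, because of this, adding the loop respects the admissible-cover structure (it is a $\delta_0^{\operatorname{ram}}$-type loop, as in Figure~\ref{Fig:d0ram}), so that Proposition~\ref{propmoncone} applies verbatim to $\widetilde\Gamma'$, with the dual graph of the base acquiring exactly one new edge $e'$ whose unique lift is $\tilde e'$.

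Next I would record the effect on homology. Adding a loop introduces a single new generator of $H_1$, so $H_1(\widetilde\Gamma',\ZZ)=H_1(\widetilde\Gamma,\ZZ)\oplus\ZZ[\tilde e']$, and this decomposition is $\iota'$-equivariant with $[\tilde e']$ fixed (one can also phrase this via the split exact sequence of Remark~\ref{remSESH1} applied with $S=\{\tilde e'\}$). Hence the new generator lies in the $(+1)$-eigenlattice, and therefore
\[
H_1(\widetilde\Gamma',\ZZ)^{[-]}=H_1(\widetilde\Gamma',\ZZ)/H_1(\widetilde\Gamma',\ZZ)^{+}\cong H_1(\widetilde\Gamma,\ZZ)/H_1(\widetilde\Gamma,\ZZ)^{+}=H_1(\widetilde\Gamma,\ZZ)^{[-]}
\]
canonically, and dually $H^1(\widetilde\Gamma',\ZZ)^{-}=H^1(\widetilde\Gamma,\ZZ)^{-}$. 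In particular the ambient space $\bigl(\Sym^2 H_1(\widetilde\Gamma',\ZZ)^{[-]}\bigr)^{\vee}$ in which $MC(\widetilde\Gamma')$ lives is literally the same as the one for $MC(\widetilde\Gamma)$.

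Finally I would identify the generators. By Proposition~\ref{propmoncone}, $MC(\widetilde\Gamma')=\RR_{\ge 0}\langle \ell_e^{2}\rangle_{e\in E(\Gamma')}$ with $\ell_e$ as in \eqref{EQNdefle}. For $e\in E(\Gamma)$ one may take the same chosen lift $\tilde e$, and the cocycle $\tilde e^\vee-\iota\tilde e^\vee$ (resp.\ $\tilde e^\vee$) restricts, via the identification above, to the old $\ell_e$; so these generators are unchanged. For the new edge, $\iota'\tilde e'^\vee=\tilde e'^\vee$ as cochains, and $\tilde e'^\vee$ is nonzero in the free group $H^1(\widetilde\Gamma',\ZZ)$ (it pairs to $1$ with $[\tilde e']$), so $\iota'\tilde e'^\vee\neq-\tilde e'^\vee$ there; hence \eqref{EQNdefle} falls into the first case and $\ell_{e'}=\tilde e'^\vee-\iota'\tilde e'^\vee=0$ in $H^1(\widetilde\Gamma',\ZZ)^{-}$. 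Thus $\ell_{e'}^{2}=0$ contributes nothing, and $MC(\widetilde\Gamma')=\RR_{\ge 0}\langle \ell_e^{2}\rangle_{e\in E(\Gamma)}=MC(\widetilde\Gamma)$.

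There is essentially no hard step; the only points requiring care are (a) that the added loop is compatible with admissibility of the involution, so Proposition~\ref{propmoncone} applies to $\widetilde\Gamma'$, and (b) keeping the integral structure straight — specifically that $H_1(\widetilde\Gamma,\ZZ)^{[-]}$ is preserved on the nose and that the correct branch of \eqref{EQNdefle} is selected for $e'$. Both are routine. I would close by remarking that the same argument shows, more generally, that smoothing a $\iota$-fixed node of an admissible cover does not affect the monodromy cone, which is exactly what is invoked in Lemma~\ref{AlemFSB}.
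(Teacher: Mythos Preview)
Your argument is correct. The paper's proof reaches the same conclusion in one line by invoking the preceding Lemma~\ref{AlemD}: since $\iota'$ must fix the single new loop, one takes $\widetilde\Gamma_1=\widetilde\Gamma$ and $\widetilde\Gamma_2$ to be the one-vertex loop graph $(s(\tilde e'),\tilde e')$, and the block-diagonal form of the monodromy matrix shows that the $\widetilde\Gamma_2$-block is empty (this is a Beauville example with one node, so $H_1(\widetilde\Gamma_2,\ZZ)^{[-]}=0$). What you have written is precisely the direct computation underlying that application of Lemma~\ref{AlemD} in this special case, so the two proofs are the same in substance; the paper just packages the homology splitting and the vanishing of $\ell_{e'}$ via the block-diagonal lemma rather than spelling it out.
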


\begin{proof}
Necessarily $\iota'(\tilde e')=\tilde e'$.  Then apply the previous lemma  with  $\widetilde \Gamma_1=\widetilde \Gamma$ and $\widetilde \Gamma_2=(s(\tilde e'),\tilde e')$.
\end{proof}

%%%%%%%%%%%%%%%%%%%%%%%%%%%%%%%%%%%%%%
\bibliography{prymbib}

%%%%%%%%%%%%%%%%%%%%%%%%%%%%%%%%%%%%%%%%%%%%%%%%%%%%%%%%%%%%%%%%%%%%%%%%%%%%%%%%

%%%%%%%%%%%%%%%%%%%%%%%%%%%%%%%%%%%%%%%%%%%%%%%%%%%%%%%%%%%%%%%%%%%%%
%%%%  MATHIEU'S APPENDIX
%%%%%%%%%%%%%%%%%%%%%%%%%%%%%%%%%%%%%%%%%%%%%%%%%%%%%%%%%%%%%%%%%%%%%
%%%%

\vfil\eject

\section{Extension to the central cone decomposition\\by Mathieu Dutour Sikiri\'c} \label{secappMDS}
In this appendix we discuss the extension of the Prym map to the central cone compactification. We shall see that the indeterminacy loci
differ substantially  for the three toroidal compactfications $\bar A_g^V$,  $\bar A_g^P$ and $\bar A_g^C$.

\begin{teo}\label{teoindC}
For the extension of the Prym map $P_g^C:\overline R_{g+1}\dashrightarrow \bar A_g^C$ to the central cone compactification the following holds:
\begin{enumerate}
\item $\overline {FS}_2\cup \overline {FS}_3\subseteq Ind(P_g^C)$, and for $n\geq 4$  the strata 
${FS}_n$
are not contained in $Ind(P_g^C)$.
\item If $g \geq 9$, the indeterminacy locus $Ind(P_g^C)$ contains points that are not contained in $\cup_{n\geq 1}\overline{FS}_n$.  
\end{enumerate}
\end{teo}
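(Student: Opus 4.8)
The plan is to reduce both parts to the combinatorial extension criterion for the central cone compactification, Theorem \ref{teoprymext}(3), using the explicit description of the monodromy cones of Friedman--Smith covers (and their degenerations) worked out in Section \ref{secFSexamples} and Appendix \ref{seccombinatorics}. Recall that by Theorem \ref{teoprymext}(3), the Prym map $P^C_g$ extends near an admissible cover $\pi:\widetilde C\to C$ if and only if there is a positive definite quadratic form $Q$ on $H^1(\widetilde\Gamma,\RR)^-$ that is $\ge 1$ on all nonzero lattice vectors, $=1$ on each $\ell_e$, and \emph{integral} on the whole lattice $H^1(\widetilde\Gamma,\ZZ)^-$. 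The integrality condition (iv) is the only difference from the perfect cone criterion (P), so the strategy throughout is: start from the quadratic forms already produced in the perfect cone analysis, and either verify/adjust them so they become integral on the lattice, or show no integral form can exist.

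For part (1), the inclusion $\overline{FS}_2\cup\overline{FS}_3\subseteq Ind(P^C_g)$ follows immediately from the containments $\Sigma_C^{(1)}\subseteq\Sigma_P$ and $\Sigma_V^{(1)}\subseteq\Sigma_P$ recorded after Lemma \ref{lemcc}: since by Theorem \ref{teoFSMCP}(3) the Friedman--Smith cone for $n=2,3$ is not contained in any perfect cone, it is a fortiori not contained in any central cone, and the same then holds for every degeneration lying in $\overline{FS}_2\cup\overline{FS}_3$ (a monodromy cone containing a non-central sub-cone cannot be central). For the statement that $FS_n$ for $n\ge 4$ is \emph{not} in $Ind(P^C_g)$, I would use the matrix \eqref{eqnFSMC} together with the metric \eqref{eqnQ}; the key point is that for $n\equiv 0\pmod 4$ the form $Q$ in \eqref{eqnQ} is already integral on all of $\mathbb L\cong H^1(\widetilde\Gamma,\ZZ)^-$ (this is exactly the ``$n\equiv 0\bmod 4$'' case noted in the proof of Proposition \ref{proFS=PC}), so (iv) holds and $P^C_g$ extends there; for general $n\ge 4$ one modifies the $(1,1)$-entry $\tfrac n4$ appropriately (replacing it by $\lceil n/4\rceil$, or more carefully by a suitable rational number realizing an integral perfect form on the shortest-vector lattice $\mathbb L$ described in the proof of Proposition \ref{proFS=PC}) and checks that the three conditions (i)--(iii) plus integrality are preserved. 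The main obstacle here is the bookkeeping for $n\not\equiv 0\pmod 4$: one must exhibit an honest positive definite \emph{integral} Gram matrix on $\mathbb L$ whose minimal vectors are exactly the $f_i^\vee$, i.e.~essentially identify the relevant cone with a face of a known integral perfect (indeed eutactic/central) form such as an $A_m$ or $D_m$ root lattice cone; this is precisely the kind of verification carried out by computer in the appendix, and I would present it as an explicit Gram matrix together with a short shortest-vector computation.

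For part (2), the idea is to produce, for $g\ge 9$, an admissible cover whose Prym monodromy cone is matroidal (hence lies in a second Voronoi and a perfect cone) but whose combinatorial data forbids any integral quadratic form of the required type. The natural source is the Torelli obstruction of Alexeev--Brunyate: by \cite[Cor.~5.6]{ab} (cited in Remark \ref{remTorCC}), for every $g\ge 9$ there is a dual graph $\Gamma_0$ of a stable curve of genus $g$ admitting no quadratic form $Q$ on $H^1(\Gamma_0,\RR)$ satisfying conditions (a)--(d) of Theorem \ref{teotorelliext}(3). I would take such a $\Gamma_0$ and realize it as the base dual graph of an \emph{étale} admissible cover $\widetilde C\to C$ of a suitable Wirtinger/elementary-étale type, for instance an elementary étale example (see \S\ref{secEEE}) whose quotient graph is $\Gamma_0$: there $H_1(\widetilde\Gamma,\ZZ)^{[-]}\cong H_1(\Gamma_0,\ZZ)$ canonically, each $\ell_{e_i}=z_i^\vee$ corresponds to a co-edge $e^\vee$ of $\Gamma_0$, and the Prym monodromy cone is $\RR_{\ge 0}\langle(e^\vee)^2\rangle_{e\in E(\Gamma_0)}$ — exactly the Jacobian monodromy cone of $\Gamma_0$. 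Thus the central cone extension criterion (C)(i)--(iv) for this Prym cover coincides verbatim with the Torelli criterion Theorem \ref{teotorelliext}(3) for $\Gamma_0$, which has no solution; hence $\widetilde C\to C\in Ind(P^C_g)$, while (by matroidality, Lemma \ref{lemsecvor} and \cite[Thm.~A]{MV12}) its cone lies in a perfect and second Voronoi cone, so this point is \emph{not} in $\cup_n\overline{FS}_n$. The main obstacle in part (2) is purely one of realization: one must check that the chosen obstructed graph $\Gamma_0$ actually occurs as the quotient graph of an admissible double cover of a stable curve of the correct arithmetic genus $g+1$ sitting inside $\overline R_{g+1}$ — i.e.~that one can find a genuine curve $C$ (reducible, with the prescribed dual graph) together with an étale double cover — and that the resulting locus does not accidentally lie in some $\overline{FS}_n$; both are straightforward but require care with the parity/genus constraints ($\widetilde C_i$ of odd genus, $\Gamma_0$ connected with no separating-vertex structure forcing a Friedman--Smith splitting).
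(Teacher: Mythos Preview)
Your overall plan is the same as the paper's, and for the inclusion $\overline{FS}_2\cup\overline{FS}_3\subseteq Ind(P_g^C)$ your reduction via $\Sigma_C^{(1)}\subseteq\Sigma_P$ is exactly what the paper does (phrased there as ``in genus $2$ and $3$ the three decompositions coincide'').  The rest of the proposal, however, has two soft spots where the paper does something sharper.

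For $FS_n$ with $n\ge 4$, your suggested modification of the metric \eqref{eqnQ} does not work.  The constraint $Q(2,-1,\dots,-1)=1$ forces the $(1,1)$-entry to be exactly $n/4$, so replacing it by $\lceil n/4\rceil$ destroys condition~(iii); conversely, keeping it at $n/4$ destroys integrality unless $4\mid n$.  One really must change the off-diagonal structure, not just a single entry.  The paper writes down a different explicit form $Q_C$ (with $(1,1)$-entry $1$, entries $1/2$ in positions $(1,2),(1,3),(1,4)$, and a chain of $-1/2$'s on the sub/super-diagonal from position $4$ on), checks directly that it is integral and takes value $1$ on each generator, and then identifies it (after a sign change) with the root form $D_n$, which gives positive definiteness and minimum $1$ in one stroke.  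Your final remark that the answer should come from a root lattice of type $A_m$ or $D_m$ is thus exactly right, but it is the identification with $D_n$ that replaces the missing ``bookkeeping,'' not a tweak of \eqref{eqnQ}.

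For part (2), the strategy---feed an Alexeev--Brunyate obstructed graph $\Gamma_0$ into a Prym whose monodromy cone coincides with the Jacobian cone of $\Gamma_0$---is again the paper's strategy, but your proposed vehicle is the wrong one.  An elementary \'etale example in the sense of \S\ref{secEEE} has base dual graph a \emph{single vertex} with loops, so it cannot realize an arbitrary $\Gamma_0$; the claimed isomorphism $H_1(\widetilde\Gamma,\ZZ)^{[-]}\cong H_1(\Gamma_0,\ZZ)$ is simply not available in that setting.  The paper instead uses a Wirtinger-type construction: take two copies of a curve $C$ and glue them at two pairs of exchanged points, so that the involution swaps the two copies and the Prym equals $JC$; then degenerate $C$ to an Alexeev--Brunyate curve $C'$.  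The Prym monodromy cone is then literally the Jacobian monodromy cone of $C'$, and the cover is visibly not in any $\overline{FS}_n$ because all vertices of the dual graph are exchanged in pairs by the involution.  This also sidesteps the ``realization obstacle'' you flag: there is nothing to check beyond picking $C'$.
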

\begin{proof}

We shall first prove (1).
In genus $2$ and $3$ the central cone decomposition coincides with the second Voronoi and the perfect cone decomposition. In particular the monodromy cones of $FS_2$ and
$FS_3$ are not contained in a cone in the central cone decomposition (CCD). This shows the inclusion $\overline {FS}_2\cup \overline {FS}_3\subseteq Ind(P_g^C)$.
To complete the proof of (1) it remains to show that the monodromy cones of $FS_n$ are contained in a cone of the  CCD for $n\geq 4$. For this we work with the representation of $FS_n$
given by \eqref{eqnFSMC}, namely
 $$
FS_n \ \ \  \left(\begin{smallmatrix}
2&-1&-1&&&&-1\\
0&1&0&&&&0\\
0&0&1&0&&&\\
&&&\ddots&\ddots&&\\
&&&0&1&0&0\\
&&&&0&1&0\\
&&&&&0&1\\
\end{smallmatrix}\right)
$$
We will apply the criterion of Lemma \ref{lemcc} using the following quadratic form
$$Q_C=
\left(\begin{smallmatrix}
1           &  \frac12 &  \frac12 &  \frac12 &    0     &  \ldots  &  \ldots  & 0\\
 \frac12    &    1     &    0     &    0     &    0     &  \ldots  &  \ldots  & 0\\
 \frac12    &    0     &    1     &    0     &    0     &  \ldots  &  \ldots  & 0\\
 \frac12    &    0     &    0     &    1     & -\frac12 &    0     &  \ldots  & 0\\
0           &    0     &    0  & -\frac12 &    1     & -\frac12 &  \ddots  & 0\\
\vdots      &  \vdots  &  \vdots  &    0     & -\frac12 &    1     &  \ddots  & 0\\
\vdots      &  \ddots  &  \ddots  &  \vdots     &  \ddots     &  \ddots  &  \ddots  & -\frac12\\
0           &  \ldots  &  \ldots  &  \ldots  &  \ldots  &    0     & -\frac12 & 1
\end{smallmatrix}\right)
$$
Clearly this matrix is integer valued, and one computes immediately that for all rows $\ell_i$ of the $FS_n$ matrix  one has $Q_C(\ell_i)=1$. To
prove the claim it remains to show that $Q_C$ is positive definite. To see this we first note that $Q_C$ is equivalent to
$$Q'_C=
{\small
\left(\begin{smallmatrix}
1           & -\frac12 & -\frac12 & -\frac12 &    0     &  \ldots  &  \ldots  & 0\\
-\frac12    &    1     &    0     &    0     &    0     &  \ldots  &  \ldots  & 0\\
-\frac12    &    0     &    1     &    0     &    0     &  \ldots  &  \ldots  & 0\\
-\frac12    &    0     &    0     &    1     & -\frac12 &    0     &  \ldots  & 0\\
0           &    0     &    0  & -\frac12 &    1     & -\frac12 &  \ddots  & 0\\
\vdots      &  \vdots  &  \vdots  &    0     & -\frac12 &    1     &  \ddots  & 0\\
\vdots      &  \ddots  &  \ddots  &  \vdots     &  \ddots     &  \ddots  &  \ddots  & -\frac12\\
0           &  \ldots  &  \ldots  &  \ldots  &  \ldots  &    0     & -\frac12 & 1
\end{smallmatrix}\right)
}
$$
But then taking $e_1,\ldots,e_n$ as a $\mathbb Z$-basis of $\mathbb Z^n$, we have $Q'_{C}(e_i)=1$, $Q_{C'}(e_1,e_i)= -\frac12$ for $i=2,3,4$ and $Q'_C(e_i,e_{i+1})=-\frac12, i=4, \ldots n-1$ with $Q'_C(e_k,e_l)=0$ in all other cases.
This shows that $Q'_C$ is the form $D_n$ (up to scalar) and this shows positive definiteness.

We shall now prove (2). The starting point is that by \cite{ab, AETAL} there are stable curves of genus $g\geq 9$ near which the Torelli map to $\bar A_g^C$ is not a morphism.
We start with two copies of a curve $C$ of genus $g$ with $2$ marked points, say $P,Q$. Then we attach $P$ to $Q$ and vice versa.
On the resulting curve we can define an involution that swaps the two components and the two nodes. The associated Prym variety is then equal 
to the Jacobian of $C$. We can now degenerate this involution to an example which proves our claim. Indeed, since $g \geq 9$ we can degenerate $C$ to a curve $C'$ which lies in the indeterminacy locus of the Torelli map 
to $\bar A_g^C$.
We now choose one component of $C'$, and attach it to a second copy of $C'$ as in the
Wirtinger example with 2 nodes (to the same chosen component on the second copy of $C'$).  Again we let
the involution swap the two copies of $C'$ (and the chosen attaching nodes).
The monodromy cone is then the same as the monodromy cone associated to $C'$, (cf.~the Wirtinger Example \ref{secWE}).
This shows that the map $P_g^C:\overline R_{g+1}\dashrightarrow \bar A_g^C$ is not a morphism near this cover.
It remains to check that this is not contained in $\cup_{n\geq 1} \overline{FS}_n$. But this follows from considering the dual graph: since the vertices of this graph
are interchanged pairwise it cannot be contracted to the graph (with involution) of an $FS_n$ example. 
\end{proof}

\end{document}